\documentclass[reqno, 8pt]{amsart}

\usepackage{amsmath,amssymb,amsthm,amsfonts,mathrsfs}
\usepackage{fullpage}
\usepackage{xcolor}
\usepackage{graphicx}
\usepackage{makecell}
\usepackage{listings}
\usepackage{bbm}

\definecolor{codegreen}{rgb}{0,0.6,0}
\definecolor{codegray}{rgb}{0.5,0.5,0.5}
\definecolor{codepurple}{rgb}{0.58,0,0.82}
\definecolor{backcolour}{rgb}{0.95,0.95,0.92}

\lstdefinestyle{mystyle}{
  backgroundcolor=\color{backcolour},   commentstyle=\color{codegreen},
  keywordstyle=\color{magenta},
  numberstyle=\tiny\color{codegray},
  stringstyle=\color{codepurple},
  basicstyle=\ttfamily\footnotesize,
  breakatwhitespace=false,         
  breaklines=true,                 
  captionpos=b,                    
  keepspaces=true,                 
  numbers=left,                    
  numbersep=5pt,                  
  showspaces=false,                
  showstringspaces=false,
  showtabs=false,                  
  tabsize=2
}

\lstset{style=mystyle}
\usepackage{listings}
\usepackage[colorlinks=true,
linkcolor=blue,
anchorcolor=blue,
citecolor=red
]{hyperref}
\allowdisplaybreaks 
\newtheorem{theorem1number}{Theorem}
\newtheorem{theorem}{Theorem}[section]
\newtheorem{lemma}[theorem]{Lemma}

\newtheorem*{conjecture*}{Conjecture}
\newtheorem*{theorem*}{Theorem}
\theoremstyle{definition}

\theoremstyle{remark}
\newtheorem{remark}[theorem]{Remark}
\newtheorem*{remark*}{remark}

\usepackage{colonequals}



\author{\textbf{Runbo Li}}
\address{International Curriculum Center, The High School Affiliated to Renmin University of China, Beijing, China}
\email{runbo.li.carey@gmail.com}


\makeatletter
\@namedef{subjclassname@2020}{\textup{}2020 Mathematics Subject Classification}
\makeatother

\title[]{The number of primes in short intervals and numerical calculations for Harman's sieve}
\subjclass[2020]{\textbf{11N05}, \textbf{11N35}, \textbf{11N36}}
\keywords{\textbf{Prime}, \textbf{Sieve methods}, \textbf{Short intervals}}

\begin{document}

\begin{abstract}
The author gives nontrivial upper and lower bounds for the number of primes in the interval $[x - x^{\theta}, x]$ for some $0.52 \leqslant \theta \leqslant 0.525$, showing that the interval $[x - x^{0.52}, x]$ contains prime numbers for all sufficiently large $x$. This refines a result of Baker, Harman and Pintz (2001) and gives an affirmative answer to Harman and Pintz's argument. New arithmetic information, a delicate sieve decomposition, various techniques in Harman's sieve and accurate estimates for integrals are used to good effect.
\end{abstract}

\maketitle

\tableofcontents

\section{Introduction}
One of the famous topics in number theory is to find prime numbers in short intervals. In 1937, Cramér \cite{Cramer1937} conjectured that every interval $[x-f(x) (\log x)^2, x]$ contains prime numbers for some $f(x) \to 1$ as $x \to \infty$. The Riemann Hypothesis implies that for all sufficiently large $x$, the interval $[x-x^{\theta}, x]$ contains $\sim x^{\theta} (\log x)^{-1}$ prime numbers for every $\frac{1}{2} < \theta \leqslant 1$. The first unconditional result of this asymptotic formula with some $\theta < 1$ was proved by Hoheisel \cite{Hoheisel} in 1930 with $\theta \geqslant 1-\frac{1}{33000}$. After the works of Hoheisel \cite{Hoheisel}, Heilbronn \cite{Heilbronn}, Chudakov \cite{Chudakov}, Ingham \cite{Ingham} and Montgomery \cite{Montgomery}, Huxley \cite{Huxley} proved in 1972 that the above asymptotic formula holds when $\theta>\frac{7}{12}$ by his zero density estimate. In 2024, Guth and Maynard \cite{GuthMaynard} improved this to $\theta>\frac{17}{30}$ by a new zero density estimate.

In 1979, Iwaniec and Jutila \cite{IwaniecJutila} first introduced a sieve method into this problem. They established a lower bound with correct order of magnitude (instead of an asymptotic formula) with $\theta=\frac{13}{23}$. After that breakthrough, many improvements were made and the value of $\theta$ was reduced successively to
$$
\frac{5}{9} \approx 0.5556,\ \frac{11}{20}=0.5500,\ \frac{17}{31} \approx 0.5484,\ \frac{23}{42} \approx 0.5476, 
$$
$$
\frac{1051}{1920} \approx 0.5474,\ \frac{35}{64} \approx 0.5469,\ \frac{6}{11} \approx 0.5455\ \text{ and }\ \frac{7}{13} \approx 0.5385
$$
by Iwaniec and Jutila \cite{IwaniecJutila}, Heath-Brown and Iwaniec \cite{HeathBrownIwaniec}, Pintz \cite{Pintz1} \cite{Pintz2}, Iwaniec and Pintz \cite{IwaniecPintz}, Mozzochi \cite{Mozzochi} and Lou and Yao \cite{LouYao3564} \cite{LouYao} \cite{LouYao2} \cite{LouYao3} respectively.

In 1996, Baker and Harman \cite{BakerHarman} presented an alternative approach to this problem. They used the alternative sieve developed by Harman \cite{Harman1} \cite{Harman2} to reduce $\theta$ to $0.535$. 
Finally, Baker, Harman and Pintz (BHP) \cite{BHP} further developed this sieve process and combined it with Watt's power mean value theorem on Dirichlet polynomials \cite{WattTheorem} and showed $\theta \geqslant 0.525$. As Friedlander and Iwaniec mentioned in their book [\cite{Opera}, Chapter 23], ``their method uses many powerful tools and arguments, both analytic and combinatorial, and these are extremely complicated.'' However, they omitted almost all calculation details in \cite{BakerHarman} and \cite{BHP}, which makes the papers very hard to read and check. 
In 2014, Pintz \cite{PintzGoldbachSejtesrol} pointed out that ``the Baker--Harman--Pintz result with $\theta=0.525$ actually leads to a slightly better value'' in his lecture. Harman [\cite{HarmanBOOK}, Chapter 7.10] also mentioned that $\theta=0.52$ might be achievable with an incredibly long and boring argument. In a personal communication, Kumchev mentioned that BHP tried and discovered that $0.52$ was out of reach of the existing techniques.
In 2024, Starichkova \cite{Starichkova} provided full details of \cite{BakerHarman} in her PhD thesis, so we turn our attention to \cite{BHP}. In this paper, we provide the calculation details and sharpen the main theorem proved in \cite{BHP}.
\begin{theorem1number}\label{t1}
For all sufficiently large $x$, the interval $[x - x^{0.52}, x]$ contains prime numbers.
\end{theorem1number}
Theorem~\ref{t1} is a direct corollary of the following result, which gives nontrivial upper and lower bounds with correct order of magnitude for the number of primes in intervals of length between $x^{0.52}$ and $x^{0.525}$. Here we say a trivial upper bound is the bound with upper constant $\frac{2}{\theta}$ obtained by Montgomery and Vaughan \cite{MontgomeryVaughanLargeSieve}, and a trivial lower bound is of course zero. Note that in \cite{IwaniecBT}, \cite{LouYaoUpperBound}, \cite{LouYao} and \cite{BHP2} nontrivial upper bounds for the number of primes in other intervals are also given.
\begin{theorem1number}\label{t1B}
Let $0.52 \leqslant \theta \leqslant 0.525$ and $\varepsilon >0$. Then we have
$$
\mathbf{LB}(\theta) \frac{x^{\theta + \varepsilon}}{\log x} \leqslant \pi(x)-\pi(x-x^{\theta + \varepsilon}) \leqslant \mathbf{UB}(\theta) \frac{x^{\theta + \varepsilon}}{\log x}
$$
for all sufficiently large $x$, where the values of functions $\mathbf{LB}(\theta)$ and $\mathbf{UB}(\theta)$ satisfy the following condition table.

\begin{center}
\begin{tabular}{|c|c|c|}
\hline \boldmath{$\theta$} & \boldmath{$\operatorname{LB}(\theta)$} & \boldmath{$\operatorname{UB}(\theta)$} \\
\hline \boldmath{$0.520$} & $>0.004$ & $<2.874$  \\
\hline \boldmath{$0.521$} & $>0.075$ & $<2.700$  \\
\hline \boldmath{$0.522$} & $>0.134$ & $<2.583$  \\
\hline \boldmath{$0.523$} & $>0.169$ & $<2.536$  \\
\hline \boldmath{$0.524$} & $>0.209$ & $<2.437$  \\
\hline \boldmath{$0.525$} & $>0.249$ & $<2.347$  \\
\hline
\end{tabular}
\end{center}
\end{theorem1number}
Obviously, our result confirms Harman and Pintz's argument and goes beyond the $0.52$-barrier mentioned by Kumchev. Although our upper constant for $\theta = 0.52$ is a little bit weaker than Iwaniec's (see Section 6 of \cite{IwaniecBT}), our result comes from Harman's sieve which leads to much better results for intervals longer than $x^{0.522}$. Our improvement on the lower bounds comes mainly from the following 2 aspects:

\textbf{1.} We have a careful discussion on the original sieve decomposing process of BHP and optimize some of their arguments. Specifically, the most important optimization we do is that we perform a role-reversal after a Buchstab iteration for some four-dimensional sums that are not decomposed in BHP's original arguments, replacing a larger 4D loss by a much smaller 6D loss. We also consider 8D losses after more iteration steps.

\textbf{2.} We prove some new arithmetic information outside of those in \cite{BHP} and \cite{HarmanBOOK} which gives room for improvement, see Lemmas~\ref{l34}--\ref{l35}. We also find that Lemma 18 of \cite{BHP} and Lemma 7.22 of \cite{HarmanBOOK} actually cover some non-overlapping three-dimensional regions when $\theta \geqslant 0.52$, so using them simultaneously yields a better result.

All the numerical values of the integrals in Sections 5 and 6 are calculated using C$++$, and we use Mathematica 14 to calculate them again for cross-checking. We use an Intel(R) Xeon(R) Platinum 8383C CPU with 160 threads (80 Wolfram kernels) to run the code. The C$++$ code can be found in the ancillary files, and the websites for Mathematica code can be found in Table 6, Appendix 2.

Throughout this paper, we always assume that $\varepsilon$ is a sufficiently small positive constant, $x$ is a sufficiently large integer and $K > 0$. The appearance of $K$ in the exponent of a logarithm will always signify that the result holds for every $K > 0$ with an implied constant that depends on $K$. Let $\theta$ be a positive number which will be fixed later. The letter $p$, with or without subscript, is reserved for prime numbers. We write $m \sim M$ to mean that $M \leqslant m < 2M$. We use $M(s)$, $N(s)$, $R(s)$ and some other capital letters (with or without subscript) to denote some divisor-bounded Dirichlet polynomials
$$
M(s)=\sum_{m \sim M} a_{m} m^{-s}, \quad N(s)=\sum_{n \sim N} b_{n} n^{-s}, \quad R(s)=\sum_{r \sim R} c_{r} r^{-s}.
$$
We say a Dirichlet polynomial $M(s)$ is \textit{prime-factored} if we have
$$
\left|M \left( \frac{1}{2}+it \right)\right| \ll M^{\frac{1}{2}} (\log x)^{-K}
$$
for $\exp \left((\log x)^{1/3} \right) < |t| < x^{1-\theta+\varepsilon}$. In fact, this holds when $a_m$ is the characteristic function of primes or of numbers with a bounded number of prime factors restricted to certain ranges. For example, if we have
$$
a_m = \sum_{m = p_1 \cdots p_k} 1
$$
and the least prime factor of $m$ is $\gg \exp \left((\log x)^{4/5} \right)$, then we know that $M(s)$ is prime-factored by [\cite{HarmanBOOK}, Lemma 1.5]. We also say a Dirichlet polynomial is \textit{decomposable} if it can be written as the form
$$
\sum_{p_{i} \sim P_{i}}\left(p_{1} \ldots p_{u}\right)^{-s}.
$$
That is, we can decompose this Dirichlet polynomial.

\section{An outline of the proof}
Let $0.505 \leqslant \theta \leqslant 0.535$, $\mathcal{C}$ denote a finite set of positive integers, $y=x^{\theta+\varepsilon}$, $y_{1}=x \exp \left(-3 (\log x)^{1/3} \right)$,
$$
\mathcal{A}=\{a: a \in \mathbb{Z},\ x-y \leqslant a < x\}, \quad \mathcal{B}=\{b: b \in \mathbb{Z},\ x-y_{1} \leqslant b < x\},
$$
$$
\mathcal{C}_d=\{a: a d \in \mathcal{C}\}, \quad P(z)=\prod_{p<z} p, \quad S\left(\mathcal{C}, z\right)=\sum_{\substack{a \in \mathcal{C} \\ (a, P(z))=1}} 1.
$$
Then we have
\begin{equation}
\pi(x)-\pi(x-y)=S\left(\mathcal{A}, x^{\frac{1}{2}}\right).
\end{equation}

\textit{Buchstab's identity} is the equation
$$
S\left(\mathcal{C}, z\right) = S\left(\mathcal{C}, w\right) - \sum_{w \leqslant p < z} S\left(\mathcal{C}_{p}, p\right),
$$
where $2 \leqslant w < z$.

In order to prove Theorem~\ref{t1B}, we only need to give upper and lower bounds for $S\left(\mathcal{A}, x^{\frac{1}{2}}\right)$. Our aim is to show that the sparser set $\mathcal{A}$ contains the expected proportion of primes compared to the larger set $\mathcal{B}$, which requires us to decompose $S\left(\mathcal{A}, x^{\frac{1}{2}}\right)$ using the above Buchstab's identity, prove asymptotic formulas of the form
\begin{equation}
S\left(\mathcal{A}, z\right) = \frac{y}{y_1} (1+o(1)) S\left(\mathcal{B}, z\right)
\end{equation}
for some parts of it, and drop the remaining parts. The dropped parts must be positive in the lower bound case and must be negative in the upper bound case. We say a term $S\left(\mathcal{A}, z\right)$ \textit{has an asymptotic formula} if (2) holds for this term.

In Section 3 we provide asymptotic formulas for terms of the form $S\left(\mathcal{A}_{p_{1} \ldots p_{n}}, x^{\nu}\right)$ (which requires both Type-I and Type-II information) and in Section 4 we provide asymptotic formulas for terms of the form $S\left(\mathcal{A}_{p_{1} \ldots p_{n}}, p_{n} \right)$ (which only requires Type-II information). In Sections 5 and 6 we will make further use of Buchstab's identity to decompose $S\left(\mathcal{A}, x^{\frac{1}{2}}\right)$ and prove Theorem~\ref{t1B} for $\theta = 0.52$. We omit the proof of numerical bounds for other values of $\theta$ for the sake of simplicity.

\section{Sieve asymptotic formulas I}
Now we follow \cite{BHP} directly to get some sieve asymptotic formulas. For a positive integer $h$, we define the interval
\begin{equation}
I_{h}=\left[\frac{1}{2}-2 h\left(\theta-\frac{1}{2}\right),\ \frac{1}{2}-(2 h-2)\left(\theta-\frac{1}{2}\right)\right),
\end{equation}
and we define the piecewise-linear function $\nu=\nu(\alpha)$ and the function $\alpha^{*} = \alpha^{*}(\alpha)$, $0 \leqslant \alpha \leqslant \frac{1}{2}$ as follows: if $\alpha \in I_{h}$ then
\begin{equation}
\nu(\alpha)= \min \left(\frac{2 (\theta-\alpha)}{2 h-1},\ \gamma(\theta) \right) \text{ for } h \geqslant 1,
\end{equation}
\begin{equation}
\alpha^{*}=\max \left(\frac{2 h(1-\theta)-\alpha}{2 h-1},\ \frac{2(h-1) \theta+\alpha}{2 h-1}\right) ,
\end{equation}
where $\gamma(\theta)$ will be defined in next section. Note that we have $\nu(\alpha) \geqslant 2 \theta -1$ and $1-\theta \leqslant \alpha^{*} \leqslant \frac{1}{2}+\varepsilon$.

Now we provide some lemmas which will be used to give asymptotic formulas for sieve functions of the form $S\left(\mathcal{A}_{p_{1} \ldots p_{n}}, x^{\nu}\right)$. The next two lemmas can be deduced from [\cite{HarmanBOOK}, Lemma 7.3], some combinatorial lemmas together with [\cite{BHP}, Lemma 2] which is a generalized version of Watt's theorem \cite{WattTheorem}.

\begin{lemma}\label{l21} ([\cite{BHP}, Lemma 12], [\cite{HarmanBOOK}, Lemma 7.15]).
Let $M=x^{\alpha_1}, N=x^{\alpha_2}$ where $M(s)$ and $N(s)$ are decomposable. Suppose that $\alpha_1 \leqslant \frac{1}{2}$ and
$$
\alpha_2 \leqslant \min \left(\frac{3 \theta+1-4 \alpha_1^{*}}{2},\ \frac{3+\theta-4 \alpha_1^{*}}{5}\right)-2 \varepsilon.
$$
Then
$$
\sum_{\substack{m \sim M \\ n \sim N}} a_{m} b_{n} S\left(\mathcal{A}_{m n}, x^{\nu}\right)=\frac{y}{y_{1}}(1+o(1)) \sum_{\substack{m \sim M \\ n \sim N}} a_{m} b_{n} S\left(\mathcal{B}_{m n}, x^{\nu}\right)
$$
holds for every $\nu \leqslant \nu(\alpha_1)$.
\end{lemma}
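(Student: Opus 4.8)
The plan is to run the Type~I/Type~II machinery of Harman's sieve, comparing $\mathcal{A}$ against the nearly full-length model set $\mathcal{B}$ one Dirichlet-polynomial piece at a time. Write $\Sigma_{\mathcal{A}}$, $\Sigma_{\mathcal{B}}$ for the two sums in the statement. Since $M(s)$ and $N(s)$ are decomposable their coefficients are nonnegative (products of prime indicators over fixed dyadic ranges), so $\Sigma_{\mathcal{B}} \gg y_1(\log x)^{-O(1)}$; hence it suffices to prove $\Sigma_{\mathcal{A}} - \tfrac{y}{y_1}\Sigma_{\mathcal{B}} \ll y(\log x)^{-A}$ for every fixed $A>0$, which yields the $(1+o(1))$ factor. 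The first step is to expand $S(\mathcal{A}_{mn},x^{\nu})$ by Buchstab's identity, peeling primes $\geq x^{\nu}$ off the sifted cofactor $a/(mn)$ (which has exponent $1-\alpha_1-\alpha_2 \leq \tfrac12$) and applying the fundamental lemma at the bottom; because $\nu \leq \nu(\alpha_1)$ and $\alpha_1 \leq \tfrac12$ leave enough room against the level of distribution for $\mathcal{A}$, only $O_\theta(1)$ pieces arise, each of the shape $\sum_{\ell} c_\ell\,|\mathcal{A}_\ell|$ (and its $\mathcal{B}$-analogue) with $\ell$ a product of $mn$ with collected primes and $c_\ell$ a product of sieve weights and prime indicators over fixed ranges. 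It is enough to treat each such piece, and --- via Perron's formula --- this amounts to controlling a product of short Dirichlet polynomials $|M(\tfrac12+it)N(\tfrac12+it)\cdots|$ over $|t| < x^{1-\theta+\varepsilon}$.

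For a fixed piece I would distinguish two regimes according to whether its prime blocks can be regrouped --- and this is exactly where decomposability of $M(s)$ and $N(s)$ is used --- into a factorization $\ell = \ell_1\ell_2$ with both factors in a favourable window below $x^{1/2}$. If no such Type~II factorization exists, the non-peeled part of $\ell$ is short, and after the standard reversal of roles for short-interval sieves --- replacing the structured divisor of exponent $\alpha_1$ by a complementary divisor within $[x-y,x]$, iterated $h$ times where $h$ is the index with $\alpha_1\in I_h$, which is precisely what turns $\alpha_1$ into $\alpha_1^{*}\in[1-\theta,\tfrac12+\varepsilon]$ in the hypothesis --- the sum falls under the Type~I estimate [\cite{HarmanBOOK}, Lemma~7.3], giving $|\mathcal{A}_\ell| = \tfrac{y}{y_1}|\mathcal{B}_\ell| + O(\text{admissible error})$ uniformly. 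If a Type~II factorization does exist, I would regroup $M(s)N(s)$ together with the peeled primes into two blocks and apply the generalized Watt mean value theorem [\cite{BHP}, Lemma~2]; I expect the two entries of the minimum bounding $\alpha_2$ to be exactly the thresholds up to which the two mean-value inputs behind that lemma stay admissible --- the $\tfrac{3\theta+1-4\alpha_1^{*}}{2}$ term in the plain fourth-moment range and the $\tfrac{3+\theta-4\alpha_1^{*}}{5}$ term where Watt's theorem is used at full strength --- with the $-2\varepsilon$ slack absorbing the boundedly many dyadic and logarithmic losses.

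The main obstacle is the bookkeeping hidden in the previous paragraph: for each of the $O_\theta(1)$ pieces produced by Buchstab one must check that its prime blocks can indeed be regrouped to meet the size hypotheses of [\cite{HarmanBOOK}, Lemma~7.3] or [\cite{BHP}, Lemma~2], drawing only on $\alpha_1 \leq \tfrac12$, $\alpha_2 \leq \min(\cdots)-2\varepsilon$, $\nu \leq \nu(\alpha_1)$, and the combinatorics of the intervals $I_h$ and of the function $\alpha^{*}$. The reflection argument that produces $\alpha^{*}$ is the one genuinely conceptual step; once it is set up the rest is a finite case analysis of linear inequalities in $\alpha_1,\alpha_2,\nu,\theta$, which is the content of the combinatorial lemmas referred to just before the statement. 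Summing the boundedly many pieces --- each contributing an error $\ll y(\log x)^{-A}$ against $\Sigma_{\mathcal{B}} \gg y_1(\log x)^{-O(1)}$ --- then gives $\Sigma_{\mathcal{A}} = \tfrac{y}{y_1}(1+o(1))\Sigma_{\mathcal{B}}$ for every $\nu \leq \nu(\alpha_1)$, as required.
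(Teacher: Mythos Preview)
Your proposal is essentially correct and follows the same route the paper points to: the paper gives no proof of its own here, merely recording that the lemma is \cite{BHP}, Lemma~12 / \cite{HarmanBOOK}, Lemma~7.15 and that these ``can be deduced from [\cite{HarmanBOOK}, Lemma~7.3], some combinatorial lemmas together with [\cite{BHP}, Lemma~2]''. Your sketch --- Buchstab iteration with the fundamental lemma at the bottom, Type~I pieces handled by \cite{HarmanBOOK}, Lemma~7.3, Type~II pieces handled by the generalized Watt mean value \cite{BHP}, Lemma~2, and a finite case analysis of linear inequalities to check that every piece falls under one of the two --- is exactly this.

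One terminological caution: what you call ``reversal of roles \ldots\ iterated $h$ times'' to produce $\alpha_1^{*}$ is not the mechanism in \cite{BHP}/\cite{HarmanBOOK}, and it is not what this paper means by \emph{role reversal} (which throughout Sections~5--6 denotes swapping the sifted variable with an almost-prime cofactor). The appearance of $\alpha_1^{*}$ and of the index $h$ with $\alpha_1\in I_h$ comes instead from the combinatorial splitting of the decomposable polynomial $M(s)$: one breaks $M$ into $2h-1$ subproducts of nearly equal length and regroups, and $\alpha_1^{*}$ records the length of the resulting near-$x^{1/2}$ factor in the worst case. This is purely a Dirichlet-polynomial manoeuvre, not a divisor-switching inside $[x-y,x]$. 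Since you correctly defer these details to ``the combinatorial lemmas referred to just before the statement'', the slip is only in the informal description, not in the argument.
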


\begin{lemma}\label{l22} ([\cite{BHP}, Lemma 13], [\cite{HarmanBOOK}, Lemma 7.16]).
Let $M=x^{\alpha_1}, N_{1}=x^{\alpha_2}, N_{2}=x^{\alpha_3}$ where $M(s), N_{1}(s)$ and $N_{2}(s)$ are decomposable. Suppose that $\alpha_1 \leqslant \frac{1}{2}$ and either
$$
2 \alpha_2+\alpha_3 \leqslant 1+\theta-2 \alpha_1^{*}-2 \varepsilon, \quad \alpha_3 \leqslant \frac{1+3 \theta}{4}-\alpha_1^{*}-\varepsilon, \quad 
2 \alpha_2+3 \alpha_3 \leqslant \frac{3+\theta}{2}-2 \alpha_1^{*}-2 \varepsilon
$$
or
$$
\alpha_2 \leqslant \frac{1-\theta}{2}, \quad \alpha_3 \leqslant \frac{1+3 \theta - 4 \alpha_1^{*}}{8} -\varepsilon.
$$
Then
$$
\sum_{\substack{m \sim M \\ n_{1} \sim N_{1} \\ n_{2} \sim N_{2} }} a_{m} b_{n_{1}} c_{n_{2}} S\left(\mathcal{A}_{m n_1 n_2}, x^{\nu}\right)=\frac{y}{y_{1}}(1+o(1)) \sum_{\substack{m \sim M \\ n_{1} \sim N_{1} \\ n_{2} \sim N_{2} }} a_{m} b_{n_{1}} c_{n_{2}} S\left(\mathcal{B}_{m n_1 n_2}, x^{\nu}\right)
$$
holds for every $\nu \leqslant \nu(\alpha_1)$.
\end{lemma}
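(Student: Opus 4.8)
The plan is to reproduce the proofs of [\cite{BHP}, Lemma~13] and [\cite{HarmanBOOK}, Lemma~7.16], which proceed in parallel with Lemma~\ref{l21}. One first invokes the general sieve proposition [\cite{HarmanBOOK}, Lemma~7.3], which reduces, via Buchstab's identity, the discrepancy
$$
\Delta=\sum_{\substack{m \sim M \\ n_1 \sim N_1 \\ n_2 \sim N_2}} a_{m} b_{n_1} c_{n_2}\Big( S(\cA_{m n_1 n_2},x^{\nu})-\tfrac{y}{y_1}S(\cB_{m n_1 n_2},x^{\nu})\Big)
$$
to a bounded linear combination of Type--I sums $\sum_{d}\gamma_d\big(|\cA_d|-\tfrac{y}{y_1}|\cB_d|\big)$, with $d$ running over $mn_1n_2$ times products of primes $\geq x^{\nu}$ and $\gamma_d$ smooth, together with Type--II sums in which a bilinear splitting $d=d_1 d_2$ with both factors in prescribed ranges is forced; the hypothesis $\nu\leq\nu(\alpha_1)$ is precisely what keeps those ranges admissible. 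It then remains to verify the Type--I and Type--II mean value inputs for the Dirichlet polynomials in play, which yields $\Delta\ll y\,(\log x)^{-A}$ for every fixed $A>0$; since $M(s),N_1(s),N_2(s)$ are \emph{decomposable} the coefficients are non-negative and the main term $\tfrac{y}{y_1}\sum a_{m} b_{n_1}c_{n_2}S(\cB_{mn_1n_2},x^{\nu})$ is $\asymp y(\log x)^{-1}$, so the asserted asymptotic follows.

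For the mean value inputs one uses Perron's formula and a dyadic decomposition to pass to integrals $\int_{|t|\leq x^{1-\theta+\varepsilon}}|F(\tfrac12+it)G(\tfrac12+it)|\,dt$, where $F$ and $G$ are assembled from the prime--variable polynomials, and the crucial tool is the generalized Watt mean value theorem [\cite{BHP}, Lemma~2]. It allows one factor to be as long as roughly $x^{1/2}$ while the complementary factor ranges over a window controlled by $\alpha_1^{*}$, the effective exponent with which $M=x^{\alpha_1}$ (note $\alpha_1\leq\tfrac12$) enters the estimate, which satisfies $1-\theta\leq\alpha_1^{*}\leq\tfrac12+\varepsilon$. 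Since the polynomials are decomposable, the prime factors of $mn_1n_2$ may be regrouped freely when forming $F$ and $G$, and the combinatorial lemmas referred to before the statement convert this freedom into the two alternatives in the hypothesis: the three inequalities of the first alternative encode the three component bounds of the generalized Watt estimate after a Cauchy--Schwarz step that duplicates $N_1(s)$ (the source of the coefficient $2$ on $\alpha_2$ in $2\alpha_2+\alpha_3$ and in $2\alpha_2+3\alpha_3$), whereas the second alternative is the degenerate regime in which $\alpha_2\leq\tfrac{1-\theta}{2}$ makes $N_1(s)$ short enough to be absorbed into the coefficient sequence, leaving a two--variable estimate for which $\alpha_3\leq\tfrac{1+3\theta-4\alpha_1^{*}}{8}-\varepsilon$ is exactly what is needed.

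Summing the contributions, every Type--I and Type--II piece is $O\big(y\,(\log x)^{-A}\big)$ for each fixed $A$, whereas the main terms reassemble (running the same Buchstab expansion for $\cB$ in reverse) into $\tfrac{y}{y_1}(1+o(1))\sum a_{m} b_{n_1}c_{n_2}S(\cB_{mn_1n_2},x^{\nu})$, which is the claim. The hard part is not any single mean value bound but the bookkeeping just sketched: one must verify that on \emph{every} branch of the Buchstab tree, and for \emph{every} way the prime factors of $mn_1n_2$ together with the peeled--off primes distribute themselves, at least one of the two alternatives supplies a decomposition $d=d_1 d_2$ meeting the hypotheses of [\cite{BHP}, Lemma~2]; controlling the interplay among $\nu\leq\nu(\alpha_1)$, the constraint $\alpha_1\leq\tfrac12$ and the reflected exponent $\alpha_1^{*}$ is where the delicacy lies. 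As only the qualitative statement is needed in the sequel, in the paper we simply cite [\cite{BHP}, Lemma~13] and [\cite{HarmanBOOK}, Lemma~7.16].
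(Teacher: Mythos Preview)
Your proposal is correct and matches the paper's approach: the paper does not supply its own proof of this lemma but simply cites [\cite{BHP}, Lemma~13] and [\cite{HarmanBOOK}, Lemma~7.16], noting just before the statement that both Lemmas~\ref{l21} and~\ref{l22} ``can be deduced from [\cite{HarmanBOOK}, Lemma~7.3], some combinatorial lemmas together with [\cite{BHP}, Lemma~2].'' Your sketch expands on exactly these ingredients and is consistent with the cited arguments.
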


The next lemma is obtained by a two-dimensional sieve together with Lemma~\ref{l21}, and they will help us deal with the regions $A_2$ and $A_2^{\prime}$ in Section 6.

\begin{lemma}\label{l23} ([\cite{BHP}, Lemma 16], [\cite{HarmanBOOK}, Lemma 7.19]).
Let $M_1=x^{\alpha_1}, M_2=x^{\alpha_2}$. Suppose that
$$
\alpha_2 \leqslant \alpha_1,\ 2 \alpha_1 + \alpha_2 < 1 \text{ and } \alpha_2 < \frac{7}{2}\theta - \frac{3}{2}.
$$
Then
$$
\sum_{\substack{p_{1} \sim M_1 \\ p_{2} \sim M_2}} S\left(\mathcal{A}_{p_{1} p_{2}}, x^{\nu}\right)=\frac{y}{y_{1}}(1+o(1)) \sum_{\substack{p_{1} \sim M_1 \\ p_{2} \sim M_2}} S\left(\mathcal{B}_{p_{1} p_{2}}, x^{\nu}\right)
$$
holds for $\nu=2 \theta-1$.
\end{lemma}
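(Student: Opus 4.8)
The goal is to prove the transfer identity for
$$
T(\cA)\colonequals \sum_{\substack{p_1 \sim M_1 \\ p_2 \sim M_2}} S\!\left(\cA_{p_1 p_2},\, x^{2\theta-1}\right),
$$
namely $T(\cA)=\tfrac{y}{y_1}(1+o(1))\,T(\cB)$. Note first that $\alpha_2 \leq \alpha_1$ together with $2\alpha_1+\alpha_2<1$ forces $3\alpha_1<1$, so $\alpha_1<\tfrac13<\tfrac12$ and the Dirichlet polynomial $\sum_{p_1 \sim M_1,\,p_2 \sim M_2}(p_1 p_2)^{-s}$ is decomposable, while $\alpha_2<\tfrac72\theta-\tfrac32<\theta$. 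Since $\nu(\alpha)\geq 2\theta-1$ for every $\alpha$, the sieve-level hypothesis $\nu\leq\nu(\cdot)$ in Lemmas~\ref{l21} and \ref{l22} is satisfied automatically for every regrouping used below, so only the exponent inequalities there need to be checked. It is worth recording that a plain application of Lemma~\ref{l21} with $M=M_1$, $N=M_2$ already covers the subrange $\alpha_2 \leq \tfrac{3\theta+1-4\alpha^{*}(\alpha_1)}{2}-2\varepsilon$; since $\alpha^{*}(\alpha)\geq 1-\theta$ with equality only at $\alpha=1-\theta$, and $\tfrac{3\theta+1-4(1-\theta)}{2}=\tfrac72\theta-\tfrac32$, this falls short of the full range exactly when $\alpha_1$ is away from $1-\theta$, and it is this gap that the two-dimensional sieve closes.

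The plan is to feed a Buchstab decomposition of $S(\cA_{p_1 p_2},x^{2\theta-1})$ into Lemma~\ref{l21}; this combination is the ``two-dimensional sieve'' of the statement. One repeatedly applies Buchstab's identity in the sieve variable, splitting prime factors off the cofactor of $\cA_{p_1 p_2}$ and, after dyadically decomposing the new primes into ranges $q_i\sim Q_i\leq x^{2\theta-1}$, arrives at sums
$$
\sum_{\substack{p_1 \sim M_1,\ p_2 \sim M_2 \\ q_i\sim Q_i\ (1\leq i\leq r)}} S\!\left(\cA_{p_1 p_2 q_1\cdots q_r},\, x^{\eta}\right);
$$
one continues on a given piece until the attached decomposable polynomial $\sum(p_1 p_2 q_1\cdots q_r)^{-s}$ can be regrouped as a product $M\cdot N$ of two decomposable factors in which $M$ has exponent $\leq\tfrac12$ pushed as close as possible to the optimal value $1-\theta$ — built from $p_1$ together with a sub-product of the $q_i$, which is feasible because each $q_i$ has exponent $\leq 2\theta-1$ and so the $q_i$ can be packed to essentially any prescribed total exponent up to $1-\alpha_1-\alpha_2$ — and $N$, consisting of $p_2$ and the leftover $q_i$, has exponent below $\tfrac72\theta-\tfrac32$. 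For such a piece Lemma~\ref{l21} yields the transfer; when $p_1$ cannot serve as the base of the long factor one instead reverses the roles of $p_1$ and $p_2$ (legitimate since $\alpha_2\leq\alpha_1$) or appeals to the three-factor asymptotic of Lemma~\ref{l22}, and the pieces whose explicit part $p_1 p_2 q_1\cdots q_r$ is short enough are disposed of by the short-interval Type~I information underlying those lemmas. The three hypotheses are exactly calibrated for this: $2\alpha_1+\alpha_2<1$ keeps the cofactor long enough to supply the padding primes and the mean-value inputs (Watt's theorem, via Lemmas~\ref{l21}--\ref{l22}) within range, $\alpha_2\leq\alpha_1$ makes the role reversal always available, and $\alpha_2<\tfrac72\theta-\tfrac32$ is the admissible bound for the short factor at $\alpha^{*}=1-\theta$.

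I expect the real obstacle to be the exhaustive bookkeeping behind the previous paragraph: one must verify that, uniformly over the region $\set{\alpha_2\leq\alpha_1,\ 2\alpha_1+\alpha_2<1,\ \alpha_2<\tfrac72\theta-\tfrac32}$ and over every admissible configuration $(Q_1,\dots,Q_r)$ of Buchstab ranges, the decomposition terminates with every piece either regroupable into a product satisfying the exponent inequalities of Lemma~\ref{l21} or Lemma~\ref{l22}, or short enough to be Type~I, so that no ``arbitrary sign'' remainder survives. This amounts to a finite system of linear inequalities in $\alpha_1$, $\alpha_2$ and the exponents of the $q_i$, arranged to be solvable precisely on the stated region; the delicate points are confirming that no branch of the Buchstab tree escapes, and checking in the borderline configurations that the padded factor $M$ and the role reversals land strictly inside — not merely on the boundary of — the admissible range of Lemma~\ref{l21}. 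The analytic substance is already contained in Lemmas~\ref{l21}--\ref{l22}, so no new harmonic analysis should be needed.
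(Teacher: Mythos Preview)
Your proposal is essentially correct and matches the paper's approach: the paper does not give its own proof but states that the lemma ``is obtained by a two-dimensional sieve together with Lemma~\ref{l21}'' and cites [BHP, Lemma~16] and [HarmanBOOK, Lemma~7.19]; your sketch --- iterate Buchstab with primes below $x^{2\theta-1}$, pack a subproduct of these small primes onto $p_1$ so that the long factor lands near $x^{1-\theta}$ (where $\alpha^{*}$ attains its minimum $1-\theta$ and the Lemma~\ref{l21} bound on the short factor becomes exactly $\tfrac{7}{2}\theta-\tfrac{3}{2}$), and handle the residual short pieces by the Type~I input --- is precisely that mechanism, and your reading of the three hypotheses is accurate.

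One small remark: the paper's description invokes only Lemma~\ref{l21}, and in the cited proofs the three-factor Lemma~\ref{l22} is not needed for this particular result, so your fallback to it is superfluous here (though harmless). The ``two-dimensional'' in the name refers to the two free primes $p_1,p_2$ against which the Buchstab primes can be redistributed, which is exactly the flexibility you exploit.
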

\begin{remark}
For $\theta > \frac{11}{21} \approx 0.5238$, the third condition in Lemma~\ref{l23} can be simplified to $\alpha_2 < \frac{1}{3}$.
\end{remark}
\begin{remark}
One may use existing results on higher power moments of zeta function to get a minor improvement. For example, it is possible to use Heath-Brown's twelfth power moment \cite{HB12} together with Hölder's inequality (or results in [\cite{TYexponentpairs}, Section 2.1]) and mean value theorem to get an improvement on [\cite{HarmanBOOK}, Lemmas 7.9 and 7.10] which are essential in proving Lemma~\ref{l22}. Here we don't consider about them for the sake of simplicity.
\end{remark}

\section{Sieve asymptotic formulas II}
In this section we give asymptotic formulas for sieve functions of the form $S\left(\mathcal{A}_{p_{1} \ldots p_{n}}, p_{n}\right)$ or more general sums. These lemmas can be deduced from [\cite{BHP}, Lemma 6] together with some mean and large value theorems of Dirichlet polynomials.

\begin{lemma}\label{l31} ([\cite{BHP}, Lemma 9], [\cite{HarmanBOOK}, Lemma 7.3]).
Let $M=x^{\alpha_{1}}, N=x^{\alpha_{2}}$ with
$$
\left|\alpha_{1}-\alpha_{2}\right|<2 \theta-1, \quad \alpha_{1}+\alpha_{2}>1-\gamma(\theta)
$$
where
$$
\gamma(\theta)=\max _{g \in \mathbb{N}} \gamma_{g}(\theta),
$$
$$
\gamma_{g}(\theta)=\min \left(4 \theta-2,\ \frac{(8 g-4) \theta-(4 g-3)}{4 g-1},\ \frac{24 g \theta-(12 g+1)}{4 g-1}\right). 
$$
Moreover, let $R=x^{1-\alpha_1 -\alpha_2}$ and suppose that $R(s)$ is prime-factored. Then we can obtain an asymptotic formula for
$$
\sum_{\substack{m n r \in \mathcal{A} \\ m \sim M \\ n \sim N }} a_m b_n c_r \quad \text{and thus for} \quad \sum_{\substack{p_{j} \sim x^{\alpha_{j}} \\ 1 \leqslant j \leqslant 2}} S\left(\mathcal{A}_{p_{1} p_{2}}, p_{2}\right).
$$
Note that the dependencies between variables in the sieve functions (here and many others below) can be removed using a truncated Perron's formula as in [\cite{BakerHarmanRivat}, Lemma 11]. Moreover, we have 
\begin{align*}
\gamma(\theta) =
\begin{cases}
\gamma_{6}(\theta) = 4 \theta -2, & \quad 0.52 \leqslant \theta < \frac{25}{48} \approx 0.5208, \\
\gamma_{6}(\theta) = \frac{44 \theta -21}{23}, & \quad \frac{25}{48} \leqslant \theta < \frac{251}{481} \approx 0.5218, \\
\gamma_{5}(\theta) = \frac{120 \theta -61}{19}, & \quad \frac{251}{481} \leqslant \theta < \frac{23}{44} \approx 0.5227, \\
\gamma_{5}(\theta) = 4 \theta -2, & \quad \frac{23}{44} \leqslant \theta < 0.525. \\
\end{cases}
\end{align*}
\end{lemma}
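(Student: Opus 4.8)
The plan is to obtain both the asymptotic formula for the weighted sum $\sum_{mnr\in\mathcal{A}}a_mb_nc_r$ and the special case $\sum_{p_j\sim x^{\alpha_j}}S(\mathcal{A}_{p_1p_2},p_2)$ as an application of the fundamental asymptotic criterion [\cite{BHP}, Lemma 6] (equivalently [\cite{HarmanBOOK}, Lemma 7.3]). That criterion reduces the assertion to establishing, for the product $M(s)N(s)R(s)$ regarded as a single Dirichlet polynomial of length $\asymp x$, a mean value estimate of the shape
$$
\int_{T_0}^{x^{1-\theta+\varepsilon}}\bigl|M(\tfrac12+it)N(\tfrac12+it)R(\tfrac12+it)\bigr|\,dt\ll x^{\frac12-\theta-\delta}
$$
for some fixed $\delta>0$, where $T_0=\exp\bigl((\log x)^{1/3}\bigr)$; the complementary contribution of $|t|\le T_0$ is handled trivially and furnishes the main term $\frac{y}{y_1}(1+o(1))$ times the corresponding sum over $\mathcal{B}$. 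The dependence of the sieving level $p_2$ on the summation variable in $S(\mathcal{A}_{p_1p_2},p_2)$ is not an obstruction: as remarked in the statement, one first removes it by a truncated Perron formula exactly as in [\cite{BakerHarmanRivat}, Lemma 11], thereby reducing to finitely many sums with fixed ranges of the same type as $\sum a_mb_nc_r$.

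To verify the displayed estimate I would proceed as follows. The hypothesis $|\alpha_1-\alpha_2|<2\theta-1$ is precisely what permits $M(s)N(s)$ to be written as (a sum of) Dirichlet polynomials whose coefficients are supported on an interval short enough to be admissible for Watt's mean value theorem, while prime-factoredness of $R(s)$ supplies the pointwise bound $|R(\tfrac12+it)|\ll R^{1/2}(\log x)^{-K}$ throughout $T_0<|t|<x^{1-\theta+\varepsilon}$. Applying Watt's theorem [\cite{BHP}, Lemma 2] (a generalisation of [\cite{WattTheorem}]) to the pair $(MN,R)$, and optionally first replacing $M(s)N(s)$ by its $2g$-th power (equivalently, performing a $g$-fold divisor decomposition before invoking a Huxley--Heath-Brown type large-values bound), yields for each integer $g\ge1$ three admissible linear constraints, namely $1-\alpha_1-\alpha_2\le 4\theta-2$, $1-\alpha_1-\alpha_2\le\frac{(8g-4)\theta-(4g-3)}{4g-1}$ and $1-\alpha_1-\alpha_2\le\frac{24g\theta-(12g+1)}{4g-1}$, coming respectively from the elementary mean value theorem, from the fourth-power (Watt) input, and from the large-values input applied to the $g$-fold splitting. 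Requiring all three is exactly $1-\alpha_1-\alpha_2\le\gamma_g(\theta)$, and since $g$ is at our disposal the sharpest condition is $1-\alpha_1-\alpha_2<\gamma(\theta)=\max_{g}\gamma_g(\theta)$, i.e. $\alpha_1+\alpha_2>1-\gamma(\theta)$, which is the hypothesis of the lemma.

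It then remains only to make $\gamma(\theta)$ explicit on $0.52\le\theta<0.525$, and this is a finite linear-optimisation check. For fixed $\theta$ in this range one observes that the second term of $\gamma_g(\theta)$ is decreasing in $g$ while the third is increasing, and that $\gamma_g(\theta)\to\min(4\theta-2,\,2\theta-1,\,6\theta-3)$ as $g\to\infty$; together these show that $\max_g\gamma_g(\theta)$ is attained with $g\in\{5,6\}$. Comparing $\gamma_5$ and $\gamma_6$ gives $\gamma_6>\gamma_5$ for $\theta<\frac{251}{481}$ and $\gamma_5>\gamma_6$ for $\theta>\frac{251}{481}$, the crossover being the solution of $\frac{44\theta-21}{23}=\frac{120\theta-61}{19}$. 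Within each case one identifies the active linear piece of the relevant minimum using the elementary identities $4\theta-2=\frac{44\theta-21}{23}$ at $\theta=\frac{25}{48}$ and $\frac{120\theta-61}{19}=4\theta-2$ at $\theta=\frac{23}{44}$; these are exactly the four sub-intervals and four values recorded in the statement, and checking the corresponding handful of linear inequalities completes the determination.

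The main obstacle is not this optimisation but the uniformity and error bookkeeping in the first two steps: one must ensure the mean value estimate holds with a single positive power saving $\delta$ uniformly over all $(\alpha_1,\alpha_2)$ obeying the two hypotheses, and that the error term produced by [\cite{BHP}, Lemma 6] is genuinely $o\bigl((y/y_1)\cdot(\text{main term})\bigr)$ rather than merely smaller than the $\mathcal{B}$-sum; this is where one must track the $(\log x)^{-K}$ gains from prime-factoredness and choose $K$ large relative to all other parameters. A secondary point requiring care is confirming that no $g\le4$ or $g\ge7$ overtakes $g=5,6$ anywhere on $[0.52,0.525)$, which is exactly what the monotonicity and limiting observations above are designed to rule out.
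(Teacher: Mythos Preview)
The paper does not supply its own proof here; the lemma is quoted from [\cite{BHP}, Lemma 9] and [\cite{HarmanBOOK}, Lemma 7.3], with only the remark opening Section~4 that such results follow from [\cite{BHP}, Lemma 6] together with mean and large value theorems. Your outline correctly identifies [\cite{BHP}, Lemma 6] as the reduction step, and your piecewise-linear optimisation of $\gamma(\theta)$ (monotonicity of the second and third entries of $\gamma_g$ in $g$, the limiting values, and the crossovers at $\tfrac{25}{48}$, $\tfrac{251}{481}$, $\tfrac{23}{44}$) is accurate and complete.

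Two points need correction. First, a citation slip: [\cite{HarmanBOOK}, Lemma 7.3] \emph{is} the lemma under discussion, not an equivalent formulation of [\cite{BHP}, Lemma 6]. Second, and more substantively, Watt's theorem is not the analytic input here. Watt's mean value ([\cite{BHP}, Lemma 2]) requires one factor to be a zeta-like partial sum, and it is what powers the Type-I/II hybrid Lemmas~\ref{l21}--\ref{l22}; indeed the paper explicitly says those two lemmas are deduced \emph{from} the present lemma together with Watt. The present lemma is a pure Type-II result: after reducing via [\cite{BHP}, Lemma 6], one partitions the range of $t$ according to the size of $|R(\tfrac12+it)|$, uses the classical mean value theorem on $M$ and $N$, and applies a Huxley-type large-values estimate to $R^{g}$; the integer $g$ is the power to which $R$ is raised before invoking the large-values bound, and this is the source of the second and third entries of $\gamma_g(\theta)$. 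Consequently the hypothesis $|\alpha_1-\alpha_2|<2\theta-1$ is not a short-support condition enabling Watt but the balance condition ensuring the mean-value bounds on $M$ and $N$ combine correctly. Your attribution of the three constraints to ``mean value / Watt / large values'' therefore misidentifies the middle input, though the subsequent optimisation and the handling of the Perron step are fine.
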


\begin{lemma}\label{l32}
Let $L_{1} L_{2} L_{3} L_{4}=x, L_{j}=x^{\alpha_{j}}, \alpha_{j} \geqslant \varepsilon$ and suppose that $L_{j}(s)$ is prime-factored for $j \geqslant 2$. If any of the following conditions hold:
\begin{align}
\nonumber & \alpha_{1} \geqslant 1-\theta,\ \alpha_{2} \geqslant \frac{(1-\theta)}{2},\ \alpha_{3} \geqslant \frac{(1-\theta)}{4},\ 1-\alpha_{1}-\alpha_2 -\alpha_3 \geqslant \frac{2(1-\theta)}{7}; \\
\nonumber & \alpha_{1} \geqslant 1-\theta,\ \alpha_{2} \geqslant \frac{(1-\theta)}{2},\ \alpha_{3} \geqslant \frac{(1-\theta)}{3},\ 1-\alpha_{1}-\alpha_2 -\alpha_3 \geqslant \frac{2(1-\theta)}{11}; \\
\nonumber & \alpha_{1} \geqslant 1-\theta,\ \alpha_{2} \geqslant \frac{(1-\theta)}{3},\ 1-\alpha_{1}-\alpha_2 +\alpha_3 \geqslant 1-\theta,\ 1-\alpha_{1}-\alpha_2 -\alpha_3 \geqslant \frac{2(1-\theta)}{5}; \\
\nonumber & \alpha_{1} \geqslant 1-\theta,\ \alpha_{2} \leqslant \frac{(1-\theta)}{3},\ \alpha_{3} \leqslant \frac{(1-\theta)}{3},\ \alpha_{2}+\alpha_{3} \geqslant \frac{4(1-\theta)}{7},\ 1-\alpha_{1} \geqslant \frac{14(1-\theta)}{13}.
\end{align}
Then we can obtain an asymptotic formula for
$$
\sum_{\substack{l_1 l_2 l_3 l_4 \in \mathcal{A} \\ l_j \sim L_j \\ 1 \leqslant j \leqslant 3 }} a_{l_1} b_{l_2} c_{l_3} d_{l_4} \quad \text{and thus for} \quad \sum_{\substack{p_{j} \sim x^{\alpha_{j}} \\ 1 \leqslant j \leqslant 3}} S\left(\mathcal{A}_{p_{1} p_{2} p_{3}}, p_{3}\right).
$$
\end{lemma}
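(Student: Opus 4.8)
The plan is to deduce the asymptotic formula from a mean--value bound for the Dirichlet polynomial $L_1 L_2 L_3 L_4$ on the critical line, and then to establish that bound in each of the four regions. By [\cite{BHP}, Lemma 6] (the ``arithmetic information'' criterion behind the alternative sieve; compare [\cite{HarmanBOOK}, Lemma 7.1]), together with truncated Perron's formula and the kernel estimate $\bigl|\tfrac{x^s-(x-y)^s}{s}-\tfrac{y}{y_1}\tfrac{x^s-(x-y_1)^s}{s}\bigr|\ll x^{\theta-\frac12+\varepsilon}$ valid for $\exp((\log x)^{1/3})<|t|<x^{1-\theta+\varepsilon}$ (the main terms cancelling for smaller $|t|$, the remaining tail being negligible), it suffices to show
$$
\int_{\exp((\log x)^{1/3})}^{x^{1-\theta+\varepsilon}}\bigl|L_1\bigl(\tfrac12+it\bigr)L_2\bigl(\tfrac12+it\bigr)L_3\bigl(\tfrac12+it\bigr)L_4\bigl(\tfrac12+it\bigr)\bigr|\,dt\ll x^{\frac12}(\log x)^{-K}.
$$
Since $L_1 L_2 L_3 L_4=x$, estimating $|L_2 L_3 L_4(\tfrac12+it)|$ by the prime--factored pointwise bound $\ll (L_2 L_3 L_4)^{1/2}(\log x)^{-3K}$ and $\int|L_1|$ by Cauchy--Schwarz against the mean value theorem still overshoots $x^{1/2}(\log x)^{-K}$ by a positive power of $x$; recovering this factor requires exploiting the sparsity of large values of the polynomials, which is where the hypotheses on $\alpha_2,\alpha_3,\alpha_4$ enter.

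For each of the four lists of hypotheses I would split the range $[\exp((\log x)^{1/3}),x^{1-\theta+\varepsilon}]$ according to the size of $|L_1(\tfrac12+it)|$ (or of an appropriate partial product such as $L_1 L_3 L_4$ or $L_1 L_2 L_3$), bounding the measure of each level set $\{t:|L_1(\tfrac12+it)|>V\}$ both by the mean value theorem, which gives $\ll (x^{1-\theta+\varepsilon}+L_1)V^{-2}(\log x)^{O(1)}$ and uses $\alpha_1\geq 1-\theta$, and by large value theorems for Dirichlet polynomials --- Huxley's and Montgomery's, and, crucially, the sharper estimates for bounded--divisor (and prime--factored) polynomials of [\cite{HarmanBOOK}, Lemmas 7.9 and 7.10], whose hypotheses demand that the polynomial in question be sufficiently long relative to $x^{1-\theta}$. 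The four conditions are exactly the four workable combinations of these inputs: the thresholds $\tfrac12,\tfrac13,\tfrac14$ imposed on $\alpha_2,\alpha_3$ and $\tfrac27,\tfrac{2}{11},\tfrac25$ on $\alpha_4$, along with the bundled inequalities $\alpha_3+\alpha_4\geq 1-\theta$, $\alpha_2+\alpha_3\geq\tfrac47(1-\theta)$ and $1-\alpha_1\geq\tfrac{14}{13}(1-\theta)$, encode how much of $L_2 L_3 L_4$ is peeled off by the pointwise bound, which factor or sub--product is fed into the large value estimate, and which power--moment of it is used; the pointwise bounds on the prime--factored factors supply the negative power of $\log x$ in all cases. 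With the right allocation the surplus power of $x$ is absorbed and the mean value bound follows.

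Finally, the special sum $\sum_{p_j\sim x^{\alpha_j}}S(\mathcal{A}_{p_1 p_2 p_3},p_3)$ equals $\sum_{p_1 p_2 p_3 l_4\in\mathcal{A}}1$ with $l_4$ subject to $(l_4,P(p_3))=1$; as $p_3\gg\exp((\log x)^{4/5})$, such an $l_4$ has boundedly many prime factors, all $\gg\exp((\log x)^{4/5})$, so the polynomial over $l_4$ is itself prime--factored, and the dependence of its range on $p_3$ is removed by a truncated Perron formula exactly as in [\cite{BakerHarmanRivat}, Lemma 11]; applying the four--variable case with the indices relabelled so that one factor of exponent $\geq 1-\theta$ plays the role of $L_1$ then gives the conclusion. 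The main obstacle I anticipate is the second step: determining, region by region, the precise level--set decomposition and the exact mean-- and large--value estimates needed for the surplus power of $x$ to cancel, and verifying that the constants $\tfrac12,\tfrac13,\tfrac14,\tfrac27,\tfrac{2}{11},\tfrac25,\tfrac47,\tfrac{14}{13}$ are indeed the sharp thresholds; it is the large value theorems, not the mean value theorem, that are delicate here, and the four regions adjoin along boundaries where the competing estimates are in balance.
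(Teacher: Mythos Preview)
Your general framework is correct and matches the paper's: reduce to a mean--value bound via [\cite{BHP}, Lemma~6], then control the integral by splitting according to the size of the factors and invoking large--value estimates for prime--factored Dirichlet polynomials. The paper's proof is in fact just a citation --- cases one, three and four are verbatim [\cite{HarmanBOOK}, Lemma~7.22], and case two is that lemma with a single parameter change.

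Two points where your outline drifts from what actually makes the argument work. First, the decisive technical input is not [\cite{HarmanBOOK}, Lemmas~7.9--7.10] (those are zeta--moment estimates underpinning Lemma~\ref{l22}, not this lemma) but rather [\cite{HarmanBOOK}, Lemma~7.21], a packaged large--value estimate for products of prime--factored polynomials carrying a free integer parameter~$h$; the thresholds $\tfrac12,\tfrac13,\tfrac14$ on $\alpha_2,\alpha_3$ and $\tfrac27,\tfrac25$ on $\alpha_4$ drop out of specific choices of $h$ there, not from ad hoc combinations of Huxley and Montgomery. Second --- and this is the one substantive novelty you would not recover from the outline alone --- the second set of conditions, with $\alpha_3\geq\tfrac{1-\theta}{3}$ and $\alpha_4\geq\tfrac{2(1-\theta)}{11}$, requires taking $h=5$ in Lemma~7.21 rather than the $h=3$ used in Harman's book; the value $\tfrac{2}{11}$ is exactly the threshold that $h=5$ produces. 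This modification already appeared in \cite{HarmanKumchevLewis} for $\theta=0.53$, so it is not new, but it is the one place where the proof is not a straight citation.

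Your final paragraph on removing the dependence between $l_4$ and $p_3$ via truncated Perron is correct and matches the paper's treatment (cf.\ the parenthetical in Lemma~\ref{l31}).
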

\begin{proof}
The proof is completely same as the proof of [\cite{HarmanBOOK}, Lemma 7.22] except for the second case. For the second case, we need to make a small modification by choosing $h = 5$ instead of $h = 3$ in [\cite{HarmanBOOK}, Lemma 7.21]. Note that this modification also occurred in \cite{HarmanKumchevLewis} with $\theta = 0.53$.
\end{proof}

\begin{lemma}\label{l33}
Let $L_{j}$ satisfies the conditions in Lemma~\ref{l32}. Then we can obtain an asymptotic formula for
$$
\sum_{\substack{l_1 l_2 l_3 l_4 \in \mathcal{A} \\ l_j \sim L_j \\ 1 \leqslant j \leqslant 3 }} a_{l_1} b_{l_2} c_{l_3} d_{l_4} \quad \text{and thus for} \quad \sum_{\substack{p_{j} \sim x^{\alpha_{j}} \\ 1 \leqslant j \leqslant 3}} S\left(\mathcal{A}_{p_{1} p_{2} p_{3}}, p_{3}\right)
$$
if the following conditions hold:
\begin{align}
\nonumber & 1-\alpha_{1}-\alpha_{2}-\alpha_{3} \geqslant \frac{1}{g_4}(1-\theta), \\
\nonumber & \alpha_{2}\left(\frac{1}{4} h+\frac{1}{2} g_{2} b_{1}\right)+\alpha_{3}\left(-\frac{1}{2} g_{3} c_{1}+\frac{1}{4} h-\frac{h k_{1}}{4 g_{1}}+\frac{1}{2} k_{2} b_{1}\right) >b_{1}(1-\theta), \\
\nonumber & \alpha_{1}\left(\frac{1}{4} h+\frac{1}{2} g_{1} a_{2}\right)+\alpha_{3}\left(-\frac{1}{2} g_{3} c_{2}+\frac{1}{4} h-\frac{h k_{2}}{4 g_{2}}+\frac{1}{2} k_{1} a_{2}\right) >a_{2}(1-\theta), \\
\nonumber & \alpha_{2}\left(\frac{1}{4} h+\frac{1}{2} g_{2} b_{3}\right)+\alpha_{3}\left(\frac{1}{2} g_{3} c_{3}+\frac{1}{4} h-\frac{h k_{1}}{4 g_{1}}+\frac{1}{2} k_{2} b_{3}\right) >\left(u-\frac{h}{2 g_{1}}\right)(1-\theta), \\
\nonumber & \alpha_{1}\left(\frac{1}{4} h+\frac{1}{2} g_{1} a_{4}\right)+\alpha_{3}\left(\frac{1}{2} g_{3} c_{4}+\frac{1}{4} h-\frac{h k_{2}}{4 g_{2}}+\frac{1}{2} k_{1} a_{4}\right) >\left(u-\frac{h}{2 g_{2}}\right)(1-\theta), \\
\nonumber & \alpha_{1}\left(\frac{1}{4} h+\frac{1}{2} g_{1} a_{5}\right)+\alpha_{2}\left(\frac{1}{4} h+\frac{1}{2} g_{2} b_{5}\right)+\alpha_{3}\left(-\frac{1}{2} g_{3} c_{5}+\frac{1}{4} h+\frac{1}{2} k_{1} a_{5}+\frac{1}{2} k_{2} b_{5}\right) >\left(a_{5}+b_{5}\right)(1-\theta), \\
\nonumber & \alpha_{1}\left(\frac{1}{4} h+\frac{1}{2} g_{1} a_{6}\right)+\alpha_{2}\left(\frac{1}{4} h+\frac{1}{2} g_{2} b_{6}\right)+\alpha_{3}\left(\frac{1}{2} g_{3} c_{6}+\frac{1}{4} h+\frac{1}{2} k_{1} a_{6}+\frac{1}{2} k_{2} b_{6}\right) > u(1-\theta), \\
\nonumber & \alpha_{3}\left(\frac{1}{2} g_{3}\left(u-\frac{h}{2 g_{1}}-\frac{h}{2 g_{2}}\right)+\frac{1}{4} h v\right) > \left(u-\frac{h}{2 g_{1}}-\frac{h}{2 g_{2}}\right)(1-\theta),
\end{align}
where $h=1$, $g_1 =1$, $g_2 =2$, $g_3 =3$, $g_4 =d$, $d=4\text{ or }5$, $k_1 =k_2 =0$, $u=1-\frac{1}{2d}$, $v=1$,
\begin{align}
\nonumber & \left(b_{1}, c_{1}\right)=\left(\frac{1}{3}-\frac{1}{2 d}, \frac{1}{6}\right), \quad \left(a_{2}, c_{2}\right)=\left(\frac{7}{12}-\frac{1}{2 d}, \frac{1}{6}\right), \\
\nonumber & \left(b_{3}, c_{3}\right)= \left(\frac{1}{3}-\frac{1}{2 d}, \frac{1}{6}\right) \text{ or } \left(\frac{1}{4}, \frac{1}{4}-\frac{1}{2 d}\right), \\
\nonumber & \left(a_{4}, c_{4}\right)= \left(\frac{1}{2}, \frac{1}{4}-\frac{1}{2 d}\right) \text{ or } \left(\frac{7}{12}-\frac{1}{2 d}, \frac{1}{6}\right), \\
\nonumber & \left(a_{5}, b_{5}, c_{5}\right)= \left(\frac{1}{2}, \frac{1}{3}-\frac{1}{2 d}, \frac{1}{6}\right) \text{ or } \left(\frac{7}{12}-\frac{1}{2 d}, \frac{1}{4}, \frac{1}{6}\right), \\
\nonumber & \left(a_{6}, b_{6}, c_{6}\right)= \left(\frac{1}{2}, \frac{1}{4}, \frac{1}{4}-\frac{1}{2 d}\right) \text{ or } \left(\frac{7}{12}-\frac{1}{2 d}, \frac{1}{4}, \frac{1}{6}\right) \text{ or } \left(\frac{1}{2}, \frac{1}{3}-\frac{1}{2 d}, \frac{1}{6}\right).
\end{align}
\end{lemma}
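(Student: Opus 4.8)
The plan is to re-run the Dirichlet polynomial argument that proves the general parametric lemma [\cite{HarmanBOOK}, Lemma 7.21] lying behind Lemma~\ref{l32}, keeping the auxiliary exponents $g_1,g_2,g_3,g_4,h,k_1,k_2$ and the weights $u,v,a_i,b_i,c_i$ free, and specialising only at the end to $h=1$, $(g_1,g_2,g_3,g_4)=(1,2,3,d)$, $k_1=k_2=0$, $u=1-\tfrac1{2d}$, $v=1$ with $d\in\{4,5\}$, together with the listed alternatives for the $(a_i,b_i,c_i)$. In this reading the statement is essentially [\cite{HarmanBOOK}, Lemma 7.21] with a particular admissible parameter choice, so what has to be done is (i) recall why that lemma holds and (ii) check the specialisation.

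For (i): using [\cite{BHP}, Lemma 6] one reduces the four-variable sum $\sum_{l_j\sim L_j}a_{l_1}b_{l_2}c_{l_3}d_{l_4}$ over $l_1l_2l_3l_4\in\mathcal{A}$ to the comparison of the mean value of the product polynomial $L_1L_2L_3L_4$ over $\mathcal{A}$ with that over $\mathcal{B}$, the dependencies among the variables being removed by a truncated Perron formula as in [\cite{BakerHarmanRivat}, Lemma 11] (cf.\ the remark after Lemma~\ref{l31}). The whole task then becomes a bound
$$
\int_{\mathcal{T}}\prod_{j=1}^{4}\bigl|L_j(\tfrac12+it)\bigr|\,dt\ll x^{1-\varepsilon},\qquad \mathcal{T}\subseteq\{\,t:\exp((\log x)^{1/3})<|t|<x^{1-\theta+\varepsilon}\,\},
$$
the complementary small-$|t|$ range producing the main term $\tfrac{y}{y_1}(1+o(1))S(\mathcal{B},\cdot)$; the first displayed hypothesis $1-\alpha_1-\alpha_2-\alpha_3\geq\tfrac1{g_4}(1-\theta)$ is exactly what makes $L_4(s)$ long enough for the prime-factored bound $|L_4(\tfrac12+it)|\ll L_4^{1/2}(\log x)^{-K}$ to be decisive.

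The integral is estimated by a cascade of Hölder splittings: one raises $L_1,L_2,L_3$ to powers governed by $g_1,g_2,g_3$, pairs the pieces against a Watt-type fourth-moment bound (via [\cite{BHP}, Lemma 2] — the source of the $\tfrac14 h$ terms) and against the prime-factored bound for $L_4$, in some cases after a role reversal in which a divisor of $l_1$ or $l_2$ is promoted to the long polynomial; the weights $a_i,b_i,c_i$ and the quantity $u$ bookkeep these arrangements. Each of the eight displayed inequalities then asserts that one such arrangement gives $\ll x^{1-\delta}$: the ones with right-hand sides $b_1(1-\theta)$ and $a_2(1-\theta)$ handle the cases where $\alpha_2$, respectively $\alpha_1$, is dominant; those with $(u-\tfrac{h}{2g_1})(1-\theta)$ and $(u-\tfrac{h}{2g_2})(1-\theta)$ the role-reversed versions; those with $(a_5+b_5)(1-\theta)$ and $u(1-\theta)$ the two-variable and the fully combined cases; and the last the residual case. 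The ``either/or'' alternatives for $(b_3,c_3),(a_4,c_4),(a_5,b_5,c_5),(a_6,b_6,c_6)$ just record that two (or three) admissible Hölder arrangements exist there and one keeps whichever makes the region largest. For (ii), substituting the specialised values turns all eight conditions into explicit linear inequalities in $(\alpha_1,\alpha_2,\alpha_3)$, to be checked for $d=4$ and $d=5$; the choice $d=5$ is the ``$h=5$'' modification of [\cite{HarmanBOOK}, Lemma 7.21] already used in the second case of Lemma~\ref{l32} and, with $\theta=0.53$, in \cite{HarmanKumchevLewis}.

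The main obstacle is bookkeeping rather than new analysis: one must match each of the eight conditions to a concrete application of the mean- and large-value theorems behind Lemma~\ref{l31}, verify that $k_1=k_2=0$ together with the listed weights is consistent with all internal constraints of [\cite{HarmanBOOK}, Lemma 7.21], and confirm that the finer splitting of $l_4$ forced by $g_4=d=5$ still respects the divisor-bound and prime-factored hypotheses, so that the conclusion is a genuine asymptotic formula and not merely an upper bound.
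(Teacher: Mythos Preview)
Your proposal is essentially correct in spirit: the lemma is indeed just a specialisation of the general parametric result in the Baker--Harman--Pintz/Harman framework, and your sketch of the underlying Dirichlet-polynomial argument (reduction via [\cite{BHP}, Lemma 6], H\"older splittings against Watt's bound, prime-factored savings on $L_4$) is broadly accurate. The paper, however, takes the one-line route: it simply observes that the statement is a special case of [\cite{BHP}, Lemma 18], with no re-derivation. So you are doing strictly more work than the paper does, re-proving the parametric lemma rather than citing it.

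One genuine slip to flag: you write that ``the choice $d=5$ is the `$h=5$' modification of [\cite{HarmanBOOK}, Lemma 7.21] already used in the second case of Lemma~\ref{l32}''. These are different parameters. In the present lemma $h=1$ is fixed throughout and $d=g_4\in\{4,5\}$ governs the size threshold for $L_4$; the $h=5$ in the proof of Lemma~\ref{l32} refers to a different exponent in Harman's Lemma 7.21 (replacing $h=3$), unrelated to the $d$ here. This does not break your argument, but the identification is wrong and should be removed. Also note that the paper's citation is to [\cite{BHP}, Lemma 18] rather than to [\cite{HarmanBOOK}, Lemma 7.21]; the two are closely related but not identical (indeed the paper remarks elsewhere that [\cite{BHP}, Lemma 18] and [\cite{HarmanBOOK}, Lemma 7.22] cover non-overlapping regions), so if you want to match the paper you should anchor the specialisation to the BHP source.
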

\begin{proof}
This is a special case of [\cite{BHP}, Lemma 18].
\end{proof}

The next two lemmas are our new arithmetic information, which can be seen as generalizations of [\cite{HarmanBOOK}, Lemma 7.22]. These can be used to estimate ``Type-II$_5$'' and ``Type-II$_6$'' sums mentioned in \cite{HarmanErato}. The proof is similar to the proof of first and second case of Lemma~\ref{l32}. One can generalize these to ``Type-II$_n$'' sums with $n \geqslant 7$, but the corresponding results will be very complicated and not very numerically significant since the contribution of those high-dimensional sums is already quite small.
\begin{lemma}\label{l34}
Let $L_{1} L_{2} L_{3} L_{4} L_{5}=x, L_{j}=x^{\alpha_{j}}, \alpha_{j} \geqslant \varepsilon$ and suppose that $L_{j}(s)$ is prime-factored for $j \geqslant 2$. If any of the following 9 conditions hold:
\begin{align}
\nonumber & \alpha_{1} \geqslant 1-\theta,\ \alpha_{2} \geqslant \frac{(1-\theta)}{2},\ \alpha_{3} \geqslant \frac{(1-\theta)}{3},\ \alpha_{4} \geqslant \frac{(1-\theta)}{7},\ 1-\alpha_{1}-\alpha_2 -\alpha_3 -\alpha_4 \geqslant \frac{2(1-\theta)}{83}; \\
\nonumber & \alpha_{1} \geqslant 1-\theta,\ \alpha_{2} \geqslant \frac{(1-\theta)}{2},\ \alpha_{3} \geqslant \frac{(1-\theta)}{3},\ \alpha_{4} \geqslant \frac{(1-\theta)}{8},\ 1-\alpha_{1}-\alpha_2 -\alpha_3 -\alpha_4 \geqslant \frac{2(1-\theta)}{47}; \\
\nonumber & \alpha_{1} \geqslant 1-\theta,\ \alpha_{2} \geqslant \frac{(1-\theta)}{2},\ \alpha_{3} \geqslant \frac{(1-\theta)}{3},\ \alpha_{4} \geqslant \frac{(1-\theta)}{9},\ 1-\alpha_{1}-\alpha_2 -\alpha_3 -\alpha_4 \geqslant \frac{2(1-\theta)}{35}; \\
\nonumber & \alpha_{1} \geqslant 1-\theta,\ \alpha_{2} \geqslant \frac{(1-\theta)}{2},\ \alpha_{3} \geqslant \frac{(1-\theta)}{3},\ \alpha_{4} \geqslant \frac{(1-\theta)}{10},\ 1-\alpha_{1}-\alpha_2 -\alpha_3 -\alpha_4 \geqslant \frac{2(1-\theta)}{29}; \\
\nonumber & \alpha_{1} \geqslant 1-\theta,\ \alpha_{2} \geqslant \frac{(1-\theta)}{2},\ \alpha_{3} \geqslant \frac{(1-\theta)}{3},\ \alpha_{4} \geqslant \frac{(1-\theta)}{12},\ 1-\alpha_{1}-\alpha_2 -\alpha_3 -\alpha_4 \geqslant \frac{2(1-\theta)}{23}; \\
\nonumber & \alpha_{1} \geqslant 1-\theta,\ \alpha_{2} \geqslant \frac{(1-\theta)}{2},\ \alpha_{3} \geqslant \frac{(1-\theta)}{4},\ \alpha_{4} \geqslant \frac{(1-\theta)}{5},\ 1-\alpha_{1}-\alpha_2 -\alpha_3 -\alpha_4 \geqslant \frac{2(1-\theta)}{39}; \\
\nonumber & \alpha_{1} \geqslant 1-\theta,\ \alpha_{2} \geqslant \frac{(1-\theta)}{2},\ \alpha_{3} \geqslant \frac{(1-\theta)}{4},\ \alpha_{4} \geqslant \frac{(1-\theta)}{6},\ 1-\alpha_{1}-\alpha_2 -\alpha_3 -\alpha_4 \geqslant \frac{2(1-\theta)}{23}; \\
\nonumber & \alpha_{1} \geqslant 1-\theta,\ \alpha_{2} \geqslant \frac{(1-\theta)}{2},\ \alpha_{3} \geqslant \frac{(1-\theta)}{4},\ \alpha_{4} \geqslant \frac{(1-\theta)}{8},\ 1-\alpha_{1}-\alpha_2 -\alpha_3 -\alpha_4 \geqslant \frac{2(1-\theta)}{15}; \\
\nonumber & \alpha_{1} \geqslant 1-\theta,\ \alpha_{2} \geqslant \frac{(1-\theta)}{2},\ \alpha_{3} \geqslant \frac{(1-\theta)}{5},\ \alpha_{4} \geqslant \frac{(1-\theta)}{5},\ 1-\alpha_{1}-\alpha_2 -\alpha_3 -\alpha_4 \geqslant \frac{2(1-\theta)}{19}.
\end{align}
Then we can obtain an asymptotic formula for
$$
\sum_{\substack{l_1 l_2 l_3 l_4 l_5 \in \mathcal{A} \\ l_j \sim L_j \\ 1 \leqslant j \leqslant 4 }} a_{l_1} b_{l_2} c_{l_3} d_{l_4} e_{l_5} \quad \text{and thus for} \quad \sum_{\substack{p_{j} \sim x^{\alpha_{j}} \\ 1 \leqslant j \leqslant 4}} S\left(\mathcal{A}_{p_{1} p_{2} p_{3} p_{4}}, p_{4}\right).
$$
\end{lemma}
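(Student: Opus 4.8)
The plan is to imitate Harman's proof of [\cite{HarmanBOOK}, Lemma 7.22] line by line, upgrading it from one general factor together with three prime--factored factors to one general factor together with four prime--factored factors, and to choose the internal parameters of [\cite{HarmanBOOK}, Lemma 7.21] separately in each of the nine cases. I would prove the two displayed assertions together: the same reduction used in the proof of [\cite{HarmanBOOK}, Lemma 7.22] --- a truncated Perron's formula as in [\cite{BakerHarmanRivat}, Lemma 11] to remove the dependence of the sifted variable on $p_1,p_2,p_3,p_4$, followed by an application of Buchstab's identity --- brings $S(\mathcal{A}_{p_1p_2p_3p_4}, p_4)$ into the shape of the first, general statement, the sifted variable $q \sim x^{\alpha_5}$ with least prime factor $\geq p_4$ being of prime--factored type.

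First I would carry out the routine preliminaries. After a dyadic dissection and using the prime--factored hypothesis on $L_2,\dots,L_5$, it suffices to obtain the asymptotic formula for a product $L_1(s)L_2(s)\cdots L_5(s)$ in which $L_1$ is an arbitrary divisor--bounded Dirichlet polynomial of length $x^{\alpha_1}$ with $\alpha_1 \geq 1-\theta$, and each $L_j$ with $j \geq 2$ is decomposable, so that its constituent prime variables may be moved around freely. Since $\alpha_1 \geq 1-\theta$, the factor $L_1$ can serve as the divisor $d$ in sums $S(\mathcal{A}_d,\cdot)$ with error terms admissible up to level $x^{1-\theta+\varepsilon}$; this level of distribution is exactly what is encoded in Lemma~\ref{l31}, which in turn rests on the generalized form of Watt's theorem [\cite{BHP}, Lemma 2].

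The core of the argument is the role--reversal, or ``switching'', device of [\cite{HarmanBOOK}, Lemma 7.21]. Starting from the bipartition $(L_1,\ L_2L_3L_4L_5)$, one repeatedly transfers a prime factor from one side to the other, the sequence of transfers being dictated by the integers $h, g_1, g_2, g_3,\dots$ of [\cite{HarmanBOOK}, Lemma 7.21] --- the same parameters as in Lemma~\ref{l33}, adapted here to five variables, with $h$ taken larger in the later cases exactly as $h=3$ was replaced by $h=5$ in the second case of Lemma~\ref{l32} and in \cite{HarmanKumchevLewis}. After finitely many transfers one reaches a three--fold decomposition $M(s)N(s)R(s)$ with $R$ prime--factored, $\alpha_M+\alpha_N+\alpha_R=1$, $|\alpha_M-\alpha_N| < 2\theta-1$ and $\alpha_M+\alpha_N > 1-\gamma(\theta)$, and then Lemma~\ref{l31} gives
$$
\sum_{\substack{l_1 l_2 l_3 l_4 l_5 \in \mathcal{A} \\ l_j \sim L_j,\ 1 \leq j \leq 5}} a_{l_1} b_{l_2} c_{l_3} d_{l_4} e_{l_5} = \frac{y}{y_1}\bigl(1+o(1)\bigr) \sum_{\substack{l_1 l_2 l_3 l_4 l_5 \in \mathcal{B} \\ l_j \sim L_j,\ 1 \leq j \leq 5}} a_{l_1} b_{l_2} c_{l_3} d_{l_4} e_{l_5},
$$
which is the asserted asymptotic formula.

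What remains is to check, in each of the nine cases, that the stated lower bounds on $\alpha_1,\alpha_2,\alpha_3,\alpha_4$ together with the threshold on $\alpha_5$ really do keep every intermediate exponent inside the windows required for the successive transfers and for the closing application of Lemma~\ref{l31}; this is exactly what fixes the denominators appearing in the hypotheses (for instance $2,3,7$ in the first case) and the values $\frac{2(1-\theta)}{83}, \frac{2(1-\theta)}{47},\dots$ in the last inequality of each case. I expect this bookkeeping --- rather than any conceptual point --- to be the main obstacle: with four prime--factored variables there are many orderings of the $\alpha_j$ to be treated, and at every transfer one must verify that the enlarged Type--I divisor stays below $x^{1-\theta+\varepsilon}$, equivalently that the smaller of the two principal halves stays above $1-\gamma(\theta)$ minus the larger half. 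The binding constraint is always the one on the smallest variable, and its denominator grows rapidly with the number of factors (from $7$ and $11$ for three prime--factored factors to $83, 47,\dots$ for four), which is why pushing the same method to six or more prime--factored factors, although possible, produces conditions too weak to be of numerical use.
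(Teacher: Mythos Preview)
Your overall plan matches the paper's: this lemma is a five--factor generalization of [\cite{HarmanBOOK}, Lemma 7.22], proved by invoking [\cite{HarmanBOOK}, Lemma 7.21] with parameters (including $h$) chosen case by case, exactly as in the first and second cases of Lemma~\ref{l32}. The paper's own proof is the one--line remark that the argument is ``similar to the proof of first and second case of Lemma~\ref{l32}'', so you have identified the right references and the right strategy.

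One point of your description is inaccurate and worth correcting before you write out the details. [\cite{HarmanBOOK}, Lemma 7.21] is not a role--reversal or ``switching'' device in which factors are transferred between two sides until one lands on a single three--fold product handled by Lemma~\ref{l31}. Rather, it is a mean--value lemma: one bounds the integral $\int |L_1 L_2 L_3 L_4 L_5|$ by H\"older's inequality with exponents encoded by the integers $h, g_1, g_2, \ldots$, reducing to higher moments $\int |L_j|^{2g_j}$ which are then controlled by the standard mean--value and large--value theorems for Dirichlet polynomials. The size conditions on the $\alpha_j$ (and in particular the denominators $7,8,\ldots$ and the thresholds $\tfrac{2(1-\theta)}{83},\ldots$) are precisely what is needed so that every term arising from this H\"older splitting is admissible. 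The asymptotic formula then follows from [\cite{BHP}, Lemma 6] once this integral is shown to be small. If you follow the actual argument of Lemma 7.22 with this correction in mind, everything goes through as you sketch.
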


\begin{lemma}\label{l35}
Let $L_{1} L_{2} L_{3} L_{4} L_{5} L_{6}=x, L_{j}=x^{\alpha_{j}}, \alpha_{j} \geqslant \varepsilon$ and suppose that $L_{j}(s)$ is prime-factored for $j \geqslant 2$. If any of the following 87 conditions hold:
\begin{align}
\nonumber & \alpha_{1} \geqslant 1-\theta,\ \alpha_{2} \geqslant \frac{(1-\theta)}{2},\ \alpha_{3} \geqslant \frac{(1-\theta)}{3},\ \alpha_{4} \geqslant \frac{(1-\theta)}{7},\ \alpha_{5} \geqslant \frac{(1-\theta)}{43},\ 1-\alpha_{1}-\alpha_2 -\alpha_3 -\alpha_4 -\alpha_5 \geqslant \frac{2(1-\theta)}{3611}; \\
\nonumber & \alpha_{1} \geqslant 1-\theta,\ \alpha_{2} \geqslant \frac{(1-\theta)}{2},\ \alpha_{3} \geqslant \frac{(1-\theta)}{3},\ \alpha_{4} \geqslant \frac{(1-\theta)}{7},\ \alpha_{5} \geqslant \frac{(1-\theta)}{44},\ 1-\alpha_{1}-\alpha_2 -\alpha_3 -\alpha_4 -\alpha_5 \geqslant \frac{2(1-\theta)}{1847}; \\
\nonumber & \alpha_{1} \geqslant 1-\theta,\ \alpha_{2} \geqslant \frac{(1-\theta)}{2},\ \alpha_{3} \geqslant \frac{(1-\theta)}{3},\ \alpha_{4} \geqslant \frac{(1-\theta)}{7},\ \alpha_{5} \geqslant \frac{(1-\theta)}{45},\ 1-\alpha_{1}-\alpha_2 -\alpha_3 -\alpha_4 -\alpha_5 \geqslant \frac{2(1-\theta)}{1259}; \\
\nonumber & \alpha_{1} \geqslant 1-\theta,\ \alpha_{2} \geqslant \frac{(1-\theta)}{2},\ \alpha_{3} \geqslant \frac{(1-\theta)}{3},\ \alpha_{4} \geqslant \frac{(1-\theta)}{7},\ \alpha_{5} \geqslant \frac{(1-\theta)}{46},\ 1-\alpha_{1}-\alpha_2 -\alpha_3 -\alpha_4 -\alpha_5 \geqslant \frac{2(1-\theta)}{965}; \\
\nonumber & \alpha_{1} \geqslant 1-\theta,\ \alpha_{2} \geqslant \frac{(1-\theta)}{2},\ \alpha_{3} \geqslant \frac{(1-\theta)}{3},\ \alpha_{4} \geqslant \frac{(1-\theta)}{7},\ \alpha_{5} \geqslant \frac{(1-\theta)}{48},\ 1-\alpha_{1}-\alpha_2 -\alpha_3 -\alpha_4 -\alpha_5 \geqslant \frac{2(1-\theta)}{671}; \\
\nonumber & \alpha_{1} \geqslant 1-\theta,\ \alpha_{2} \geqslant \frac{(1-\theta)}{2},\ \alpha_{3} \geqslant \frac{(1-\theta)}{3},\ \alpha_{4} \geqslant \frac{(1-\theta)}{7},\ \alpha_{5} \geqslant \frac{(1-\theta)}{49},\ 1-\alpha_{1}-\alpha_2 -\alpha_3 -\alpha_4 -\alpha_5 \geqslant \frac{2(1-\theta)}{587}; \\
\nonumber & \alpha_{1} \geqslant 1-\theta,\ \alpha_{2} \geqslant \frac{(1-\theta)}{2},\ \alpha_{3} \geqslant \frac{(1-\theta)}{3},\ \alpha_{4} \geqslant \frac{(1-\theta)}{7},\ \alpha_{5} \geqslant \frac{(1-\theta)}{51},\ 1-\alpha_{1}-\alpha_2 -\alpha_3 -\alpha_4 -\alpha_5 \geqslant \frac{2(1-\theta)}{475}; \\
\nonumber & \alpha_{1} \geqslant 1-\theta,\ \alpha_{2} \geqslant \frac{(1-\theta)}{2},\ \alpha_{3} \geqslant \frac{(1-\theta)}{3},\ \alpha_{4} \geqslant \frac{(1-\theta)}{7},\ \alpha_{5} \geqslant \frac{(1-\theta)}{54},\ 1-\alpha_{1}-\alpha_2 -\alpha_3 -\alpha_4 -\alpha_5 \geqslant \frac{2(1-\theta)}{377}; \\
\nonumber & \alpha_{1} \geqslant 1-\theta,\ \alpha_{2} \geqslant \frac{(1-\theta)}{2},\ \alpha_{3} \geqslant \frac{(1-\theta)}{3},\ \alpha_{4} \geqslant \frac{(1-\theta)}{7},\ \alpha_{5} \geqslant \frac{(1-\theta)}{56},\ 1-\alpha_{1}-\alpha_2 -\alpha_3 -\alpha_4 -\alpha_5 \geqslant \frac{2(1-\theta)}{335}; \\
\nonumber & \alpha_{1} \geqslant 1-\theta,\ \alpha_{2} \geqslant \frac{(1-\theta)}{2},\ \alpha_{3} \geqslant \frac{(1-\theta)}{3},\ \alpha_{4} \geqslant \frac{(1-\theta)}{7},\ \alpha_{5} \geqslant \frac{(1-\theta)}{60},\ 1-\alpha_{1}-\alpha_2 -\alpha_3 -\alpha_4 -\alpha_5 \geqslant \frac{2(1-\theta)}{279}; \\
\nonumber & \alpha_{1} \geqslant 1-\theta,\ \alpha_{2} \geqslant \frac{(1-\theta)}{2},\ \alpha_{3} \geqslant \frac{(1-\theta)}{3},\ \alpha_{4} \geqslant \frac{(1-\theta)}{7},\ \alpha_{5} \geqslant \frac{(1-\theta)}{63},\ 1-\alpha_{1}-\alpha_2 -\alpha_3 -\alpha_4 -\alpha_5 \geqslant \frac{2(1-\theta)}{251}; \\
\nonumber & \alpha_{1} \geqslant 1-\theta,\ \alpha_{2} \geqslant \frac{(1-\theta)}{2},\ \alpha_{3} \geqslant \frac{(1-\theta)}{3},\ \alpha_{4} \geqslant \frac{(1-\theta)}{7},\ \alpha_{5} \geqslant \frac{(1-\theta)}{70},\ 1-\alpha_{1}-\alpha_2 -\alpha_3 -\alpha_4 -\alpha_5 \geqslant \frac{2(1-\theta)}{209}; \\
\nonumber & \alpha_{1} \geqslant 1-\theta,\ \alpha_{2} \geqslant \frac{(1-\theta)}{2},\ \alpha_{3} \geqslant \frac{(1-\theta)}{3},\ \alpha_{4} \geqslant \frac{(1-\theta)}{7},\ \alpha_{5} \geqslant \frac{(1-\theta)}{78},\ 1-\alpha_{1}-\alpha_2 -\alpha_3 -\alpha_4 -\alpha_5 \geqslant \frac{2(1-\theta)}{181}; \\
\nonumber & \alpha_{1} \geqslant 1-\theta,\ \alpha_{2} \geqslant \frac{(1-\theta)}{2},\ \alpha_{3} \geqslant \frac{(1-\theta)}{3},\ \alpha_{4} \geqslant \frac{(1-\theta)}{7},\ \alpha_{5} \geqslant \frac{(1-\theta)}{84},\ 1-\alpha_{1}-\alpha_2 -\alpha_3 -\alpha_4 -\alpha_5 \geqslant \frac{2(1-\theta)}{167}; \\
\nonumber & \alpha_{1} \geqslant 1-\theta,\ \alpha_{2} \geqslant \frac{(1-\theta)}{2},\ \alpha_{3} \geqslant \frac{(1-\theta)}{3},\ \alpha_{4} \geqslant \frac{(1-\theta)}{8},\ \alpha_{5} \geqslant \frac{(1-\theta)}{25},\ 1-\alpha_{1}-\alpha_2 -\alpha_3 -\alpha_4 -\alpha_5 \geqslant \frac{2(1-\theta)}{1199}; \\
\nonumber & \alpha_{1} \geqslant 1-\theta,\ \alpha_{2} \geqslant \frac{(1-\theta)}{2},\ \alpha_{3} \geqslant \frac{(1-\theta)}{3},\ \alpha_{4} \geqslant \frac{(1-\theta)}{8},\ \alpha_{5} \geqslant \frac{(1-\theta)}{26},\ 1-\alpha_{1}-\alpha_2 -\alpha_3 -\alpha_4 -\alpha_5 \geqslant \frac{2(1-\theta)}{623}; \\
\nonumber & \alpha_{1} \geqslant 1-\theta,\ \alpha_{2} \geqslant \frac{(1-\theta)}{2},\ \alpha_{3} \geqslant \frac{(1-\theta)}{3},\ \alpha_{4} \geqslant \frac{(1-\theta)}{8},\ \alpha_{5} \geqslant \frac{(1-\theta)}{27},\ 1-\alpha_{1}-\alpha_2 -\alpha_3 -\alpha_4 -\alpha_5 \geqslant \frac{2(1-\theta)}{431}; \\
\nonumber & \alpha_{1} \geqslant 1-\theta,\ \alpha_{2} \geqslant \frac{(1-\theta)}{2},\ \alpha_{3} \geqslant \frac{(1-\theta)}{3},\ \alpha_{4} \geqslant \frac{(1-\theta)}{8},\ \alpha_{5} \geqslant \frac{(1-\theta)}{28},\ 1-\alpha_{1}-\alpha_2 -\alpha_3 -\alpha_4 -\alpha_5 \geqslant \frac{2(1-\theta)}{335}; \\
\nonumber & \alpha_{1} \geqslant 1-\theta,\ \alpha_{2} \geqslant \frac{(1-\theta)}{2},\ \alpha_{3} \geqslant \frac{(1-\theta)}{3},\ \alpha_{4} \geqslant \frac{(1-\theta)}{8},\ \alpha_{5} \geqslant \frac{(1-\theta)}{30},\ 1-\alpha_{1}-\alpha_2 -\alpha_3 -\alpha_4 -\alpha_5 \geqslant \frac{2(1-\theta)}{239}; \\
\nonumber & \alpha_{1} \geqslant 1-\theta,\ \alpha_{2} \geqslant \frac{(1-\theta)}{2},\ \alpha_{3} \geqslant \frac{(1-\theta)}{3},\ \alpha_{4} \geqslant \frac{(1-\theta)}{8},\ \alpha_{5} \geqslant \frac{(1-\theta)}{32},\ 1-\alpha_{1}-\alpha_2 -\alpha_3 -\alpha_4 -\alpha_5 \geqslant \frac{2(1-\theta)}{191}; \\
\nonumber & \alpha_{1} \geqslant 1-\theta,\ \alpha_{2} \geqslant \frac{(1-\theta)}{2},\ \alpha_{3} \geqslant \frac{(1-\theta)}{3},\ \alpha_{4} \geqslant \frac{(1-\theta)}{8},\ \alpha_{5} \geqslant \frac{(1-\theta)}{33},\ 1-\alpha_{1}-\alpha_2 -\alpha_3 -\alpha_4 -\alpha_5 \geqslant \frac{2(1-\theta)}{175}; \\
\nonumber & \alpha_{1} \geqslant 1-\theta,\ \alpha_{2} \geqslant \frac{(1-\theta)}{2},\ \alpha_{3} \geqslant \frac{(1-\theta)}{3},\ \alpha_{4} \geqslant \frac{(1-\theta)}{8},\ \alpha_{5} \geqslant \frac{(1-\theta)}{36},\ 1-\alpha_{1}-\alpha_2 -\alpha_3 -\alpha_4 -\alpha_5 \geqslant \frac{2(1-\theta)}{143}; \\
\nonumber & \alpha_{1} \geqslant 1-\theta,\ \alpha_{2} \geqslant \frac{(1-\theta)}{2},\ \alpha_{3} \geqslant \frac{(1-\theta)}{3},\ \alpha_{4} \geqslant \frac{(1-\theta)}{8},\ \alpha_{5} \geqslant \frac{(1-\theta)}{40},\ 1-\alpha_{1}-\alpha_2 -\alpha_3 -\alpha_4 -\alpha_5 \geqslant \frac{2(1-\theta)}{119}; \\
\nonumber & \alpha_{1} \geqslant 1-\theta,\ \alpha_{2} \geqslant \frac{(1-\theta)}{2},\ \alpha_{3} \geqslant \frac{(1-\theta)}{3},\ \alpha_{4} \geqslant \frac{(1-\theta)}{8},\ \alpha_{5} \geqslant \frac{(1-\theta)}{42},\ 1-\alpha_{1}-\alpha_2 -\alpha_3 -\alpha_4 -\alpha_5 \geqslant \frac{2(1-\theta)}{111}; \\
\nonumber & \alpha_{1} \geqslant 1-\theta,\ \alpha_{2} \geqslant \frac{(1-\theta)}{2},\ \alpha_{3} \geqslant \frac{(1-\theta)}{3},\ \alpha_{4} \geqslant \frac{(1-\theta)}{8},\ \alpha_{5} \geqslant \frac{(1-\theta)}{48},\ 1-\alpha_{1}-\alpha_2 -\alpha_3 -\alpha_4 -\alpha_5 \geqslant \frac{2(1-\theta)}{95}; \\
\nonumber & \alpha_{1} \geqslant 1-\theta,\ \alpha_{2} \geqslant \frac{(1-\theta)}{2},\ \alpha_{3} \geqslant \frac{(1-\theta)}{3},\ \alpha_{4} \geqslant \frac{(1-\theta)}{9},\ \alpha_{5} \geqslant \frac{(1-\theta)}{19},\ 1-\alpha_{1}-\alpha_2 -\alpha_3 -\alpha_4 -\alpha_5 \geqslant \frac{2(1-\theta)}{683}; \\
\nonumber & \alpha_{1} \geqslant 1-\theta,\ \alpha_{2} \geqslant \frac{(1-\theta)}{2},\ \alpha_{3} \geqslant \frac{(1-\theta)}{3},\ \alpha_{4} \geqslant \frac{(1-\theta)}{9},\ \alpha_{5} \geqslant \frac{(1-\theta)}{20},\ 1-\alpha_{1}-\alpha_2 -\alpha_3 -\alpha_4 -\alpha_5 \geqslant \frac{2(1-\theta)}{359}; \\
\nonumber & \alpha_{1} \geqslant 1-\theta,\ \alpha_{2} \geqslant \frac{(1-\theta)}{2},\ \alpha_{3} \geqslant \frac{(1-\theta)}{3},\ \alpha_{4} \geqslant \frac{(1-\theta)}{9},\ \alpha_{5} \geqslant \frac{(1-\theta)}{21},\ 1-\alpha_{1}-\alpha_2 -\alpha_3 -\alpha_4 -\alpha_5 \geqslant \frac{2(1-\theta)}{251}; \\
\nonumber & \alpha_{1} \geqslant 1-\theta,\ \alpha_{2} \geqslant \frac{(1-\theta)}{2},\ \alpha_{3} \geqslant \frac{(1-\theta)}{3},\ \alpha_{4} \geqslant \frac{(1-\theta)}{9},\ \alpha_{5} \geqslant \frac{(1-\theta)}{22},\ 1-\alpha_{1}-\alpha_2 -\alpha_3 -\alpha_4 -\alpha_5 \geqslant \frac{2(1-\theta)}{197}; \\
\nonumber & \alpha_{1} \geqslant 1-\theta,\ \alpha_{2} \geqslant \frac{(1-\theta)}{2},\ \alpha_{3} \geqslant \frac{(1-\theta)}{3},\ \alpha_{4} \geqslant \frac{(1-\theta)}{9},\ \alpha_{5} \geqslant \frac{(1-\theta)}{24},\ 1-\alpha_{1}-\alpha_2 -\alpha_3 -\alpha_4 -\alpha_5 \geqslant \frac{2(1-\theta)}{143}; \\
\nonumber & \alpha_{1} \geqslant 1-\theta,\ \alpha_{2} \geqslant \frac{(1-\theta)}{2},\ \alpha_{3} \geqslant \frac{(1-\theta)}{3},\ \alpha_{4} \geqslant \frac{(1-\theta)}{9},\ \alpha_{5} \geqslant \frac{(1-\theta)}{27},\ 1-\alpha_{1}-\alpha_2 -\alpha_3 -\alpha_4 -\alpha_5 \geqslant \frac{2(1-\theta)}{107}; \\
\nonumber & \alpha_{1} \geqslant 1-\theta,\ \alpha_{2} \geqslant \frac{(1-\theta)}{2},\ \alpha_{3} \geqslant \frac{(1-\theta)}{3},\ \alpha_{4} \geqslant \frac{(1-\theta)}{9},\ \alpha_{5} \geqslant \frac{(1-\theta)}{30},\ 1-\alpha_{1}-\alpha_2 -\alpha_3 -\alpha_4 -\alpha_5 \geqslant \frac{2(1-\theta)}{89}; \\
\nonumber & \alpha_{1} \geqslant 1-\theta,\ \alpha_{2} \geqslant \frac{(1-\theta)}{2},\ \alpha_{3} \geqslant \frac{(1-\theta)}{3},\ \alpha_{4} \geqslant \frac{(1-\theta)}{9},\ \alpha_{5} \geqslant \frac{(1-\theta)}{36},\ 1-\alpha_{1}-\alpha_2 -\alpha_3 -\alpha_4 -\alpha_5 \geqslant \frac{2(1-\theta)}{71}; \\
\nonumber & \alpha_{1} \geqslant 1-\theta,\ \alpha_{2} \geqslant \frac{(1-\theta)}{2},\ \alpha_{3} \geqslant \frac{(1-\theta)}{3},\ \alpha_{4} \geqslant \frac{(1-\theta)}{10},\ \alpha_{5} \geqslant \frac{(1-\theta)}{16},\ 1-\alpha_{1}-\alpha_2 -\alpha_3 -\alpha_4 -\alpha_5 \geqslant \frac{2(1-\theta)}{479}; \\
\nonumber & \alpha_{1} \geqslant 1-\theta,\ \alpha_{2} \geqslant \frac{(1-\theta)}{2},\ \alpha_{3} \geqslant \frac{(1-\theta)}{3},\ \alpha_{4} \geqslant \frac{(1-\theta)}{10},\ \alpha_{5} \geqslant \frac{(1-\theta)}{18},\ 1-\alpha_{1}-\alpha_2 -\alpha_3 -\alpha_4 -\alpha_5 \geqslant \frac{2(1-\theta)}{179}; \\
\nonumber & \alpha_{1} \geqslant 1-\theta,\ \alpha_{2} \geqslant \frac{(1-\theta)}{2},\ \alpha_{3} \geqslant \frac{(1-\theta)}{3},\ \alpha_{4} \geqslant \frac{(1-\theta)}{10},\ \alpha_{5} \geqslant \frac{(1-\theta)}{20},\ 1-\alpha_{1}-\alpha_2 -\alpha_3 -\alpha_4 -\alpha_5 \geqslant \frac{2(1-\theta)}{119}; \\
\nonumber & \alpha_{1} \geqslant 1-\theta,\ \alpha_{2} \geqslant \frac{(1-\theta)}{2},\ \alpha_{3} \geqslant \frac{(1-\theta)}{3},\ \alpha_{4} \geqslant \frac{(1-\theta)}{10},\ \alpha_{5} \geqslant \frac{(1-\theta)}{24},\ 1-\alpha_{1}-\alpha_2 -\alpha_3 -\alpha_4 -\alpha_5 \geqslant \frac{2(1-\theta)}{79}; \\
\nonumber & \alpha_{1} \geqslant 1-\theta,\ \alpha_{2} \geqslant \frac{(1-\theta)}{2},\ \alpha_{3} \geqslant \frac{(1-\theta)}{3},\ \alpha_{4} \geqslant \frac{(1-\theta)}{10},\ \alpha_{5} \geqslant \frac{(1-\theta)}{30},\ 1-\alpha_{1}-\alpha_2 -\alpha_3 -\alpha_4 -\alpha_5 \geqslant \frac{2(1-\theta)}{59}; \\
\nonumber & \alpha_{1} \geqslant 1-\theta,\ \alpha_{2} \geqslant \frac{(1-\theta)}{2},\ \alpha_{3} \geqslant \frac{(1-\theta)}{3},\ \alpha_{4} \geqslant \frac{(1-\theta)}{11},\ \alpha_{5} \geqslant \frac{(1-\theta)}{14},\ 1-\alpha_{1}-\alpha_2 -\alpha_3 -\alpha_4 -\alpha_5 \geqslant \frac{2(1-\theta)}{461}; \\
\nonumber & \alpha_{1} \geqslant 1-\theta,\ \alpha_{2} \geqslant \frac{(1-\theta)}{2},\ \alpha_{3} \geqslant \frac{(1-\theta)}{3},\ \alpha_{4} \geqslant \frac{(1-\theta)}{11},\ \alpha_{5} \geqslant \frac{(1-\theta)}{15},\ 1-\alpha_{1}-\alpha_2 -\alpha_3 -\alpha_4 -\alpha_5 \geqslant \frac{2(1-\theta)}{219}; \\
\nonumber & \alpha_{1} \geqslant 1-\theta,\ \alpha_{2} \geqslant \frac{(1-\theta)}{2},\ \alpha_{3} \geqslant \frac{(1-\theta)}{3},\ \alpha_{4} \geqslant \frac{(1-\theta)}{11},\ \alpha_{5} \geqslant \frac{(1-\theta)}{22},\ 1-\alpha_{1}-\alpha_2 -\alpha_3 -\alpha_4 -\alpha_5 \geqslant \frac{2(1-\theta)}{65}; \\
\nonumber & \alpha_{1} \geqslant 1-\theta,\ \alpha_{2} \geqslant \frac{(1-\theta)}{2},\ \alpha_{3} \geqslant \frac{(1-\theta)}{3},\ \alpha_{4} \geqslant \frac{(1-\theta)}{12},\ \alpha_{5} \geqslant \frac{(1-\theta)}{13},\ 1-\alpha_{1}-\alpha_2 -\alpha_3 -\alpha_4 -\alpha_5 \geqslant \frac{2(1-\theta)}{311}; \\
\nonumber & \alpha_{1} \geqslant 1-\theta,\ \alpha_{2} \geqslant \frac{(1-\theta)}{2},\ \alpha_{3} \geqslant \frac{(1-\theta)}{3},\ \alpha_{4} \geqslant \frac{(1-\theta)}{12},\ \alpha_{5} \geqslant \frac{(1-\theta)}{14},\ 1-\alpha_{1}-\alpha_2 -\alpha_3 -\alpha_4 -\alpha_5 \geqslant \frac{2(1-\theta)}{167}; \\
\nonumber & \alpha_{1} \geqslant 1-\theta,\ \alpha_{2} \geqslant \frac{(1-\theta)}{2},\ \alpha_{3} \geqslant \frac{(1-\theta)}{3},\ \alpha_{4} \geqslant \frac{(1-\theta)}{12},\ \alpha_{5} \geqslant \frac{(1-\theta)}{15},\ 1-\alpha_{1}-\alpha_2 -\alpha_3 -\alpha_4 -\alpha_5 \geqslant \frac{2(1-\theta)}{119}; \\
\nonumber & \alpha_{1} \geqslant 1-\theta,\ \alpha_{2} \geqslant \frac{(1-\theta)}{2},\ \alpha_{3} \geqslant \frac{(1-\theta)}{3},\ \alpha_{4} \geqslant \frac{(1-\theta)}{12},\ \alpha_{5} \geqslant \frac{(1-\theta)}{16},\ 1-\alpha_{1}-\alpha_2 -\alpha_3 -\alpha_4 -\alpha_5 \geqslant \frac{2(1-\theta)}{95}; \\
\nonumber & \alpha_{1} \geqslant 1-\theta,\ \alpha_{2} \geqslant \frac{(1-\theta)}{2},\ \alpha_{3} \geqslant \frac{(1-\theta)}{3},\ \alpha_{4} \geqslant \frac{(1-\theta)}{12},\ \alpha_{5} \geqslant \frac{(1-\theta)}{18},\ 1-\alpha_{1}-\alpha_2 -\alpha_3 -\alpha_4 -\alpha_5 \geqslant \frac{2(1-\theta)}{71}; \\
\nonumber & \alpha_{1} \geqslant 1-\theta,\ \alpha_{2} \geqslant \frac{(1-\theta)}{2},\ \alpha_{3} \geqslant \frac{(1-\theta)}{3},\ \alpha_{4} \geqslant \frac{(1-\theta)}{12},\ \alpha_{5} \geqslant \frac{(1-\theta)}{20},\ 1-\alpha_{1}-\alpha_2 -\alpha_3 -\alpha_4 -\alpha_5 \geqslant \frac{2(1-\theta)}{59}; \\
\nonumber & \alpha_{1} \geqslant 1-\theta,\ \alpha_{2} \geqslant \frac{(1-\theta)}{2},\ \alpha_{3} \geqslant \frac{(1-\theta)}{3},\ \alpha_{4} \geqslant \frac{(1-\theta)}{12},\ \alpha_{5} \geqslant \frac{(1-\theta)}{21},\ 1-\alpha_{1}-\alpha_2 -\alpha_3 -\alpha_4 -\alpha_5 \geqslant \frac{2(1-\theta)}{55}; \\
\nonumber & \alpha_{1} \geqslant 1-\theta,\ \alpha_{2} \geqslant \frac{(1-\theta)}{2},\ \alpha_{3} \geqslant \frac{(1-\theta)}{3},\ \alpha_{4} \geqslant \frac{(1-\theta)}{13},\ \alpha_{5} \geqslant \frac{(1-\theta)}{13},\ 1-\alpha_{1}-\alpha_2 -\alpha_3 -\alpha_4 -\alpha_5 \geqslant \frac{2(1-\theta)}{155}; \\
\nonumber & \alpha_{1} \geqslant 1-\theta,\ \alpha_{2} \geqslant \frac{(1-\theta)}{2},\ \alpha_{3} \geqslant \frac{(1-\theta)}{3},\ \alpha_{4} \geqslant \frac{(1-\theta)}{14},\ \alpha_{5} \geqslant \frac{(1-\theta)}{15},\ 1-\alpha_{1}-\alpha_2 -\alpha_3 -\alpha_4 -\alpha_5 \geqslant \frac{2(1-\theta)}{69}; \\
\nonumber & \alpha_{1} \geqslant 1-\theta,\ \alpha_{2} \geqslant \frac{(1-\theta)}{2},\ \alpha_{3} \geqslant \frac{(1-\theta)}{3},\ \alpha_{4} \geqslant \frac{(1-\theta)}{14},\ \alpha_{5} \geqslant \frac{(1-\theta)}{21},\ 1-\alpha_{1}-\alpha_2 -\alpha_3 -\alpha_4 -\alpha_5 \geqslant \frac{2(1-\theta)}{41}; \\
\nonumber & \alpha_{1} \geqslant 1-\theta,\ \alpha_{2} \geqslant \frac{(1-\theta)}{2},\ \alpha_{3} \geqslant \frac{(1-\theta)}{3},\ \alpha_{4} \geqslant \frac{(1-\theta)}{15},\ \alpha_{5} \geqslant \frac{(1-\theta)}{15},\ 1-\alpha_{1}-\alpha_2 -\alpha_3 -\alpha_4 -\alpha_5 \geqslant \frac{2(1-\theta)}{59}; \\
\nonumber & \alpha_{1} \geqslant 1-\theta,\ \alpha_{2} \geqslant \frac{(1-\theta)}{2},\ \alpha_{3} \geqslant \frac{(1-\theta)}{3},\ \alpha_{4} \geqslant \frac{(1-\theta)}{15},\ \alpha_{5} \geqslant \frac{(1-\theta)}{20},\ 1-\alpha_{1}-\alpha_2 -\alpha_3 -\alpha_4 -\alpha_5 \geqslant \frac{2(1-\theta)}{39}; \\
\nonumber & \alpha_{1} \geqslant 1-\theta,\ \alpha_{2} \geqslant \frac{(1-\theta)}{2},\ \alpha_{3} \geqslant \frac{(1-\theta)}{4},\ \alpha_{4} \geqslant \frac{(1-\theta)}{5},\ \alpha_{5} \geqslant \frac{(1-\theta)}{21},\ 1-\alpha_{1}-\alpha_2 -\alpha_3 -\alpha_4 -\alpha_5 \geqslant \frac{2(1-\theta)}{839}; \\
\nonumber & \alpha_{1} \geqslant 1-\theta,\ \alpha_{2} \geqslant \frac{(1-\theta)}{2},\ \alpha_{3} \geqslant \frac{(1-\theta)}{4},\ \alpha_{4} \geqslant \frac{(1-\theta)}{5},\ \alpha_{5} \geqslant \frac{(1-\theta)}{22},\ 1-\alpha_{1}-\alpha_2 -\alpha_3 -\alpha_4 -\alpha_5 \geqslant \frac{2(1-\theta)}{439}; \\
\nonumber & \alpha_{1} \geqslant 1-\theta,\ \alpha_{2} \geqslant \frac{(1-\theta)}{2},\ \alpha_{3} \geqslant \frac{(1-\theta)}{4},\ \alpha_{4} \geqslant \frac{(1-\theta)}{5},\ \alpha_{5} \geqslant \frac{(1-\theta)}{24},\ 1-\alpha_{1}-\alpha_2 -\alpha_3 -\alpha_4 -\alpha_5 \geqslant \frac{2(1-\theta)}{239}; \\
\nonumber & \alpha_{1} \geqslant 1-\theta,\ \alpha_{2} \geqslant \frac{(1-\theta)}{2},\ \alpha_{3} \geqslant \frac{(1-\theta)}{4},\ \alpha_{4} \geqslant \frac{(1-\theta)}{5},\ \alpha_{5} \geqslant \frac{(1-\theta)}{25},\ 1-\alpha_{1}-\alpha_2 -\alpha_3 -\alpha_4 -\alpha_5 \geqslant \frac{2(1-\theta)}{199}; \\
\nonumber & \alpha_{1} \geqslant 1-\theta,\ \alpha_{2} \geqslant \frac{(1-\theta)}{2},\ \alpha_{3} \geqslant \frac{(1-\theta)}{4},\ \alpha_{4} \geqslant \frac{(1-\theta)}{5},\ \alpha_{5} \geqslant \frac{(1-\theta)}{28},\ 1-\alpha_{1}-\alpha_2 -\alpha_3 -\alpha_4 -\alpha_5 \geqslant \frac{2(1-\theta)}{139}; \\
\nonumber & \alpha_{1} \geqslant 1-\theta,\ \alpha_{2} \geqslant \frac{(1-\theta)}{2},\ \alpha_{3} \geqslant \frac{(1-\theta)}{4},\ \alpha_{4} \geqslant \frac{(1-\theta)}{5},\ \alpha_{5} \geqslant \frac{(1-\theta)}{30},\ 1-\alpha_{1}-\alpha_2 -\alpha_3 -\alpha_4 -\alpha_5 \geqslant \frac{2(1-\theta)}{119}; \\
\nonumber & \alpha_{1} \geqslant 1-\theta,\ \alpha_{2} \geqslant \frac{(1-\theta)}{2},\ \alpha_{3} \geqslant \frac{(1-\theta)}{4},\ \alpha_{4} \geqslant \frac{(1-\theta)}{5},\ \alpha_{5} \geqslant \frac{(1-\theta)}{36},\ 1-\alpha_{1}-\alpha_2 -\alpha_3 -\alpha_4 -\alpha_5 \geqslant \frac{2(1-\theta)}{89}; \\
\nonumber & \alpha_{1} \geqslant 1-\theta,\ \alpha_{2} \geqslant \frac{(1-\theta)}{2},\ \alpha_{3} \geqslant \frac{(1-\theta)}{4},\ \alpha_{4} \geqslant \frac{(1-\theta)}{5},\ \alpha_{5} \geqslant \frac{(1-\theta)}{40},\ 1-\alpha_{1}-\alpha_2 -\alpha_3 -\alpha_4 -\alpha_5 \geqslant \frac{2(1-\theta)}{79}; \\
\nonumber & \alpha_{1} \geqslant 1-\theta,\ \alpha_{2} \geqslant \frac{(1-\theta)}{2},\ \alpha_{3} \geqslant \frac{(1-\theta)}{4},\ \alpha_{4} \geqslant \frac{(1-\theta)}{6},\ \alpha_{5} \geqslant \frac{(1-\theta)}{13},\ 1-\alpha_{1}-\alpha_2 -\alpha_3 -\alpha_4 -\alpha_5 \geqslant \frac{2(1-\theta)}{311}; \\
\nonumber & \alpha_{1} \geqslant 1-\theta,\ \alpha_{2} \geqslant \frac{(1-\theta)}{2},\ \alpha_{3} \geqslant \frac{(1-\theta)}{4},\ \alpha_{4} \geqslant \frac{(1-\theta)}{6},\ \alpha_{5} \geqslant \frac{(1-\theta)}{14},\ 1-\alpha_{1}-\alpha_2 -\alpha_3 -\alpha_4 -\alpha_5 \geqslant \frac{2(1-\theta)}{167}; \\
\nonumber & \alpha_{1} \geqslant 1-\theta,\ \alpha_{2} \geqslant \frac{(1-\theta)}{2},\ \alpha_{3} \geqslant \frac{(1-\theta)}{4},\ \alpha_{4} \geqslant \frac{(1-\theta)}{6},\ \alpha_{5} \geqslant \frac{(1-\theta)}{15},\ 1-\alpha_{1}-\alpha_2 -\alpha_3 -\alpha_4 -\alpha_5 \geqslant \frac{2(1-\theta)}{119}; \\
\nonumber & \alpha_{1} \geqslant 1-\theta,\ \alpha_{2} \geqslant \frac{(1-\theta)}{2},\ \alpha_{3} \geqslant \frac{(1-\theta)}{4},\ \alpha_{4} \geqslant \frac{(1-\theta)}{6},\ \alpha_{5} \geqslant \frac{(1-\theta)}{16},\ 1-\alpha_{1}-\alpha_2 -\alpha_3 -\alpha_4 -\alpha_5 \geqslant \frac{2(1-\theta)}{95}; \\
\nonumber & \alpha_{1} \geqslant 1-\theta,\ \alpha_{2} \geqslant \frac{(1-\theta)}{2},\ \alpha_{3} \geqslant \frac{(1-\theta)}{4},\ \alpha_{4} \geqslant \frac{(1-\theta)}{6},\ \alpha_{5} \geqslant \frac{(1-\theta)}{18},\ 1-\alpha_{1}-\alpha_2 -\alpha_3 -\alpha_4 -\alpha_5 \geqslant \frac{2(1-\theta)}{71}; \\
\nonumber & \alpha_{1} \geqslant 1-\theta,\ \alpha_{2} \geqslant \frac{(1-\theta)}{2},\ \alpha_{3} \geqslant \frac{(1-\theta)}{4},\ \alpha_{4} \geqslant \frac{(1-\theta)}{6},\ \alpha_{5} \geqslant \frac{(1-\theta)}{20},\ 1-\alpha_{1}-\alpha_2 -\alpha_3 -\alpha_4 -\alpha_5 \geqslant \frac{2(1-\theta)}{59}; \\
\nonumber & \alpha_{1} \geqslant 1-\theta,\ \alpha_{2} \geqslant \frac{(1-\theta)}{2},\ \alpha_{3} \geqslant \frac{(1-\theta)}{4},\ \alpha_{4} \geqslant \frac{(1-\theta)}{6},\ \alpha_{5} \geqslant \frac{(1-\theta)}{21},\ 1-\alpha_{1}-\alpha_2 -\alpha_3 -\alpha_4 -\alpha_5 \geqslant \frac{2(1-\theta)}{55}; \\
\nonumber & \alpha_{1} \geqslant 1-\theta,\ \alpha_{2} \geqslant \frac{(1-\theta)}{2},\ \alpha_{3} \geqslant \frac{(1-\theta)}{4},\ \alpha_{4} \geqslant \frac{(1-\theta)}{6},\ \alpha_{5} \geqslant \frac{(1-\theta)}{24},\ 1-\alpha_{1}-\alpha_2 -\alpha_3 -\alpha_4 -\alpha_5 \geqslant \frac{2(1-\theta)}{47}; \\
\nonumber & \alpha_{1} \geqslant 1-\theta,\ \alpha_{2} \geqslant \frac{(1-\theta)}{2},\ \alpha_{3} \geqslant \frac{(1-\theta)}{4},\ \alpha_{4} \geqslant \frac{(1-\theta)}{7},\ \alpha_{5} \geqslant \frac{(1-\theta)}{10},\ 1-\alpha_{1}-\alpha_2 -\alpha_3 -\alpha_4 -\alpha_5 \geqslant \frac{2(1-\theta)}{279}; \\
\nonumber & \alpha_{1} \geqslant 1-\theta,\ \alpha_{2} \geqslant \frac{(1-\theta)}{2},\ \alpha_{3} \geqslant \frac{(1-\theta)}{4},\ \alpha_{4} \geqslant \frac{(1-\theta)}{7},\ \alpha_{5} \geqslant \frac{(1-\theta)}{14},\ 1-\alpha_{1}-\alpha_2 -\alpha_3 -\alpha_4 -\alpha_5 \geqslant \frac{2(1-\theta)}{55}; \\
\nonumber & \alpha_{1} \geqslant 1-\theta,\ \alpha_{2} \geqslant \frac{(1-\theta)}{2},\ \alpha_{3} \geqslant \frac{(1-\theta)}{4},\ \alpha_{4} \geqslant \frac{(1-\theta)}{8},\ \alpha_{5} \geqslant \frac{(1-\theta)}{9},\ 1-\alpha_{1}-\alpha_2 -\alpha_3 -\alpha_4 -\alpha_5 \geqslant \frac{2(1-\theta)}{143}; \\
\nonumber & \alpha_{1} \geqslant 1-\theta,\ \alpha_{2} \geqslant \frac{(1-\theta)}{2},\ \alpha_{3} \geqslant \frac{(1-\theta)}{4},\ \alpha_{4} \geqslant \frac{(1-\theta)}{8},\ \alpha_{5} \geqslant \frac{(1-\theta)}{10},\ 1-\alpha_{1}-\alpha_2 -\alpha_3 -\alpha_4 -\alpha_5 \geqslant \frac{2(1-\theta)}{79}; \\
\nonumber & \alpha_{1} \geqslant 1-\theta,\ \alpha_{2} \geqslant \frac{(1-\theta)}{2},\ \alpha_{3} \geqslant \frac{(1-\theta)}{4},\ \alpha_{4} \geqslant \frac{(1-\theta)}{8},\ \alpha_{5} \geqslant \frac{(1-\theta)}{16},\ 1-\alpha_{1}-\alpha_2 -\alpha_3 -\alpha_4 -\alpha_5 \geqslant \frac{2(1-\theta)}{31}; \\
\nonumber & \alpha_{1} \geqslant 1-\theta,\ \alpha_{2} \geqslant \frac{(1-\theta)}{2},\ \alpha_{3} \geqslant \frac{(1-\theta)}{4},\ \alpha_{4} \geqslant \frac{(1-\theta)}{9},\ \alpha_{5} \geqslant \frac{(1-\theta)}{9},\ 1-\alpha_{1}-\alpha_2 -\alpha_3 -\alpha_4 -\alpha_5 \geqslant \frac{2(1-\theta)}{71}; \\
\nonumber & \alpha_{1} \geqslant 1-\theta,\ \alpha_{2} \geqslant \frac{(1-\theta)}{2},\ \alpha_{3} \geqslant \frac{(1-\theta)}{5},\ \alpha_{4} \geqslant \frac{(1-\theta)}{5},\ \alpha_{5} \geqslant \frac{(1-\theta)}{11},\ 1-\alpha_{1}-\alpha_2 -\alpha_3 -\alpha_4 -\alpha_5 \geqslant \frac{2(1-\theta)}{219}; \\
\nonumber & \alpha_{1} \geqslant 1-\theta,\ \alpha_{2} \geqslant \frac{(1-\theta)}{2},\ \alpha_{3} \geqslant \frac{(1-\theta)}{5},\ \alpha_{4} \geqslant \frac{(1-\theta)}{5},\ \alpha_{5} \geqslant \frac{(1-\theta)}{12},\ 1-\alpha_{1}-\alpha_2 -\alpha_3 -\alpha_4 -\alpha_5 \geqslant \frac{2(1-\theta)}{119}; \\
\nonumber & \alpha_{1} \geqslant 1-\theta,\ \alpha_{2} \geqslant \frac{(1-\theta)}{2},\ \alpha_{3} \geqslant \frac{(1-\theta)}{5},\ \alpha_{4} \geqslant \frac{(1-\theta)}{5},\ \alpha_{5} \geqslant \frac{(1-\theta)}{14},\ 1-\alpha_{1}-\alpha_2 -\alpha_3 -\alpha_4 -\alpha_5 \geqslant \frac{2(1-\theta)}{69}; \\
\nonumber & \alpha_{1} \geqslant 1-\theta,\ \alpha_{2} \geqslant \frac{(1-\theta)}{2},\ \alpha_{3} \geqslant \frac{(1-\theta)}{5},\ \alpha_{4} \geqslant \frac{(1-\theta)}{5},\ \alpha_{5} \geqslant \frac{(1-\theta)}{15},\ 1-\alpha_{1}-\alpha_2 -\alpha_3 -\alpha_4 -\alpha_5 \geqslant \frac{2(1-\theta)}{59}; \\
\nonumber & \alpha_{1} \geqslant 1-\theta,\ \alpha_{2} \geqslant \frac{(1-\theta)}{2},\ \alpha_{3} \geqslant \frac{(1-\theta)}{5},\ \alpha_{4} \geqslant \frac{(1-\theta)}{6},\ \alpha_{5} \geqslant \frac{(1-\theta)}{8},\ 1-\alpha_{1}-\alpha_2 -\alpha_3 -\alpha_4 -\alpha_5 \geqslant \frac{2(1-\theta)}{239}; \\
\nonumber & \alpha_{1} \geqslant 1-\theta,\ \alpha_{2} \geqslant \frac{(1-\theta)}{2},\ \alpha_{3} \geqslant \frac{(1-\theta)}{5},\ \alpha_{4} \geqslant \frac{(1-\theta)}{6},\ \alpha_{5} \geqslant \frac{(1-\theta)}{9},\ 1-\alpha_{1}-\alpha_2 -\alpha_3 -\alpha_4 -\alpha_5 \geqslant \frac{2(1-\theta)}{89}; \\
\nonumber & \alpha_{1} \geqslant 1-\theta,\ \alpha_{2} \geqslant \frac{(1-\theta)}{2},\ \alpha_{3} \geqslant \frac{(1-\theta)}{5},\ \alpha_{4} \geqslant \frac{(1-\theta)}{6},\ \alpha_{5} \geqslant \frac{(1-\theta)}{10},\ 1-\alpha_{1}-\alpha_2 -\alpha_3 -\alpha_4 -\alpha_5 \geqslant \frac{2(1-\theta)}{59}; \\
\nonumber & \alpha_{1} \geqslant 1-\theta,\ \alpha_{2} \geqslant \frac{(1-\theta)}{2},\ \alpha_{3} \geqslant \frac{(1-\theta)}{5},\ \alpha_{4} \geqslant \frac{(1-\theta)}{6},\ \alpha_{5} \geqslant \frac{(1-\theta)}{15},\ 1-\alpha_{1}-\alpha_2 -\alpha_3 -\alpha_4 -\alpha_5 \geqslant \frac{2(1-\theta)}{29}; \\
\nonumber & \alpha_{1} \geqslant 1-\theta,\ \alpha_{2} \geqslant \frac{(1-\theta)}{2},\ \alpha_{3} \geqslant \frac{(1-\theta)}{5},\ \alpha_{4} \geqslant \frac{(1-\theta)}{7},\ \alpha_{5} \geqslant \frac{(1-\theta)}{7},\ 1-\alpha_{1}-\alpha_2 -\alpha_3 -\alpha_4 -\alpha_5 \geqslant \frac{2(1-\theta)}{139}; \\
\nonumber & \alpha_{1} \geqslant 1-\theta,\ \alpha_{2} \geqslant \frac{(1-\theta)}{2},\ \alpha_{3} \geqslant \frac{(1-\theta)}{6},\ \alpha_{4} \geqslant \frac{(1-\theta)}{7},\ \alpha_{5} \geqslant \frac{(1-\theta)}{7},\ 1-\alpha_{1}-\alpha_2 -\alpha_3 -\alpha_4 -\alpha_5 \geqslant \frac{2(1-\theta)}{41}; \\
\nonumber & \alpha_{1} \geqslant 1-\theta,\ \alpha_{2} \geqslant \frac{(1-\theta)}{2},\ \alpha_{3} \geqslant \frac{(1-\theta)}{6},\ \alpha_{4} \geqslant \frac{(1-\theta)}{9},\ \alpha_{5} \geqslant \frac{(1-\theta)}{9},\ 1-\alpha_{1}-\alpha_2 -\alpha_3 -\alpha_4 -\alpha_5 \geqslant \frac{2(1-\theta)}{17}; \\
\nonumber & \alpha_{1} \geqslant 1-\theta,\ \alpha_{2} \geqslant \frac{(1-\theta)}{2},\ \alpha_{3} \geqslant \frac{(1-\theta)}{7},\ \alpha_{4} \geqslant \frac{(1-\theta)}{7},\ \alpha_{5} \geqslant \frac{(1-\theta)}{7},\ 1-\alpha_{1}-\alpha_2 -\alpha_3 -\alpha_4 -\alpha_5 \geqslant \frac{2(1-\theta)}{27}.
\end{align}
Then we can obtain an asymptotic formula for
$$
\sum_{\substack{l_1 l_2 l_3 l_4 l_5 l_6 \in \mathcal{A} \\ l_j \sim L_j \\ 1 \leqslant j \leqslant 5 }} a_{l_1} b_{l_2} c_{l_3} d_{l_4} e_{l_5} f_{l_6} \quad \text{and thus for} \quad \sum_{\substack{p_{j} \sim x^{\alpha_{j}} \\ 1 \leqslant j \leqslant 5}} S\left(\mathcal{A}_{p_{1} p_{2} p_{3} p_{4} p_{5}}, p_{5}\right).
$$
\end{lemma}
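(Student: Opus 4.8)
The plan is to run, two dimensions higher, the argument behind [\cite{HarmanBOOK}, Lemma~7.22] (reproduced in Lemma~\ref{l32}): the only analytic input needed is a Type--II estimate for a product of Dirichlet polynomials, and the long list of hypotheses serves solely to guarantee that such a product can always be organised into a usable shape (this is the ``Type--II$_{6}$'' estimate in the terminology of \cite{HarmanErato}). First I would write the left--hand sum as $\sum_{\mathbf{l}} a_{l_1}\cdots f_{l_6}\,\texttt{Boole}(l_1\cdots l_6 \in \mathcal{A})$ and, by a truncated Perron expansion — the same device cited via [\cite{BakerHarmanRivat}, Lemma~11] in Lemma~\ref{l31}, which incidentally removes any dependence among the variables — express the difference between this sum and $\frac{y}{y_1}$ times its $\mathcal{B}$--analogue as an integral over $\exp((\log x)^{1/3}) < |t| < x^{1-\theta+\varepsilon}$ of $L_1(\tfrac{1}{2}+it)\cdots L_6(\tfrac{1}{2}+it)$ against a bounded kernel. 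It then suffices to show this integral is $o$ of the trivial bound. The sieve form $\sum_{p_j} S(\mathcal{A}_{p_1\cdots p_5},p_5)$ reduces to this after expanding the inner sieve on the sixth variable into blocks that are again products of prime--factored polynomials in the stated ranges, exactly as in [\cite{HarmanBOOK}, Lemma~7.22].

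The heart of the matter is the regrouping step. For each admissible $(\alpha_1,\dots,\alpha_5)$ I would assemble the six factors into a Type--II configuration — a product of two Dirichlet polynomials of lengths $x^{\lambda}$ and $x^{1-\lambda}$ with $\lambda$ forced into a window on which [\cite{HarmanBOOK}, Lemma~7.21] (the output of Watt's mean value theorem [\cite{WattTheorem}] together with the mean and large value theorems for Dirichlet polynomials) yields the asymptotic formula. The slack comes from the fact that $L_2,\dots,L_5$ are single prime variables $p_j\sim x^{\alpha_j}$ and that $L_6$ (or the sifted variable in the special sum) has all prime factors above a fixed power of $x$: one can peel prime factors off these within essentially prescribed ranges, and the lower bounds $\alpha_2\geq\frac{1-\theta}{2}$, $\alpha_3\geq\frac{1-\theta}{3}$, $\alpha_4\geq\frac{1-\theta}{k_4}$, $\alpha_5\geq\frac{1-\theta}{k_5}$ are exactly the room required to carry out that peeling. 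Invoking [\cite{HarmanBOOK}, Lemma~7.21] with a parameter $h$, and iterating it as in the proof of Lemma~7.22 — with the same passage from $h=3$ to $h=5$ in the delicate subcases that is made in the second case of Lemma~\ref{l32} and in [\cite{HarmanKumchevLewis}] — steadily enlarges the attainable windows; the denominators $3611,1847,1259,\dots$ are the thresholds on $\alpha_6=1-\alpha_1-\cdots-\alpha_5$ below which, with $\alpha_1,\dots,\alpha_5$ at their lower limits, no $h$ reaches a usable window, and they emerge as fixed points of the resulting recursion, one such computation behind each line of the hypothesis list.

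The step I expect to be the main obstacle is precisely this combinatorial bookkeeping: one must verify, for \emph{every} line of the list, that the greedy regrouping genuinely lands in a valid window, and in particular that the extremal configuration — all of $\alpha_1,\dots,\alpha_5$ at their lower bounds and $\alpha_6$ at its — remains covered by some admissible $h$. This is a finite but lengthy case analysis, organised as a recursion that absorbs the peelable mass of $L_2,\dots,L_6$ into a growing block until it either enters the Type--II window or pushes $\alpha_6$ below that recursion's fixed point; proving that the first alternative always occurs under the stated inequalities is the content of the lemma. The remaining ingredients — that the mean and large value inputs of [\cite{HarmanBOOK}, Lemma~7.21] apply on the indicated $t$--range and that the error term beats the main term — are routine and identical to the four--variable case recorded with Lemma~\ref{l32}, so I would simply cite them. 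As in the remark after Lemma~\ref{l23} one could feed Heath--Brown's twelfth power moment \cite{HB12} into [\cite{HarmanBOOK}, Lemma~7.21] to widen the windows slightly and lower some denominators, but I would not pursue that here.
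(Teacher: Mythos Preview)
Your proposal is correct and matches the paper's own approach: the paper simply states, just before Lemmas~\ref{l34} and~\ref{l35}, that ``The proof is similar to the proof of first and second case of Lemma~\ref{l32}'' (i.e., run the [\cite{HarmanBOOK}, Lemma~7.22] argument, using [\cite{HarmanBOOK}, Lemma~7.21] with the $h=5$ modification where needed), and offers no further detail. Your write-up is a faithful expansion of exactly this sketch, correctly identifying the Perron reduction, the regrouping into a Type--II window via Lemma~7.21, and the case-by-case verification of the numerical thresholds as the combinatorial core.
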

\begin{proof}
We follow the steps in the proof of [\cite{HarmanBOOK}, Lemma 7.22]. Write $T_0 = \exp \left((\log x)^{1/3} \right)$ and $T = x^{1-\theta-\varepsilon/2}$. Let $B > 0$ and $n = 5 \text{ or } 6$. It suffices to show that
$$
\int_{T_0}^{T} \left|L_{1}\left(\frac{1}{2} + i t\right) \cdots L_{n}\left(\frac{1}{2} + i t\right)\right| d t \ll x^{\frac{1}{2}} (\log x)^{-K}.
$$
By Hölder's inequality we have
\begin{align}
\nonumber &\ \int_{T_0}^{T} \left|L_{1}\left(\frac{1}{2} + i t\right) \cdots L_{n}\left(\frac{1}{2} + i t\right)\right| d t \\
\nonumber \leqslant&\ \left( \int_{T_0}^{T} \left|L_{1}\left(\frac{1}{2} + i t\right) \right|^{2} d t \right)^{\frac{1}{2}} \left( \int_{T_0}^{T} \left|L_{2}\left(\frac{1}{2} + i t\right) \right|^{\delta_{2}} d t \right)^{\frac{1}{\delta_{2}}} \cdots \left( \int_{T_0}^{T} \left|L_{n}\left(\frac{1}{2} + i t\right) \right|^{\delta_{n}} d t \right)^{\frac{1}{\delta_{n}}},
\end{align}
where $\delta_{j}$ is even for all $2 \leqslant j \leqslant n - 1$ and
$$
\sum_{j = 2}^{n} \frac{1}{\delta_{j}} = \frac{1}{2}.
$$

Since $L_1 \gg T$ ($\alpha_1 \geqslant 1 - \theta$), by the mean value theorem for Dirichlet polynomials we know that
$$
\int_{T_0}^{T} \left|L_{1}\left(\frac{1}{2} + i t\right) \right|^{2} d t \ll x^{\alpha_{1}} (\log x)^{B}.
$$
Similarly, for $2 \leqslant j \leqslant n - 1$ we can also use the mean value theorem. If $L_j \gg T^{\frac{2}{\delta_j}}$ ($\alpha_j \geqslant \frac{2(1 - \theta)}{\delta_j}$), then
$$
\left( \int_{T_0}^{T} \left|L_{j}\left(\frac{1}{2} + i t\right) \right|^{\delta_j} d t \right)^{\frac{1}{\delta_j}} \ll x^{\frac{\alpha_j}{2}} (\log x)^{B}.
$$

For the remaining prime-factored Dirichlet polynomial $L_{n}(s)$, by [\cite{HarmanBOOK}, Lemma 7.21], we have
$$
\left( \int_{T_0}^{T} \left|L_{n}\left(\frac{1}{2} + i t\right) \right|^{\delta_n} d t \right)^{\frac{1}{\delta_n}} \ll x^{\frac{\alpha_n}{2}} (\log x)^{-K}
$$
under the conditions
$$
L_n \geqslant T^{\frac{1}{\delta_0}}, \qquad 1 \leqslant \delta_0 < B, \qquad \delta_n \geqslant 4 \delta_0 - 2 h + \varepsilon,
$$
where $h$ is an integer such that
$$
2 h - \varepsilon < \delta_n < 6 h - \varepsilon.
$$
We want to find the largest possible $\delta_0$. For $h \geqslant 1$, we can take
$$
\delta_n = 2 h + 2
$$
and thus
$$
2 h + 2 \geqslant 4 \delta_0 - 2 h + \varepsilon.
$$
Now we can take
$$
\nonumber \delta_0 = h + \frac{1}{2} - \varepsilon = \frac{\delta_n - 1}{2} - \varepsilon
$$
and all conditions are fulfilled.

Finally, we need to find $n$-tuples $(\delta_1 = 2, \delta_2, \ldots, \delta_n)$ such that
$$
\sum_{j = 1}^{n} \frac{1}{\delta_j} = 1.
$$
For each $n$-tuple that satisfies the above condition, we have a corresponding asymptotic formula region for ``Type-II$_n$'' sums determined by the following inequalities:
\begin{align}
\nonumber \alpha_1 \geqslant&\ 1 - \theta, \qquad \qquad \qquad \qquad \qquad \qquad \qquad \left(L_1 \gg T\right) \\
\nonumber \alpha_j \geqslant&\ \frac{2}{\delta_j}(1 - \theta) \quad \text{for all } 2 \leqslant j \leqslant n - 1, \qquad \left(L_j \gg T^{\frac{2}{\delta_j}}\right) \\
\nonumber \alpha_n \geqslant&\ \frac{2}{\delta_n - 1}(1 - \theta). \qquad \qquad \qquad \qquad \qquad \left(L_n \gg T^{\frac{1}{\delta_0}}\right)
\end{align}
By combining all satisfactory $n$-tuples for $n = 5$ and $n = 6$, Lemmas~\ref{l34} and \ref{l35} are proved.
\end{proof}
\begin{remark}
Here we shall explain why the Type-II information arises from Lemmas~\ref{l34}--\ref{l35} covers new regions out of previous information. Without loss of generality, we assume that $2 = \delta_1 \leqslant \delta_2 \leqslant \cdots \leqslant \delta_n$. Write $A_n = \alpha_1 + \cdots + \alpha_n$, we know that
$$
\frac{A_n}{1 - \theta} \geqslant \sum_{j = 1}^{n - 1} \frac{2}{\delta_j} + \frac{2}{\delta_n - 1} > \sum_{j = 1}^{n} \frac{2}{\delta_j} = 2.
$$
Since $\theta \geqslant 0.505$, we have $2(1 - \theta) \leqslant 0.99 < 1$, which means that we still have some satisfactory $n$-tuples $(\alpha_1, \ldots, \alpha_n)$ on $(0, 0.5)^n$, and the asymptotic region is not empty. In fact, the difference between $2$ and
$$
\sum_{j = 1}^{n - 1} \frac{2}{\delta_j} + \frac{2}{\delta_n - 1}
$$
is just
$$
\frac{2}{\delta_n - 1} - \frac{2}{\delta_n} = \frac{2}{\delta_n^2 - \delta_n}.
$$

Now, since the value of $\delta_n$ usually become larger as $n$ increases, $A_n$ will decrease and approach to the ``best possible'' value $2(1 - \theta)$ as $n$ increases. However, the asymptotic region becomes ``useless'' when $\alpha_n < \nu$. Hence considering ``Type-II$_n$'' information with $n \geqslant 7$ is not so worthwhile.
\end{remark}

\section{The final decomposition I: Lower Bound}
In this section, we ignore the presence of $\varepsilon$ for clarity. Let $\omega(u)$ denote the Buchstab function determined by the following differential-difference equation
\begin{align*}
\begin{cases}
\omega(u)=\frac{1}{u}, & \quad 1 \leqslant u \leqslant 2, \\
(u \omega(u))^{\prime}= \omega(u-1), & \quad u \geqslant 2 .
\end{cases}
\end{align*}
Moreover, we have the upper and lower bounds for $\omega(u)$:
\begin{align*}
\omega(u) \geqslant \omega_{0}(u) =
\begin{cases}
\frac{1}{u}, & \quad 1 \leqslant u < 2, \\
\frac{1+\log(u-1)}{u}, & \quad 2 \leqslant u < 3, \\
\frac{1+\log(u-1)}{u} + \frac{1}{u} \int_{2}^{u-1}\frac{\log(t-1)}{t} d t \geqslant 0.5607, & \quad 3 \leqslant u < 4, \\
0.5612, & \quad u \geqslant 4, \\
\end{cases}
\end{align*}
\begin{align*}
\omega(u) \leqslant \omega_{1}(u) =
\begin{cases}
\frac{1}{u}, & \quad 1 \leqslant u < 2, \\
\frac{1+\log(u-1)}{u}, & \quad 2 \leqslant u < 3, \\
\frac{1+\log(u-1)}{u} + \frac{1}{u} \int_{2}^{u-1}\frac{\log(t-1)}{t} d t \leqslant 0.5644, & \quad 3 \leqslant u < 4, \\
0.5617, & \quad u \geqslant 4. \\
\end{cases}
\end{align*}
We shall use $\omega_0(u)$ and $\omega_1(u)$ to give numerical bounds for some sieve functions discussed below. We shall also use the simple upper bound $\omega(u) \leqslant \max(\frac{1}{u}, 0.5672)$ (see Lemma 8(iii) of \cite{JiaPSV}) to estimate high-dimensional integrals.

By Prime Number Theorem with Vinogradov's error term and the inductive arguments in [\cite{HarmanBOOK}, Chapter A.2], we know that, for sufficiently large $z$,
\begin{equation}
S\left(\mathcal{B}, z\right) = \sum_{\substack{a \in \mathcal{B} \\ (a, P(z))=1}} 1 = (1+o(1)) \frac{y_1}{\log z} \omega\left(\frac{\log x}{\log z}\right),
\end{equation}
and we expect that the similar relation also holds for $S\left(\mathcal{A}, z\right)$:
\begin{equation}
S\left(\mathcal{A}, z\right) = \sum_{\substack{a \in \mathcal{A} \\ (a, P(z))=1}} 1 = (1+o(1)) \frac{y}{\log z} \omega\left(\frac{\log x}{\log z}\right).
\end{equation}
If (2) holds for $S\left(\mathcal{A}, z\right)$, then we can deduce (7) easily from (2) and (6). Otherwise we must drop this $S\left(\mathcal{A}, z\right)$. We define the \textit{loss} from this term by the size of corresponding $S\left(\mathcal{B}, z\right)$:
\begin{equation}
S\left(\mathcal{B}, z\right) = (\textit{loss}+o(1)) \frac{y_1}{\log x}.
\end{equation}
We note that for the lower bound problem, we can only drop positive parts and the total loss of the dropped parts must be less than $1$.

Fix $\theta =0.52$, $\nu_0 = \nu_{\min} = 2 \theta -1 = 0.04$ and let $p_{j}=x^{\alpha_{j}}$. By Buchstab's identity, we have
\begin{align}
\nonumber S\left(\mathcal{A}, x^{\frac{1}{2}}\right) =&\ S\left(\mathcal{A}, x^{\nu(0)}\right)-\sum_{\nu(0) \leqslant \alpha_{1}< \frac{1}{2}} S\left(\mathcal{A}_{p_{1}}, x^{\nu\left(\alpha_{1}\right)}\right) +\sum_{\substack{\nu(0) \leqslant \alpha_{1}<\frac{1}{2}\\
\nu\left(\alpha_{1}\right) \leqslant \alpha_{2}<\min \left(\alpha_{1},\frac{1}{2}\left(1-\alpha_{1}\right)\right)}} S\left(\mathcal{A}_{p_{1} p_{2}}, p_{2}\right)  \\
=&\ \sum_{1} - \sum_{2} + \sum_{3}.
\end{align}
We can give asymptotic formulas for $\sum_{1}$ and $\sum_{2}$. For $\sum_{3}$, We begin with some notation needed to describe the further decompositions. Following \cite{HarmanBOOK} directly, we use the bold capital letters $\boldsymbol{G}$ and $\boldsymbol{D}$ to represent sets that have asymptotic formulas and that can perform further decompositions directly. We write $\boldsymbol{\alpha}_{n}$ to denote $(\alpha_1, \ldots, \alpha_n)$ and similarly for $\boldsymbol{t}_{n}$. Let $\boldsymbol{G}_{n}$ denote the set of $\boldsymbol{\alpha}_{n}$ such that an asymptotic formula can be obtained for
$$
\sum_{\alpha_{1}, \ldots, \alpha_{n}} S\left(\mathcal{A}_{p_{1} \ldots p_{n}}, p_{n}\right),
$$
so we can define sets $\boldsymbol{G}_2$ and $\boldsymbol{G}_3$ by using Lemmas~\ref{l31}--\ref{l33}. We also need to define $\boldsymbol{G}_i$ with $i \geqslant 4$ in order to perform our calculation. For this, we can define them by checking whether a region $\boldsymbol{\alpha}_{i}$ can be partitioned into $(m, n) \in \boldsymbol{G}_{2}$ or $(m, n, h) \in \boldsymbol{G}_{3}$ in any order as long as the conditions are satisfied. By Lemmas~\ref{l21}--\ref{l22}, we put
\begin{align}
\nonumber \boldsymbol{D}_{0} =&\ \left\{\boldsymbol{\alpha}_{2}: 0 \leqslant \alpha_{1} \leqslant \frac{1}{2},\ 0 \leqslant \alpha_{2} \leqslant \min \left(\frac{3 \theta+1-4 \alpha_1^{*}}{2}, \frac{3+\theta-4 \alpha_1^{*}}{5}\right)\right\}, \\
\nonumber \boldsymbol{D}_{1} =&\ \left\{\boldsymbol{\alpha}_{3}: 0 \leqslant \alpha_{1} \leqslant \frac{1}{2},\ \alpha_{3} \leqslant \frac{1+3 \theta}{4}-\alpha_1^{*},\ 2 \alpha_{2}+\alpha_{3} \leqslant 1+\theta-2 \alpha_1^{*},\ 2 \alpha_{2}+3 \alpha_{3} \leqslant \frac{3+\theta}{2}-2 \alpha_1^{*}\right\}, \\
\nonumber \boldsymbol{D}_{2} =&\ \left\{\boldsymbol{\alpha}_{3}: 0 \leqslant \alpha_1 \leqslant \frac{1}{2},\ \alpha_{2} \leqslant \frac{1-\theta}{2},\ \alpha_{3} \leqslant \frac{1+3 \theta- 4\alpha_1^{*}}{8} \right\} , \\
\nonumber \boldsymbol{D}_{0}^{\prime} =&\ \left\{\boldsymbol{\alpha}_{2}: 0 \leqslant \alpha_{1} \leqslant \frac{1}{2},\ 0 \leqslant \alpha_{2} \leqslant \min \left(\frac{3 \theta -1}{2}, \frac{1+\theta}{5}\right)\right\}, \\
\nonumber \boldsymbol{D}_{1}^{\prime} =&\ \left\{\boldsymbol{\alpha}_{3}: 0 \leqslant \alpha_{1} \leqslant \frac{1}{2},\ \alpha_{3} \leqslant \frac{3 \theta -1}{4},\ 2 \alpha_{2}+\alpha_{3} \leqslant \theta,\ 2 \alpha_{2}+3 \alpha_{3} \leqslant \frac{1+\theta}{2}\right\}, \\
\nonumber \boldsymbol{D}_{2}^{\prime} =&\ \left\{\boldsymbol{\alpha}_{3}: 0 \leqslant \alpha_1 \leqslant \frac{1}{2},\ \alpha_{2} \leqslant \frac{1-\theta}{2},\ \alpha_{3} \leqslant \frac{3 \theta -1}{8} \right\} , \\
\nonumber \boldsymbol{D}^{*} =&\ \left\{\boldsymbol{\alpha}_{4}: \left(\alpha_{1}, \alpha_{2}, \alpha_{3}, \alpha_{4}, \alpha_{4}\right) \text { can be partitioned into } (m, n) \in \boldsymbol{D}_{0}^{\prime} \text { or }(m, n, h) \in \boldsymbol{D}_{1}^{\prime} \cup \boldsymbol{D}_{2}^{\prime} \right\}, \\
\nonumber \boldsymbol{D}^{**} =&\ \left\{\boldsymbol{\alpha}_{6}: \left(\alpha_{1}, \alpha_{2}, \alpha_{3}, \alpha_{4}, \alpha_{5}, \alpha_{6}, \alpha_{6}\right) \text { can be partitioned into } (m, n) \in \boldsymbol{D}_{0}^{\prime} \text { or }(m, n, h) \in \boldsymbol{D}_{1}^{\prime} \cup \boldsymbol{D}_{2}^{\prime} \right\}, \\
\nonumber \boldsymbol{D}^{\dag} =&\ \left\{\boldsymbol{\alpha}_{4}: \boldsymbol{\alpha}_{4} \text { can be partitioned into } (m, n) \in \boldsymbol{D}_{0} \text { or }(m, n, h) \in \boldsymbol{D}_{1} \cup \boldsymbol{D}_{2} \right\}, \\
\nonumber \boldsymbol{D}^{\ddag} =&\ \left\{\boldsymbol{\alpha}_{4}: \boldsymbol{\alpha}_{4} \in \boldsymbol{D}^{\dag},\ \left(1- \alpha_{1} - \alpha_2 - \alpha_3 - \alpha_4, \alpha_{2}, \alpha_{3}, \alpha_{4} \right) \in \boldsymbol{D}^{\dag} \right\},
\end{align}
where the sets $\boldsymbol{D}_0$, $\boldsymbol{D}_1$ and $\boldsymbol{D}_2$ correspond to conditions on variables that allow a further decomposition (that is, we apply Buchstab's identity twice), $\boldsymbol{D}_{i}^{\prime}$ is a simplified version of $\boldsymbol{D}_{i}$ for $i \in \{0,1,2\}$, $\boldsymbol{D}^{*}$ and $\boldsymbol{D}^{**}$ allow two and three further decompositions respectively, and $\boldsymbol{D}^{\ddag}$ allows two further decompositions with a variable role-reversal. For example, if we have $\boldsymbol{\alpha}_{4} \in \boldsymbol{D}^{*}$ after applying Buchstab's identity twice, we can apply Buchstab's identity twice more because we can obtain an asymptotic formula for $S\left(\mathcal{A}_{p_1 p_2 p_3 p_4 p_5}, x^{\nu}\right)$ for all $\nu \leqslant \alpha_5 < \alpha_4$. The definition of a role-reversal can be found in \cite{BHP} at the bottom of page 533 and at the top of page 534. It can be seen as ``an application of Buchstab's identity on $p_1$''. In regions corresponding to neither $\boldsymbol{G}$ nor $\boldsymbol{D}$, we sometimes need role-reversals to perform further decompositions.

We remark that if $\left(\alpha_{1}, \alpha_{2}, \alpha_{3}, \alpha_{4}, \alpha_{4}\right)$ can be partitioned into $(m, n) \in \boldsymbol{D}_{0}$ with $m<n$ or one element $\alpha_{4}$ is partitioned into $n$, then we still have this $\boldsymbol{\alpha}_{4} \in \boldsymbol{D}^{*}$ even if $\left(\alpha_{1}, \alpha_{2}, \alpha_{3}, \alpha_{4}, \alpha_{4}\right)$ cannot be partitioned into $(m, n) \in \boldsymbol{D}_{0}^{\prime}$. This is because sometimes we can group $\left(\alpha_{1}, \alpha_{2}, \alpha_{3}, \alpha_{4}, \alpha_{4}\right)$ into $(m, n) \in \boldsymbol{D}_{0}$ but cannot group $\left(\alpha_{1}, \alpha_{2}, \alpha_{3}, \alpha_{4}, \alpha_{5}\right)$ into $(m, n) \in \boldsymbol{D}_{0}$ for some $\nu \leqslant \alpha_5 < \alpha_4$ due to the involvement of $\alpha^{*}(m)$ in the upper bound of $n$. That is, if we group $\left(\alpha_{1}, \alpha_{2}, \alpha_{3}, \alpha_{4}, \alpha_{4}\right)$ into $(m, n)$ which lies in the areas above the line $n = \frac{3\theta-1}{2}$ (see the protrusions at the top of the region $\boldsymbol{D}_{0}$ in Figure 1 in Appendix 1) and all of two elements $\alpha_4$ are partitioned into $m$ (note that $n$ is a constant in this partition), then we cannot group all $\left(\alpha_{1}, \alpha_{2}, \alpha_{3}, \alpha_{4}, \alpha_{5}\right)$ with $\nu \leqslant \alpha_5 < \alpha_4$ because some values of $\alpha_5$ may leads to some smaller $m$ where the new $(m, n)$ lies in the concave areas on the left of the original $(m, n)$. Otherwise we have at least one $\alpha_4$ is in $n$, and we can let this $\alpha_4$ to be the variable $\alpha_5$ runs over values less than $\alpha_4$.
In $\boldsymbol{D}_{0}^{\prime}$ the function $\alpha^{*}$ is replaced by an upper bound $\frac{1}{2}$, and the shape of $\boldsymbol{D}_{0}^{\prime}$ is a rectangle with bounds $0 \leqslant m \leqslant \frac{1}{2}$ and $0 \leqslant n \leqslant \frac{3 \theta-1}{2}$. Thus, we can group $\left(\alpha_{1}, \alpha_{2}, \alpha_{3}, \alpha_{4}, \alpha_{5}\right)$ into $(m, n) \in \boldsymbol{D}_{0}^{\prime}$ for any $\nu \leqslant \alpha_5 < \alpha_4$ if we can group $\left(\alpha_{1}, \alpha_{2}, \alpha_{3}, \alpha_{4}, \alpha_{4}\right)$ into $(m, n) \in \boldsymbol{D}_{0}^{\prime}$. However, if $m<n$, we can change the role of $m$ and $n$ so that at least one element $\alpha_4$ is in $n$, hence we can group $\left(\alpha_{1}, \alpha_{2}, \alpha_{3}, \alpha_{4}, \alpha_{5}\right)$ into $(m, n) \in \boldsymbol{D}_{0}$ for any $\nu \leqslant \alpha_5 < \alpha_4$. The similar phenomenon holds for $\boldsymbol{D}^{**}$.

Now we split the region defined by $\sum_3$ into three subregions $A, B, C$ corresponding to the different techniques that should be applied. The plot of these regions can be found in Appendix 1.
\begin{align}
\nonumber A =&\ \left\{\boldsymbol{\alpha}_{2}: \frac{1}{4} \leqslant \alpha_{1} \leqslant \frac{2}{5},\ \frac{1}{3}\left(1-\alpha_{1}\right) \leqslant \alpha_{2} \leqslant \min \left(\alpha_{1}, 1-2 \alpha_{1}\right) \right\};\\
\nonumber B =&\ \left\{\boldsymbol{\alpha}_{2}: \frac{1}{3} \leqslant \alpha_{1} \leqslant \frac{1}{2},\ \max \left(\frac{1}{2} \alpha_{1}, 1-2 \alpha_{1}\right) \leqslant \alpha_{2} \leqslant \frac{1}{2}\left(1-\alpha_{1}\right) \right\};\\
\nonumber C =&\ \left\{\boldsymbol{\alpha}_{2}: \nu(0) \leqslant \alpha_{1} \leqslant \frac{1}{2},\ \nu\left(\alpha_{1}\right) \leqslant \alpha_{2} \leqslant \min \left(\alpha_{1}, \frac{1}{2}\left(1-\alpha_{1}\right)\right),\ \boldsymbol{\alpha}_{2} \notin A \cup B \right\}.
\end{align}
We note that $\left(\alpha_{1}, \alpha_{2}\right) \in A \Leftrightarrow\left(1-\alpha_{1}-\alpha_{2}, \alpha_{2}\right) \in B$. Since in $A \cup B$ only products of three primes are counted, we have
\begin{align}
\sum_{\boldsymbol{\alpha}_{2} \in A} S\left(\mathcal{A}_{p_{1} p_{2}}, p_{2}\right) &= \sum_{\boldsymbol{\alpha}_{2} \in B} S\left(\mathcal{A}_{p_{1} p_{2}}, p_{2}\right),
\end{align}
hence
\begin{align}
\nonumber \sum_{3} =&\ 2 \sum_{\boldsymbol{\alpha}_{2} \in A} S\left(\mathcal{A}_{p_{1} p_{2}}, p_{2}\right) + \sum_{\boldsymbol{\alpha}_{2} \in C} S\left(\mathcal{A}_{p_{1} p_{2}}, p_{2}\right) \\
=&\ 2 \sum_{A} + \sum_{C}.
\end{align}

We first consider $\sum_{A}$. Discarding the whole of $\sum_{A}$ leads to a loss of
\begin{equation}
\int_{\frac{1}{4}}^{\frac{2}{5}} \int_{\frac{1 - t_1}{3}}^{\min\left(t_1, 1 - 2 t_1\right)} \frac{\omega\left(\frac{1 - t_1 - t_2}{t_2}\right)}{t_1 t_2^2} d t_2 d t_1 < 0.240227.
\end{equation}
For Theorem~\ref{t1B} we use the bound (12) directly. However, we shall give a method on how to make possible savings over this sum (and we will actually use this process as the first step in our decomposition in region $C$). Let $A^{\prime}$ denote a part of region $A$ that satisfies $\alpha_2 \leqslant \frac{3 \theta - 1}{2}$. By Buchstab's identity, we have
\begin{equation}
\sum_{\boldsymbol{\alpha}_{2} \in A^{\prime}} S\left(\mathcal{A}_{p_{1} p_{2}}, p_{2}\right) = \sum_{\boldsymbol{\alpha}_{2} \in A^{\prime}} S\left(\mathcal{A}_{p_{1} p_{2}}, x^{\nu_0}\right)-\sum_{\substack{\boldsymbol{\alpha}_{2} \in A^{\prime} \\ \nu_0 \leqslant \alpha_{3} < \min\left(\alpha_2, \frac{1}{2}(1-\alpha_1 -\alpha_2)\right) } }S\left(\mathcal{A}_{p_{1} p_{2} p_{3}}, p_{3}\right).
\end{equation}
We can give an asymptotic formula for the first sum on the right-hand side. For the second sum, we can perform a straightforward decomposition by applying Buchstab's identity twice more if we can group $\boldsymbol{\alpha}_{3}$ into $(m, n) \in \boldsymbol{D}_{0}$ or $(m, n, h) \in \boldsymbol{D}_{1} \cup \boldsymbol{D}_{2}$. For the remaining part of the second sum, we note that $\alpha_1 +\alpha_2 \geqslant \frac{1}{2}$ and $\alpha_3 < \alpha_2 \leqslant \frac{3\theta -1}{2} \leqslant \frac{3 \theta+1-4 \alpha_1^{*}}{2}$. Let $p_{1} p_{2} p_{3} \beta$ denote the numbers counted by $S\left(\mathcal{A}_{p_{1} p_{2} p_{3}}, p_{3}\right)$, we have $\beta \sim x^{1-\alpha_1 -\alpha_2 -\alpha_3}$. Thus, we can perform a role-reversal on the ramaining part because we have $((1-\alpha_1 -\alpha_2 -\alpha_3)+ \alpha_3, \alpha_2) \in \boldsymbol{D}_{0}$ in this case. Altogether, we have the following expression after the first decomposition procedure:
\begin{align}
\nonumber \sum_{\boldsymbol{\alpha}_{2} \in A^{\prime}} S\left(\mathcal{A}_{p_{1} p_{2}}, p_{2}\right) =&\ \sum_{\boldsymbol{\alpha}_{2} \in A^{\prime}} S\left(\mathcal{A}_{p_{1} p_{2}}, x^{\nu_0}\right)-\sum_{\substack{\boldsymbol{\alpha}_{2} \in A^{\prime} \\ \nu_0 \leqslant \alpha_{3} < \min\left(\alpha_2, \frac{1}{2}(1-\alpha_1 -\alpha_2)\right) } }S\left(\mathcal{A}_{p_{1} p_{2} p_{3}}, p_{3}\right) \\
\nonumber =&\ \sum_{\boldsymbol{\alpha}_{2} \in A^{\prime}} S\left(\mathcal{A}_{p_{1} p_{2}}, x^{\nu_0}\right)-\sum_{\substack{\boldsymbol{\alpha}_{2} \in A^{\prime} \\ \nu_0 \leqslant \alpha_{3} < \min\left(\alpha_2, \frac{1}{2}(1-\alpha_1 -\alpha_2)\right) \\ \boldsymbol{\alpha}_{3} \in \boldsymbol{G}_{3} } }S\left(\mathcal{A}_{p_{1} p_{2} p_{3}}, p_{3}\right) \\
\nonumber & -\sum_{\substack{\boldsymbol{\alpha}_{2} \in A^{\prime} \\ \nu_0 \leqslant \alpha_{3} < \min\left(\alpha_2, \frac{1}{2}(1-\alpha_1 -\alpha_2)\right) \\ \boldsymbol{\alpha}_{3} \notin \boldsymbol{G}_{3} \\ \boldsymbol{\alpha}_{3} \text{ can be partitioned into } (m, n) \in \boldsymbol{D}_{0} \text{ or } (m, n, h) \in \boldsymbol{D}_{1} \cup \boldsymbol{D}_{2} } }S\left(\mathcal{A}_{p_{1} p_{2} p_{3}}, p_{3}\right) \\
\nonumber & -\sum_{\substack{\boldsymbol{\alpha}_{2} \in A^{\prime} \\ \nu_0 \leqslant \alpha_{3} < \min\left(\alpha_2, \frac{1}{2}(1-\alpha_1 -\alpha_2)\right) \\ \boldsymbol{\alpha}_{3} \notin \boldsymbol{G}_{3} \\ \boldsymbol{\alpha}_{3} \text{ cannot be partitioned into } (m, n) \in \boldsymbol{D}_{0} \text{ or } (m, n, h) \in \boldsymbol{D}_{1} \cup \boldsymbol{D}_{2} } }S\left(\mathcal{A}_{p_{1} p_{2} p_{3}}, p_{3}\right) \\
\nonumber =&\ \sum_{\boldsymbol{\alpha}_{2} \in A^{\prime}} S\left(\mathcal{A}_{p_{1} p_{2}}, x^{\nu_0}\right)-\sum_{\substack{\boldsymbol{\alpha}_{2} \in A^{\prime} \\ \nu_0 \leqslant \alpha_{3} < \min\left(\alpha_2, \frac{1}{2}(1-\alpha_1 -\alpha_2)\right) \\ \boldsymbol{\alpha}_{3} \in \boldsymbol{G}_{3} } }S\left(\mathcal{A}_{p_{1} p_{2} p_{3}}, p_{3}\right) \\
\nonumber & -\sum_{\substack{\boldsymbol{\alpha}_{2} \in A^{\prime} \\ \nu_0 \leqslant \alpha_{3} < \min\left(\alpha_2, \frac{1}{2}(1-\alpha_1 -\alpha_2)\right) \\ \boldsymbol{\alpha}_{3} \notin \boldsymbol{G}_{3} \\ \boldsymbol{\alpha}_{3} \text{ can be partitioned into } (m, n) \in \boldsymbol{D}_{0} \text{ or } (m, n, h) \in \boldsymbol{D}_{1} \cup \boldsymbol{D}_{2} } }S\left(\mathcal{A}_{p_{1} p_{2} p_{3}}, x^{\nu_0}\right) \\
\nonumber & +\sum_{\substack{\boldsymbol{\alpha}_{2} \in A^{\prime} \\ \nu_0 \leqslant \alpha_{3} < \min\left(\alpha_2, \frac{1}{2}(1-\alpha_1 -\alpha_2)\right) \\ \boldsymbol{\alpha}_{3} \notin \boldsymbol{G}_{3} \\ \boldsymbol{\alpha}_{3} \text{ can be partitioned into } (m, n) \in \boldsymbol{D}_{0} \text{ or } (m, n, h) \in \boldsymbol{D}_{1} \cup \boldsymbol{D}_{2} \\ \nu_0 \leqslant \alpha_{4} < \min\left(\alpha_3, \frac{1}{2}(1-\alpha_1 -\alpha_2 -\alpha_3)\right) \\ \boldsymbol{\alpha}_{4} \in \boldsymbol{G}_{4} } }S\left(\mathcal{A}_{p_{1} p_{2} p_{3} p_{4}}, p_{4}\right) \\
\nonumber & +\sum_{\substack{\boldsymbol{\alpha}_{2} \in A^{\prime} \\ \nu_0 \leqslant \alpha_{3} < \min\left(\alpha_2, \frac{1}{2}(1-\alpha_1 -\alpha_2)\right) \\ \boldsymbol{\alpha}_{3} \notin \boldsymbol{G}_{3} \\ \boldsymbol{\alpha}_{3} \text{ can be partitioned into } (m, n) \in \boldsymbol{D}_{0} \text{ or } (m, n, h) \in \boldsymbol{D}_{1} \cup \boldsymbol{D}_{2} \\ \nu_0 \leqslant \alpha_{4} < \min\left(\alpha_3, \frac{1}{2}(1-\alpha_1 -\alpha_2 -\alpha_3)\right) \\ \boldsymbol{\alpha}_{4} \notin \boldsymbol{G}_{4} } }S\left(\mathcal{A}_{p_{1} p_{2} p_{3} p_{4}}, p_{4}\right) \\
\nonumber & -\sum_{\substack{\boldsymbol{\alpha}_{2} \in A^{\prime} \\ \nu_0 \leqslant \alpha_{3} < \min\left(\alpha_2, \frac{1}{2}(1-\alpha_1 -\alpha_2)\right) \\ \boldsymbol{\alpha}_{3} \notin \boldsymbol{G}_{3} \\ \boldsymbol{\alpha}_{3} \text{ cannot be partitioned into } (m, n) \in \boldsymbol{D}_{0} \text{ or } (m, n, h) \in \boldsymbol{D}_{1} \cup \boldsymbol{D}_{2} } }S\left(\mathcal{A}_{\beta p_{2} p_{3}}, x^{\nu_0}\right) \\
\nonumber & +\sum_{\substack{\boldsymbol{\alpha}_{2} \in A^{\prime} \\ \nu_0 \leqslant \alpha_{3} < \min\left(\alpha_2, \frac{1}{2}(1-\alpha_1 -\alpha_2)\right) \\ \boldsymbol{\alpha}_{3} \notin \boldsymbol{G}_{3} \\ \boldsymbol{\alpha}_{3} \text{ cannot be partitioned into } (m, n) \in \boldsymbol{D}_{0} \text{ or } (m, n, h) \in \boldsymbol{D}_{1} \cup \boldsymbol{D}_{2} \\ \nu_0 \leqslant \alpha_{4} < \frac{1}{2}\alpha_1 \\ \boldsymbol{\alpha}_{4}^{\prime} \in \boldsymbol{G}_{4} } }S\left(\mathcal{A}_{\beta p_{2} p_{3} p_{4}}, p_{4}\right) \\
\nonumber & +\sum_{\substack{\boldsymbol{\alpha}_{2} \in A^{\prime} \\ \nu_0 \leqslant \alpha_{3} < \min\left(\alpha_2, \frac{1}{2}(1-\alpha_1 -\alpha_2)\right) \\ \boldsymbol{\alpha}_{3} \notin \boldsymbol{G}_{3} \\ \boldsymbol{\alpha}_{3} \text{ cannot be partitioned into } (m, n) \in \boldsymbol{D}_{0} \text{ or } (m, n, h) \in \boldsymbol{D}_{1} \cup \boldsymbol{D}_{2} \\ \nu_0 \leqslant \alpha_{4} < \frac{1}{2}\alpha_1 \\ \boldsymbol{\alpha}_{4}^{\prime} \notin \boldsymbol{G}_{4} } }S\left(\mathcal{A}_{\beta p_{2} p_{3} p_{4}}, p_{4}\right) \\
=&\ S_{01} - S_{02} - S_{03} + S_{04} + T_{01} - S_{05} + S_{06} + T_{02},
\end{align}
where $\left(\beta, P(p_{3})\right)=1$ and
$$
\boldsymbol{\alpha}_{4}^{\prime}=(1-\alpha_1-\alpha_2-\alpha_3,\ \alpha_2,\ \alpha_3,\ \alpha_4).
$$

We can give asymptotic formulas for $S_{01}$--$S_{06}$. For $T_{01}$ we can perform Buchstab's identity twice more to reach a six-dimensional sum if $\boldsymbol{\alpha}_{4} \in \boldsymbol{D}^{*}$, and we can use Buchstab's identity in a different way (which will be explained later) to make some savings on the remaining parts. After the second decomposition procedure on $T_{01}$, we have
\begin{align}
\nonumber T_{01} =&\ \sum_{\substack{\boldsymbol{\alpha}_{2} \in A^{\prime} \\ \nu_0 \leqslant \alpha_{3} < \min\left(\alpha_2, \frac{1}{2}(1-\alpha_1 -\alpha_2)\right) \\ \boldsymbol{\alpha}_{3} \notin \boldsymbol{G}_{3} \\ \boldsymbol{\alpha}_{3} \text{ can be partitioned into } (m, n) \in \boldsymbol{D}_{0} \text{ or } (m, n, h) \in \boldsymbol{D}_{1} \cup \boldsymbol{D}_{2} \\ \nu_0 \leqslant \alpha_{4} < \min\left(\alpha_3, \frac{1}{2}(1-\alpha_1 -\alpha_2 -\alpha_3)\right) \\ \boldsymbol{\alpha}_{4} \notin \boldsymbol{G}_{4} } }S\left(\mathcal{A}_{p_{1} p_{2} p_{3} p_{4}}, p_{4}\right) \\
\nonumber =&\ \sum_{\substack{\boldsymbol{\alpha}_{2} \in A^{\prime} \\ \nu_0 \leqslant \alpha_{3} < \min\left(\alpha_2, \frac{1}{2}(1-\alpha_1 -\alpha_2)\right) \\ \boldsymbol{\alpha}_{3} \notin \boldsymbol{G}_{3} \\ \boldsymbol{\alpha}_{3} \text{ can be partitioned into } (m, n) \in \boldsymbol{D}_{0} \text{ or } (m, n, h) \in \boldsymbol{D}_{1} \cup \boldsymbol{D}_{2} \\ \nu_0 \leqslant \alpha_{4} < \min\left(\alpha_3, \frac{1}{2}(1-\alpha_1 -\alpha_2 -\alpha_3)\right) \\ \boldsymbol{\alpha}_{4} \notin \boldsymbol{G}_{4},\ \boldsymbol{\alpha}_{4} \notin \boldsymbol{D}^{*} } }S\left(\mathcal{A}_{p_{1} p_{2} p_{3} p_{4}}, p_{4}\right) \\
\nonumber &+ \sum_{\substack{\boldsymbol{\alpha}_{2} \in A^{\prime} \\ \nu_0 \leqslant \alpha_{3} < \min\left(\alpha_2, \frac{1}{2}(1-\alpha_1 -\alpha_2)\right) \\ \boldsymbol{\alpha}_{3} \notin \boldsymbol{G}_{3} \\ \boldsymbol{\alpha}_{3} \text{ can be partitioned into } (m, n) \in \boldsymbol{D}_{0} \text{ or } (m, n, h) \in \boldsymbol{D}_{1} \cup \boldsymbol{D}_{2} \\ \nu_0 \leqslant \alpha_{4} < \min\left(\alpha_3, \frac{1}{2}(1-\alpha_1 -\alpha_2 -\alpha_3)\right) \\ \boldsymbol{\alpha}_{4} \notin \boldsymbol{G}_{4},\ \boldsymbol{\alpha}_{4} \in \boldsymbol{D}^{*} } }S\left(\mathcal{A}_{p_{1} p_{2} p_{3} p_{4}}, x^{\nu_0}\right) \\
\nonumber =&\ \sum_{\substack{\boldsymbol{\alpha}_{2} \in A^{\prime} \\ \nu_0 \leqslant \alpha_{3} < \min\left(\alpha_2, \frac{1}{2}(1-\alpha_1 -\alpha_2)\right) \\ \boldsymbol{\alpha}_{3} \notin \boldsymbol{G}_{3} \\ \boldsymbol{\alpha}_{3} \text{ can be partitioned into } (m, n) \in \boldsymbol{D}_{0} \text{ or } (m, n, h) \in \boldsymbol{D}_{1} \cup \boldsymbol{D}_{2} \\ \nu_0 \leqslant \alpha_{4} < \min\left(\alpha_3, \frac{1}{2}(1-\alpha_1 -\alpha_2 -\alpha_3)\right) \\ \boldsymbol{\alpha}_{4} \notin \boldsymbol{G}_{4},\ \boldsymbol{\alpha}_{4} \notin \boldsymbol{D}^{*} } }S\left(\mathcal{A}_{p_{1} p_{2} p_{3} p_{4}}, p_{4}\right) \\
\nonumber &+ \sum_{\substack{\boldsymbol{\alpha}_{2} \in A^{\prime} \\ \nu_0 \leqslant \alpha_{3} < \min\left(\alpha_2, \frac{1}{2}(1-\alpha_1 -\alpha_2)\right) \\ \boldsymbol{\alpha}_{3} \notin \boldsymbol{G}_{3} \\ \boldsymbol{\alpha}_{3} \text{ can be partitioned into } (m, n) \in \boldsymbol{D}_{0} \text{ or } (m, n, h) \in \boldsymbol{D}_{1} \cup \boldsymbol{D}_{2} \\ \nu_0 \leqslant \alpha_{4} < \min\left(\alpha_3, \frac{1}{2}(1-\alpha_1 -\alpha_2 -\alpha_3)\right) \\ \boldsymbol{\alpha}_{4} \notin \boldsymbol{G}_{4},\ \boldsymbol{\alpha}_{4} \in \boldsymbol{D}^{*} } }S\left(\mathcal{A}_{p_{1} p_{2} p_{3} p_{4}}, x^{\nu_0}\right) \\
\nonumber &- \sum_{\substack{\boldsymbol{\alpha}_{2} \in A^{\prime} \\ \nu_0 \leqslant \alpha_{3} < \min\left(\alpha_2, \frac{1}{2}(1-\alpha_1 -\alpha_2)\right) \\ \boldsymbol{\alpha}_{3} \notin \boldsymbol{G}_{3} \\ \boldsymbol{\alpha}_{3} \text{ can be partitioned into } (m, n) \in \boldsymbol{D}_{0} \text{ or } (m, n, h) \in \boldsymbol{D}_{1} \cup \boldsymbol{D}_{2} \\ \nu_0 \leqslant \alpha_{4} < \min\left(\alpha_3, \frac{1}{2}(1-\alpha_1 -\alpha_2 -\alpha_3)\right) \\ \boldsymbol{\alpha}_{4} \notin \boldsymbol{G}_{4},\ \boldsymbol{\alpha}_{4} \in \boldsymbol{D}^{*} \\ \nu_0 \leqslant \alpha_{5} < \min\left(\alpha_4, \frac{1}{2}(1-\alpha_1 -\alpha_2 -\alpha_3 - \alpha_4)\right) \\ \boldsymbol{\alpha}_{5} \in \boldsymbol{G}_{5} } }S\left(\mathcal{A}_{p_{1} p_{2} p_{3} p_{4} p_{5}}, p_{5}\right) \\
\nonumber &- \sum_{\substack{\boldsymbol{\alpha}_{2} \in A^{\prime} \\ \nu_0 \leqslant \alpha_{3} < \min\left(\alpha_2, \frac{1}{2}(1-\alpha_1 -\alpha_2)\right) \\ \boldsymbol{\alpha}_{3} \notin \boldsymbol{G}_{3} \\ \boldsymbol{\alpha}_{3} \text{ can be partitioned into } (m, n) \in \boldsymbol{D}_{0} \text{ or } (m, n, h) \in \boldsymbol{D}_{1} \cup \boldsymbol{D}_{2} \\ \nu_0 \leqslant \alpha_{4} < \min\left(\alpha_3, \frac{1}{2}(1-\alpha_1 -\alpha_2 -\alpha_3)\right) \\ \boldsymbol{\alpha}_{4} \notin \boldsymbol{G}_{4},\ \boldsymbol{\alpha}_{4} \in \boldsymbol{D}^{*} \\ \nu_0 \leqslant \alpha_{5} < \min\left(\alpha_4, \frac{1}{2}(1-\alpha_1 -\alpha_2 -\alpha_3 - \alpha_4)\right) \\ \boldsymbol{\alpha}_{5} \notin \boldsymbol{G}_{5} } }S\left(\mathcal{A}_{p_{1} p_{2} p_{3} p_{4} p_{5}}, x^{\nu_0}\right) \\
\nonumber &+ \sum_{\substack{\boldsymbol{\alpha}_{2} \in A^{\prime} \\ \nu_0 \leqslant \alpha_{3} < \min\left(\alpha_2, \frac{1}{2}(1-\alpha_1 -\alpha_2)\right) \\ \boldsymbol{\alpha}_{3} \notin \boldsymbol{G}_{3} \\ \boldsymbol{\alpha}_{3} \text{ can be partitioned into } (m, n) \in \boldsymbol{D}_{0} \text{ or } (m, n, h) \in \boldsymbol{D}_{1} \cup \boldsymbol{D}_{2} \\ \nu_0 \leqslant \alpha_{4} < \min\left(\alpha_3, \frac{1}{2}(1-\alpha_1 -\alpha_2 -\alpha_3)\right) \\ \boldsymbol{\alpha}_{4} \notin \boldsymbol{G}_{4},\ \boldsymbol{\alpha}_{4} \in \boldsymbol{D}^{*} \\ \nu_0 \leqslant \alpha_{5} < \min\left(\alpha_4, \frac{1}{2}(1-\alpha_1 -\alpha_2 -\alpha_3 - \alpha_4)\right) \\ \boldsymbol{\alpha}_{5} \notin \boldsymbol{G}_{5} \\ \nu_0 \leqslant \alpha_{6} < \min\left(\alpha_5, \frac{1}{2}(1-\alpha_1 -\alpha_2 -\alpha_3 - \alpha_5 - \alpha_6)\right) \\ \boldsymbol{\alpha}_{6} \in \boldsymbol{G}_{6}} }S\left(\mathcal{A}_{p_{1} p_{2} p_{3} p_{4} p_{5} p_{6}}, p_{6}\right) \\
\nonumber &+ \sum_{\substack{\boldsymbol{\alpha}_{2} \in A^{\prime} \\ \nu_0 \leqslant \alpha_{3} < \min\left(\alpha_2, \frac{1}{2}(1-\alpha_1 -\alpha_2)\right) \\ \boldsymbol{\alpha}_{3} \notin \boldsymbol{G}_{3} \\ \boldsymbol{\alpha}_{3} \text{ can be partitioned into } (m, n) \in \boldsymbol{D}_{0} \text{ or } (m, n, h) \in \boldsymbol{D}_{1} \cup \boldsymbol{D}_{2} \\ \nu_0 \leqslant \alpha_{4} < \min\left(\alpha_3, \frac{1}{2}(1-\alpha_1 -\alpha_2 -\alpha_3)\right) \\ \boldsymbol{\alpha}_{4} \notin \boldsymbol{G}_{4},\ \boldsymbol{\alpha}_{4} \in \boldsymbol{D}^{*} \\ \nu_0 \leqslant \alpha_{5} < \min\left(\alpha_4, \frac{1}{2}(1-\alpha_1 -\alpha_2 -\alpha_3 - \alpha_4)\right) \\ \boldsymbol{\alpha}_{5} \notin \boldsymbol{G}_{5} \\ \nu_0 \leqslant \alpha_{6} < \min\left(\alpha_5, \frac{1}{2}(1-\alpha_1 -\alpha_2 -\alpha_3 - \alpha_5 - \alpha_6)\right) \\ \boldsymbol{\alpha}_{6} \notin \boldsymbol{G}_{6}} }S\left(\mathcal{A}_{p_{1} p_{2} p_{3} p_{4} p_{5} p_{6}}, p_{6}\right) \\
=&\ T_{011} + S_{07} - S_{08} - S_{09} + S_{10} + T_{012}.
\end{align}
We can give asymptotic formulas for $S_{07}$--$S_{10}$. The sum $T_{011}$ can be further decomposed to
\begin{align}
\nonumber T_{011} =&\ \sum_{\substack{\boldsymbol{\alpha}_{2} \in A^{\prime} \\ \nu_0 \leqslant \alpha_{3} < \min\left(\alpha_2, \frac{1}{2}(1-\alpha_1 -\alpha_2)\right) \\ \boldsymbol{\alpha}_{3} \notin \boldsymbol{G}_{3} \\ \boldsymbol{\alpha}_{3} \text{ can be partitioned into } (m, n) \in \boldsymbol{D}_{0} \text{ or } (m, n, h) \in \boldsymbol{D}_{1} \cup \boldsymbol{D}_{2} \\ \nu_0 \leqslant \alpha_{4} < \min\left(\alpha_3, \frac{1}{2}(1-\alpha_1 -\alpha_2 -\alpha_3)\right) \\ \boldsymbol{\alpha}_{4} \notin \boldsymbol{G}_{4},\ \boldsymbol{\alpha}_{4} \notin \boldsymbol{D}^{*} } }S\left(\mathcal{A}_{p_{1} p_{2} p_{3} p_{4}}, p_{4}\right) \\
\nonumber =&\ \sum_{\substack{\boldsymbol{\alpha}_{2} \in A^{\prime} \\ \nu_0 \leqslant \alpha_{3} < \min\left(\alpha_2, \frac{1}{2}(1-\alpha_1 -\alpha_2)\right) \\ \boldsymbol{\alpha}_{3} \notin \boldsymbol{G}_{3} \\ \boldsymbol{\alpha}_{3} \text{ can be partitioned into } (m, n) \in \boldsymbol{D}_{0} \text{ or } (m, n, h) \in \boldsymbol{D}_{1} \cup \boldsymbol{D}_{2} \\ \nu_0 \leqslant \alpha_{4} < \min\left(\alpha_3, \frac{1}{2}(1-\alpha_1 -\alpha_2 -\alpha_3)\right) \\ \boldsymbol{\alpha}_{4} \notin \boldsymbol{G}_{4},\ \boldsymbol{\alpha}_{4} \notin \boldsymbol{D}^{*},\ \alpha_4 \geqslant \frac{1}{2}(1-\alpha_1 -\alpha_2 -\alpha_3 -\alpha_4) } }S\left(\mathcal{A}_{p_{1} p_{2} p_{3} p_{4}}, p_{4}\right) \\
\nonumber &+ \sum_{\substack{\boldsymbol{\alpha}_{2} \in A^{\prime} \\ \nu_0 \leqslant \alpha_{3} < \min\left(\alpha_2, \frac{1}{2}(1-\alpha_1 -\alpha_2)\right) \\ \boldsymbol{\alpha}_{3} \notin \boldsymbol{G}_{3} \\ \boldsymbol{\alpha}_{3} \text{ can be partitioned into } (m, n) \in \boldsymbol{D}_{0} \text{ or } (m, n, h) \in \boldsymbol{D}_{1} \cup \boldsymbol{D}_{2} \\ \nu_0 \leqslant \alpha_{4} < \min\left(\alpha_3, \frac{1}{2}(1-\alpha_1 -\alpha_2 -\alpha_3)\right) \\ \boldsymbol{\alpha}_{4} \notin \boldsymbol{G}_{4},\ \boldsymbol{\alpha}_{4} \notin \boldsymbol{D}^{*},\ \alpha_4 < \frac{1}{2}(1-\alpha_1 -\alpha_2 -\alpha_3 -\alpha_4) } }S\left(\mathcal{A}_{p_{1} p_{2} p_{3} p_{4}}, p_{4}\right) \\
\nonumber =&\ \sum_{\substack{\boldsymbol{\alpha}_{2} \in A^{\prime} \\ \nu_0 \leqslant \alpha_{3} < \min\left(\alpha_2, \frac{1}{2}(1-\alpha_1 -\alpha_2)\right) \\ \boldsymbol{\alpha}_{3} \notin \boldsymbol{G}_{3} \\ \boldsymbol{\alpha}_{3} \text{ can be partitioned into } (m, n) \in \boldsymbol{D}_{0} \text{ or } (m, n, h) \in \boldsymbol{D}_{1} \cup \boldsymbol{D}_{2} \\ \nu_0 \leqslant \alpha_{4} < \min\left(\alpha_3, \frac{1}{2}(1-\alpha_1 -\alpha_2 -\alpha_3)\right) \\ \boldsymbol{\alpha}_{4} \notin \boldsymbol{G}_{4},\ \boldsymbol{\alpha}_{4} \notin \boldsymbol{D}^{*},\ \alpha_4 \geqslant \frac{1}{2}(1-\alpha_1 -\alpha_2 -\alpha_3 -\alpha_4) } }S\left(\mathcal{A}_{p_{1} p_{2} p_{3} p_{4}}, p_{4}\right) \\
\nonumber &+ \sum_{\substack{\boldsymbol{\alpha}_{2} \in A^{\prime} \\ \nu_0 \leqslant \alpha_{3} < \min\left(\alpha_2, \frac{1}{2}(1-\alpha_1 -\alpha_2)\right) \\ \boldsymbol{\alpha}_{3} \notin \boldsymbol{G}_{3} \\ \boldsymbol{\alpha}_{3} \text{ can be partitioned into } (m, n) \in \boldsymbol{D}_{0} \text{ or } (m, n, h) \in \boldsymbol{D}_{1} \cup \boldsymbol{D}_{2} \\ \nu_0 \leqslant \alpha_{4} < \min\left(\alpha_3, \frac{1}{2}(1-\alpha_1 -\alpha_2 -\alpha_3)\right) \\ \boldsymbol{\alpha}_{4} \notin \boldsymbol{G}_{4},\ \boldsymbol{\alpha}_{4} \notin \boldsymbol{D}^{*},\ \alpha_4 < \frac{1}{2}(1-\alpha_1 -\alpha_2 -\alpha_3 -\alpha_4) } }S\left(\mathcal{A}_{p_{1} p_{2} p_{3} p_{4}}, \left(\frac{x}{p_1 p_2 p_3 p_4}\right)^{\frac{1}{2}}\right) \\
\nonumber &+ \sum_{\substack{\boldsymbol{\alpha}_{2} \in A^{\prime} \\ \nu_0 \leqslant \alpha_{3} < \min\left(\alpha_2, \frac{1}{2}(1-\alpha_1 -\alpha_2)\right) \\ \boldsymbol{\alpha}_{3} \notin \boldsymbol{G}_{3} \\ \boldsymbol{\alpha}_{3} \text{ can be partitioned into } (m, n) \in \boldsymbol{D}_{0} \text{ or } (m, n, h) \in \boldsymbol{D}_{1} \cup \boldsymbol{D}_{2} \\ \nu_0 \leqslant \alpha_{4} < \min\left(\alpha_3, \frac{1}{2}(1-\alpha_1 -\alpha_2 -\alpha_3)\right) \\ \boldsymbol{\alpha}_{4} \notin \boldsymbol{G}_{4},\ \boldsymbol{\alpha}_{4} \notin \boldsymbol{D}^{*} \\ \alpha_4 < \alpha_5 < \frac{1}{2}(1-\alpha_1 -\alpha_2 -\alpha_3 -\alpha_4) \\ \boldsymbol{\alpha}_{5} \notin \boldsymbol{G}_{5} } }S\left(\mathcal{A}_{p_{1} p_{2} p_{3} p_{4} p_{5}}, p_{5}\right) \\
&+ \sum_{\substack{\boldsymbol{\alpha}_{2} \in A^{\prime} \\ \nu_0 \leqslant \alpha_{3} < \min\left(\alpha_2, \frac{1}{2}(1-\alpha_1 -\alpha_2)\right) \\ \boldsymbol{\alpha}_{3} \notin \boldsymbol{G}_{3} \\ \boldsymbol{\alpha}_{3} \text{ can be partitioned into } (m, n) \in \boldsymbol{D}_{0} \text{ or } (m, n, h) \in \boldsymbol{D}_{1} \cup \boldsymbol{D}_{2} \\ \nu_0 \leqslant \alpha_{4} < \min\left(\alpha_3, \frac{1}{2}(1-\alpha_1 -\alpha_2 -\alpha_3)\right) \\ \boldsymbol{\alpha}_{4} \notin \boldsymbol{G}_{4},\ \boldsymbol{\alpha}_{4} \notin \boldsymbol{D}^{*} \\ \alpha_4 < \alpha_5 < \frac{1}{2}(1-\alpha_1 -\alpha_2 -\alpha_3 -\alpha_4) \\ \boldsymbol{\alpha}_{5} \in \boldsymbol{G}_{5} } }S\left(\mathcal{A}_{p_{1} p_{2} p_{3} p_{4} p_{5}}, p_{5}\right).
\end{align}
The decomposing process (16) can be viewed as a ``splitting'' argument: the numbers $T_{011}$ counts is of the form $p_1 p_2 p_3 p_4 m_0$, where the smallest prime factor of $m_0$ is larger than $p_4$. If $\alpha_4 > \frac{1}{2}(1-\alpha_1 -\alpha_2 -\alpha_3 -\alpha_4)$, we know that $m_0$ cannot have two or more prime factors and therefore must be a prime. Otherwise $m_0$ may have more than one prime factor, which means that we can ``split'' the smallest prime factor of $m$ and consider it separately. The sum $S\left(\mathcal{A}_{p_{1} p_{2} p_{3} p_{4}}, \left(\frac{x}{p_1 p_2 p_3 p_4}\right)^{\frac{1}{2}}\right)$ counts numbers of the form $p_1 p_2 p_3 p_4 p^{\prime}$, where $p^{\prime} = m_0$. The sum $S\left(\mathcal{A}_{p_{1} p_{2} p_{3} p_{4} p_{5}}, p_{5}\right)$ counts numbers of the form $p_1 p_2 p_3 p_4 (p_5 m_1)$, where $p_5 m_1 = m_0$, $p_5 > p_4$ and all prime factors of $m_1$ are larger than $p_5$. In this sum we have a new variable $\alpha_5$, which means that part of this sum may have an asymptotic formula. In this situation, we can give an asymptotic formula for the last sum on the right-hand side of (16), hence we can subtract its contribution from the loss from $T_{011}$. Again, for the remaining part of $S\left(\mathcal{A}_{p_{1} p_{2} p_{3} p_{4} p_{5}}, p_{5}\right)$, we can do a similar process as (16) to subtract the contribution of the numbers $p_1 p_2 p_3 p_4 p_5 (p_6 m_2)$ that have asymptotic formulas. The above process can be rewritten as
\begin{equation}
\sum_{\boldsymbol{\alpha}_{4}} S\left(\mathcal{A}_{p_{1} p_{2} p_{3} p_{4}}, \left(\frac{x}{p_1 p_2 p_3 p_4}\right)^{\frac{1}{2}}\right) = \sum_{\boldsymbol{\alpha}_{4}} S\left(\mathcal{A}_{p_{1} p_{2} p_{3} p_{4}}, p_{4}\right) - \sum_{\substack{\boldsymbol{\alpha}_{4} \\ \alpha_4 < \alpha_5 < \frac{1}{2}(1-\alpha_1 -\alpha_2 -\alpha_3 -\alpha_4)}} S\left(\mathcal{A}_{p_{1} p_{2} p_{3} p_{4} p_{5}}, p_{5}\right).
\end{equation}
Since (17) is a direct application of Buchstab's identity, we shall call this process ``Buchstab's identity in reverse'' or ``reversed Buchstab's identity'' in the rest of our paper. The same process can also be used to deal with $T_{02}$, but we choose to discard all of it for the sake of simplicity. There are also many possible decompositions in some subregions, but we don't consider them here. In fact, we shall consider some of them when decomposing $\sum_{C}$. 

Combining all the sums above with remaining parts of $A$ we get a loss from $\sum_{A}$ of
\begin{align}
\nonumber & \left( \int_{\boldsymbol{t}_{2} \in U_{A1}} \frac{\omega\left(\frac{1 - t_1 - t_2}{t_2}\right)}{t_1 t_2^2} d t_2 d t_1 \right) \\
\nonumber +& \left( \int_{\boldsymbol{t}_{4} \in U_{A2}} \frac{\omega \left(\frac{1 - t_1 - t_2 - t_3 - t_4}{t_4}\right)}{t_1 t_2 t_3 t_4^2} d t_4 d t_3 d t_2 d t_1 \right) \\ 
\nonumber -& \left( \int_{\boldsymbol{t}_{5} \in U_{A3}} \frac{\omega \left(\frac{1 - t_1 - t_2 - t_3 - t_4 - t_5}{t_5}\right)}{t_1 t_2 t_3 t_4 t_5^2} d t_5 d t_4 d t_3 d t_2 d t_1 \right) \\
\nonumber +& \left( \int_{\boldsymbol{t}_{6} \in U_{A4}} \frac{\omega \left(\frac{1 - t_1 - t_2 - t_3 - t_4 - t_5 - t_6}{t_6}\right)}{t_1 t_2 t_3 t_4 t_5 t_6^2} d t_6 d t_5 d t_4 d t_3 d t_2 d t_1 \right) \\
+& \left( \int_{\boldsymbol{t}_{4} \in U_{A5}} \frac{\omega \left(\frac{t_1 - t_4}{t_4}\right) \omega \left(\frac{1 - t_1 - t_2 - t_3}{t_3}\right)}{t_2 t_3^2 t_4^2} d t_4 d t_3 d t_2 d t_1 \right),
\end{align}
where
\begin{align}
\nonumber U_{A1}(\boldsymbol{\alpha}_{2}) :=&\ \left\{ \boldsymbol{\alpha}_{2} \in A \backslash A^{\prime},\ \nu_0 \leqslant \alpha_1 < \frac{1}{2},\ \nu\left(\alpha_1\right) \leqslant \alpha_2 < \min\left(\alpha_1, \frac{1}{2}(1-\alpha_1) \right) \right\}, \\
\nonumber U_{A2}(\boldsymbol{\alpha}_{4}) :=&\ \left\{ \boldsymbol{\alpha}_{2} \in A^{\prime},\ \nu_0 \leqslant \alpha_{3} < \min\left(\alpha_2, \frac{1}{2}(1-\alpha_1 -\alpha_2)\right),\ \boldsymbol{\alpha}_{3} \notin \boldsymbol{G}_{3}, \right. \\
\nonumber & \quad \boldsymbol{\alpha}_{3} \text{ can be partitioned into } (m, n) \in \boldsymbol{D}_{0} \text{ or } (m, n, h) \in \boldsymbol{D}_{1} \cup \boldsymbol{D}_{2}, \\
\nonumber & \quad \nu_0 \leqslant \alpha_{4} < \min\left(\alpha_3, \frac{1}{2}(1-\alpha_1 -\alpha_2 -\alpha_3)\right),\ \boldsymbol{\alpha}_{4} \notin \boldsymbol{G}_{4},\ \boldsymbol{\alpha}_{4} \notin \boldsymbol{D}^{*}, \\
\nonumber & \left. \quad \nu_0 \leqslant \alpha_1 < \frac{1}{2},\ \nu\left(\alpha_1\right) \leqslant \alpha_2 < \min\left(\alpha_1, \frac{1}{2}(1-\alpha_1) \right) \right\}, \\
\nonumber U_{A3}(\boldsymbol{\alpha}_{5}) :=&\ \left\{ \boldsymbol{\alpha}_{2} \in A^{\prime},\ \nu_0 \leqslant \alpha_{3} < \min\left(\alpha_2, \frac{1}{2}(1-\alpha_1 -\alpha_2)\right),\ \boldsymbol{\alpha}_{3} \notin \boldsymbol{G}_{3}, \right. \\
\nonumber & \quad \boldsymbol{\alpha}_{3} \text{ can be partitioned into } (m, n) \in \boldsymbol{D}_{0} \text{ or } (m, n, h) \in \boldsymbol{D}_{1} \cup \boldsymbol{D}_{2}, \\
\nonumber & \quad \nu_0 \leqslant \alpha_{4} < \min\left(\alpha_3, \frac{1}{2}(1-\alpha_1 -\alpha_2 -\alpha_3)\right),\ \boldsymbol{\alpha}_{4} \notin \boldsymbol{G}_{4},\ \boldsymbol{\alpha}_{4} \notin \boldsymbol{D}^{*}, \\
\nonumber & \quad \alpha_4 < \alpha_5 < \frac{1}{2}(1-\alpha_1 -\alpha_2 -\alpha_3 -\alpha_4),\ \boldsymbol{\alpha}_{5} \in \boldsymbol{G}_{5}, \\
\nonumber & \left. \quad \nu_0 \leqslant \alpha_1 < \frac{1}{2},\ \nu\left(\alpha_1\right) \leqslant \alpha_2 < \min\left(\alpha_1, \frac{1}{2}(1-\alpha_1) \right) \right\}, \\
\nonumber U_{A4}(\boldsymbol{\alpha}_{6}) :=&\ \left\{ \boldsymbol{\alpha}_{2} \in A^{\prime},\ \nu_0 \leqslant \alpha_{3} < \min\left(\alpha_2, \frac{1}{2}(1-\alpha_1 -\alpha_2)\right),\ \boldsymbol{\alpha}_{3} \notin \boldsymbol{G}_{3}, \right. \\
\nonumber & \quad \boldsymbol{\alpha}_{3} \text{ can be partitioned into } (m, n) \in \boldsymbol{D}_{0} \text{ or } (m, n, h) \in \boldsymbol{D}_{1} \cup \boldsymbol{D}_{2}, \\
\nonumber & \quad \nu_0 \leqslant \alpha_{4} < \min\left(\alpha_3, \frac{1}{2}(1-\alpha_1 -\alpha_2 -\alpha_3)\right),\ \boldsymbol{\alpha}_{4} \notin \boldsymbol{G}_{4},\ \boldsymbol{\alpha}_{4} \in \boldsymbol{D}^{*}, \\
\nonumber & \quad \nu_0 \leqslant \alpha_{5} < \min\left(\alpha_4, \frac{1}{2}(1-\alpha_1 -\alpha_2 -\alpha_3 -\alpha_4)\right),\ \boldsymbol{\alpha}_{5} \notin \boldsymbol{G}_{5}, \\
\nonumber & \quad \nu_0 \leqslant \alpha_{6} < \min\left(\alpha_5, \frac{1}{2}(1-\alpha_1 -\alpha_2 -\alpha_3 -\alpha_4 -\alpha_5)\right),\ \boldsymbol{\alpha}_{6} \notin \boldsymbol{G}_{6}, \\
\nonumber & \left. \quad \nu_0 \leqslant \alpha_1 < \frac{1}{2},\ \nu\left(\alpha_1\right) \leqslant \alpha_2 < \min\left(\alpha_1, \frac{1}{2}(1-\alpha_1) \right) \right\}, \\
\nonumber U_{A5}(\boldsymbol{\alpha}_{4}) :=&\ \left\{ \boldsymbol{\alpha}_{2} \in A^{\prime},\ \nu_0 \leqslant \alpha_{3} < \min\left(\alpha_2, \frac{1}{2}(1-\alpha_1 -\alpha_2)\right),\ \boldsymbol{\alpha}_{3} \notin \boldsymbol{G}_{3}, \right. \\
\nonumber & \quad \boldsymbol{\alpha}_{3} \text{ cannot be partitioned into } (m, n) \in \boldsymbol{D}_{0} \text{ or } (m, n, h) \in \boldsymbol{D}_{1} \cup \boldsymbol{D}_{2}, \\
\nonumber & \quad \nu_0 \leqslant \alpha_{4} < \frac{1}{2}\alpha_1,\ \boldsymbol{\alpha}_{4}^{\prime} \notin \boldsymbol{G}_{4}, \\
\nonumber & \left. \quad \nu_0 \leqslant \alpha_1 < \frac{1}{2},\ \nu\left(\alpha_1\right) \leqslant \alpha_2 < \min\left(\alpha_1, \frac{1}{2}(1-\alpha_1) \right) \right\}.
\end{align}
Note that the above integrals arise from sums $T_{011}$, $T_{012}$, $T_{02}$ and the two-dimensional sum over region $A \backslash A^{\prime}$, and one can compare our integrals to those in \cite{BHP} and \cite{HarmanBOOK}. For example, one can see the integrals corresponding to $U_{A2}$ and $U_{A3}$ as an simple explicit expression of the function $w(\boldsymbol{\alpha}_{4})$ defined in [\cite{HarmanBOOK}, Chapter 7.9]. In \cite{BHP} and \cite{HarmanBOOK} reversed Buchstab's identity has been used many many times, but we do not consider using it repeatedly since the savings over high-dimensional sums produced by this technique are very small. We have tried some acceptable region $A^{\prime}$, but the total loss in (18) exceeds the original two-dimensional loss from $A^{\prime}$. Some small savings may be obtained by using more careful decompositions and more powerful supercomputers to calculate the loss integrals.

Remember that we have $\alpha_3 < \alpha_2 \leqslant \frac{3\theta -1}{2} \leqslant \frac{3 \theta+1-4 \alpha_1^{*}}{2}$ and at least one of $\alpha_1 +\alpha_2$ and $1-\alpha_1 -\alpha_2$ is $\leqslant \frac{1}{2}$ when $(\alpha_1, \alpha_2) \in C$, further decompositions in region $C$ are possible. For $\sum_{C}$ we can redo the above decomposition procedure (15) on the entire region $C$ to reach two four-dimensional sums
\begin{equation}
T_{03} := \sum_{\substack{\boldsymbol{\alpha}_{2} \in C,\ \boldsymbol{\alpha}_{2} \notin \boldsymbol{G}_{2} \\ \nu_0 \leqslant \alpha_{3} < \min\left(\alpha_2, \frac{1}{2}(1-\alpha_1 -\alpha_2)\right) \\ \boldsymbol{\alpha}_{3} \notin \boldsymbol{G}_{3} \\ \boldsymbol{\alpha}_{3} \text{ can be partitioned into } (m, n) \in \boldsymbol{D}_{0} \text{ or } (m, n, h) \in \boldsymbol{D}_{1} \cup \boldsymbol{D}_{2} \\ \nu_0 \leqslant \alpha_{4} < \min\left(\alpha_3, \frac{1}{2}(1-\alpha_1 -\alpha_2 -\alpha_3)\right) \\ \boldsymbol{\alpha}_{4} \notin \boldsymbol{G}_{4} } }S\left(\mathcal{A}_{p_{1} p_{2} p_{3} p_{4}}, p_{4}\right)
\end{equation}
and
\begin{equation}
T_{04} := \sum_{\substack{\boldsymbol{\alpha}_{2} \in C,\ \boldsymbol{\alpha}_{2} \notin \boldsymbol{G}_{2} \\ \nu_0 \leqslant \alpha_{3} < \min\left(\alpha_2, \frac{1}{2}(1-\alpha_1 -\alpha_2)\right) \\ \boldsymbol{\alpha}_{3} \notin \boldsymbol{G}_{3} \\ \boldsymbol{\alpha}_{3} \text{ cannot be partitioned into } (m, n) \in \boldsymbol{D}_{0} \text{ or } (m, n, h) \in \boldsymbol{D}_{1} \cup \boldsymbol{D}_{2} \\ \nu_0 \leqslant \alpha_{4} < \frac{1}{2}\alpha_1 \\ \boldsymbol{\alpha}_{4}^{\prime} \notin \boldsymbol{G}_{4} } }S\left(\mathcal{A}_{\beta p_{2} p_{3} p_{4}}, p_{4}\right).
\end{equation}
For $T_{03}$, we can perform Buchstab's identity twice more if $\boldsymbol{\alpha}_{4} \in \boldsymbol{D}^{*}$, and we can use Buchstab's identity twice with a role-reversal if $\boldsymbol{\alpha}_{4} \in \boldsymbol{D}^{\ddag}$. Again, we can apply Buchstab's identity in reverse to gain some savings by making almost-primes visible. Similar to the decomposition procedure (15), we have the following expression after the decomposition of $T_{03}$:
\begin{align}
\nonumber T_{03} =&\ \sum_{\substack{\boldsymbol{\alpha}_{2} \in C,\ \boldsymbol{\alpha}_{2} \notin \boldsymbol{G}_{2} \\ \nu_0 \leqslant \alpha_{3} < \min\left(\alpha_2, \frac{1}{2}(1-\alpha_1 -\alpha_2)\right) \\ \boldsymbol{\alpha}_{3} \notin \boldsymbol{G}_{3} \\ \boldsymbol{\alpha}_{3} \text{ can be partitioned into } (m, n) \in \boldsymbol{D}_{0} \text{ or } (m, n, h) \in \boldsymbol{D}_{1} \cup \boldsymbol{D}_{2} \\ \nu_0 \leqslant \alpha_{4} < \min\left(\alpha_3, \frac{1}{2}(1-\alpha_1 -\alpha_2 -\alpha_3)\right) \\ \boldsymbol{\alpha}_{4} \notin \boldsymbol{G}_{4} } }S\left(\mathcal{A}_{p_{1} p_{2} p_{3} p_{4}}, p_{4}\right) \\
\nonumber =&\ \sum_{\substack{\boldsymbol{\alpha}_{2} \in C,\ \boldsymbol{\alpha}_{2} \notin \boldsymbol{G}_{2} \\ \nu_0 \leqslant \alpha_{3} < \min\left(\alpha_2, \frac{1}{2}(1-\alpha_1 -\alpha_2)\right) \\ \boldsymbol{\alpha}_{3} \notin \boldsymbol{G}_{3} \\ \boldsymbol{\alpha}_{3} \text{ can be partitioned into } (m, n) \in \boldsymbol{D}_{0} \text{ or } (m, n, h) \in \boldsymbol{D}_{1} \cup \boldsymbol{D}_{2} \\ \nu_0 \leqslant \alpha_{4} < \min\left(\alpha_3, \frac{1}{2}(1-\alpha_1 -\alpha_2 -\alpha_3)\right) \\ \boldsymbol{\alpha}_{4} \notin \boldsymbol{G}_{4},\ \boldsymbol{\alpha}_{4} \notin \boldsymbol{D}^{*},\ \boldsymbol{\alpha}_{4} \notin \boldsymbol{D}^{\ddag} } }S\left(\mathcal{A}_{p_{1} p_{2} p_{3} p_{4}}, p_{4}\right) \\
\nonumber &+ \sum_{\substack{\boldsymbol{\alpha}_{2} \in C,\ \boldsymbol{\alpha}_{2} \notin \boldsymbol{G}_{2} \\ \nu_0 \leqslant \alpha_{3} < \min\left(\alpha_2, \frac{1}{2}(1-\alpha_1 -\alpha_2)\right) \\ \boldsymbol{\alpha}_{3} \notin \boldsymbol{G}_{3} \\ \boldsymbol{\alpha}_{3} \text{ can be partitioned into } (m, n) \in \boldsymbol{D}_{0} \text{ or } (m, n, h) \in \boldsymbol{D}_{1} \cup \boldsymbol{D}_{2} \\ \nu_0 \leqslant \alpha_{4} < \min\left(\alpha_3, \frac{1}{2}(1-\alpha_1 -\alpha_2 -\alpha_3)\right) \\ \boldsymbol{\alpha}_{4} \notin \boldsymbol{G}_{4},\ \boldsymbol{\alpha}_{4} \in \boldsymbol{D}^{*} } }S\left(\mathcal{A}_{p_{1} p_{2} p_{3} p_{4}}, p_{4}\right) \\
\nonumber &+ \sum_{\substack{\boldsymbol{\alpha}_{2} \in C,\ \boldsymbol{\alpha}_{2} \notin \boldsymbol{G}_{2} \\ \nu_0 \leqslant \alpha_{3} < \min\left(\alpha_2, \frac{1}{2}(1-\alpha_1 -\alpha_2)\right) \\ \boldsymbol{\alpha}_{3} \notin \boldsymbol{G}_{3} \\ \boldsymbol{\alpha}_{3} \text{ can be partitioned into } (m, n) \in \boldsymbol{D}_{0} \text{ or } (m, n, h) \in \boldsymbol{D}_{1} \cup \boldsymbol{D}_{2} \\ \nu_0 \leqslant \alpha_{4} < \min\left(\alpha_3, \frac{1}{2}(1-\alpha_1 -\alpha_2 -\alpha_3)\right) \\ \boldsymbol{\alpha}_{4} \notin \boldsymbol{G}_{4},\ \boldsymbol{\alpha}_{4} \notin \boldsymbol{D}^{*},\ \boldsymbol{\alpha}_{4} \in \boldsymbol{D}^{\ddag} } }S\left(\mathcal{A}_{p_{1} p_{2} p_{3} p_{4}}, p_{4}\right) \\
\nonumber =&\ \sum_{\substack{\boldsymbol{\alpha}_{2} \in C,\ \boldsymbol{\alpha}_{2} \notin \boldsymbol{G}_{2} \\ \nu_0 \leqslant \alpha_{3} < \min\left(\alpha_2, \frac{1}{2}(1-\alpha_1 -\alpha_2)\right) \\ \boldsymbol{\alpha}_{3} \notin \boldsymbol{G}_{3} \\ \boldsymbol{\alpha}_{3} \text{ can be partitioned into } (m, n) \in \boldsymbol{D}_{0} \text{ or } (m, n, h) \in \boldsymbol{D}_{1} \cup \boldsymbol{D}_{2} \\ \nu_0 \leqslant \alpha_{4} < \min\left(\alpha_3, \frac{1}{2}(1-\alpha_1 -\alpha_2 -\alpha_3)\right) \\ \boldsymbol{\alpha}_{4} \notin \boldsymbol{G}_{4},\ \boldsymbol{\alpha}_{4} \notin \boldsymbol{D}^{*},\ \boldsymbol{\alpha}_{4} \notin \boldsymbol{D}^{\ddag} } }S\left(\mathcal{A}_{p_{1} p_{2} p_{3} p_{4}}, p_{4}\right) \\
\nonumber &+ \sum_{\substack{\boldsymbol{\alpha}_{2} \in C,\ \boldsymbol{\alpha}_{2} \notin \boldsymbol{G}_{2} \\ \nu_0 \leqslant \alpha_{3} < \min\left(\alpha_2, \frac{1}{2}(1-\alpha_1 -\alpha_2)\right) \\ \boldsymbol{\alpha}_{3} \notin \boldsymbol{G}_{3} \\ \boldsymbol{\alpha}_{3} \text{ can be partitioned into } (m, n) \in \boldsymbol{D}_{0} \text{ or } (m, n, h) \in \boldsymbol{D}_{1} \cup \boldsymbol{D}_{2} \\ \nu_0 \leqslant \alpha_{4} < \min\left(\alpha_3, \frac{1}{2}(1-\alpha_1 -\alpha_2 -\alpha_3)\right) \\ \boldsymbol{\alpha}_{4} \notin \boldsymbol{G}_{4},\ \boldsymbol{\alpha}_{4} \in \boldsymbol{D}^{*} } }S\left(\mathcal{A}_{p_{1} p_{2} p_{3} p_{4}}, x^{\nu_0}\right) \\
\nonumber &- \sum_{\substack{\boldsymbol{\alpha}_{2} \in C,\ \boldsymbol{\alpha}_{2} \notin \boldsymbol{G}_{2} \\ \nu_0 \leqslant \alpha_{3} < \min\left(\alpha_2, \frac{1}{2}(1-\alpha_1 -\alpha_2)\right) \\ \boldsymbol{\alpha}_{3} \notin \boldsymbol{G}_{3} \\ \boldsymbol{\alpha}_{3} \text{ can be partitioned into } (m, n) \in \boldsymbol{D}_{0} \text{ or } (m, n, h) \in \boldsymbol{D}_{1} \cup \boldsymbol{D}_{2} \\ \nu_0 \leqslant \alpha_{4} < \min\left(\alpha_3, \frac{1}{2}(1-\alpha_1 -\alpha_2 -\alpha_3)\right) \\ \boldsymbol{\alpha}_{4} \notin \boldsymbol{G}_{4},\ \boldsymbol{\alpha}_{4} \in \boldsymbol{D}^{*} \\ \nu_0 \leqslant \alpha_{5} < \min\left(\alpha_4, \frac{1}{2}(1-\alpha_1 -\alpha_2 -\alpha_3 -\alpha_4)\right) \\ \boldsymbol{\alpha}_{5} \in \boldsymbol{G}_{5} } }S\left(\mathcal{A}_{p_{1} p_{2} p_{3} p_{4} p_{5}}, p_{5}\right) \\
\nonumber &- \sum_{\substack{\boldsymbol{\alpha}_{2} \in C,\ \boldsymbol{\alpha}_{2} \notin \boldsymbol{G}_{2} \\ \nu_0 \leqslant \alpha_{3} < \min\left(\alpha_2, \frac{1}{2}(1-\alpha_1 -\alpha_2)\right) \\ \boldsymbol{\alpha}_{3} \notin \boldsymbol{G}_{3} \\ \boldsymbol{\alpha}_{3} \text{ can be partitioned into } (m, n) \in \boldsymbol{D}_{0} \text{ or } (m, n, h) \in \boldsymbol{D}_{1} \cup \boldsymbol{D}_{2} \\ \nu_0 \leqslant \alpha_{4} < \min\left(\alpha_3, \frac{1}{2}(1-\alpha_1 -\alpha_2 -\alpha_3)\right) \\ \boldsymbol{\alpha}_{4} \notin \boldsymbol{G}_{4},\ \boldsymbol{\alpha}_{4} \in \boldsymbol{D}^{*} \\ \nu_0 \leqslant \alpha_{5} < \min\left(\alpha_4, \frac{1}{2}(1-\alpha_1 -\alpha_2 -\alpha_3 -\alpha_4)\right) \\ \boldsymbol{\alpha}_{5} \notin \boldsymbol{G}_{5} } }S\left(\mathcal{A}_{p_{1} p_{2} p_{3} p_{4} p_{5}}, x^{\nu_0}\right) \\
\nonumber &+ \sum_{\substack{\boldsymbol{\alpha}_{2} \in C,\ \boldsymbol{\alpha}_{2} \notin \boldsymbol{G}_{2} \\ \nu_0 \leqslant \alpha_{3} < \min\left(\alpha_2, \frac{1}{2}(1-\alpha_1 -\alpha_2)\right) \\ \boldsymbol{\alpha}_{3} \notin \boldsymbol{G}_{3} \\ \boldsymbol{\alpha}_{3} \text{ can be partitioned into } (m, n) \in \boldsymbol{D}_{0} \text{ or } (m, n, h) \in \boldsymbol{D}_{1} \cup \boldsymbol{D}_{2} \\ \nu_0 \leqslant \alpha_{4} < \min\left(\alpha_3, \frac{1}{2}(1-\alpha_1 -\alpha_2 -\alpha_3)\right) \\ \boldsymbol{\alpha}_{4} \notin \boldsymbol{G}_{4},\ \boldsymbol{\alpha}_{4} \in \boldsymbol{D}^{*} \\ \nu_0 \leqslant \alpha_{5} < \min\left(\alpha_4, \frac{1}{2}(1-\alpha_1 -\alpha_2 -\alpha_3 -\alpha_4)\right) \\ \boldsymbol{\alpha}_{5} \notin \boldsymbol{G}_{5} \\ \nu_0 \leqslant \alpha_{6} < \min\left(\alpha_5, \frac{1}{2}(1-\alpha_1 -\alpha_2 -\alpha_3 -\alpha_4 -\alpha_5)\right) \\ \boldsymbol{\alpha}_{6} \in \boldsymbol{G}_{6} } }S\left(\mathcal{A}_{p_{1} p_{2} p_{3} p_{4} p_{5} p_{6}}, p_{6}\right) \\
\nonumber &+ \sum_{\substack{\boldsymbol{\alpha}_{2} \in C,\ \boldsymbol{\alpha}_{2} \notin \boldsymbol{G}_{2} \\ \nu_0 \leqslant \alpha_{3} < \min\left(\alpha_2, \frac{1}{2}(1-\alpha_1 -\alpha_2)\right) \\ \boldsymbol{\alpha}_{3} \notin \boldsymbol{G}_{3} \\ \boldsymbol{\alpha}_{3} \text{ can be partitioned into } (m, n) \in \boldsymbol{D}_{0} \text{ or } (m, n, h) \in \boldsymbol{D}_{1} \cup \boldsymbol{D}_{2} \\ \nu_0 \leqslant \alpha_{4} < \min\left(\alpha_3, \frac{1}{2}(1-\alpha_1 -\alpha_2 -\alpha_3)\right) \\ \boldsymbol{\alpha}_{4} \notin \boldsymbol{G}_{4},\ \boldsymbol{\alpha}_{4} \in \boldsymbol{D}^{*} \\ \nu_0 \leqslant \alpha_{5} < \min\left(\alpha_4, \frac{1}{2}(1-\alpha_1 -\alpha_2 -\alpha_3 -\alpha_4)\right) \\ \boldsymbol{\alpha}_{5} \notin \boldsymbol{G}_{5} \\ \nu_0 \leqslant \alpha_{6} < \min\left(\alpha_5, \frac{1}{2}(1-\alpha_1 -\alpha_2 -\alpha_3 -\alpha_4 -\alpha_5)\right) \\ \boldsymbol{\alpha}_{6} \notin \boldsymbol{G}_{6} } }S\left(\mathcal{A}_{p_{1} p_{2} p_{3} p_{4} p_{5} p_{6}}, p_{6}\right) \\
\nonumber &+ \sum_{\substack{\boldsymbol{\alpha}_{2} \in C,\ \boldsymbol{\alpha}_{2} \notin \boldsymbol{G}_{2} \\ \nu_0 \leqslant \alpha_{3} < \min\left(\alpha_2, \frac{1}{2}(1-\alpha_1 -\alpha_2)\right) \\ \boldsymbol{\alpha}_{3} \notin \boldsymbol{G}_{3} \\ \boldsymbol{\alpha}_{3} \text{ can be partitioned into } (m, n) \in \boldsymbol{D}_{0} \text{ or } (m, n, h) \in \boldsymbol{D}_{1} \cup \boldsymbol{D}_{2} \\ \nu_0 \leqslant \alpha_{4} < \min\left(\alpha_3, \frac{1}{2}(1-\alpha_1 -\alpha_2 -\alpha_3)\right) \\ \boldsymbol{\alpha}_{4} \notin \boldsymbol{G}_{4},\ \boldsymbol{\alpha}_{4} \notin \boldsymbol{D}^{*},\ \boldsymbol{\alpha}_{4} \in \boldsymbol{D}^{\ddag} } }S\left(\mathcal{A}_{p_{1} p_{2} p_{3} p_{4}}, x^{\nu_0}\right) \\
\nonumber &- \sum_{\substack{\boldsymbol{\alpha}_{2} \in C,\ \boldsymbol{\alpha}_{2} \notin \boldsymbol{G}_{2} \\ \nu_0 \leqslant \alpha_{3} < \min\left(\alpha_2, \frac{1}{2}(1-\alpha_1 -\alpha_2)\right) \\ \boldsymbol{\alpha}_{3} \notin \boldsymbol{G}_{3} \\ \boldsymbol{\alpha}_{3} \text{ can be partitioned into } (m, n) \in \boldsymbol{D}_{0} \text{ or } (m, n, h) \in \boldsymbol{D}_{1} \cup \boldsymbol{D}_{2} \\ \nu_0 \leqslant \alpha_{4} < \min\left(\alpha_3, \frac{1}{2}(1-\alpha_1 -\alpha_2 -\alpha_3)\right) \\ \boldsymbol{\alpha}_{4} \notin \boldsymbol{G}_{4},\ \boldsymbol{\alpha}_{4} \notin \boldsymbol{D}^{*},\ \boldsymbol{\alpha}_{4} \in \boldsymbol{D}^{\ddag} \\ \nu_0 \leqslant \alpha_{5} < \min\left(\alpha_4, \frac{1}{2}(1-\alpha_1 -\alpha_2 -\alpha_3 -\alpha_4)\right) \\ \boldsymbol{\alpha}_{5} \in \boldsymbol{G}_{5} } }S\left(\mathcal{A}_{p_{1} p_{2} p_{3} p_{4} p_{5}}, p_{5}\right) \\
\nonumber &- \sum_{\substack{\boldsymbol{\alpha}_{2} \in C,\ \boldsymbol{\alpha}_{2} \notin \boldsymbol{G}_{2} \\ \nu_0 \leqslant \alpha_{3} < \min\left(\alpha_2, \frac{1}{2}(1-\alpha_1 -\alpha_2)\right) \\ \boldsymbol{\alpha}_{3} \notin \boldsymbol{G}_{3} \\ \boldsymbol{\alpha}_{3} \text{ can be partitioned into } (m, n) \in \boldsymbol{D}_{0} \text{ or } (m, n, h) \in \boldsymbol{D}_{1} \cup \boldsymbol{D}_{2} \\ \nu_0 \leqslant \alpha_{4} < \min\left(\alpha_3, \frac{1}{2}(1-\alpha_1 -\alpha_2 -\alpha_3)\right) \\ \boldsymbol{\alpha}_{4} \notin \boldsymbol{G}_{4},\ \boldsymbol{\alpha}_{4} \notin \boldsymbol{D}^{*},\ \boldsymbol{\alpha}_{4} \in \boldsymbol{D}^{\ddag} \\ \nu_0 \leqslant \alpha_{5} < \min\left(\alpha_4, \frac{1}{2}(1-\alpha_1 -\alpha_2 -\alpha_3 -\alpha_4)\right) \\ \boldsymbol{\alpha}_{5} \notin \boldsymbol{G}_{5} } }S\left(\mathcal{A}_{\gamma p_{2} p_{3} p_{4} p_{5}}, x^{\nu_0}\right) \\
\nonumber &+ \sum_{\substack{\boldsymbol{\alpha}_{2} \in C,\ \boldsymbol{\alpha}_{2} \notin \boldsymbol{G}_{2} \\ \nu_0 \leqslant \alpha_{3} < \min\left(\alpha_2, \frac{1}{2}(1-\alpha_1 -\alpha_2)\right) \\ \boldsymbol{\alpha}_{3} \notin \boldsymbol{G}_{3} \\ \boldsymbol{\alpha}_{3} \text{ can be partitioned into } (m, n) \in \boldsymbol{D}_{0} \text{ or } (m, n, h) \in \boldsymbol{D}_{1} \cup \boldsymbol{D}_{2} \\ \nu_0 \leqslant \alpha_{4} < \min\left(\alpha_3, \frac{1}{2}(1-\alpha_1 -\alpha_2 -\alpha_3)\right) \\ \boldsymbol{\alpha}_{4} \notin \boldsymbol{G}_{4},\ \boldsymbol{\alpha}_{4} \notin \boldsymbol{D}^{*},\ \boldsymbol{\alpha}_{4} \in \boldsymbol{D}^{\ddag} \\ \nu_0 \leqslant \alpha_{5} < \min\left(\alpha_4, \frac{1}{2}(1-\alpha_1 -\alpha_2 -\alpha_3 -\alpha_4)\right) \\ \boldsymbol{\alpha}_{5} \notin \boldsymbol{G}_{5} \\ \nu_0 \leqslant \alpha_{6} < \frac{1}{2}\alpha_1 \\ \boldsymbol{\alpha}_{6}^{\ddag} \in \boldsymbol{G}_{6} } }S\left(\mathcal{A}_{\gamma p_{2} p_{3} p_{4} p_{5} p_{6}}, p_{6}\right) \\
\nonumber &+ \sum_{\substack{\boldsymbol{\alpha}_{2} \in C,\ \boldsymbol{\alpha}_{2} \notin \boldsymbol{G}_{2} \\ \nu_0 \leqslant \alpha_{3} < \min\left(\alpha_2, \frac{1}{2}(1-\alpha_1 -\alpha_2)\right) \\ \boldsymbol{\alpha}_{3} \notin \boldsymbol{G}_{3} \\ \boldsymbol{\alpha}_{3} \text{ can be partitioned into } (m, n) \in \boldsymbol{D}_{0} \text{ or } (m, n, h) \in \boldsymbol{D}_{1} \cup \boldsymbol{D}_{2} \\ \nu_0 \leqslant \alpha_{4} < \min\left(\alpha_3, \frac{1}{2}(1-\alpha_1 -\alpha_2 -\alpha_3)\right) \\ \boldsymbol{\alpha}_{4} \notin \boldsymbol{G}_{4},\ \boldsymbol{\alpha}_{4} \notin \boldsymbol{D}^{*},\ \boldsymbol{\alpha}_{4} \in \boldsymbol{D}^{\ddag} \\ \nu_0 \leqslant \alpha_{5} < \min\left(\alpha_4, \frac{1}{2}(1-\alpha_1 -\alpha_2 -\alpha_3 -\alpha_4)\right) \\ \boldsymbol{\alpha}_{5} \notin \boldsymbol{G}_{5} \\ \nu_0 \leqslant \alpha_{6} < \frac{1}{2}\alpha_1 \\ \boldsymbol{\alpha}_{6}^{\ddag} \notin \boldsymbol{G}_{6} } }S\left(\mathcal{A}_{\gamma p_{2} p_{3} p_{4} p_{5} p_{6}}, p_{6}\right) \\
=&\ T_{031} + S_{11} - S_{12} - S_{13} + S_{14} + T_{032} + S_{15} - S_{16} - S_{17} + S_{18} + T_{033},
\end{align}
where $\gamma \sim x^{1-\alpha_1 -\alpha_2 -\alpha_3 -\alpha_4 -\alpha_5}$, $\left(\gamma, P(p_{5})\right)=1$ and
$$
\boldsymbol{\alpha}_{6}^{\ddag} =(1-\alpha_1-\alpha_2-\alpha_3-\alpha_4-\alpha_5,\ \alpha_2,\ \alpha_3,\ \alpha_4,\ \alpha_5,\ \alpha_6).
$$
We can give asymptotic formulas for $S_{11}$--$S_{14}$, $S_{16}$ and $S_{18}$. For $S_{15}$ we note that $\boldsymbol{\alpha}_{4} \in D^{\dag}$ yields an asymptotic formula. For $S_{17}$, the variables $(\gamma,\ p_2,\ p_3,\ p_4,\ p_5)$ correspond to the variables $(1-\alpha_1-\alpha_2-\alpha_3-\alpha_4-\alpha_5,\ \alpha_2,\ \alpha_3,\ \alpha_4,\ \alpha_5)$. Combining the first and the last variables, we obtain a new set of variables $(1-\alpha_1-\alpha_2-\alpha_3-\alpha_4,\ \alpha_2,\ \alpha_3,\ \alpha_4)$. Now by the condition $(1-\alpha_1-\alpha_2-\alpha_3-\alpha_4,\ \alpha_2,\ \alpha_3,\ \alpha_4) \in D^{\dag}$, we know that $S_{17}$ also has an asymptotic formula. For $T_{031}$ we can use Buchstab's identity in reverse to subtract the sum
\begin{equation}
\sum_{\substack{\boldsymbol{\alpha}_{2} \in C,\ \boldsymbol{\alpha}_{2} \notin \boldsymbol{G}_{2} \\ \nu_0 \leqslant \alpha_{3} < \min\left(\alpha_2, \frac{1}{2}(1-\alpha_1 -\alpha_2)\right) \\ \boldsymbol{\alpha}_{3} \notin \boldsymbol{G}_{3} \\ \boldsymbol{\alpha}_{3} \text{ can be partitioned into } (m, n) \in \boldsymbol{D}_{0} \text{ or } (m, n, h) \in \boldsymbol{D}_{1} \cup \boldsymbol{D}_{2} \\ \nu_0 \leqslant \alpha_{4} < \min\left(\alpha_3, \frac{1}{2}(1-\alpha_1 -\alpha_2 -\alpha_3)\right) \\ \boldsymbol{\alpha}_{4} \notin \boldsymbol{G}_{4},\ \boldsymbol{\alpha}_{4} \notin \boldsymbol{D}^{*},\ \boldsymbol{\alpha}_{4} \notin \boldsymbol{D}^{\ddag} \\ \alpha_4 < \alpha_5 < \frac{1}{2}(1-\alpha_1 -\alpha_2 -\alpha_3 -\alpha_4) \\ \boldsymbol{\alpha}_{5} \in \boldsymbol{G}_{5} } }S\left(\mathcal{A}_{p_{1} p_{2} p_{3} p_{4} p_{5}}, p_{5}\right)
\end{equation}
from the loss, and for $T_{032}$ we can perform a straightforward decomposition if $\boldsymbol{\alpha}_{6} \in \boldsymbol{D}^{**}$, leading to an eight-dimensional sum
\begin{align}
\sum_{\substack{\boldsymbol{\alpha}_{2} \in C,\ \boldsymbol{\alpha}_{2} \notin \boldsymbol{G}_{2} \\ \nu_0 \leqslant \alpha_{3} < \min\left(\alpha_2, \frac{1}{2}(1-\alpha_1 -\alpha_2)\right) \\ \boldsymbol{\alpha}_{3} \notin \boldsymbol{G}_{3} \\ \boldsymbol{\alpha}_{3} \text{ can be partitioned into } (m, n) \in \boldsymbol{D}_{0} \text{ or } (m, n, h) \in \boldsymbol{D}_{1} \cup \boldsymbol{D}_{2} \\ \nu_0 \leqslant \alpha_{4} < \min\left(\alpha_3, \frac{1}{2}(1-\alpha_1 -\alpha_2 -\alpha_3)\right) \\ \boldsymbol{\alpha}_{4} \notin \boldsymbol{G}_{4},\ \boldsymbol{\alpha}_{4} \in \boldsymbol{D}^{*} \\ \nu_0 \leqslant \alpha_{5} < \min\left(\alpha_4, \frac{1}{2}(1-\alpha_1 -\alpha_2 -\alpha_3 -\alpha_4)\right) \\ \boldsymbol{\alpha}_{5} \notin \boldsymbol{G}_{5} \\ \nu_0 \leqslant \alpha_{6} < \min\left(\alpha_5, \frac{1}{2}(1-\alpha_1 -\alpha_2 -\alpha_3 -\alpha_4 -\alpha_5)\right) \\ \boldsymbol{\alpha}_{6} \notin \boldsymbol{G}_{6},\ \boldsymbol{\alpha}_{6} \in \boldsymbol{D}^{**} \\ \nu_0 \leqslant \alpha_{7} < \min\left(\alpha_6, \frac{1}{2}(1-\alpha_1 -\alpha_2 -\alpha_3 -\alpha_4 -\alpha_5 -\alpha_6)\right) \\ \boldsymbol{\alpha}_{7} \notin \boldsymbol{G}_{7} \\ \nu_0 \leqslant \alpha_{8} < \min\left(\alpha_7, \frac{1}{2}(1-\alpha_1 -\alpha_2 -\alpha_3 -\alpha_4 -\alpha_5 -\alpha_6 -\alpha_7)\right) \\ \boldsymbol{\alpha}_{8} \notin \boldsymbol{G}_{8} } }S\left(\mathcal{A}_{p_{1} p_{2} p_{3} p_{4} p_{5} p_{6} p_{7} p_{8}}. p_{8}\right)
\end{align}
Note that in $T_{033}$ we counts numbers of the form $\gamma p_2 p_3 p_4 p_5 p_6 \gamma_1$ with two almost-prime variables $\gamma$ and $\gamma_1 \sim x^{\alpha_1 -\alpha_6}$, hence we can perform a straightforward decomposition if either $\boldsymbol{\alpha}_{6}^{\ddag} \in \boldsymbol{D}^{**}$ or $\boldsymbol{\alpha}_{6}^{\ddag \prime} \in \boldsymbol{D}^{**}$, where
$$
\boldsymbol{\alpha}_{6}^{\ddag \prime} = (\alpha_1-\alpha_6,\ \alpha_2,\ \alpha_3,\ \alpha_4,\ \alpha_6,\ \alpha_5).
$$
That is, we write
\begin{align}
\nonumber T_{033} =&\ \sum_{\substack{\boldsymbol{\alpha}_{2} \in C,\ \boldsymbol{\alpha}_{2} \notin \boldsymbol{G}_{2} \\ \nu_0 \leqslant \alpha_{3} < \min\left(\alpha_2, \frac{1}{2}(1-\alpha_1 -\alpha_2)\right) \\ \boldsymbol{\alpha}_{3} \notin \boldsymbol{G}_{3} \\ \boldsymbol{\alpha}_{3} \text{ can be partitioned into } (m, n) \in \boldsymbol{D}_{0} \text{ or } (m, n, h) \in \boldsymbol{D}_{1} \cup \boldsymbol{D}_{2} \\ \nu_0 \leqslant \alpha_{4} < \min\left(\alpha_3, \frac{1}{2}(1-\alpha_1 -\alpha_2 -\alpha_3)\right) \\ \boldsymbol{\alpha}_{4} \notin \boldsymbol{G}_{4},\ \boldsymbol{\alpha}_{4} \notin \boldsymbol{D}^{*},\ \boldsymbol{\alpha}_{4} \in \boldsymbol{D}^{\ddag} \\ \nu_0 \leqslant \alpha_{5} < \min\left(\alpha_4, \frac{1}{2}(1-\alpha_1 -\alpha_2 -\alpha_3 -\alpha_4)\right) \\ \boldsymbol{\alpha}_{5} \notin \boldsymbol{G}_{5} \\ \nu_0 \leqslant \alpha_{6} < \frac{1}{2}\alpha_1 \\ \boldsymbol{\alpha}_{6}^{\ddag} \notin \boldsymbol{G}_{6} } }S\left(\mathcal{A}_{\gamma p_{2} p_{3} p_{4} p_{5} p_{6}}, p_{6}\right) \\
\nonumber =&\ \sum_{\substack{\boldsymbol{\alpha}_{2} \in C,\ \boldsymbol{\alpha}_{2} \notin \boldsymbol{G}_{2} \\ \nu_0 \leqslant \alpha_{3} < \min\left(\alpha_2, \frac{1}{2}(1-\alpha_1 -\alpha_2)\right) \\ \boldsymbol{\alpha}_{3} \notin \boldsymbol{G}_{3} \\ \boldsymbol{\alpha}_{3} \text{ can be partitioned into } (m, n) \in \boldsymbol{D}_{0} \text{ or } (m, n, h) \in \boldsymbol{D}_{1} \cup \boldsymbol{D}_{2} \\ \nu_0 \leqslant \alpha_{4} < \min\left(\alpha_3, \frac{1}{2}(1-\alpha_1 -\alpha_2 -\alpha_3)\right) \\ \boldsymbol{\alpha}_{4} \notin \boldsymbol{G}_{4},\ \boldsymbol{\alpha}_{4} \notin \boldsymbol{D}^{*},\ \boldsymbol{\alpha}_{4} \in \boldsymbol{D}^{\ddag} \\ \nu_0 \leqslant \alpha_{5} < \min\left(\alpha_4, \frac{1}{2}(1-\alpha_1 -\alpha_2 -\alpha_3 -\alpha_4)\right) \\ \boldsymbol{\alpha}_{5} \notin \boldsymbol{G}_{5} \\ \nu_0 \leqslant \alpha_{6} < \frac{1}{2}\alpha_1 \\ \boldsymbol{\alpha}_{6}^{\ddag} \notin \boldsymbol{G}_{6},\ \boldsymbol{\alpha}_{6}^{\ddag} \notin \boldsymbol{D}^{**},\ \boldsymbol{\alpha}_{6}^{\ddag \prime} \notin \boldsymbol{D}^{**} } }S\left(\mathcal{A}_{\gamma p_{2} p_{3} p_{4} p_{5} p_{6}}, p_{6}\right) \\
\nonumber &+ \sum_{\substack{\boldsymbol{\alpha}_{2} \in C,\ \boldsymbol{\alpha}_{2} \notin \boldsymbol{G}_{2} \\ \nu_0 \leqslant \alpha_{3} < \min\left(\alpha_2, \frac{1}{2}(1-\alpha_1 -\alpha_2)\right) \\ \boldsymbol{\alpha}_{3} \notin \boldsymbol{G}_{3} \\ \boldsymbol{\alpha}_{3} \text{ can be partitioned into } (m, n) \in \boldsymbol{D}_{0} \text{ or } (m, n, h) \in \boldsymbol{D}_{1} \cup \boldsymbol{D}_{2} \\ \nu_0 \leqslant \alpha_{4} < \min\left(\alpha_3, \frac{1}{2}(1-\alpha_1 -\alpha_2 -\alpha_3)\right) \\ \boldsymbol{\alpha}_{4} \notin \boldsymbol{G}_{4},\ \boldsymbol{\alpha}_{4} \notin \boldsymbol{D}^{*},\ \boldsymbol{\alpha}_{4} \in \boldsymbol{D}^{\ddag} \\ \nu_0 \leqslant \alpha_{5} < \min\left(\alpha_4, \frac{1}{2}(1-\alpha_1 -\alpha_2 -\alpha_3 -\alpha_4)\right) \\ \boldsymbol{\alpha}_{5} \notin \boldsymbol{G}_{5} \\ \nu_0 \leqslant \alpha_{6} < \frac{1}{2}\alpha_1 \\ \boldsymbol{\alpha}_{6}^{\ddag} \notin \boldsymbol{G}_{6},\ \boldsymbol{\alpha}_{6}^{\ddag} \in \boldsymbol{D}^{**} } }S\left(\mathcal{A}_{\gamma p_{2} p_{3} p_{4} p_{5} p_{6}}, p_{6}\right) \\
\nonumber &+ \sum_{\substack{\boldsymbol{\alpha}_{2} \in C,\ \boldsymbol{\alpha}_{2} \notin \boldsymbol{G}_{2} \\ \nu_0 \leqslant \alpha_{3} < \min\left(\alpha_2, \frac{1}{2}(1-\alpha_1 -\alpha_2)\right) \\ \boldsymbol{\alpha}_{3} \notin \boldsymbol{G}_{3} \\ \boldsymbol{\alpha}_{3} \text{ can be partitioned into } (m, n) \in \boldsymbol{D}_{0} \text{ or } (m, n, h) \in \boldsymbol{D}_{1} \cup \boldsymbol{D}_{2} \\ \nu_0 \leqslant \alpha_{4} < \min\left(\alpha_3, \frac{1}{2}(1-\alpha_1 -\alpha_2 -\alpha_3)\right) \\ \boldsymbol{\alpha}_{4} \notin \boldsymbol{G}_{4},\ \boldsymbol{\alpha}_{4} \notin \boldsymbol{D}^{*},\ \boldsymbol{\alpha}_{4} \in \boldsymbol{D}^{\ddag} \\ \nu_0 \leqslant \alpha_{5} < \min\left(\alpha_4, \frac{1}{2}(1-\alpha_1 -\alpha_2 -\alpha_3 -\alpha_4)\right) \\ \boldsymbol{\alpha}_{5} \notin \boldsymbol{G}_{5} \\ \nu_0 \leqslant \alpha_{6} < \frac{1}{2}\alpha_1 \\ \boldsymbol{\alpha}_{6}^{\ddag} \notin \boldsymbol{G}_{6},\ \boldsymbol{\alpha}_{6}^{\ddag} \notin \boldsymbol{D}^{**},\ \boldsymbol{\alpha}_{6}^{\ddag \prime} \in \boldsymbol{D}^{**} } }S\left(\mathcal{A}_{\gamma p_{2} p_{3} p_{4} p_{5} p_{6}}, p_{6}\right) \\
=&\ T_{0331} + T_{0332} + T_{0333}.
\end{align}
We discard the whole of $T_{0331}$. For $T_{0332}$ we perform a straightforward decomposition to reach an eight-dimensional sum
\begin{equation}
\sum_{\substack{\boldsymbol{\alpha}_{2} \in C,\ \boldsymbol{\alpha}_{2} \notin \boldsymbol{G}_{2} \\ \nu_0 \leqslant \alpha_{3} < \min\left(\alpha_2, \frac{1}{2}(1-\alpha_1 -\alpha_2)\right) \\ \boldsymbol{\alpha}_{3} \notin \boldsymbol{G}_{3} \\ \boldsymbol{\alpha}_{3} \text{ can be partitioned into } (m, n) \in \boldsymbol{D}_{0} \text{ or } (m, n, h) \in \boldsymbol{D}_{1} \cup \boldsymbol{D}_{2} \\ \nu_0 \leqslant \alpha_{4} < \min\left(\alpha_3, \frac{1}{2}(1-\alpha_1 -\alpha_2 -\alpha_3)\right) \\ \boldsymbol{\alpha}_{4} \notin \boldsymbol{G}_{4},\ \boldsymbol{\alpha}_{4} \notin \boldsymbol{D}^{*},\ \boldsymbol{\alpha}_{4} \in \boldsymbol{D}^{\ddag} \\ \nu_0 \leqslant \alpha_{5} < \min\left(\alpha_4, \frac{1}{2}(1-\alpha_1 -\alpha_2 -\alpha_3 -\alpha_4)\right) \\ \boldsymbol{\alpha}_{5} \notin \boldsymbol{G}_{5} \\ \nu_0 \leqslant \alpha_{6} < \frac{1}{2}\alpha_1 \\ \boldsymbol{\alpha}_{6}^{\ddag} \notin \boldsymbol{G}_{6},\ \boldsymbol{\alpha}_{6}^{\ddag} \in \boldsymbol{D}^{**} \\ \nu_0 \leqslant \alpha_{7} < \min\left(\alpha_6, \frac{1}{2}(\alpha_1 -\alpha_6)\right) \\ \boldsymbol{\alpha}_{7}^{\ddag} \notin \boldsymbol{G}_{7} \\ \nu_0 \leqslant \alpha_{8} < \min\left(\alpha_7, \frac{1}{2}(\alpha_1 -\alpha_6 -\alpha_7)\right) \\ \boldsymbol{\alpha}_{8}^{\ddag} \notin \boldsymbol{G}_{8} } }S\left(\mathcal{A}_{\gamma p_{2} p_{3} p_{4} p_{5} p_{6} p_{7} p_{8}}, p_{8}\right),
\end{equation}
where 
$$
\boldsymbol{\alpha}_{7}^{\ddag} =(1-\alpha_1-\alpha_2-\alpha_3-\alpha_4-\alpha_5,\ \alpha_2,\ \alpha_3,\ \alpha_4,\ \alpha_5,\ \alpha_6,\ \alpha_7)
$$
and
$$
\boldsymbol{\alpha}_{8}^{\ddag} =(1-\alpha_1-\alpha_2-\alpha_3-\alpha_4-\alpha_5,\ \alpha_2,\ \alpha_3,\ \alpha_4,\ \alpha_5,\ \alpha_6,\ \alpha_7,\ \alpha_8).
$$
For $T_{0333}$ we reverse the roles of $\gamma$ and $\gamma_1$ to get
\begin{align}
\nonumber T_{0333} =&\ \sum_{\substack{\boldsymbol{\alpha}_{2} \in C,\ \boldsymbol{\alpha}_{2} \notin \boldsymbol{G}_{2} \\ \nu_0 \leqslant \alpha_{3} < \min\left(\alpha_2, \frac{1}{2}(1-\alpha_1 -\alpha_2)\right) \\ \boldsymbol{\alpha}_{3} \notin \boldsymbol{G}_{3} \\ \boldsymbol{\alpha}_{3} \text{ can be partitioned into } (m, n) \in \boldsymbol{D}_{0} \text{ or } (m, n, h) \in \boldsymbol{D}_{1} \cup \boldsymbol{D}_{2} \\ \nu_0 \leqslant \alpha_{4} < \min\left(\alpha_3, \frac{1}{2}(1-\alpha_1 -\alpha_2 -\alpha_3)\right) \\ \boldsymbol{\alpha}_{4} \notin \boldsymbol{G}_{4},\ \boldsymbol{\alpha}_{4} \notin \boldsymbol{D}^{*},\ \boldsymbol{\alpha}_{4} \in \boldsymbol{D}^{\ddag} \\ \nu_0 \leqslant \alpha_{5} < \min\left(\alpha_4, \frac{1}{2}(1-\alpha_1 -\alpha_2 -\alpha_3 -\alpha_4)\right) \\ \boldsymbol{\alpha}_{5} \notin \boldsymbol{G}_{5} \\ \nu_0 \leqslant \alpha_{6} < \frac{1}{2}\alpha_1 \\ \boldsymbol{\alpha}_{6}^{\ddag} \notin \boldsymbol{G}_{6},\ \boldsymbol{\alpha}_{6}^{\ddag} \notin \boldsymbol{D}^{**},\ \boldsymbol{\alpha}_{6}^{\ddag \prime} \in \boldsymbol{D}^{**} } }S\left(\mathcal{A}_{\gamma p_{2} p_{3} p_{4} p_{5} p_{6}}, p_{6}\right) \\
=&\ \sum_{\substack{\boldsymbol{\alpha}_{2} \in C,\ \boldsymbol{\alpha}_{2} \notin \boldsymbol{G}_{2} \\ \nu_0 \leqslant \alpha_{3} < \min\left(\alpha_2, \frac{1}{2}(1-\alpha_1 -\alpha_2)\right) \\ \boldsymbol{\alpha}_{3} \notin \boldsymbol{G}_{3} \\ \boldsymbol{\alpha}_{3} \text{ can be partitioned into } (m, n) \in \boldsymbol{D}_{0} \text{ or } (m, n, h) \in \boldsymbol{D}_{1} \cup \boldsymbol{D}_{2} \\ \nu_0 \leqslant \alpha_{4} < \min\left(\alpha_3, \frac{1}{2}(1-\alpha_1 -\alpha_2 -\alpha_3)\right) \\ \boldsymbol{\alpha}_{4} \notin \boldsymbol{G}_{4},\ \boldsymbol{\alpha}_{4} \notin \boldsymbol{D}^{*},\ \boldsymbol{\alpha}_{4} \in \boldsymbol{D}^{\ddag} \\ \nu_0 \leqslant \alpha_{5} < \min\left(\alpha_4, \frac{1}{2}(1-\alpha_1 -\alpha_2 -\alpha_3 -\alpha_4)\right) \\ \boldsymbol{\alpha}_{5} \notin \boldsymbol{G}_{5} \\ \nu_0 \leqslant \alpha_{6} < \frac{1}{2}\alpha_1 \\ \boldsymbol{\alpha}_{6}^{\ddag} \notin \boldsymbol{G}_{6},\ \boldsymbol{\alpha}_{6}^{\ddag} \notin \boldsymbol{D}^{**},\ \boldsymbol{\alpha}_{6}^{\ddag \prime} \in \boldsymbol{D}^{**} } }S\left(\mathcal{A}_{\gamma_1 p_{2} p_{3} p_{4} p_{5} p_{6}}, p_{5}\right),
\end{align}
where we can perform a straightforward decomposition on the sum on the right hand side to reach an eight-dimensional sum
\begin{equation}
\sum_{\substack{\boldsymbol{\alpha}_{2} \in C,\ \boldsymbol{\alpha}_{2} \notin \boldsymbol{G}_{2} \\ \nu_0 \leqslant \alpha_{3} < \min\left(\alpha_2, \frac{1}{2}(1-\alpha_1 -\alpha_2)\right) \\ \boldsymbol{\alpha}_{3} \notin \boldsymbol{G}_{3} \\ \boldsymbol{\alpha}_{3} \text{ can be partitioned into } (m, n) \in \boldsymbol{D}_{0} \text{ or } (m, n, h) \in \boldsymbol{D}_{1} \cup \boldsymbol{D}_{2} \\ \nu_0 \leqslant \alpha_{4} < \min\left(\alpha_3, \frac{1}{2}(1-\alpha_1 -\alpha_2 -\alpha_3)\right) \\ \boldsymbol{\alpha}_{4} \notin \boldsymbol{G}_{4},\ \boldsymbol{\alpha}_{4} \notin \boldsymbol{D}^{*},\ \boldsymbol{\alpha}_{4} \in \boldsymbol{D}^{\ddag} \\ \nu_0 \leqslant \alpha_{5} < \min\left(\alpha_4, \frac{1}{2}(1-\alpha_1 -\alpha_2 -\alpha_3 -\alpha_4)\right) \\ \boldsymbol{\alpha}_{5} \notin \boldsymbol{G}_{5} \\ \nu_0 \leqslant \alpha_{6} < \frac{1}{2}\alpha_1 \\ \boldsymbol{\alpha}_{6}^{\ddag} \notin \boldsymbol{G}_{6},\ \boldsymbol{\alpha}_{6}^{\ddag} \notin \boldsymbol{D}^{**},\ \boldsymbol{\alpha}_{6}^{\ddag \prime} \in \boldsymbol{D}^{**} \\ \nu_0 \leqslant \alpha_{7} < \min\left(\alpha_5, \frac{1}{2}(1-\alpha_1 -\alpha_2 -\alpha_3 -\alpha_4 -\alpha_5)\right) \\ \boldsymbol{\alpha}_{7}^{\ddag \prime} \notin \boldsymbol{G}_{7} \\ \nu_0 \leqslant \alpha_{8} < \min\left(\alpha_7, \frac{1}{2}(1-\alpha_1 -\alpha_2 -\alpha_3 -\alpha_4 -\alpha_5 -\alpha_7)\right) \\ \boldsymbol{\alpha}_{8}^{\ddag \prime} \notin \boldsymbol{G}_{8} } }S\left(\mathcal{A}_{\gamma_1 p_{2} p_{3} p_{4} p_{5} p_{6} p_{7} p_{8}}, p_{8}\right),
\end{equation}
where
$$
\boldsymbol{\alpha}_{7}^{\ddag \prime} =(\alpha_1-\alpha_6,\ \alpha_2,\ \alpha_3,\ \alpha_4,\ \alpha_5,\ \alpha_6,\ \alpha_7)
$$
and
$$
\boldsymbol{\alpha}_{8}^{\ddag \prime} =(\alpha_1-\alpha_6,\ \alpha_2,\ \alpha_3,\ \alpha_4,\ \alpha_5,\ \alpha_6,\ \alpha_7,\ \alpha_8).
$$
We can also use Buchstab's identity in reverse on those sums, but the corresponding savings are quite small.

For $T_{04}$ we can also use the devices mentioned earlier to take into account the savings over this sum. Note that there are two almost-prime variables counted by this sum, so the use of straightforward decompositions is just like the case in $T_{033}$. The sum $T_{04}$ counts numbers of the form $\beta p_2 p_3 p_4 \beta_1$, where $\beta \sim x^{1-\alpha_1 -\alpha_2 -\alpha_3}$, $\left(\beta, P(p_{3})\right)=1$, $\beta_1 \sim x^{\alpha_1 -\alpha_4}$ and $\left(\beta_1, P(p_{4})\right)=1$. Here we can decompose either $\beta$ or $\beta_1$, leading to the six-dimensional sums
\begin{equation}
\sum_{\substack{\boldsymbol{\alpha}_{2} \in C,\ \boldsymbol{\alpha}_{2} \notin \boldsymbol{G}_{2} \\ \nu_0 \leqslant \alpha_{3} < \min\left(\alpha_2, \frac{1}{2}(1-\alpha_1 -\alpha_2)\right) \\ \boldsymbol{\alpha}_{3} \notin \boldsymbol{G}_{3} \\ \boldsymbol{\alpha}_{3} \text{ cannot be partitioned into } (m, n) \in \boldsymbol{D}_{0} \text{ or } (m, n, h) \in \boldsymbol{D}_{1} \cup \boldsymbol{D}_{2} \\ \nu_0 \leqslant \alpha_{4} < \frac{1}{2}\alpha_1 \\ \boldsymbol{\alpha}_{4}^{\prime} \notin \boldsymbol{G}_{4},\ \boldsymbol{\alpha}_{4}^{\prime} \in \boldsymbol{D}^{*} \\ \nu_0 \leqslant \alpha_{5} < \min\left(\alpha_4, \frac{1}{2}(\alpha_1 -\alpha_4)\right) \\ \boldsymbol{\alpha}_{5}^{\prime} \notin \boldsymbol{G}_{5} \\ \nu_0 \leqslant \alpha_{6} < \min\left(\alpha_5, \frac{1}{2}(\alpha_1 -\alpha_4 -\alpha_5)\right) \\ \boldsymbol{\alpha}_{6}^{\prime} \notin \boldsymbol{G}_{6} } }S\left(\mathcal{A}_{\beta p_{2} p_{3} p_{4} p_{5} p_{6}}, p_{6}\right)
\end{equation}
and
\begin{equation}
\sum_{\substack{\boldsymbol{\alpha}_{2} \in C,\ \boldsymbol{\alpha}_{2} \notin \boldsymbol{G}_{2} \\ \nu_0 \leqslant \alpha_{3} < \min\left(\alpha_2, \frac{1}{2}(1-\alpha_1 -\alpha_2)\right) \\ \boldsymbol{\alpha}_{3} \notin \boldsymbol{G}_{3} \\ \boldsymbol{\alpha}_{3} \text{ cannot be partitioned into } (m, n) \in \boldsymbol{D}_{0} \text{ or } (m, n, h) \in \boldsymbol{D}_{1} \cup \boldsymbol{D}_{2} \\ \nu_0 \leqslant \alpha_{4} < \frac{1}{2}\alpha_1 \\ \boldsymbol{\alpha}_{4}^{\prime} \notin \boldsymbol{G}_{4},\ \boldsymbol{\alpha}_{4}^{\prime} \notin \boldsymbol{D}^{*},\ \boldsymbol{\alpha}_{4}^{\prime \prime} \in \boldsymbol{D}^{*} \\ \nu_0 \leqslant \alpha_{5} < \min\left(\alpha_3, \frac{1}{2}(1- \alpha_1 -\alpha_2 -\alpha_3)\right) \\ \boldsymbol{\alpha}_{5}^{\prime \prime} \notin \boldsymbol{G}_{5} \\ \nu_0 \leqslant \alpha_{6} < \min\left(\alpha_5, \frac{1}{2}(1- \alpha_1 -\alpha_2 -\alpha_3 -\alpha_5)\right) \\ \boldsymbol{\alpha}_{6}^{\prime \prime} \notin \boldsymbol{G}_{6} } }S\left(\mathcal{A}_{\beta_1 p_{2} p_{3} p_{4} p_{5} p_{6}}, p_{6}\right),
\end{equation}
where 
$$
\boldsymbol{\alpha}_{5}^{\prime} = (1-\alpha_1-\alpha_2-\alpha_3,\ \alpha_2,\ \alpha_3,\ \alpha_4,\ \alpha_5),
$$
$$
\boldsymbol{\alpha}_{6}^{\prime} = (1-\alpha_1-\alpha_2-\alpha_3,\ \alpha_2,\ \alpha_3,\ \alpha_4,\ \alpha_5,\ \alpha_6),
$$
$$
\boldsymbol{\alpha}_{4}^{\prime \prime} = (\alpha_1-\alpha_4,\ \alpha_2,\ \alpha_4,\ \alpha_3),
$$
$$
\boldsymbol{\alpha}_{5}^{\prime \prime} = (\alpha_1-\alpha_4,\ \alpha_2,\ \alpha_3,\ \alpha_4,\ \alpha_5)
$$
and
$$
\boldsymbol{\alpha}_{6}^{\prime \prime} = (\alpha_1-\alpha_4,\ \alpha_2,\ \alpha_3,\ \alpha_4,\ \alpha_5,\ \alpha_6).
$$
On the remaining of $T_{04}$ (with $\boldsymbol{\alpha}_{4}^{\prime} \notin \boldsymbol{D}^{*}$ and $\boldsymbol{\alpha}_{4}^{\prime \prime} \notin \boldsymbol{D}^{*}$) we can use Buchstab's identity in reverse to make savings, and the use of reversed Buchstab's identity over this sum can be seen as the following: the remaining sum counts numbers of the form $\beta p_2 p_3 p_4 \beta_1$, hence we can decompose either $\beta$ or $\beta_1$, leading to the savings of numbers of the forms
$$
\beta p_2 p_3 p_4 (p_5 \beta_2) \quad \text{where } \beta_2 \sim x^{\alpha_1 -\alpha_4 -\alpha_5},\ \left(\beta_2, P(p_{5})\right)=1,\ \boldsymbol{\alpha}_{5}^{\prime} \in \boldsymbol{G}_{5}
$$
and
$$
(\beta_3 p_5) p_2 p_3 p_4 \beta_1 \quad \text{where } \beta_3 \sim x^{1-\alpha_1 -\alpha_2 -\alpha_3 -\alpha_5},\ \left(\beta_3, P(p_{5})\right)=1,\ \boldsymbol{\alpha}_{5}^{\prime \prime} \in \boldsymbol{G}_{5}.
$$
Here, the numbers of the form
$$
(\beta_3 p_6) p_2 p_3 p_4 (p_5 \beta_2) \quad \text{where } \boldsymbol{\alpha}_{5}^{\prime} \in \boldsymbol{G}_{5} \text{ and } \boldsymbol{\alpha}_{5}^{\prime \prime \prime} := (\alpha_1 -\alpha_4, \alpha_2, \alpha_3, \alpha_4, \alpha_6) \in \boldsymbol{G}_{5}
$$
are counted twice, hence we need to subtract them from the savings. For simplicity we omit the sieve iteration process of this sum. One can see the second part of the estimation of $\Phi_7$ in \cite{Jia928} to understand this decomposing procedure.

Altogether we get a loss from $\sum_{C}$ of
\begin{align}
\nonumber & \left( \int_{\boldsymbol{t}_{4} \in U_{C01}} \frac{\omega \left(\frac{1 - t_1 - t_2 - t_3 - t_4}{t_4}\right)}{t_1 t_2 t_3 t_4^2} d t_4 d t_3 d t_2 d t_1 \right) \\
\nonumber -& \left( \int_{\boldsymbol{t}_{5} \in U_{C02}} \frac{\omega \left(\frac{1 - t_1 - t_2 - t_3 - t_4 - t_5}{t_5}\right)}{t_1 t_2 t_3 t_4 t_5^2} d t_5 d t_4 d t_3 d t_2 d t_1 \right) \\
\nonumber +& \left( \int_{\boldsymbol{t}_{6} \in U_{C03}} \frac{\omega \left(\frac{1 - t_1 - t_2 - t_3 - t_4 - t_5 - t_6}{t_6}\right)}{t_1 t_2 t_3 t_4 t_5 t_6^2} d t_6 d t_5 d t_4 d t_3 d t_2 d t_1 \right) \\
\nonumber +& \left( \int_{\boldsymbol{t}_{6} \in U_{C04}} \frac{\omega \left(\frac{t_1 - t_6}{t_6}\right) \omega \left(\frac{1 - t_1 - t_2 - t_3 - t_4 - t_5}{t_5}\right)}{t_2 t_3 t_4 t_5^2 t_6^2} d t_6 d t_5 d t_4 d t_3 d t_2 d t_1 \right) \\
\nonumber +& \left( \int_{\boldsymbol{t}_{8} \in U_{C05}} \frac{\omega \left(\frac{1 - t_1 - t_2 - t_3 - t_4 - t_5 - t_6 - t_7 - t_8}{t_8}\right)}{t_1 t_2 t_3 t_4 t_5 t_6 t_7 t_8^2} d t_8 d t_7 d t_6 d t_5 d t_4 d t_3 d t_2 d t_1 \right) \\
\nonumber +& \left( \int_{\boldsymbol{t}_{8} \in U_{C06}} \frac{\omega \left(\frac{t_1 - t_6 - t_7 - t_8}{t_8}\right) \omega \left(\frac{1 - t_1 - t_2 - t_3 - t_4 - t_5}{t_5}\right)}{t_2 t_3 t_4 t_5^2 t_6 t_7 t_8^2} d t_8 d t_7 d t_6 d t_5 d t_4 d t_3 d t_2 d t_1 \right) \\
\nonumber +& \left( \int_{\boldsymbol{t}_{8} \in U_{C07}} \frac{\omega \left(\frac{t_1 - t_6}{t_6}\right) \omega \left(\frac{1 - t_1 - t_2 - t_3 - t_4 - t_5  - t_7 - t_8}{t_8}\right)}{t_2 t_3 t_4 t_5 t_6^2 t_7 t_8^2} d t_8 d t_7 d t_6 d t_5 d t_4 d t_3 d t_2 d t_1 \right) \\
\nonumber +& \left( \int_{\boldsymbol{t}_{4} \in U_{C08}} \frac{\omega \left(\frac{t_1 - t_4}{t_4}\right) \omega \left(\frac{1 - t_1 - t_2 - t_3}{t_3}\right)}{t_2 t_3^2 t_4^2} d t_4 d t_3 d t_2 d t_1 \right) \\
\nonumber -& \left( \int_{\boldsymbol{t}_{5} \in U_{C09}} \frac{\omega \left(\frac{t_1 - t_4 - t_5}{t_5}\right) \omega \left(\frac{1 - t_1 - t_2 - t_3}{t_3}\right)}{t_2 t_3^2 t_4 t_5^2} d t_5 d t_4 d t_3 d t_2 d t_1 \right) \\
\nonumber -& \left( \int_{\boldsymbol{t}_{5} \in U_{C10}} \frac{\omega \left(\frac{t_1 - t_4}{t_4}\right) \omega \left(\frac{1 - t_1 - t_2 - t_3 - t_5}{t_5}\right)}{t_2 t_3 t_4^2 t_5^2} d t_5 d t_4 d t_3 d t_2 d t_1 \right) \\
\nonumber +& \left( \int_{\boldsymbol{t}_{6} \in U_{C11}} \frac{\omega \left(\frac{t_1 - t_4 - t_5}{t_5}\right) \omega \left(\frac{1 - t_1 - t_2 - t_3 - t_6}{t_6}\right)}{t_2 t_3 t_4 t_5^2 t_6^2} d t_6 d t_5 d t_4 d t_3 d t_2 d t_1 \right) \\
\nonumber +& \left( \int_{\boldsymbol{t}_{6} \in U_{C12}} \frac{\omega \left(\frac{t_1 - t_4 - t_5 - t_6}{t_6}\right) \omega \left(\frac{1 - t_1 - t_2 - t_3}{t_3}\right)}{t_2 t_3^2 t_4 t_5 t_6^2} d t_6 d t_5 d t_4 d t_3 d t_2 d t_1 \right) \\
\nonumber +& \left( \int_{\boldsymbol{t}_{6} \in U_{C13}} \frac{\omega \left(\frac{t_1 - t_4}{t_4}\right) \omega \left(\frac{1 - t_1 - t_2 - t_3 - t_5 - t_6}{t_6}\right)}{t_2 t_3 t_4^2 t_5 t_6^2} d t_6 d t_5 d t_4 d t_3 d t_2 d t_1 \right) \\
\nonumber \leqslant& \left( \int_{\boldsymbol{t}_{4} \in U_{C01}} \frac{\omega_1 \left(\frac{1 - t_1 - t_2 - t_3 - t_4}{t_4}\right)}{t_1 t_2 t_3 t_4^2} d t_4 d t_3 d t_2 d t_1 \right) \\
\nonumber -& \left( \int_{\boldsymbol{t}_{5} \in U_{C02}} \frac{\omega_0 \left(\frac{1 - t_1 - t_2 - t_3 - t_4 - t_5}{t_5}\right)}{t_1 t_2 t_3 t_4 t_5^2} d t_5 d t_4 d t_3 d t_2 d t_1 \right) \\
\nonumber +& \left( \int_{\boldsymbol{t}_{6} \in U_{C03}} \frac{\omega_1 \left(\frac{1 - t_1 - t_2 - t_3 - t_4 - t_5 - t_6}{t_6}\right)}{t_1 t_2 t_3 t_4 t_5 t_6^2} d t_6 d t_5 d t_4 d t_3 d t_2 d t_1 \right) \\
\nonumber +& \left( \int_{\boldsymbol{t}_{6} \in U_{C04}} \frac{\omega_1 \left(\frac{t_1 - t_6}{t_6}\right) \omega_1 \left(\frac{1 - t_1 - t_2 - t_3 - t_4 - t_5}{t_5}\right)}{t_2 t_3 t_4 t_5^2 t_6^2} d t_6 d t_5 d t_4 d t_3 d t_2 d t_1 \right) \\
\nonumber +& \left( \int_{\boldsymbol{t}_{8} \in U_{C05}} \frac{\max \left(\frac{t_8}{1 - t_1 - t_2 - t_3 - t_4 - t_5 - t_6 - t_7 - t_8}, 0.5672\right)}{t_1 t_2 t_3 t_4 t_5 t_6 t_7 t_8^2} d t_8 d t_7 d t_6 d t_5 d t_4 d t_3 d t_2 d t_1 \right) \\
\nonumber +& \left( \int_{\boldsymbol{t}_{8} \in U_{C06}} \frac{\max \left(\frac{t_8}{t_1 - t_6 - t_7 - t_8}, 0.5672\right) \max \left(\frac{t_5}{1 - t_1 - t_2 - t_3 - t_4 - t_5}, 0.5672\right)}{t_2 t_3 t_4 t_5^2 t_6 t_7 t_8^2} d t_8 d t_7 d t_6 d t_5 d t_4 d t_3 d t_2 d t_1 \right) \\
\nonumber +& \left( \int_{\boldsymbol{t}_{8} \in U_{C07}} \frac{\max \left(\frac{t_6}{t_1 - t_6}, 0.5672\right) \max \left(\frac{t_8}{1 - t_1 - t_2 - t_3 - t_4 - t_5 - t_7 - t_8}, 0.5672\right)}{t_2 t_3 t_4 t_5 t_6^2 t_7 t_8^2} d t_8 d t_7 d t_6 d t_5 d t_4 d t_3 d t_2 d t_1 \right) \\
\nonumber +& \left( \int_{\boldsymbol{t}_{4} \in U_{C08}} \frac{\omega_1 \left(\frac{t_1 - t_4}{t_4}\right) \omega_1 \left(\frac{1 - t_1 - t_2 - t_3}{t_3}\right)}{t_2 t_3^2 t_4^2} d t_4 d t_3 d t_2 d t_1 \right) \\
\nonumber -& \left( \int_{\boldsymbol{t}_{5} \in U_{C09}} \frac{\omega_0 \left(\frac{t_1 - t_4 - t_5}{t_5}\right) \omega_0 \left(\frac{1 - t_1 - t_2 - t_3}{t_3}\right)}{t_2 t_3^2 t_4 t_5^2} d t_5 d t_4 d t_3 d t_2 d t_1 \right) \\
\nonumber -& \left( \int_{\boldsymbol{t}_{5} \in U_{C10}} \frac{\omega_0 \left(\frac{t_1 - t_4}{t_4}\right) \omega_0 \left(\frac{1 - t_1 - t_2 - t_3 - t_5}{t_5}\right)}{t_2 t_3 t_4^2 t_5^2} d t_5 d t_4 d t_3 d t_2 d t_1 \right) \\
\nonumber +& \left( \int_{\boldsymbol{t}_{6} \in U_{C11}} \frac{\omega_1 \left(\frac{t_1 - t_4 - t_5}{t_5}\right) \omega_1 \left(\frac{1 - t_1 - t_2 - t_3 - t_6}{t_6}\right)}{t_2 t_3 t_4 t_5^2 t_6^2} d t_6 d t_5 d t_4 d t_3 d t_2 d t_1 \right) \\
\nonumber +& \left( \int_{\boldsymbol{t}_{6} \in U_{C12}} \frac{\omega_1 \left(\frac{t_1 - t_4 - t_5 - t_6}{t_6}\right) \omega_1 \left(\frac{1 - t_1 - t_2 - t_3}{t_3}\right)}{t_2 t_3^2 t_4 t_5 t_6^2} d t_6 d t_5 d t_4 d t_3 d t_2 d t_1 \right) \\
\nonumber +& \left( \int_{\boldsymbol{t}_{6} \in U_{C13}} \frac{\omega_1 \left(\frac{t_1 - t_4}{t_4}\right) \omega_1 \left(\frac{1 - t_1 - t_2 - t_3 - t_5 - t_6}{t_6}\right)}{t_2 t_3 t_4^2 t_5 t_6^2} d t_6 d t_5 d t_4 d t_3 d t_2 d t_1 \right) \\
\nonumber \leqslant&\ 0.21 - 0 + 0.015 + 0.05 + 0.001 + 0.001 + 0.001 + 0.22 - (0 + 0 - 0) + 0.015 + 0.001 \\
=&\ 0.514,
\end{align}
where
\begin{align}
\nonumber U_{C01}(\boldsymbol{\alpha}_{4}) :=&\ \left\{ \boldsymbol{\alpha}_{2} \in C,\ \boldsymbol{\alpha}_{2} \notin \boldsymbol{G}_{2},\ \nu_0 \leqslant \alpha_{3} < \min\left(\alpha_2, \frac{1}{2}(1-\alpha_1 -\alpha_2)\right),\ \boldsymbol{\alpha}_{3} \notin \boldsymbol{G}_{3}, \right. \\
\nonumber & \quad \boldsymbol{\alpha}_{3} \text{ can be partitioned into } (m, n) \in \boldsymbol{D}_{0} \text{ or } (m, n, h) \in \boldsymbol{D}_{1} \cup \boldsymbol{D}_{2}, \\
\nonumber & \quad \nu_0 \leqslant \alpha_{4} < \min\left(\alpha_3, \frac{1}{2}(1-\alpha_1 -\alpha_2 -\alpha_3)\right),\ \boldsymbol{\alpha}_{4} \notin \boldsymbol{G}_{4},\ \boldsymbol{\alpha}_{4} \notin \boldsymbol{D}^{*},\ \boldsymbol{\alpha}_{4} \notin \boldsymbol{D}^{\ddag}, \\
\nonumber & \left. \quad \nu_0 \leqslant \alpha_1 < \frac{1}{2},\ \nu\left(\alpha_1\right) \leqslant \alpha_2 < \min\left(\alpha_1, \frac{1}{2}(1-\alpha_1) \right) \right\}, \\
\nonumber U_{C02}(\boldsymbol{\alpha}_{5}) :=&\ \left\{ \boldsymbol{\alpha}_{2} \in C,\ \boldsymbol{\alpha}_{2} \notin \boldsymbol{G}_{2},\ \nu_0 \leqslant \alpha_{3} < \min\left(\alpha_2, \frac{1}{2}(1-\alpha_1 -\alpha_2)\right),\ \boldsymbol{\alpha}_{3} \notin \boldsymbol{G}_{3}, \right. \\
\nonumber & \quad \boldsymbol{\alpha}_{3} \text{ can be partitioned into } (m, n) \in \boldsymbol{D}_{0} \text{ or } (m, n, h) \in \boldsymbol{D}_{1} \cup \boldsymbol{D}_{2}, \\
\nonumber & \quad \nu_0 \leqslant \alpha_{4} < \min\left(\alpha_3, \frac{1}{2}(1-\alpha_1 -\alpha_2 -\alpha_3)\right),\ \boldsymbol{\alpha}_{4} \notin \boldsymbol{G}_{4},\ \boldsymbol{\alpha}_{4} \notin \boldsymbol{D}^{*},\ \boldsymbol{\alpha}_{4} \notin \boldsymbol{D}^{\ddag}, \\
\nonumber & \quad \alpha_4 < \alpha_5 < \frac{1}{2}(1-\alpha_1 -\alpha_2 -\alpha_3 -\alpha_4),\ \boldsymbol{\alpha}_{5} \in \boldsymbol{G}_{5}, \\
\nonumber & \left. \quad \nu_0 \leqslant \alpha_1 < \frac{1}{2},\ \nu\left(\alpha_1\right) \leqslant \alpha_2 < \min\left(\alpha_1, \frac{1}{2}(1-\alpha_1) \right) \right\}, \\
\nonumber U_{C03}(\boldsymbol{\alpha}_{6}) :=&\ \left\{ \boldsymbol{\alpha}_{2} \in C,\ \boldsymbol{\alpha}_{2} \notin \boldsymbol{G}_{2},\ \nu_0 \leqslant \alpha_{3} < \min\left(\alpha_2, \frac{1}{2}(1-\alpha_1 -\alpha_2)\right),\ \boldsymbol{\alpha}_{3} \notin \boldsymbol{G}_{3}, \right. \\
\nonumber & \quad \boldsymbol{\alpha}_{3} \text{ can be partitioned into } (m, n) \in \boldsymbol{D}_{0} \text{ or } (m, n, h) \in \boldsymbol{D}_{1} \cup \boldsymbol{D}_{2}, \\
\nonumber & \quad \nu_0 \leqslant \alpha_{4} < \min\left(\alpha_3, \frac{1}{2}(1-\alpha_1 -\alpha_2 -\alpha_3)\right),\ \boldsymbol{\alpha}_{4} \notin \boldsymbol{G}_{4},\ \boldsymbol{\alpha}_{4} \in \boldsymbol{D}^{*}, \\
\nonumber & \quad \nu_0 \leqslant \alpha_{5} < \min\left(\alpha_4, \frac{1}{2}(1-\alpha_1 -\alpha_2 -\alpha_3 -\alpha_4)\right),\ \boldsymbol{\alpha}_{5} \notin \boldsymbol{G}_{5}, \\
\nonumber & \quad \nu_0 \leqslant \alpha_{6} < \min\left(\alpha_5, \frac{1}{2}(1-\alpha_1 -\alpha_2 -\alpha_3 -\alpha_4 -\alpha_5)\right),\ \boldsymbol{\alpha}_{6} \notin \boldsymbol{G}_{6},\ \boldsymbol{\alpha}_{6} \notin \boldsymbol{D}^{**}, \\
\nonumber & \left. \quad \nu_0 \leqslant \alpha_1 < \frac{1}{2},\ \nu\left(\alpha_1\right) \leqslant \alpha_2 < \min\left(\alpha_1, \frac{1}{2}(1-\alpha_1) \right) \right\}, \\
\nonumber U_{C04}(\boldsymbol{\alpha}_{6}) :=&\ \left\{ \boldsymbol{\alpha}_{2} \in C,\ \boldsymbol{\alpha}_{2} \notin \boldsymbol{G}_{2},\ \nu_0 \leqslant \alpha_{3} < \min\left(\alpha_2, \frac{1}{2}(1-\alpha_1 -\alpha_2)\right),\ \boldsymbol{\alpha}_{3} \notin \boldsymbol{G}_{3}, \right. \\
\nonumber & \quad \boldsymbol{\alpha}_{3} \text{ can be partitioned into } (m, n) \in \boldsymbol{D}_{0} \text{ or } (m, n, h) \in \boldsymbol{D}_{1} \cup \boldsymbol{D}_{2}, \\
\nonumber & \quad \nu_0 \leqslant \alpha_{4} < \min\left(\alpha_3, \frac{1}{2}(1-\alpha_1 -\alpha_2 -\alpha_3)\right),\ \boldsymbol{\alpha}_{4} \notin \boldsymbol{G}_{4},\ \boldsymbol{\alpha}_{4} \notin \boldsymbol{D}^{*},\ \boldsymbol{\alpha}_{4} \in \boldsymbol{D}^{\ddag}, \\
\nonumber & \quad \nu_0 \leqslant \alpha_{5} < \min\left(\alpha_4, \frac{1}{2}(1-\alpha_1 -\alpha_2 -\alpha_3 -\alpha_4)\right),\ \boldsymbol{\alpha}_{5} \notin \boldsymbol{G}_{5}, \\
\nonumber & \quad \nu_0 \leqslant \alpha_{6} < \frac{1}{2}\alpha_1 ,\ \boldsymbol{\alpha}_{6}^{\ddag} \notin \boldsymbol{G}_{6},\ \boldsymbol{\alpha}_{6}^{\ddag} \notin \boldsymbol{D}^{**},\ \boldsymbol{\alpha}_{6}^{\ddag \prime} \notin \boldsymbol{D}^{**}, \\
\nonumber & \left. \quad \nu_0 \leqslant \alpha_1 < \frac{1}{2},\ \nu\left(\alpha_1\right) \leqslant \alpha_2 < \min\left(\alpha_1, \frac{1}{2}(1-\alpha_1) \right) \right\}, \\
\nonumber U_{C05}(\boldsymbol{\alpha}_{8}) :=&\ \left\{ \boldsymbol{\alpha}_{2} \in C,\ \boldsymbol{\alpha}_{2} \notin \boldsymbol{G}_{2},\ \nu_0 \leqslant \alpha_{3} < \min\left(\alpha_2, \frac{1}{2}(1-\alpha_1 -\alpha_2)\right),\ \boldsymbol{\alpha}_{3} \notin \boldsymbol{G}_{3}, \right. \\
\nonumber & \quad \boldsymbol{\alpha}_{3} \text{ can be partitioned into } (m, n) \in \boldsymbol{D}_{0} \text{ or } (m, n, h) \in \boldsymbol{D}_{1} \cup \boldsymbol{D}_{2}, \\
\nonumber & \quad \nu_0 \leqslant \alpha_{4} < \min\left(\alpha_3, \frac{1}{2}(1-\alpha_1 -\alpha_2 -\alpha_3)\right),\ \boldsymbol{\alpha}_{4} \notin \boldsymbol{G}_{4},\ \boldsymbol{\alpha}_{4} \in \boldsymbol{D}^{*}, \\
\nonumber & \quad \nu_0 \leqslant \alpha_{5} < \min\left(\alpha_4, \frac{1}{2}(1-\alpha_1 -\alpha_2 -\alpha_3 -\alpha_4)\right),\ \boldsymbol{\alpha}_{5} \notin \boldsymbol{G}_{5}, \\
\nonumber & \quad \nu_0 \leqslant \alpha_{6} < \min\left(\alpha_5, \frac{1}{2}(1-\alpha_1 -\alpha_2 -\alpha_3 -\alpha_4 -\alpha_5)\right),\ \boldsymbol{\alpha}_{6} \notin \boldsymbol{G}_{6},\ \boldsymbol{\alpha}_{6} \in \boldsymbol{D}^{**}, \\
\nonumber & \quad \nu_0 \leqslant \alpha_{7} < \min\left(\alpha_6, \frac{1}{2}(1-\alpha_1 -\alpha_2 -\alpha_3 -\alpha_4-\alpha_5-\alpha_6)\right),\ \boldsymbol{\alpha}_{7} \notin \boldsymbol{G}_{7}, \\
\nonumber & \quad \nu_0 \leqslant \alpha_{8} < \min\left(\alpha_7, \frac{1}{2}(1-\alpha_1 -\alpha_2 -\alpha_3 -\alpha_4-\alpha_5-\alpha_6-\alpha_7)\right),\ \boldsymbol{\alpha}_{8} \notin \boldsymbol{G}_{8}, \\
\nonumber & \left. \quad \nu_0 \leqslant \alpha_1 < \frac{1}{2},\ \nu\left(\alpha_1\right) \leqslant \alpha_2 < \min\left(\alpha_1, \frac{1}{2}(1-\alpha_1) \right) \right\}, \\
\nonumber U_{C06}(\boldsymbol{\alpha}_{8}) :=&\ \left\{ \boldsymbol{\alpha}_{2} \in C,\ \boldsymbol{\alpha}_{2} \notin \boldsymbol{G}_{2},\ \nu_0 \leqslant \alpha_{3} < \min\left(\alpha_2, \frac{1}{2}(1-\alpha_1 -\alpha_2)\right),\ \boldsymbol{\alpha}_{3} \notin \boldsymbol{G}_{3}, \right. \\
\nonumber & \quad \boldsymbol{\alpha}_{3} \text{ can be partitioned into } (m, n) \in \boldsymbol{D}_{0} \text{ or } (m, n, h) \in \boldsymbol{D}_{1} \cup \boldsymbol{D}_{2}, \\
\nonumber & \quad \nu_0 \leqslant \alpha_{4} < \min\left(\alpha_3, \frac{1}{2}(1-\alpha_1 -\alpha_2 -\alpha_3)\right),\ \boldsymbol{\alpha}_{4} \notin \boldsymbol{G}_{4},\ \boldsymbol{\alpha}_{4} \notin \boldsymbol{D}^{*},\ \boldsymbol{\alpha}_{4} \in \boldsymbol{D}^{\ddag}, \\
\nonumber & \quad \nu_0 \leqslant \alpha_{5} < \min\left(\alpha_4, \frac{1}{2}(1-\alpha_1 -\alpha_2 -\alpha_3 -\alpha_4)\right),\ \boldsymbol{\alpha}_{5} \notin \boldsymbol{G}_{5}, \\
\nonumber & \quad \nu_0 \leqslant \alpha_{6} < \frac{1}{2}\alpha_1 ,\ \boldsymbol{\alpha}_{6}^{\ddag} \notin \boldsymbol{G}_{6},\ \boldsymbol{\alpha}_{6}^{\ddag} \in \boldsymbol{D}^{**}, \\
\nonumber & \quad \nu_0 \leqslant \alpha_{7} < \min\left(\alpha_6, \frac{1}{2}(\alpha_1 -\alpha_6)\right),\ \boldsymbol{\alpha}_{7}^{\ddag} \notin \boldsymbol{G}_{7}, \\
\nonumber & \quad \nu_0 \leqslant \alpha_{8} < \min\left(\alpha_7, \frac{1}{2}(\alpha_1 -\alpha_6 -\alpha_7)\right),\ \boldsymbol{\alpha}_{8}^{\ddag} \notin \boldsymbol{G}_{8}, \\
\nonumber & \left. \quad \nu_0 \leqslant \alpha_1 < \frac{1}{2},\ \nu\left(\alpha_1\right) \leqslant \alpha_2 < \min\left(\alpha_1, \frac{1}{2}(1-\alpha_1) \right) \right\}, \\
\nonumber U_{C07}(\boldsymbol{\alpha}_{8}) :=&\ \left\{ \boldsymbol{\alpha}_{2} \in C,\ \boldsymbol{\alpha}_{2} \notin \boldsymbol{G}_{2},\ \nu_0 \leqslant \alpha_{3} < \min\left(\alpha_2, \frac{1}{2}(1-\alpha_1 -\alpha_2)\right),\ \boldsymbol{\alpha}_{3} \notin \boldsymbol{G}_{3}, \right. \\
\nonumber & \quad \boldsymbol{\alpha}_{3} \text{ can be partitioned into } (m, n) \in \boldsymbol{D}_{0} \text{ or } (m, n, h) \in \boldsymbol{D}_{1} \cup \boldsymbol{D}_{2}, \\
\nonumber & \quad \nu_0 \leqslant \alpha_{4} < \min\left(\alpha_3, \frac{1}{2}(1-\alpha_1 -\alpha_2 -\alpha_3)\right),\ \boldsymbol{\alpha}_{4} \notin \boldsymbol{G}_{4},\ \boldsymbol{\alpha}_{4} \notin \boldsymbol{D}^{*},\ \boldsymbol{\alpha}_{4} \in \boldsymbol{D}^{\ddag}, \\
\nonumber & \quad \nu_0 \leqslant \alpha_{5} < \min\left(\alpha_4, \frac{1}{2}(1-\alpha_1 -\alpha_2 -\alpha_3 -\alpha_4)\right),\ \boldsymbol{\alpha}_{5} \notin \boldsymbol{G}_{5}, \\
\nonumber & \quad \nu_0 \leqslant \alpha_{6} < \frac{1}{2}\alpha_1 ,\ \boldsymbol{\alpha}_{6}^{\ddag} \notin \boldsymbol{G}_{6},\ \boldsymbol{\alpha}_{6}^{\ddag} \notin \boldsymbol{D}^{**},\ \boldsymbol{\alpha}_{6}^{\ddag \prime} \in \boldsymbol{D}^{**}, \\
\nonumber & \quad \nu_0 \leqslant \alpha_{7} < \min\left(\alpha_5, \frac{1}{2}(1-\alpha_1 -\alpha_2 -\alpha_3 -\alpha_4 -\alpha_5)\right),\ \boldsymbol{\alpha}_{7}^{\ddag \prime} \notin \boldsymbol{G}_{7}, \\
\nonumber & \quad \nu_0 \leqslant \alpha_{8} < \min\left(\alpha_7, \frac{1}{2}(1-\alpha_1 -\alpha_2 -\alpha_3 -\alpha_4 -\alpha_5 -\alpha_7)\right),\ \boldsymbol{\alpha}_{8}^{\ddag \prime} \notin \boldsymbol{G}_{8}, \\
\nonumber & \left. \quad \nu_0 \leqslant \alpha_1 < \frac{1}{2},\ \nu\left(\alpha_1\right) \leqslant \alpha_2 < \min\left(\alpha_1, \frac{1}{2}(1-\alpha_1) \right) \right\}, \\
\nonumber U_{C08}(\boldsymbol{\alpha}_{4}) :=&\ \left\{ \boldsymbol{\alpha}_{2} \in C,\ \boldsymbol{\alpha}_{2} \notin \boldsymbol{G}_{2},\ \nu_0 \leqslant \alpha_{3} < \min\left(\alpha_2, \frac{1}{2}(1-\alpha_1 -\alpha_2)\right),\ \boldsymbol{\alpha}_{3} \notin \boldsymbol{G}_{3}, \right. \\
\nonumber & \quad \boldsymbol{\alpha}_{3} \text{ cannot be partitioned into } (m, n) \in \boldsymbol{D}_{0} \text{ or } (m, n, h) \in \boldsymbol{D}_{1} \cup \boldsymbol{D}_{2}, \\
\nonumber & \quad \nu_0 \leqslant \alpha_{4} < \frac{1}{2}\alpha_1,\ \boldsymbol{\alpha}_{4}^{\prime} \notin \boldsymbol{G}_{4},\ \boldsymbol{\alpha}_{4}^{\prime} \notin \boldsymbol{D}^{*},\ \boldsymbol{\alpha}_{4}^{\prime \prime} \notin \boldsymbol{D}^{*},\\
\nonumber & \left. \quad \nu_0 \leqslant \alpha_1 < \frac{1}{2},\ \nu\left(\alpha_1\right) \leqslant \alpha_2 < \min\left(\alpha_1, \frac{1}{2}(1-\alpha_1) \right) \right\}, \\
\nonumber U_{C09}(\boldsymbol{\alpha}_{5}) :=&\ \left\{ \boldsymbol{\alpha}_{2} \in C,\ \boldsymbol{\alpha}_{2} \notin \boldsymbol{G}_{2},\ \nu_0 \leqslant \alpha_{3} < \min\left(\alpha_2, \frac{1}{2}(1-\alpha_1 -\alpha_2)\right),\ \boldsymbol{\alpha}_{3} \notin \boldsymbol{G}_{3}, \right. \\
\nonumber & \quad \boldsymbol{\alpha}_{3} \text{ cannot be partitioned into } (m, n) \in \boldsymbol{D}_{0} \text{ or } (m, n, h) \in \boldsymbol{D}_{1} \cup \boldsymbol{D}_{2}, \\
\nonumber & \quad \nu_0 \leqslant \alpha_{4} < \frac{1}{2}\alpha_1,\ \boldsymbol{\alpha}_{4}^{\prime} \notin \boldsymbol{G}_{4},\ \boldsymbol{\alpha}_{4}^{\prime} \notin \boldsymbol{D}^{*},\ \boldsymbol{\alpha}_{4}^{\prime \prime} \notin \boldsymbol{D}^{*},\\
\nonumber & \quad \alpha_4 < \alpha_5 < \frac{1}{2}(\alpha_1 -\alpha_4),\ \boldsymbol{\alpha}_{5}^{\prime} \in \boldsymbol{G}_{5}, \\
\nonumber & \left. \quad \nu_0 \leqslant \alpha_1 < \frac{1}{2},\ \nu\left(\alpha_1\right) \leqslant \alpha_2 < \min\left(\alpha_1, \frac{1}{2}(1-\alpha_1) \right) \right\}, \\
\nonumber U_{C10}(\boldsymbol{\alpha}_{5}) :=&\ \left\{ \boldsymbol{\alpha}_{2} \in C,\ \boldsymbol{\alpha}_{2} \notin \boldsymbol{G}_{2},\ \nu_0 \leqslant \alpha_{3} < \min\left(\alpha_2, \frac{1}{2}(1-\alpha_1 -\alpha_2)\right),\ \boldsymbol{\alpha}_{3} \notin \boldsymbol{G}_{3}, \right. \\
\nonumber & \quad \boldsymbol{\alpha}_{3} \text{ cannot be partitioned into } (m, n) \in \boldsymbol{D}_{0} \text{ or } (m, n, h) \in \boldsymbol{D}_{1} \cup \boldsymbol{D}_{2}, \\
\nonumber & \quad \nu_0 \leqslant \alpha_{4} < \frac{1}{2}\alpha_1,\ \boldsymbol{\alpha}_{4}^{\prime} \notin \boldsymbol{G}_{4},\ \boldsymbol{\alpha}_{4}^{\prime} \notin \boldsymbol{D}^{*},\ \boldsymbol{\alpha}_{4}^{\prime \prime} \notin \boldsymbol{D}^{*},\\
\nonumber & \quad \alpha_3 < \alpha_5 < \frac{1}{2}(1 - \alpha_1 - \alpha_2 - \alpha_3),\ \boldsymbol{\alpha}_{5}^{\prime \prime} \in \boldsymbol{G}_{5}, \\
\nonumber & \left. \quad \nu_0 \leqslant \alpha_1 < \frac{1}{2},\ \nu\left(\alpha_1\right) \leqslant \alpha_2 < \min\left(\alpha_1, \frac{1}{2}(1-\alpha_1) \right) \right\}, \\
\nonumber U_{C11}(\boldsymbol{\alpha}_{6}) :=&\ \left\{ \boldsymbol{\alpha}_{2} \in C,\ \boldsymbol{\alpha}_{2} \notin \boldsymbol{G}_{2},\ \nu_0 \leqslant \alpha_{3} < \min\left(\alpha_2, \frac{1}{2}(1-\alpha_1 -\alpha_2)\right),\ \boldsymbol{\alpha}_{3} \notin \boldsymbol{G}_{3}, \right. \\
\nonumber & \quad \boldsymbol{\alpha}_{3} \text{ cannot be partitioned into } (m, n) \in \boldsymbol{D}_{0} \text{ or } (m, n, h) \in \boldsymbol{D}_{1} \cup \boldsymbol{D}_{2}, \\
\nonumber & \quad \nu_0 \leqslant \alpha_{4} < \frac{1}{2}\alpha_1,\ \boldsymbol{\alpha}_{4}^{\prime} \notin \boldsymbol{G}_{4},\ \boldsymbol{\alpha}_{4}^{\prime} \notin \boldsymbol{D}^{*},\ \boldsymbol{\alpha}_{4}^{\prime \prime} \notin \boldsymbol{D}^{*},\\
\nonumber & \quad \alpha_4 < \alpha_5 < \frac{1}{2}(\alpha_1 -\alpha_4),\ \boldsymbol{\alpha}_{5}^{\prime} \in \boldsymbol{G}_{5}, \\
\nonumber & \quad \alpha_3 < \alpha_6 < \frac{1}{2}(1 - \alpha_1 - \alpha_2 - \alpha_3),\ \boldsymbol{\alpha}_{5}^{\prime \prime \prime} \in \boldsymbol{G}_{5}, \\
\nonumber & \left. \quad \nu_0 \leqslant \alpha_1 < \frac{1}{2},\ \nu\left(\alpha_1\right) \leqslant \alpha_2 < \min\left(\alpha_1, \frac{1}{2}(1-\alpha_1) \right) \right\}, \\
\nonumber U_{C12}(\boldsymbol{\alpha}_{6}) :=&\ \left\{ \boldsymbol{\alpha}_{2} \in C,\ \boldsymbol{\alpha}_{2} \notin \boldsymbol{G}_{2},\ \nu_0 \leqslant \alpha_{3} < \min\left(\alpha_2, \frac{1}{2}(1-\alpha_1 -\alpha_2)\right),\ \boldsymbol{\alpha}_{3} \notin \boldsymbol{G}_{3}, \right. \\
\nonumber & \quad \boldsymbol{\alpha}_{3} \text{ cannot be partitioned into } (m, n) \in \boldsymbol{D}_{0} \text{ or } (m, n, h) \in \boldsymbol{D}_{1} \cup \boldsymbol{D}_{2}, \\
\nonumber & \quad \nu_0 \leqslant \alpha_{4} < \frac{1}{2}\alpha_1,\ \boldsymbol{\alpha}_{4}^{\prime} \notin \boldsymbol{G}_{4},\ \boldsymbol{\alpha}_{4}^{\prime} \in \boldsymbol{D}^{*},\\
\nonumber & \quad \nu_0 \leqslant \alpha_{5} < \min\left(\alpha_4, \frac{1}{2}(\alpha_1 -\alpha_4)\right),\ \boldsymbol{\alpha}_{5}^{\prime} \notin \boldsymbol{G}_{5}, \\
\nonumber & \quad \nu_0 \leqslant \alpha_{6} < \min\left(\alpha_5, \frac{1}{2}(\alpha_1 -\alpha_4 -\alpha_5)\right),\ \boldsymbol{\alpha}_{6}^{\prime} \notin \boldsymbol{G}_{6}, \\
\nonumber & \left. \quad \nu_0 \leqslant \alpha_1 < \frac{1}{2},\ \nu\left(\alpha_1\right) \leqslant \alpha_2 < \min\left(\alpha_1, \frac{1}{2}(1-\alpha_1) \right) \right\}, \\
\nonumber U_{C13}(\boldsymbol{\alpha}_{6}) :=&\ \left\{ \boldsymbol{\alpha}_{2} \in C,\ \boldsymbol{\alpha}_{2} \notin \boldsymbol{G}_{2},\ \nu_0 \leqslant \alpha_{3} < \min\left(\alpha_2, \frac{1}{2}(1-\alpha_1 -\alpha_2)\right),\ \boldsymbol{\alpha}_{3} \notin \boldsymbol{G}_{3}, \right. \\
\nonumber & \quad \boldsymbol{\alpha}_{3} \text{ cannot be partitioned into } (m, n) \in \boldsymbol{D}_{0} \text{ or } (m, n, h) \in \boldsymbol{D}_{1} \cup \boldsymbol{D}_{2}, \\
\nonumber & \quad \nu_0 \leqslant \alpha_{4} < \frac{1}{2}\alpha_1,\ \boldsymbol{\alpha}_{4}^{\prime} \notin \boldsymbol{G}_{4},\ \boldsymbol{\alpha}_{4}^{\prime} \notin \boldsymbol{D}^{*},\ \boldsymbol{\alpha}_{4}^{\prime \prime} \in \boldsymbol{D}^{*},\\
\nonumber & \quad \nu_0 \leqslant \alpha_{5} < \min\left(\alpha_3, \frac{1}{2}(1 -\alpha_1 -\alpha_2 -\alpha_3)\right),\ \boldsymbol{\alpha}_{5}^{\prime \prime} \notin \boldsymbol{G}_{5}, \\
\nonumber & \quad \nu_0 \leqslant \alpha_{6} < \min\left(\alpha_5, \frac{1}{2}(1 -\alpha_1 -\alpha_2 -\alpha_3 -\alpha_5)\right),\ \boldsymbol{\alpha}_{6}^{\prime \prime} \notin \boldsymbol{G}_{6}, \\
\nonumber & \left. \quad \nu_0 \leqslant \alpha_1 < \frac{1}{2},\ \nu\left(\alpha_1\right) \leqslant \alpha_2 < \min\left(\alpha_1, \frac{1}{2}(1-\alpha_1) \right) \right\}.
\end{align}
One can also see the integrals corresponding to $U_{C08}$--$U_{C11}$ as an simple, explicit expression of the function $w^{*}(\boldsymbol{\alpha}_{4})$ defined in [\cite{HarmanBOOK}, Chapter 7.9]. We remark that a small part of $C$ is actually covered by $\boldsymbol{G}_{2}$. If we discard the whole of $\sum_{C}$, we would have a loss larger than $1$ which leads to a trivial lower bound.

Finally, by (11), (12) and (30), the total loss from $\sum_{3}$ is less than
$$
2 \times 0.241 + 0.514 < 0.996
$$
and we conclude that
$$
\pi(x)-\pi(x-x^{0.52})=S\left(\mathcal{A}, x^{\frac{1}{2}}\right) \geqslant 0.004 \frac{x^{0.52}}{\log x}.
$$
The lower constant $0.004$ can be slightly improved by more careful decompositions and accurate calculations. The lower bounds for other values of $\theta$ between $0.52$ and $0.525$ can be proved in the same way, so we omit the calculation details. One can check our code for them to verify the numerical calculations.

\section{The final decomposition II: Upper Bound}
In this section, we ignore the presence of $\varepsilon$ for clarity. Let $\omega(u)$, $\omega_0(u)$ and $\omega_1(u)$ denote the same functions as in Section 5. We still use the idea of decomposing $S\left(\mathcal{A}, z\right)$ explained in Section 2 with (6)--(8) to prove our upper bound results. We note that for the upper bound problem, we can only drop negative parts. Fix $\theta =0.52$, $\nu_0 = \nu_{\min} = 2 \theta -1 = 0.04$ and let $p_{j}=x^{\alpha_{j}}$. By Buchstab's identity, we have
\begin{align}
\nonumber S\left(\mathcal{A}, x^{\frac{1}{2}}\right) =&\ S\left(\mathcal{A}, x^{\nu(0)}\right)-\sum_{\nu(0) \leqslant \alpha_{1}< \frac{1}{2}} S\left(\mathcal{A}_{p_{1}}, p_{1}\right) \\
=&\ \sum_{1} - \sum_{2}^{\prime}.
\end{align}
We can give asymptotic formulas for $\sum_{1}$. For $\sum_{2}^{\prime}$, We need to split the whole summation range over $p_{1}$ into different ranges and consider further decompositions in each range because we can only drop negative parts on the upper bound problem. The sets $\boldsymbol{G}$ and $\boldsymbol{D}$ with same superscripts and subscripts as in Section 5 represent the same asymptotic regions. We shall define some new sets and subsets using Lemma~\ref{l23} and partition technique.
Put
\begin{align}
\nonumber \boldsymbol{D}_{3} =&\ \left\{\boldsymbol{\alpha}_{2}: \alpha_{2} \leqslant \alpha_{1},\ 2 \alpha_{1} + \alpha_{2} < 1,\ \alpha_2 < \frac{7}{2} \theta - \frac{3}{2} \right\}, \\
\nonumber \boldsymbol{D}^{+} =&\ \left\{\boldsymbol{\alpha}_{3}: \left(\alpha_{1}, \alpha_{2}, \alpha_{3}, \alpha_{3}\right) \text { can be partitioned into } (m, n) \in \boldsymbol{D}_{0}^{\prime} \text { or }(m, n, h) \in \boldsymbol{D}_{1}^{\prime} \cup \boldsymbol{D}_{2}^{\prime} \right\}, \\
\nonumber \boldsymbol{D}^{+ +} =&\ \left\{\boldsymbol{\alpha}_{5}: \left(\alpha_{1}, \alpha_{2}, \alpha_{3}, \alpha_{4}, \alpha_{5}, \alpha_{5}\right) \text { can be partitioned into } (m, n) \in \boldsymbol{D}_{0}^{\prime} \text { or }(m, n, h) \in \boldsymbol{D}_{1}^{\prime} \cup \boldsymbol{D}_{2}^{\prime} \right\}, \\
\nonumber \boldsymbol{D}^{\#} =&\ \left\{\boldsymbol{\alpha}_{3}: \text{both } \boldsymbol{\alpha}_{3} \text{ and } \left(1- \alpha_{1} - \alpha_2 - \alpha_3 , \alpha_{2}, \alpha_{3} \right) \text { can be partitioned into } (m, n) \in \boldsymbol{D}_{0} \text { or }(m, n, h) \in \boldsymbol{D}_{1} \cup \boldsymbol{D}_{2} \right\}, \\
\nonumber H =&\ \left\{\boldsymbol{\alpha}_{1}: \frac{7}{2} \theta - \frac{3}{2} \leqslant \alpha_1 \leqslant 4 - 7 \theta \right\},
\end{align}
where $\boldsymbol{D}_3$ correspond to conditions on variables that allow a further decomposition, $\boldsymbol{D}^{+}$ and $\boldsymbol{D}^{+ +}$ allow two and three further decompositions respectively, and $\boldsymbol{D}^{\#}$ allows two further decompositions with a role-reversal. In the sum corresponding to region $H$, we need to discard the whole of it because we cannot use Lemmas~\ref{l21}--\ref{l23} to give an asymptotic formula for the two-dimensional sum with $\alpha_2 \geqslant \frac{7}{2} \theta - \frac{3}{2}$ after a Buchstab iteration. We remark that $H$ is empty when $\theta > \frac{11}{21} \approx 0.5238$.

Next, we shall define some subregions of $A$ and $B$ defined in Section 5. The plot of these regions can also be found in Appendix 1.
\begin{align}
\nonumber A_1 =&\ \left\{\boldsymbol{\alpha}_{2}: \boldsymbol{\alpha}_{2} \in A,\ \alpha_2 < \min \left(\frac{3 \theta -1}{2}, \frac{1+\theta}{5}\right) \right\};\\
\nonumber A_2 =&\ \left\{\boldsymbol{\alpha}_{2}: \boldsymbol{\alpha}_{2} \in A,\ \alpha_2 \geqslant \min \left(\frac{3 \theta -1}{2}, \frac{1+\theta}{5}\right) \right\};\\
\nonumber B_1 =&\ \left\{\boldsymbol{\alpha}_{2}: \boldsymbol{\alpha}_{2} \in B,\ \alpha_2 < \min \left(\frac{3 \theta -1}{2}, \frac{1+\theta}{5}\right) \right\};\\
\nonumber B_2 =&\ \left\{\boldsymbol{\alpha}_{2}: \boldsymbol{\alpha}_{2} \in B,\ \alpha_2 \geqslant \min \left(\frac{3 \theta -1}{2}, \frac{1+\theta}{5}\right) \right\};\\
\nonumber A_1^{\prime} =&\ \left\{\boldsymbol{\alpha}_{2}: (1-\alpha_1 -\alpha_2, \alpha_2) \in B_1,\ (1-\alpha_1 -\alpha_2) \notin H \right\};\\
\nonumber A_2^{\prime} =&\ \left\{\boldsymbol{\alpha}_{2}: (1-\alpha_1 -\alpha_2, \alpha_2) \in B_2,\ (1-\alpha_1 -\alpha_2) \notin H \right\}.
\end{align}
Hence, by Buchstab's identity, we have
\begin{align}
\nonumber - \sum_{2}^{\prime} = - \sum_{\nu(0) \leqslant \alpha_{1}< \frac{1}{2}} S\left(\mathcal{A}_{p_{1}}, p_{1}\right) =&\ - \sum_{\substack{\nu(0) \leqslant \alpha_{1}< \frac{1}{2} \\ \boldsymbol{\alpha}_{1} \in H }} S\left(\mathcal{A}_{p_{1}}, p_{1}\right) - \sum_{\substack{\nu(0) \leqslant \alpha_{1}< \frac{1}{2} \\ \boldsymbol{\alpha}_{1} \notin H }} S\left(\mathcal{A}_{p_{1}}, p_{1}\right) \\
\nonumber =&\ - \sum_{\substack{\nu(0) \leqslant \alpha_{1}< \frac{1}{2} \\ \boldsymbol{\alpha}_{1} \in H }} S\left(\mathcal{A}_{p_{1}}, p_{1}\right) - \sum_{\substack{\nu(0) \leqslant \alpha_{1}< \frac{1}{2} \\ \boldsymbol{\alpha}_{1} \notin H }} S\left(\mathcal{A}_{p_{1}}, x^{\nu\left(\alpha_{1}\right)}\right) \\
\nonumber &+ \sum_{\substack{\nu(0) \leqslant \alpha_{1}<\frac{1}{2} \\ \boldsymbol{\alpha}_{1} \notin H \\
\nu\left(\alpha_{1}\right) \leqslant \alpha_{2}<\min \left(\alpha_{1},\frac{1}{2}\left(1-\alpha_{1}\right)\right)}} S\left(\mathcal{A}_{p_{1} p_{2}}, p_{2}\right)  \\
\nonumber =&\ - \sum_{\substack{\nu(0) \leqslant \alpha_{1}< \frac{1}{2} \\ \boldsymbol{\alpha}_{1} \in H }} S\left(\mathcal{A}_{p_{1}}, p_{1}\right) - \sum_{\substack{\nu(0) \leqslant \alpha_{1}< \frac{1}{2} \\ \boldsymbol{\alpha}_{1} \notin H }} S\left(\mathcal{A}_{p_{1}}, x^{\nu\left(\alpha_{1}\right)}\right) \\
\nonumber &+ \sum_{\substack{\boldsymbol{\alpha}_{1} \notin H \\ \boldsymbol{\alpha}_{2} \in A_1 }} S\left(\mathcal{A}_{p_{1} p_{2}}, p_{2}\right)
+ \sum_{\substack{\boldsymbol{\alpha}_{1} \notin H \\ \boldsymbol{\alpha}_{2} \in A_2 }} S\left(\mathcal{A}_{p_{1} p_{2}}, p_{2}\right)  \\
\nonumber &+ \sum_{\substack{\boldsymbol{\alpha}_{1} \notin H \\ \boldsymbol{\alpha}_{2} \in B_1 }} S\left(\mathcal{A}_{p_{1} p_{2}}, p_{2}\right)
+ \sum_{\substack{\boldsymbol{\alpha}_{1} \notin H \\ \boldsymbol{\alpha}_{2} \in B_2 }} S\left(\mathcal{A}_{p_{1} p_{2}}, p_{2}\right)
+ \sum_{\substack{\boldsymbol{\alpha}_{1} \notin H \\ \boldsymbol{\alpha}_{2} \in C }} S\left(\mathcal{A}_{p_{1} p_{2}}, p_{2}\right)  \\
=&\ - \sum_{H}^{\prime} - \sum_{3}^{\prime} + \sum_{A_1}^{\prime} + \sum_{A_2}^{\prime} + \sum_{B_1}^{\prime} + \sum_{B_2}^{\prime} + \sum_{C}^{\prime}.
\end{align}
By a similar discussion as in Section 5, we know that
\begin{equation}
\sum_{B_1}^{\prime} = \sum_{A_1^{\prime}}^{\prime} := \sum_{ \boldsymbol{\alpha}_{2} \in A_1^{\prime} } S\left(\mathcal{A}_{p_{1} p_{2}}, p_{2}\right)
\end{equation}
and
\begin{equation}
\sum_{B_2}^{\prime} = \sum_{A_2^{\prime}}^{\prime} := \sum_{ \boldsymbol{\alpha}_{2} \in A_2^{\prime} } S\left(\mathcal{A}_{p_{1} p_{2}}, p_{2}\right),
\end{equation}
hence
\begin{equation}
- \sum_{2}^{\prime} = - \sum_{H}^{\prime} - \sum_{3}^{\prime} + \sum_{A_1}^{\prime} + \sum_{A_2}^{\prime} + \sum_{A_1^{\prime}}^{\prime} + \sum_{A_2^{\prime}}^{\prime} + \sum_{C}^{\prime}.
\end{equation}
We have an asymptotic formula for $\sum_{3}^{\prime}$. For $\sum_{H}^{\prime}$ which cannot be decomposed anymore, we discard the whole of the sum leading to a loss of
\begin{equation}
\int_{\frac{7}{2} \theta - \frac{3}{2}}^{4 - 7 \theta} \frac{\omega\left(\frac{1 - t_1}{t_1}\right)}{t_1^2} d t_1 < 0.183.
\end{equation}

For $\sum_{A_1}^{\prime}$ we can use Buchstab's identity to reach
\begin{align}
\nonumber \sum_{A_1}^{\prime} =&\ \sum_{\substack{\boldsymbol{\alpha}_{1} \notin H \\ \boldsymbol{\alpha}_{2} \in A_1 }}  S\left(\mathcal{A}_{p_{1} p_{2}}, p_{2}\right) \\
=&\ \sum_{\substack{\boldsymbol{\alpha}_{1} \notin H \\ \boldsymbol{\alpha}_{2} \in A_1 }} S\left(\mathcal{A}_{p_{1} p_{2}}, x^{\nu_0}\right) - \sum_{\substack{\boldsymbol{\alpha}_{1} \notin H,\ \boldsymbol{\alpha}_{2} \in A_1 \\ \nu_0 \leqslant \alpha_{3} < \min\left(\alpha_2, \frac{1}{2}(1-\alpha_1 -\alpha_2)\right) }} S\left(\mathcal{A}_{p_{1} p_{2} p_{3}}, p_{3}\right).
\end{align}
By Lemma~\ref{l21}, we can give an asymptotic formula for the first sum on the right-hand side. For the second sum, we can perform a straightforward decomposition if we have $\boldsymbol{\alpha}_{3} \in \boldsymbol{D}^{+}$, and we can perform a role-reversal if we have $\boldsymbol{\alpha}_{3} \in \boldsymbol{D}^{\#}$. We can also use Buchstab's identity in reverse to gain some four-dimensional savings. Altogether, we have the following expression after this decomposition procedure:
\begin{align}
\nonumber &- \sum_{\substack{\boldsymbol{\alpha}_{1} \notin H,\ \boldsymbol{\alpha}_{2} \in A_1 \\ \nu_0 \leqslant \alpha_{3} < \min\left(\alpha_2, \frac{1}{2}(1-\alpha_1 -\alpha_2)\right) }} S\left(\mathcal{A}_{p_{1} p_{2} p_{3}}, p_{3}\right) \\
\nonumber =&\ - \sum_{\substack{\boldsymbol{\alpha}_{1} \notin H,\ \boldsymbol{\alpha}_{2} \in A_1 \\ \nu_0 \leqslant \alpha_{3} < \min\left(\alpha_2, \frac{1}{2}(1-\alpha_1 -\alpha_2)\right) \\ \boldsymbol{\alpha}_{3} \in \boldsymbol{G}_{3} }} S\left(\mathcal{A}_{p_{1} p_{2} p_{3}}, p_{3}\right) - \sum_{\substack{\boldsymbol{\alpha}_{1} \notin H,\ \boldsymbol{\alpha}_{2} \in A_1 \\ \nu_0 \leqslant \alpha_{3} < \min\left(\alpha_2, \frac{1}{2}(1-\alpha_1 -\alpha_2)\right) \\ \boldsymbol{\alpha}_{3} \notin \boldsymbol{G}_{3},\ \boldsymbol{\alpha}_{3} \notin \boldsymbol{D}^{+} }} S\left(\mathcal{A}_{p_{1} p_{2} p_{3}}, p_{3}\right) \\
\nonumber &- \sum_{\substack{\boldsymbol{\alpha}_{1} \notin H,\ \boldsymbol{\alpha}_{2} \in A_1 \\ \nu_0 \leqslant \alpha_{3} < \min\left(\alpha_2, \frac{1}{2}(1-\alpha_1 -\alpha_2)\right) \\ \boldsymbol{\alpha}_{3} \notin \boldsymbol{G}_{3},\ \boldsymbol{\alpha}_{3} \in \boldsymbol{D}^{+} }} S\left(\mathcal{A}_{p_{1} p_{2} p_{3}}, p_{3}\right) \\
\nonumber =&\ - \sum_{\substack{\boldsymbol{\alpha}_{1} \notin H,\ \boldsymbol{\alpha}_{2} \in A_1 \\ \nu_0 \leqslant \alpha_{3} < \min\left(\alpha_2, \frac{1}{2}(1-\alpha_1 -\alpha_2)\right) \\ \boldsymbol{\alpha}_{3} \in \boldsymbol{G}_{3} }} S\left(\mathcal{A}_{p_{1} p_{2} p_{3}}, p_{3}\right) 
- \sum_{\substack{\boldsymbol{\alpha}_{1} \notin H,\ \boldsymbol{\alpha}_{2} \in A_1 \\ \nu_0 \leqslant \alpha_{3} < \min\left(\alpha_2, \frac{1}{2}(1-\alpha_1 -\alpha_2)\right) \\ \boldsymbol{\alpha}_{3} \notin \boldsymbol{G}_{3},\ \boldsymbol{\alpha}_{3} \notin \boldsymbol{D}^{+} }} S\left(\mathcal{A}_{p_{1} p_{2} p_{3}}, p_{3}\right) \\
\nonumber &- \sum_{\substack{\boldsymbol{\alpha}_{1} \notin H,\ \boldsymbol{\alpha}_{2} \in A_1 \\ \nu_0 \leqslant \alpha_{3} < \min\left(\alpha_2, \frac{1}{2}(1-\alpha_1 -\alpha_2)\right) \\ \boldsymbol{\alpha}_{3} \notin \boldsymbol{G}_{3},\ \boldsymbol{\alpha}_{3} \in \boldsymbol{D}^{+} }} S\left(\mathcal{A}_{p_{1} p_{2} p_{3}}, x^{\nu_0}\right)
+ \sum_{\substack{\boldsymbol{\alpha}_{1} \notin H,\ \boldsymbol{\alpha}_{2} \in A_1 \\ \nu_0 \leqslant \alpha_{3} < \min\left(\alpha_2, \frac{1}{2}(1-\alpha_1 -\alpha_2)\right) \\ \boldsymbol{\alpha}_{3} \notin \boldsymbol{G}_{3},\ \boldsymbol{\alpha}_{3} \in \boldsymbol{D}^{+} \\ \nu_0 \leqslant \alpha_{4} < \min\left(\alpha_3, \frac{1}{2}(1-\alpha_1 -\alpha_2 -\alpha_3)\right) \\ \boldsymbol{\alpha}_{4} \in \boldsymbol{G}_{4} }} S\left(\mathcal{A}_{p_{1} p_{2} p_{3} p_{4}}, p_{4}\right) \\
\nonumber &+ \sum_{\substack{\boldsymbol{\alpha}_{1} \notin H,\ \boldsymbol{\alpha}_{2} \in A_1 \\ \nu_0 \leqslant \alpha_{3} < \min\left(\alpha_2, \frac{1}{2}(1-\alpha_1 -\alpha_2)\right) \\ \boldsymbol{\alpha}_{3} \notin \boldsymbol{G}_{3},\ \boldsymbol{\alpha}_{3} \in \boldsymbol{D}^{+} \\ \nu_0 \leqslant \alpha_{4} < \min\left(\alpha_3, \frac{1}{2}(1-\alpha_1 -\alpha_2 -\alpha_3)\right) \\ \boldsymbol{\alpha}_{4} \notin \boldsymbol{G}_{4} }} S\left(\mathcal{A}_{p_{1} p_{2} p_{3} p_{4}}, x^{\nu_0}\right) 
- \sum_{\substack{\boldsymbol{\alpha}_{1} \notin H,\ \boldsymbol{\alpha}_{2} \in A_1 \\ \nu_0 \leqslant \alpha_{3} < \min\left(\alpha_2, \frac{1}{2}(1-\alpha_1 -\alpha_2)\right) \\ \boldsymbol{\alpha}_{3} \notin \boldsymbol{G}_{3},\ \boldsymbol{\alpha}_{3} \in \boldsymbol{D}^{+} \\ \nu_0 \leqslant \alpha_{4} < \min\left(\alpha_3, \frac{1}{2}(1-\alpha_1 -\alpha_2 -\alpha_3)\right) \\ \boldsymbol{\alpha}_{4} \notin \boldsymbol{G}_{4} \\ \nu_0 \leqslant \alpha_{5} < \min\left(\alpha_4, \frac{1}{2}(1-\alpha_1 -\alpha_2 -\alpha_3 -\alpha_4)\right) \\ \boldsymbol{\alpha}_{5} \in \boldsymbol{G}_{5} }} S\left(\mathcal{A}_{p_{1} p_{2} p_{3} p_{4} p_{5}}, p_{5}\right) \\
\nonumber &- \sum_{\substack{\boldsymbol{\alpha}_{1} \notin H,\ \boldsymbol{\alpha}_{2} \in A_1 \\ \nu_0 \leqslant \alpha_{3} < \min\left(\alpha_2, \frac{1}{2}(1-\alpha_1 -\alpha_2)\right) \\ \boldsymbol{\alpha}_{3} \notin \boldsymbol{G}_{3},\ \boldsymbol{\alpha}_{3} \in \boldsymbol{D}^{+} \\ \nu_0 \leqslant \alpha_{4} < \min\left(\alpha_3, \frac{1}{2}(1-\alpha_1 -\alpha_2 -\alpha_3)\right) \\ \boldsymbol{\alpha}_{4} \notin \boldsymbol{G}_{4} \\ \nu_0 \leqslant \alpha_{5} < \min\left(\alpha_4, \frac{1}{2}(1-\alpha_1 -\alpha_2 -\alpha_3 -\alpha_4)\right) \\ \boldsymbol{\alpha}_{5} \notin \boldsymbol{G}_{5} }} S\left(\mathcal{A}_{p_{1} p_{2} p_{3} p_{4} p_{5}}, p_{5}\right) \\
=&\ - S_{01}^{\prime} - T_{01}^{\prime} - S_{02}^{\prime} + S_{03}^{\prime} + S_{04}^{\prime} - S_{05}^{\prime} - T_{02}^{\prime}.
\end{align}
We can give asymptotic formulas for $S_{01}^{\prime}$--$S_{05}^{\prime}$, and we can subtract the contribution of the sum
\begin{equation}
\sum_{\substack{\boldsymbol{\alpha}_{1} \notin H,\ \boldsymbol{\alpha}_{2} \in A_1 \\ \nu_0 \leqslant \alpha_{3} < \min\left(\alpha_2, \frac{1}{2}(1-\alpha_1 -\alpha_2)\right) \\ \boldsymbol{\alpha}_{3} \notin \boldsymbol{G}_{3},\ \boldsymbol{\alpha}_{3} \notin \boldsymbol{D}^{+} \\ \alpha_3 < \alpha_4 < \frac{1}{2}(1-\alpha_1 -\alpha_2 -\alpha_3) \\ \boldsymbol{\alpha}_{4} \in \boldsymbol{G}_{4}}} S\left(\mathcal{A}_{p_{1} p_{2} p_{3} p_{4}}, p_{4}\right)
\end{equation}
from the loss from $T_{01}^{\prime}$ by using Buchstab's identity in reverse.

To sum up, the loss from $\sum_{A_1}^{\prime}$ can be bounded by
\begin{align}
\nonumber & \left( \int_{\boldsymbol{t}_{3} \in V_{A1}} \frac{\omega \left(\frac{1 - t_1 - t_2 - t_3}{t_3}\right)}{t_1 t_2 t_3^2} d t_3 d t_2 d t_1 \right) \\ 
\nonumber -& \left( \int_{\boldsymbol{t}_{4} \in V_{A2}} \frac{\omega \left(\frac{1 - t_1 - t_2 - t_3 - t_4}{t_4}\right)}{t_1 t_2 t_3 t_4^2} d t_4 d t_3 d t_2 d t_1 \right) \\
\nonumber +& \left( \int_{\boldsymbol{t}_{5} \in V_{A3}} \frac{\omega \left(\frac{1 - t_1 - t_2 - t_3 - t_4 - t_5}{t_5}\right)}{t_1 t_2 t_3 t_4 t_5^2} d t_5 d t_4 d t_3 d t_2 d t_1 \right) \\
\nonumber \leqslant & \left( \int_{\boldsymbol{t}_{3} \in V_{A1}} \frac{\omega_1 \left(\frac{1 - t_1 - t_2 - t_3}{t_3}\right)}{t_1 t_2 t_3^2} d t_3 d t_2 d t_1 \right) \\ 
\nonumber -& \left( \int_{\boldsymbol{t}_{4} \in V_{A2}} \frac{\omega_0 \left(\frac{1 - t_1 - t_2 - t_3 - t_4}{t_4}\right)}{t_1 t_2 t_3 t_4^2} d t_4 d t_3 d t_2 d t_1 \right) \\
\nonumber +& \left( \int_{\boldsymbol{t}_{5} \in V_{A3}} \frac{\omega_1 \left(\frac{1 - t_1 - t_2 - t_3 - t_4 - t_5}{t_5}\right)}{t_1 t_2 t_3 t_4 t_5^2} d t_5 d t_4 d t_3 d t_2 d t_1 \right) \\
\leqslant &\ (0.19 - 0.005 + 0.07) < 0.255,
\end{align}
where
\begin{align}
\nonumber V_{A1}(\boldsymbol{\alpha}_{3}) :=&\ \left\{ \boldsymbol{\alpha}_{1} \notin H,\ \boldsymbol{\alpha}_{2} \in A_{1},\ \nu_0 \leqslant \alpha_{3} < \min\left(\alpha_2, \frac{1}{2}(1-\alpha_1 -\alpha_2)\right), \right. \\
\nonumber & \quad \boldsymbol{\alpha}_{3} \notin \boldsymbol{G}_{3},\ \boldsymbol{\alpha}_{3} \notin \boldsymbol{D}^{+}, \\
\nonumber & \left. \quad \nu_0 \leqslant \alpha_1 < \frac{1}{2},\ \nu\left(\alpha_1\right) \leqslant \alpha_2 < \min\left(\alpha_1, \frac{1}{2}(1-\alpha_1) \right) \right\}, \\
\nonumber V_{A2}(\boldsymbol{\alpha}_{4}) :=&\ \left\{ \boldsymbol{\alpha}_{1} \notin H,\ \boldsymbol{\alpha}_{2} \in A_{1},\ \nu_0 \leqslant \alpha_{3} < \min\left(\alpha_2, \frac{1}{2}(1-\alpha_1 -\alpha_2)\right), \right. \\
\nonumber & \quad \boldsymbol{\alpha}_{3} \notin \boldsymbol{G}_{3},\ \boldsymbol{\alpha}_{3} \notin \boldsymbol{D}^{+}, \\
\nonumber & \quad \alpha_3 < \alpha_4 < \frac{1}{2}(1-\alpha_1 -\alpha_2 -\alpha_3),\ \boldsymbol{\alpha}_{4} \in \boldsymbol{G}_{4}, \\
\nonumber & \left. \quad \nu_0 \leqslant \alpha_1 < \frac{1}{2},\ \nu\left(\alpha_1\right) \leqslant \alpha_2 < \min\left(\alpha_1, \frac{1}{2}(1-\alpha_1) \right) \right\}, \\
\nonumber V_{A3}(\boldsymbol{\alpha}_{5}) :=&\ \left\{ \boldsymbol{\alpha}_{1} \notin H,\ \boldsymbol{\alpha}_{2} \in A_{1},\ \nu_0 \leqslant \alpha_{3} < \min\left(\alpha_2, \frac{1}{2}(1-\alpha_1 -\alpha_2)\right), \right. \\
\nonumber & \quad \boldsymbol{\alpha}_{3} \notin \boldsymbol{G}_{3},\ \boldsymbol{\alpha}_{3} \in \boldsymbol{D}^{+}, \\
\nonumber & \quad \nu_0 \leqslant \alpha_{4} < \min\left(\alpha_3, \frac{1}{2}(1-\alpha_1 -\alpha_2 -\alpha_3)\right),\ \boldsymbol{\alpha}_{4} \notin \boldsymbol{G}_{4}, \\
\nonumber & \quad \nu_0 \leqslant \alpha_{5} < \min\left(\alpha_4, \frac{1}{2}(1-\alpha_1 -\alpha_2 -\alpha_3 -\alpha_4)\right),\ \boldsymbol{\alpha}_{5} \notin \boldsymbol{G}_{5}, \\
\nonumber & \left. \quad \nu_0 \leqslant \alpha_1 < \frac{1}{2},\ \nu\left(\alpha_1\right) \leqslant \alpha_2 < \min\left(\alpha_1, \frac{1}{2}(1-\alpha_1) \right) \right\}.
\end{align}

By the essentially identical decomposing process, the loss from $\sum_{A_1^{\prime}}^{\prime}$ is less than
\begin{align}
\nonumber & \left( \int_{\boldsymbol{t}_{3} \in V_{A4}} \frac{\omega \left(\frac{1 - t_1 - t_2 - t_3}{t_3}\right)}{t_1 t_2 t_3^2} d t_3 d t_2 d t_1 \right) \\ 
\nonumber -& \left( \int_{\boldsymbol{t}_{4} \in V_{A5}} \frac{\omega \left(\frac{1 - t_1 - t_2 - t_3 - t_4}{t_4}\right)}{t_1 t_2 t_3 t_4^2} d t_4 d t_3 d t_2 d t_1 \right) \\
\nonumber +& \left( \int_{\boldsymbol{t}_{5} \in V_{A6}} \frac{\omega \left(\frac{1 - t_1 - t_2 - t_3 - t_4 - t_5}{t_5}\right)}{t_1 t_2 t_3 t_4 t_5^2} d t_5 d t_4 d t_3 d t_2 d t_1 \right) \\
\nonumber \leqslant & \left( \int_{\boldsymbol{t}_{3} \in V_{A4}} \frac{\omega_1 \left(\frac{1 - t_1 - t_2 - t_3}{t_3}\right)}{t_1 t_2 t_3^2} d t_3 d t_2 d t_1 \right) \\ 
\nonumber -& \left( \int_{\boldsymbol{t}_{4} \in V_{A5}} \frac{\omega_0 \left(\frac{1 - t_1 - t_2 - t_3 - t_4}{t_4}\right)}{t_1 t_2 t_3 t_4^2} d t_4 d t_3 d t_2 d t_1 \right) \\
\nonumber +& \left( \int_{\boldsymbol{t}_{5} \in V_{A6}} \frac{\omega_1 \left(\frac{1 - t_1 - t_2 - t_3 - t_4 - t_5}{t_5}\right)}{t_1 t_2 t_3 t_4 t_5^2} d t_5 d t_4 d t_3 d t_2 d t_1 \right) \\
\leqslant &\ (0.32 - 0.01 + 0.07) < 0.38,
\end{align}
where
\begin{align}
\nonumber V_{A4}(\boldsymbol{\alpha}_{3}) :=&\ \left\{ \boldsymbol{\alpha}_{2} \in A_{1}^{\prime},\ \nu_0 \leqslant \alpha_{3} < \min\left(\alpha_2, \frac{1}{2}(1-\alpha_1 -\alpha_2)\right), \right. \\
\nonumber & \quad \boldsymbol{\alpha}_{3} \notin \boldsymbol{G}_{3},\ \boldsymbol{\alpha}_{3} \notin \boldsymbol{D}^{+}, \\
\nonumber & \left. \quad \nu_0 \leqslant \alpha_1 < \frac{1}{2},\ \nu\left(\alpha_1\right) \leqslant \alpha_2 < \min\left(\alpha_1, \frac{1}{2}(1-\alpha_1) \right) \right\}, \\
\nonumber V_{A5}(\boldsymbol{\alpha}_{4}) :=&\ \left\{\boldsymbol{\alpha}_{2} \in A_{1}^{\prime},\ \nu_0 \leqslant \alpha_{3} < \min\left(\alpha_2, \frac{1}{2}(1-\alpha_1 -\alpha_2)\right), \right. \\
\nonumber & \quad \boldsymbol{\alpha}_{3} \notin \boldsymbol{G}_{3},\ \boldsymbol{\alpha}_{3} \notin \boldsymbol{D}^{+}, \\
\nonumber & \quad \alpha_3 < \alpha_4 < \frac{1}{2}(1-\alpha_1 -\alpha_2 -\alpha_3),\ \boldsymbol{\alpha}_{4} \in \boldsymbol{G}_{4}, \\
\nonumber & \left. \quad \nu_0 \leqslant \alpha_1 < \frac{1}{2},\ \nu\left(\alpha_1\right) \leqslant \alpha_2 < \min\left(\alpha_1, \frac{1}{2}(1-\alpha_1) \right) \right\}, \\
\nonumber V_{A6}(\boldsymbol{\alpha}_{5}) :=&\ \left\{ \boldsymbol{\alpha}_{2} \in A_{1}^{\prime},\ \nu_0 \leqslant \alpha_{3} < \min\left(\alpha_2, \frac{1}{2}(1-\alpha_1 -\alpha_2)\right), \right. \\
\nonumber & \quad \boldsymbol{\alpha}_{3} \notin \boldsymbol{G}_{3},\ \boldsymbol{\alpha}_{3} \in \boldsymbol{D}^{+}, \\
\nonumber & \quad \nu_0 \leqslant \alpha_{4} < \min\left(\alpha_3, \frac{1}{2}(1-\alpha_1 -\alpha_2 -\alpha_3)\right),\ \boldsymbol{\alpha}_{4} \notin \boldsymbol{G}_{4}, \\
\nonumber & \quad \nu_0 \leqslant \alpha_{5} < \min\left(\alpha_4, \frac{1}{2}(1-\alpha_1 -\alpha_2 -\alpha_3 -\alpha_4)\right),\ \boldsymbol{\alpha}_{5} \notin \boldsymbol{G}_{5}, \\
\nonumber & \left. \quad \nu_0 \leqslant \alpha_1 < \frac{1}{2},\ \nu\left(\alpha_1\right) \leqslant \alpha_2 < \min\left(\alpha_1, \frac{1}{2}(1-\alpha_1) \right) \right\}.
\end{align}

For $\sum_{A_2}^{\prime}$, we apply Buchstab's identity as in (37) to get
\begin{align}
\nonumber \sum_{A_2}^{\prime} =&\ \sum_{\substack{\boldsymbol{\alpha}_{1} \notin H \\ \boldsymbol{\alpha}_{2} \in A_2 }} S\left(\mathcal{A}_{p_{1} p_{2}}, p_{2}\right) \\
=&\ \sum_{\substack{\boldsymbol{\alpha}_{1} \notin H \\ \boldsymbol{\alpha}_{2} \in A_2 }} S\left(\mathcal{A}_{p_{1} p_{2}}, x^{\nu_0}\right) - \sum_{\substack{\boldsymbol{\alpha}_{1} \notin H,\ \boldsymbol{\alpha}_{2} \in A_2 \\ \nu_0 \leqslant \alpha_{3} < \min\left(\alpha_2, \frac{1}{2}(1-\alpha_1 -\alpha_2)\right) }} S\left(\mathcal{A}_{p_{1} p_{2} p_{3}}, p_{3}\right).
\end{align}
Although Lemma~\ref{l21} is not applicable in this case, we can use Lemma~\ref{l23} to give an asymptotic formula for the first sum on the right-hand side. For the second sum, we cannot perform any further decompositions because we cannot give asymptotic formula for the four-dimensional sum after applying Buchstab's identity twice. Thus, the loss from $\sum_{A_2}^{\prime}$ is just
\begin{align}
\nonumber & \int_{\boldsymbol{t}_{3} \in V_{A7}} \frac{\omega \left(\frac{1 - t_1 - t_2 - t_3}{t_3}\right)}{t_1 t_2 t_3^2} d t_3 d t_2 d t_1 \\
\leqslant & \int_{\boldsymbol{t}_{3} \in V_{A7}} \frac{\omega_1 \left(\frac{1 - t_1 - t_2 - t_3}{t_3}\right)}{t_1 t_2 t_3^2} d t_3 d t_2 d t_1 < 0.12,
\end{align}
where
\begin{align}
\nonumber V_{A7}(\boldsymbol{\alpha}_{3}) :=&\ \left\{ \boldsymbol{\alpha}_{1} \notin H,\ \boldsymbol{\alpha}_{2} \in A_{2},\ \nu_0 \leqslant \alpha_{3} < \min\left(\alpha_2, \frac{1}{2}(1-\alpha_1 -\alpha_2)\right),\ \boldsymbol{\alpha}_{3} \notin \boldsymbol{G}_{3}, \right. \\
\nonumber & \left. \quad \nu_0 \leqslant \alpha_1 < \frac{1}{2},\ \nu\left(\alpha_1\right) \leqslant \alpha_2 < \min\left(\alpha_1, \frac{1}{2}(1-\alpha_1) \right) \right\}.
\end{align}
Similarly, the loss from $\sum_{A_2^{\prime}}^{\prime}$ is
\begin{align}
\nonumber & \int_{\boldsymbol{t}_{3} \in V_{A8}} \frac{\omega \left(\frac{1 - t_1 - t_2 - t_3}{t_3}\right)}{t_1 t_2 t_3^2} d t_3 d t_2 d t_1 \\
\leqslant & \int_{\boldsymbol{t}_{3} \in V_{A8}} \frac{\omega_1 \left(\frac{1 - t_1 - t_2 - t_3}{t_3}\right)}{t_1 t_2 t_3^2} d t_3 d t_2 d t_1 < 0.22,
\end{align}
where
\begin{align}
\nonumber V_{A8}(\boldsymbol{\alpha}_{3}) :=&\ \left\{ \boldsymbol{\alpha}_{2} \in A_{2}^{\prime},\ \nu_0 \leqslant \alpha_{3} < \min\left(\alpha_2, \frac{1}{2}(1-\alpha_1 -\alpha_2)\right),\ \boldsymbol{\alpha}_{3} \notin \boldsymbol{G}_{3}, \right. \\
\nonumber & \left. \quad \nu_0 \leqslant \alpha_1 < \frac{1}{2},\ \nu\left(\alpha_1\right) \leqslant \alpha_2 < \min\left(\alpha_1, \frac{1}{2}(1-\alpha_1) \right) \right\}.
\end{align}

For the remaining $\sum_{C}^{\prime}$, we have
\begin{align}
\nonumber \sum_{C}^{\prime} =&\ \sum_{\substack{\boldsymbol{\alpha}_{1} \notin H \\ \boldsymbol{\alpha}_{2} \in C }}  S\left(\mathcal{A}_{p_{1} p_{2}}, p_{2}\right) \\
=&\ \sum_{\substack{\boldsymbol{\alpha}_{1} \notin H \\ \boldsymbol{\alpha}_{2} \in C }} S\left(\mathcal{A}_{p_{1} p_{2}}, x^{\nu_0}\right) - \sum_{\substack{\boldsymbol{\alpha}_{1} \notin H,\ \boldsymbol{\alpha}_{2} \in C \\ \nu_0 \leqslant \alpha_{3} < \min\left(\alpha_2, \frac{1}{2}(1-\alpha_1 -\alpha_2)\right) }} S\left(\mathcal{A}_{p_{1} p_{2} p_{3}}, p_{3}\right).
\end{align}
We can give an asymptotic formula for the first sum on the right-hand side. For the second sum, we need to consider role-reversals because we may have a large $\alpha_1$ in this case. We can perform a straightforward decomposition if we have $\boldsymbol{\alpha}_{3} \in \boldsymbol{D}^{+}$, and we can perform a role-reversal if we have $\boldsymbol{\alpha}_{3} \in \boldsymbol{D}^{\#}$. Using Buchstab's identity, we write
\begin{align}
\nonumber &- \sum_{\substack{\boldsymbol{\alpha}_{1} \notin H,\ \boldsymbol{\alpha}_{2} \in C \\ \nu_0 \leqslant \alpha_{3} < \min\left(\alpha_2, \frac{1}{2}(1-\alpha_1 -\alpha_2)\right) }} S\left(\mathcal{A}_{p_{1} p_{2} p_{3}}, p_{3}\right) \\
\nonumber =&\ - \sum_{\substack{\boldsymbol{\alpha}_{1} \notin H,\ \boldsymbol{\alpha}_{2} \in C \\ \nu_0 \leqslant \alpha_{3} < \min\left(\alpha_2, \frac{1}{2}(1-\alpha_1 -\alpha_2)\right) \\ \boldsymbol{\alpha}_{3} \in \boldsymbol{G}_{3} }} S\left(\mathcal{A}_{p_{1} p_{2} p_{3}}, p_{3}\right) - \sum_{\substack{\boldsymbol{\alpha}_{1} \notin H,\ \boldsymbol{\alpha}_{2} \in C \\ \nu_0 \leqslant \alpha_{3} < \min\left(\alpha_2, \frac{1}{2}(1-\alpha_1 -\alpha_2)\right) \\ \boldsymbol{\alpha}_{3} \notin \boldsymbol{G}_{3},\ \boldsymbol{\alpha}_{3} \notin \boldsymbol{D}^{+},\ \boldsymbol{\alpha}_{3} \notin \boldsymbol{D}^{\#} }} S\left(\mathcal{A}_{p_{1} p_{2} p_{3}}, p_{3}\right) \\
\nonumber &- \sum_{\substack{\boldsymbol{\alpha}_{1} \notin H,\ \boldsymbol{\alpha}_{2} \in C \\ \nu_0 \leqslant \alpha_{3} < \min\left(\alpha_2, \frac{1}{2}(1-\alpha_1 -\alpha_2)\right) \\ \boldsymbol{\alpha}_{3} \notin \boldsymbol{G}_{3},\ \boldsymbol{\alpha}_{3} \in \boldsymbol{D}^{+} }} S\left(\mathcal{A}_{p_{1} p_{2} p_{3}}, p_{3}\right) - \sum_{\substack{\boldsymbol{\alpha}_{1} \notin H,\ \boldsymbol{\alpha}_{2} \in C \\ \nu_0 \leqslant \alpha_{3} < \min\left(\alpha_2, \frac{1}{2}(1-\alpha_1 -\alpha_2)\right) \\ \boldsymbol{\alpha}_{3} \notin \boldsymbol{G}_{3},\ \boldsymbol{\alpha}_{3} \notin \boldsymbol{D}^{+},\ \boldsymbol{\alpha}_{3} \in \boldsymbol{D}^{\#} }} S\left(\mathcal{A}_{p_{1} p_{2} p_{3}}, p_{3}\right) \\
\nonumber =&\ - \sum_{\substack{\boldsymbol{\alpha}_{1} \notin H,\ \boldsymbol{\alpha}_{2} \in C \\ \nu_0 \leqslant \alpha_{3} < \min\left(\alpha_2, \frac{1}{2}(1-\alpha_1 -\alpha_2)\right) \\ \boldsymbol{\alpha}_{3} \in \boldsymbol{G}_{3} }} S\left(\mathcal{A}_{p_{1} p_{2} p_{3}}, p_{3}\right) - \sum_{\substack{\boldsymbol{\alpha}_{1} \notin H,\ \boldsymbol{\alpha}_{2} \in C \\ \nu_0 \leqslant \alpha_{3} < \min\left(\alpha_2, \frac{1}{2}(1-\alpha_1 -\alpha_2)\right) \\ \boldsymbol{\alpha}_{3} \notin \boldsymbol{G}_{3},\ \boldsymbol{\alpha}_{3} \notin \boldsymbol{D}^{+},\ \boldsymbol{\alpha}_{3} \notin \boldsymbol{D}^{\#} }} S\left(\mathcal{A}_{p_{1} p_{2} p_{3}}, p_{3}\right) \\
\nonumber &- \sum_{\substack{\boldsymbol{\alpha}_{1} \notin H,\ \boldsymbol{\alpha}_{2} \in C \\ \nu_0 \leqslant \alpha_{3} < \min\left(\alpha_2, \frac{1}{2}(1-\alpha_1 -\alpha_2)\right) \\ \boldsymbol{\alpha}_{3} \notin \boldsymbol{G}_{3},\ \boldsymbol{\alpha}_{3} \in \boldsymbol{D}^{+} }} S\left(\mathcal{A}_{p_{1} p_{2} p_{3}}, x^{\nu_0}\right)
+ \sum_{\substack{\boldsymbol{\alpha}_{1} \notin H,\ \boldsymbol{\alpha}_{2} \in C \\ \nu_0 \leqslant \alpha_{3} < \min\left(\alpha_2, \frac{1}{2}(1-\alpha_1 -\alpha_2)\right) \\ \boldsymbol{\alpha}_{3} \notin \boldsymbol{G}_{3},\ \boldsymbol{\alpha}_{3} \in \boldsymbol{D}^{+} \\ \nu_0 \leqslant \alpha_{4} < \min\left(\alpha_3, \frac{1}{2}(1-\alpha_1 -\alpha_2 -\alpha_3)\right) \\ \boldsymbol{\alpha}_{4} \in \boldsymbol{G}_{4} }} S\left(\mathcal{A}_{p_{1} p_{2} p_{3} p_{4}}, p_{4}\right) \\
\nonumber &+ \sum_{\substack{\boldsymbol{\alpha}_{1} \notin H,\ \boldsymbol{\alpha}_{2} \in C \\ \nu_0 \leqslant \alpha_{3} < \min\left(\alpha_2, \frac{1}{2}(1-\alpha_1 -\alpha_2)\right) \\ \boldsymbol{\alpha}_{3} \notin \boldsymbol{G}_{3},\ \boldsymbol{\alpha}_{3} \in \boldsymbol{D}^{+} \\ \nu_0 \leqslant \alpha_{4} < \min\left(\alpha_3, \frac{1}{2}(1-\alpha_1 -\alpha_2 -\alpha_3)\right) \\ \boldsymbol{\alpha}_{4} \notin \boldsymbol{G}_{4} }} S\left(\mathcal{A}_{p_{1} p_{2} p_{3} p_{4}}, x^{\nu_0}\right) 
- \sum_{\substack{\boldsymbol{\alpha}_{1} \notin H,\ \boldsymbol{\alpha}_{2} \in C \\ \nu_0 \leqslant \alpha_{3} < \min\left(\alpha_2, \frac{1}{2}(1-\alpha_1 -\alpha_2)\right) \\ \boldsymbol{\alpha}_{3} \notin \boldsymbol{G}_{3},\ \boldsymbol{\alpha}_{3} \in \boldsymbol{D}^{+} \\ \nu_0 \leqslant \alpha_{4} < \min\left(\alpha_3, \frac{1}{2}(1-\alpha_1 -\alpha_2 -\alpha_3)\right) \\ \boldsymbol{\alpha}_{4} \notin \boldsymbol{G}_{4} \\ \nu_0 \leqslant \alpha_{5} < \min\left(\alpha_4, \frac{1}{2}(1-\alpha_1 -\alpha_2 -\alpha_3 -\alpha_4)\right) \\ \boldsymbol{\alpha}_{5} \in \boldsymbol{G}_{5} }} S\left(\mathcal{A}_{p_{1} p_{2} p_{3} p_{4} p_{5}}, p_{5}\right) \\
\nonumber &- \sum_{\substack{\boldsymbol{\alpha}_{1} \notin H,\ \boldsymbol{\alpha}_{2} \in C \\ \nu_0 \leqslant \alpha_{3} < \min\left(\alpha_2, \frac{1}{2}(1-\alpha_1 -\alpha_2)\right) \\ \boldsymbol{\alpha}_{3} \notin \boldsymbol{G}_{3},\ \boldsymbol{\alpha}_{3} \in \boldsymbol{D}^{+} \\ \nu_0 \leqslant \alpha_{4} < \min\left(\alpha_3, \frac{1}{2}(1-\alpha_1 -\alpha_2 -\alpha_3)\right) \\ \boldsymbol{\alpha}_{4} \notin \boldsymbol{G}_{4} \\ \nu_0 \leqslant \alpha_{5} < \min\left(\alpha_4, \frac{1}{2}(1-\alpha_1 -\alpha_2 -\alpha_3 -\alpha_4)\right) \\ \boldsymbol{\alpha}_{5} \notin \boldsymbol{G}_{5} }} S\left(\mathcal{A}_{p_{1} p_{2} p_{3} p_{4} p_{5}}, p_{5}\right)
- \sum_{\substack{\boldsymbol{\alpha}_{1} \notin H,\ \boldsymbol{\alpha}_{2} \in C \\ \nu_0 \leqslant \alpha_{3} < \min\left(\alpha_2, \frac{1}{2}(1-\alpha_1 -\alpha_2)\right) \\ \boldsymbol{\alpha}_{3} \notin \boldsymbol{G}_{3},\ \boldsymbol{\alpha}_{3} \notin \boldsymbol{D}^{+},\ \boldsymbol{\alpha}_{3} \in \boldsymbol{D}^{\#} }} S\left(\mathcal{A}_{p_{1} p_{2} p_{3}}, x^{\nu_0}\right) \\
\nonumber &+ \sum_{\substack{\boldsymbol{\alpha}_{1} \notin H,\ \boldsymbol{\alpha}_{2} \in C \\ \nu_0 \leqslant \alpha_{3} < \min\left(\alpha_2, \frac{1}{2}(1-\alpha_1 -\alpha_2)\right) \\ \boldsymbol{\alpha}_{3} \notin \boldsymbol{G}_{3},\ \boldsymbol{\alpha}_{3} \notin \boldsymbol{D}^{+},\ \boldsymbol{\alpha}_{3} \in \boldsymbol{D}^{\#} \\ \nu_0 \leqslant \alpha_{4} < \min\left(\alpha_3, \frac{1}{2}(1-\alpha_1 -\alpha_2 -\alpha_3)\right) \\ \boldsymbol{\alpha}_{4} \in \boldsymbol{G}_{4} }} S\left(\mathcal{A}_{p_{1} p_{2} p_{3} p_{4}}, p_{4}\right) 
+ \sum_{\substack{\boldsymbol{\alpha}_{1} \notin H,\ \boldsymbol{\alpha}_{2} \in C \\ \nu_0 \leqslant \alpha_{3} < \min\left(\alpha_2, \frac{1}{2}(1-\alpha_1 -\alpha_2)\right) \\ \boldsymbol{\alpha}_{3} \notin \boldsymbol{G}_{3},\ \boldsymbol{\alpha}_{3} \notin \boldsymbol{D}^{+},\ \boldsymbol{\alpha}_{3} \in \boldsymbol{D}^{\#} \\ \nu_0 \leqslant \alpha_{4} < \min\left(\alpha_3, \frac{1}{2}(1-\alpha_1 -\alpha_2 -\alpha_3)\right) \\ \boldsymbol{\alpha}_{4} \notin \boldsymbol{G}_{4} }} S\left(\mathcal{A}_{\eta p_{2} p_{3} p_{4}}, x^{\nu_0}\right) \\
\nonumber &- \sum_{\substack{\boldsymbol{\alpha}_{1} \notin H,\ \boldsymbol{\alpha}_{2} \in C \\ \nu_0 \leqslant \alpha_{3} < \min\left(\alpha_2, \frac{1}{2}(1-\alpha_1 -\alpha_2)\right) \\ \boldsymbol{\alpha}_{3} \notin \boldsymbol{G}_{3},\ \boldsymbol{\alpha}_{3} \notin \boldsymbol{D}^{+},\ \boldsymbol{\alpha}_{3} \in \boldsymbol{D}^{\#} \\ \nu_0 \leqslant \alpha_{4} < \min\left(\alpha_3, \frac{1}{2}(1-\alpha_1 -\alpha_2 -\alpha_3)\right) \\ \boldsymbol{\alpha}_{4} \notin \boldsymbol{G}_{4} \\ \nu_0 \leqslant \alpha_{5} < \frac{1}{2}\alpha_1 \\ \boldsymbol{\alpha}_{5}^{\#} \in \boldsymbol{G}_{5}}} S\left(\mathcal{A}_{\eta p_{2} p_{3} p_{4} p_{5}}, p_{5}\right)
- \sum_{\substack{\boldsymbol{\alpha}_{1} \notin H,\ \boldsymbol{\alpha}_{2} \in C \\ \nu_0 \leqslant \alpha_{3} < \min\left(\alpha_2, \frac{1}{2}(1-\alpha_1 -\alpha_2)\right) \\ \boldsymbol{\alpha}_{3} \notin \boldsymbol{G}_{3},\ \boldsymbol{\alpha}_{3} \notin \boldsymbol{D}^{+},\ \boldsymbol{\alpha}_{3} \in \boldsymbol{D}^{\#} \\ \nu_0 \leqslant \alpha_{4} < \min\left(\alpha_3, \frac{1}{2}(1-\alpha_1 -\alpha_2 -\alpha_3)\right) \\ \boldsymbol{\alpha}_{4} \notin \boldsymbol{G}_{4} \\ \nu_0 \leqslant \alpha_{5} < \frac{1}{2}\alpha_1 \\ \boldsymbol{\alpha}_{5}^{\#} \notin \boldsymbol{G}_{5}}} S\left(\mathcal{A}_{\eta p_{2} p_{3} p_{4} p_{5}}, p_{5}\right) \\
=&\ - S_{06}^{\prime} - T_{03}^{\prime} - S_{07}^{\prime} + S_{08}^{\prime} + S_{09}^{\prime} - S_{10}^{\prime} - T_{04}^{\prime} - S_{11}^{\prime} + S_{12}^{\prime} + S_{13}^{\prime} - S_{14}^{\prime} - T_{05}^{\prime},
\end{align}
where $\eta \sim x^{1-\alpha_1 -\alpha_2 -\alpha_3 -\alpha_4}$, $\left(\eta, P(p_{4})\right)=1$ and
$$
\boldsymbol{\alpha}_{5}^{\#} =(1-\alpha_1-\alpha_2-\alpha_3-\alpha_4,\ \alpha_2,\ \alpha_3,\ \alpha_4,\ \alpha_5).
$$
We can give asymptotic formulas for $S_{06}^{\prime}$--$S_{14}^{\prime}$. For $T_{03}^{\prime}$ we can use Buchstab's identity in reverse to subtract the contribution of the sum
\begin{equation}
\sum_{\substack{\boldsymbol{\alpha}_{1} \notin H,\ \boldsymbol{\alpha}_{2} \in C \\ \nu_0 \leqslant \alpha_{3} < \min\left(\alpha_2, \frac{1}{2}(1-\alpha_1 -\alpha_2)\right) \\ \boldsymbol{\alpha}_{3} \notin \boldsymbol{G}_{3},\ \boldsymbol{\alpha}_{3} \notin \boldsymbol{D}^{+} \\ \alpha_3 < \alpha_4 < \frac{1}{2}(1-\alpha_1 -\alpha_2 -\alpha_3) \\ \boldsymbol{\alpha}_{4} \in \boldsymbol{G}_{4}}} S\left(\mathcal{A}_{p_{1} p_{2} p_{3} p_{4}}, p_{4}\right)
\end{equation}
from the loss. For the remaining
\begin{equation}
T_{04}^{\prime} = \sum_{\substack{\boldsymbol{\alpha}_{1} \notin H,\ \boldsymbol{\alpha}_{2} \in C \\ \nu_0 \leqslant \alpha_{3} < \min\left(\alpha_2, \frac{1}{2}(1-\alpha_1 -\alpha_2)\right) \\ \boldsymbol{\alpha}_{3} \notin \boldsymbol{G}_{3},\ \boldsymbol{\alpha}_{3} \in \boldsymbol{D}^{+} \\ \nu_0 \leqslant \alpha_{4} < \min\left(\alpha_3, \frac{1}{2}(1-\alpha_1 -\alpha_2 -\alpha_3)\right) \\ \boldsymbol{\alpha}_{4} \notin \boldsymbol{G}_{4} \\ \nu_0 \leqslant \alpha_{5} < \min\left(\alpha_4, \frac{1}{2}(1-\alpha_1 -\alpha_2 -\alpha_3 -\alpha_4)\right) \\ \boldsymbol{\alpha}_{5} \notin \boldsymbol{G}_{5} }} S\left(\mathcal{A}_{p_{1} p_{2} p_{3} p_{4} p_{5}}, p_{5}\right)
\end{equation}
and
\begin{equation}
T_{05}^{\prime} = \sum_{\substack{\boldsymbol{\alpha}_{1} \notin H,\ \boldsymbol{\alpha}_{2} \in C \\ \nu_0 \leqslant \alpha_{3} < \min\left(\alpha_2, \frac{1}{2}(1-\alpha_1 -\alpha_2)\right) \\ \boldsymbol{\alpha}_{3} \notin \boldsymbol{G}_{3},\ \boldsymbol{\alpha}_{3} \notin \boldsymbol{D}^{+},\ \boldsymbol{\alpha}_{3} \in \boldsymbol{D}^{\#} \\ \nu_0 \leqslant \alpha_{4} < \min\left(\alpha_3, \frac{1}{2}(1-\alpha_1 -\alpha_2 -\alpha_3)\right) \\ \boldsymbol{\alpha}_{4} \notin \boldsymbol{G}_{4} \\ \nu_0 \leqslant \alpha_{5} < \frac{1}{2}\alpha_1 \\ \boldsymbol{\alpha}_{5}^{\#} \notin \boldsymbol{G}_{5}}} S\left(\mathcal{A}_{\eta p_{2} p_{3} p_{4} p_{5}}, p_{5}\right),
\end{equation}
we can perform a further straightforward decomposition on $T_{04}^{\prime}$ if $\boldsymbol{\alpha}_{5} \in \boldsymbol{D}^{++}$,  and on $T_{05}^{\prime}$ if either $\boldsymbol{\alpha}_{5}^{\#} \in \boldsymbol{D}^{++}$ or $(\alpha_1-\alpha_5,\ \alpha_2,\ \alpha_3,\ \alpha_5,\ \alpha_4) \in \boldsymbol{D}^{++}$. Note that for $T_{032}$ and $T_{033}$ in Section 5 we use similar discussion. This leads to the loss of three seven-dimensional sums
\begin{equation}
\sum_{\substack{\boldsymbol{\alpha}_{1} \notin H,\ \boldsymbol{\alpha}_{2} \in C \\ \nu_0 \leqslant \alpha_{3} < \min\left(\alpha_2, \frac{1}{2}(1-\alpha_1 -\alpha_2)\right) \\ \boldsymbol{\alpha}_{3} \notin \boldsymbol{G}_{3},\ \boldsymbol{\alpha}_{3} \in \boldsymbol{D}^{+} \\ \nu_0 \leqslant \alpha_{4} < \min\left(\alpha_3, \frac{1}{2}(1-\alpha_1 -\alpha_2 -\alpha_3)\right) \\ \boldsymbol{\alpha}_{4} \notin \boldsymbol{G}_{4} \\ \nu_0 \leqslant \alpha_{5} < \min\left(\alpha_4, \frac{1}{2}(1-\alpha_1 -\alpha_2 -\alpha_3 -\alpha_4)\right) \\ \boldsymbol{\alpha}_{5} \notin \boldsymbol{G}_{5},\ \boldsymbol{\alpha}_{5} \in \boldsymbol{D}^{++} \\ \nu_0 \leqslant \alpha_{6} < \min\left(\alpha_5, \frac{1}{2}(1-\alpha_1 -\alpha_2 -\alpha_3 -\alpha_4 -\alpha_5)\right) \\ \boldsymbol{\alpha}_{6} \notin \boldsymbol{G}_{6} \\ \nu_0 \leqslant \alpha_{7} < \min\left(\alpha_6, \frac{1}{2}(1-\alpha_1 -\alpha_2 -\alpha_3 -\alpha_4 -\alpha_5 -\alpha_6)\right) \\ \boldsymbol{\alpha}_{7} \notin \boldsymbol{G}_{7} }} S\left(\mathcal{A}_{p_{1} p_{2} p_{3} p_{4} p_{5} p_{6} p_{7}}, p_{7}\right),
\end{equation}
\begin{equation}
\sum_{\substack{\boldsymbol{\alpha}_{1} \notin H,\ \boldsymbol{\alpha}_{2} \in C \\ \nu_0 \leqslant \alpha_{3} < \min\left(\alpha_2, \frac{1}{2}(1-\alpha_1 -\alpha_2)\right) \\ \boldsymbol{\alpha}_{3} \notin \boldsymbol{G}_{3},\ \boldsymbol{\alpha}_{3} \notin \boldsymbol{D}^{+},\ \boldsymbol{\alpha}_{3} \in \boldsymbol{D}^{\#} \\ \nu_0 \leqslant \alpha_{4} < \min\left(\alpha_3, \frac{1}{2}(1-\alpha_1 -\alpha_2 -\alpha_3)\right) \\ \boldsymbol{\alpha}_{4} \notin \boldsymbol{G}_{4} \\ \nu_0 \leqslant \alpha_{5} < \frac{1}{2}\alpha_1 \\ \boldsymbol{\alpha}_{5}^{\#} \notin \boldsymbol{G}_{5},\ \boldsymbol{\alpha}_{5}^{\#} \in \boldsymbol{D}^{++} \\ \nu_0 \leqslant \alpha_{6} < \min\left(\alpha_5, \frac{1}{2}(\alpha_1 -\alpha_5)\right) \\ \boldsymbol{\alpha}_{6}^{\#} \notin \boldsymbol{G}_{6} \\ \nu_0 \leqslant \alpha_{7} < \min\left(\alpha_6, \frac{1}{2}(\alpha_1 -\alpha_5 -\alpha_6)\right) \\ \boldsymbol{\alpha}_{7}^{\#} \notin \boldsymbol{G}_{7}  }} S\left(\mathcal{A}_{\eta p_{2} p_{3} p_{4} p_{5} p_{6} p_{7}}, p_{7}\right),
\end{equation}
and
\begin{equation}
\sum_{\substack{\boldsymbol{\alpha}_{1} \notin H,\ \boldsymbol{\alpha}_{2} \in C \\ \nu_0 \leqslant \alpha_{3} < \min\left(\alpha_2, \frac{1}{2}(1-\alpha_1 -\alpha_2)\right) \\ \boldsymbol{\alpha}_{3} \notin \boldsymbol{G}_{3},\ \boldsymbol{\alpha}_{3} \notin \boldsymbol{D}^{+},\ \boldsymbol{\alpha}_{3} \in \boldsymbol{D}^{\#} \\ \nu_0 \leqslant \alpha_{4} < \min\left(\alpha_3, \frac{1}{2}(1-\alpha_1 -\alpha_2 -\alpha_3)\right) \\ \boldsymbol{\alpha}_{4} \notin \boldsymbol{G}_{4} \\ \nu_0 \leqslant \alpha_{5} < \frac{1}{2}\alpha_1 \\ \boldsymbol{\alpha}_{5}^{\#} \notin \boldsymbol{G}_{5},\ \boldsymbol{\alpha}_{5}^{\#} \notin \boldsymbol{D}^{++},\ \boldsymbol{\alpha}_{5}^{\# \prime} \in \boldsymbol{D}^{++} \\ \nu_0 \leqslant \alpha_{6} < \min\left(\alpha_4, \frac{1}{2}(1-\alpha_1-\alpha_2-\alpha_3-\alpha_4)\right) \\ \boldsymbol{\alpha}_{6}^{\# \prime} \notin \boldsymbol{G}_{6} \\ \nu_0 \leqslant \alpha_{7} < \min\left(\alpha_6, \frac{1}{2}(1-\alpha_1-\alpha_2-\alpha_3-\alpha_4 -\alpha_6)\right) \\ \boldsymbol{\alpha}_{7}^{\# \prime} \notin \boldsymbol{G}_{7}  }} S\left(\mathcal{A}_{\eta_1 p_{2} p_{3} p_{4} p_{5} p_{6} p_{7}}, p_{7}\right),
\end{equation}
where 
$\eta_1 \sim x^{\alpha_1-\alpha_5}$, $\left(\eta_1, P(p_{5})\right)=1$,
$$
\boldsymbol{\alpha}_{6}^{\#} =(1-\alpha_1-\alpha_2-\alpha_3-\alpha_4,\ \alpha_2,\ \alpha_3,\ \alpha_4,\ \alpha_5,\ \alpha_6),
$$
$$
\boldsymbol{\alpha}_{7}^{\#} =(1-\alpha_1-\alpha_2-\alpha_3-\alpha_4,\ \alpha_2,\ \alpha_3,\ \alpha_4,\ \alpha_5,\ \alpha_6,\ \alpha_7),
$$
$$
\boldsymbol{\alpha}_{5}^{\# \prime} =(\alpha_1-\alpha_5,\ \alpha_2,\ \alpha_3,\ \alpha_5,\ \alpha_4),
$$
$$
\boldsymbol{\alpha}_{6}^{\# \prime} =(\alpha_1-\alpha_5,\ \alpha_2,\ \alpha_3,\ \alpha_4,\ \alpha_5,\ \alpha_6)
$$
and 
$$
\boldsymbol{\alpha}_{7}^{\# \prime} =(\alpha_1-\alpha_5,\ \alpha_2,\ \alpha_3,\ \alpha_4,\ \alpha_5,\ \alpha_6,\ \alpha_7).
$$

Again, the loss from $\sum_{C}^{\prime}$ is no more than
\begin{align}
\nonumber & \left( \int_{\boldsymbol{t}_{3} \in V_{C1}} \frac{\omega \left(\frac{1 - t_1 - t_2 - t_3}{t_3}\right)}{t_1 t_2 t_3^2} d t_3 d t_2 d t_1 \right) \\ 
\nonumber -& \left( \int_{\boldsymbol{t}_{4} \in V_{C2}} \frac{\omega \left(\frac{1 - t_1 - t_2 - t_3 - t_4}{t_4}\right)}{t_1 t_2 t_3 t_4^2} d t_4 d t_3 d t_2 d t_1 \right) \\
\nonumber +& \left( \int_{\boldsymbol{t}_{5} \in V_{C3}} \frac{\omega \left(\frac{1 - t_1 - t_2 - t_3 - t_4 - t_5}{t_5}\right)}{t_1 t_2 t_3 t_4 t_5^2} d t_5 d t_4 d t_3 d t_2 d t_1 \right) \\
\nonumber +& \left( \int_{\boldsymbol{t}_{5} \in V_{C4}} \frac{\omega \left(\frac{t_1 - t_5}{t_5}\right) \omega \left(\frac{1 - t_1 - t_2 - t_3 - t_4}{t_4}\right)}{t_2 t_3 t_4^2 t_5^2} d t_5 d t_4 d t_3 d t_2 d t_1 \right) \\
\nonumber +& \left( \int_{\boldsymbol{t}_{7} \in V_{C5}} \frac{\omega \left(\frac{1 - t_1 - t_2 - t_3 - t_4 - t_5 - t_6 - t_7}{t_7}\right)}{t_1 t_2 t_3 t_4 t_5 t_6 t_7^2} d t_7 d t_6 d t_5 d t_4 d t_3 d t_2 d t_1 \right) \\
\nonumber +& \left( \int_{\boldsymbol{t}_{7} \in V_{C6}} \frac{\omega \left(\frac{t_1 - t_5 - t_6 - t_7}{t_7}\right) \omega \left(\frac{1 - t_1 - t_2 - t_3 - t_4}{t_4}\right)}{t_2 t_3 t_4^2 t_5 t_6 t_7^2} d t_7 d t_6 d t_5 d t_4 d t_3 d t_2 d t_1 \right) \\
\nonumber +& \left( \int_{\boldsymbol{t}_{7} \in V_{C7}} \frac{\omega \left(\frac{t_1 - t_5}{t_5}\right) \omega \left(\frac{1 - t_1 - t_2 - t_3 - t_4 - t_6 - t_7}{t_7}\right)}{t_2 t_3 t_4 t_5^2 t_6 t_7^2} d t_7 d t_6 d t_5 d t_4 d t_3 d t_2 d t_1 \right) \\
\nonumber \leqslant & \left( \int_{\boldsymbol{t}_{3} \in V_{C1}} \frac{\omega_1 \left(\frac{1 - t_1 - t_2 - t_3}{t_3}\right)}{t_1 t_2 t_3^2} d t_3 d t_2 d t_1 \right) \\ 
\nonumber -& \left( \int_{\boldsymbol{t}_{4} \in V_{C2}} \frac{\omega_0 \left(\frac{1 - t_1 - t_2 - t_3 - t_4}{t_4}\right)}{t_1 t_2 t_3 t_4^2} d t_4 d t_3 d t_2 d t_1 \right) \\
\nonumber +& \left( \int_{\boldsymbol{t}_{5} \in V_{C3}} \frac{\omega_1 \left(\frac{1 - t_1 - t_2 - t_3 - t_4 - t_5}{t_5}\right)}{t_1 t_2 t_3 t_4 t_5^2} d t_5 d t_4 d t_3 d t_2 d t_1 \right) \\
\nonumber +& \left( \int_{\boldsymbol{t}_{5} \in V_{C4}} \frac{\omega_1 \left(\frac{t_1 - t_5}{t_5}\right) \omega_1 \left(\frac{1 - t_1 - t_2 - t_3 - t_4}{t_4}\right)}{t_2 t_3 t_4^2 t_5^2} d t_5 d t_4 d t_3 d t_2 d t_1 \right) \\
\nonumber +& \left( \int_{\boldsymbol{t}_{7} \in V_{C5}} \frac{\max \left(\frac{t_7}{1 - t_1 - t_2 - t_3 - t_4 - t_5 - t_6 - t_7}, 0.5672\right)}{t_1 t_2 t_3 t_4 t_5 t_6 t_7^2} d t_7 d t_6 d t_5 d t_4 d t_3 d t_2 d t_1 \right) \\
\nonumber +& \left( \int_{\boldsymbol{t}_{7} \in V_{C6}} \frac{\max \left(\frac{t_7}{t_1 - t_5 - t_6 - t_7}, 0.5672\right) \max \left(\frac{t_4}{1 - t_1 - t_2 - t_3 - t_4}, 0.5672\right)}{t_2 t_3 t_4^2 t_5 t_6 t_7^2} d t_7 d t_6 d t_5 d t_4 d t_3 d t_2 d t_1 \right) \\
\nonumber +& \left( \int_{\boldsymbol{t}_{7} \in V_{C7}} \frac{\max \left(\frac{t_5}{t_1 - t_5}, 0.5672\right) \max \left(\frac{t_7}{1 - t_1 - t_2 - t_3 - t_4 - t_6 - t_7}, 0.5672\right)}{t_2 t_3 t_4 t_5^2 t_6 t_7^2} d t_7 d t_6 d t_5 d t_4 d t_3 d t_2 d t_1 \right) \\
\leqslant &\ (0.31 - 0 + 0.13 + 0.25 + 0.02 + 0.005 + 0.001) < 0.716,
\end{align}
where
\begin{align}
\nonumber V_{C1}(\boldsymbol{\alpha}_{3}) :=&\ \left\{ \boldsymbol{\alpha}_{1} \notin H,\ \boldsymbol{\alpha}_{2} \in C,\ \nu_0 \leqslant \alpha_{3} < \min\left(\alpha_2, \frac{1}{2}(1-\alpha_1 -\alpha_2)\right), \right. \\
\nonumber & \quad \boldsymbol{\alpha}_{3} \notin \boldsymbol{G}_{3},\ \boldsymbol{\alpha}_{3} \notin \boldsymbol{D}^{+},\ \boldsymbol{\alpha}_{3} \notin \boldsymbol{D}^{\#}, \\
\nonumber & \left. \quad \nu_0 \leqslant \alpha_1 < \frac{1}{2},\ \nu\left(\alpha_1\right) \leqslant \alpha_2 < \min\left(\alpha_1, \frac{1}{2}(1-\alpha_1) \right) \right\}, \\
\nonumber V_{C2}(\boldsymbol{\alpha}_{4}) :=&\ \left\{ \boldsymbol{\alpha}_{1} \notin H,\ \boldsymbol{\alpha}_{2} \in C,\ \nu_0 \leqslant \alpha_{3} < \min\left(\alpha_2, \frac{1}{2}(1-\alpha_1 -\alpha_2)\right), \right. \\
\nonumber & \quad \boldsymbol{\alpha}_{3} \notin \boldsymbol{G}_{3},\ \boldsymbol{\alpha}_{3} \notin \boldsymbol{D}^{+},\ \boldsymbol{\alpha}_{3} \notin \boldsymbol{D}^{\#}, \\
\nonumber & \quad \alpha_3 < \alpha_4 < \frac{1}{2}(1-\alpha_1 -\alpha_2 -\alpha_3),\ \boldsymbol{\alpha}_{4} \in \boldsymbol{G}_{4}, \\
\nonumber & \left. \quad \nu_0 \leqslant \alpha_1 < \frac{1}{2},\ \nu\left(\alpha_1\right) \leqslant \alpha_2 < \min\left(\alpha_1, \frac{1}{2}(1-\alpha_1) \right) \right\}, \\
\nonumber V_{C3}(\boldsymbol{\alpha}_{5}) :=&\ \left\{ \boldsymbol{\alpha}_{1} \notin H,\ \boldsymbol{\alpha}_{2} \in C,\ \nu_0 \leqslant \alpha_{3} < \min\left(\alpha_2, \frac{1}{2}(1-\alpha_1 -\alpha_2)\right), \right. \\
\nonumber & \quad \boldsymbol{\alpha}_{3} \notin \boldsymbol{G}_{3},\ \boldsymbol{\alpha}_{3} \in \boldsymbol{D}^{+}, \\
\nonumber & \quad \nu_0 \leqslant \alpha_{4} < \min\left(\alpha_3, \frac{1}{2}(1-\alpha_1 -\alpha_2 -\alpha_3)\right),\ \boldsymbol{\alpha}_{4} \notin \boldsymbol{G}_{4}, \\
\nonumber & \quad \nu_0 \leqslant \alpha_{5} < \min\left(\alpha_4, \frac{1}{2}(1-\alpha_1 -\alpha_2 -\alpha_3 -\alpha_4)\right),\ \boldsymbol{\alpha}_{5} \notin \boldsymbol{G}_{5},\ \boldsymbol{\alpha}_{5} \notin \boldsymbol{D}^{++}, \\
\nonumber & \left. \quad \nu_0 \leqslant \alpha_1 < \frac{1}{2},\ \nu\left(\alpha_1\right) \leqslant \alpha_2 < \min\left(\alpha_1, \frac{1}{2}(1-\alpha_1) \right) \right\}, \\
\nonumber V_{C4}(\boldsymbol{\alpha}_{5}) :=&\ \left\{ \boldsymbol{\alpha}_{1} \notin H,\ \boldsymbol{\alpha}_{2} \in C,\ \nu_0 \leqslant \alpha_{3} < \min\left(\alpha_2, \frac{1}{2}(1-\alpha_1 -\alpha_2)\right), \right. \\
\nonumber & \quad \boldsymbol{\alpha}_{3} \notin \boldsymbol{G}_{3},\ \boldsymbol{\alpha}_{3} \notin \boldsymbol{D}^{+},\ \boldsymbol{\alpha}_{3} \in \boldsymbol{D}^{\#}, \\
\nonumber & \quad \nu_0 \leqslant \alpha_{4} < \min\left(\alpha_3, \frac{1}{2}(1-\alpha_1 -\alpha_2 -\alpha_3)\right),\ \boldsymbol{\alpha}_{4} \notin \boldsymbol{G}_{4}, \\
\nonumber & \quad \nu_0 \leqslant \alpha_{5} < \frac{1}{2}\alpha_1,\ \boldsymbol{\alpha}_{5}^{\#} \notin \boldsymbol{G}_{5},\ \boldsymbol{\alpha}_{5}^{\#} \notin \boldsymbol{D}^{++},\ \boldsymbol{\alpha}_{5}^{\# \prime} \notin \boldsymbol{D}^{++} \\
\nonumber & \left. \quad \nu_0 \leqslant \alpha_1 < \frac{1}{2},\ \nu\left(\alpha_1\right) \leqslant \alpha_2 < \min\left(\alpha_1, \frac{1}{2}(1-\alpha_1) \right) \right\}, \\
\nonumber V_{C5}(\boldsymbol{\alpha}_{7}) :=&\ \left\{ \boldsymbol{\alpha}_{1} \notin H,\ \boldsymbol{\alpha}_{2} \in C,\ \nu_0 \leqslant \alpha_{3} < \min\left(\alpha_2, \frac{1}{2}(1-\alpha_1 -\alpha_2)\right), \right. \\
\nonumber & \quad \boldsymbol{\alpha}_{3} \notin \boldsymbol{G}_{3},\ \boldsymbol{\alpha}_{3} \in \boldsymbol{D}^{+}, \\
\nonumber & \quad \nu_0 \leqslant \alpha_{4} < \min\left(\alpha_3, \frac{1}{2}(1-\alpha_1 -\alpha_2 -\alpha_3)\right),\ \boldsymbol{\alpha}_{4} \notin \boldsymbol{G}_{4}, \\
\nonumber & \quad \nu_0 \leqslant \alpha_{5} < \min\left(\alpha_4, \frac{1}{2}(1-\alpha_1 -\alpha_2 -\alpha_3 -\alpha_4)\right),\ \boldsymbol{\alpha}_{5} \notin \boldsymbol{G}_{5},\ \boldsymbol{\alpha}_{5} \in \boldsymbol{D}^{++}, \\
\nonumber & \quad \nu_0 \leqslant \alpha_{6} < \min\left(\alpha_5, \frac{1}{2}(1-\alpha_1 -\alpha_2 -\alpha_3 -\alpha_4 -\alpha_5)\right),\ \boldsymbol{\alpha}_{6} \notin \boldsymbol{G}_{6}, \\
\nonumber & \quad \nu_0 \leqslant \alpha_{7} < \min\left(\alpha_6, \frac{1}{2}(1-\alpha_1 -\alpha_2 -\alpha_3 -\alpha_4 -\alpha_5 -\alpha_6)\right),\ \boldsymbol{\alpha}_{7} \notin \boldsymbol{G}_{7}, \\
\nonumber & \left. \quad \nu_0 \leqslant \alpha_1 < \frac{1}{2},\ \nu\left(\alpha_1\right) \leqslant \alpha_2 < \min\left(\alpha_1, \frac{1}{2}(1-\alpha_1) \right) \right\}, \\
\nonumber V_{C6}(\boldsymbol{\alpha}_{7}) :=&\ \left\{ \boldsymbol{\alpha}_{1} \notin H,\ \boldsymbol{\alpha}_{2} \in C,\ \nu_0 \leqslant \alpha_{3} < \min\left(\alpha_2, \frac{1}{2}(1-\alpha_1 -\alpha_2)\right), \right. \\
\nonumber & \quad \boldsymbol{\alpha}_{3} \notin \boldsymbol{G}_{3},\ \boldsymbol{\alpha}_{3} \notin \boldsymbol{D}^{+},\ \boldsymbol{\alpha}_{3} \in \boldsymbol{D}^{\#}, \\
\nonumber & \quad \nu_0 \leqslant \alpha_{4} < \min\left(\alpha_3, \frac{1}{2}(1-\alpha_1 -\alpha_2 -\alpha_3)\right),\ \boldsymbol{\alpha}_{4} \notin \boldsymbol{G}_{4}, \\
\nonumber & \quad \nu_0 \leqslant \alpha_{5} < \frac{1}{2}\alpha_1,\ \boldsymbol{\alpha}_{5}^{\#} \notin \boldsymbol{G}_{5},\ \boldsymbol{\alpha}_{5}^{\#} \in \boldsymbol{D}^{++} \\
\nonumber & \quad \nu_0 \leqslant \alpha_{6} < \min\left(\alpha_5, \frac{1}{2}(\alpha_1 -\alpha_5)\right),\ \boldsymbol{\alpha}_{6}^{\#} \notin \boldsymbol{G}_{6}, \\
\nonumber & \quad \nu_0 \leqslant \alpha_{7} < \min\left(\alpha_6, \frac{1}{2}(\alpha_1 -\alpha_5 -\alpha_6)\right),\ \boldsymbol{\alpha}_{7}^{\#} \notin \boldsymbol{G}_{7}, \\
\nonumber & \left. \quad \nu_0 \leqslant \alpha_1 < \frac{1}{2},\ \nu\left(\alpha_1\right) \leqslant \alpha_2 < \min\left(\alpha_1, \frac{1}{2}(1-\alpha_1) \right) \right\}, \\
\nonumber V_{C7}(\boldsymbol{\alpha}_{7}) :=&\ \left\{ \boldsymbol{\alpha}_{1} \notin H,\ \boldsymbol{\alpha}_{2} \in C,\ \nu_0 \leqslant \alpha_{3} < \min\left(\alpha_2, \frac{1}{2}(1-\alpha_1 -\alpha_2)\right), \right. \\
\nonumber & \quad \boldsymbol{\alpha}_{3} \notin \boldsymbol{G}_{3},\ \boldsymbol{\alpha}_{3} \notin \boldsymbol{D}^{+},\ \boldsymbol{\alpha}_{3} \in \boldsymbol{D}^{\#}, \\
\nonumber & \quad \nu_0 \leqslant \alpha_{4} < \min\left(\alpha_3, \frac{1}{2}(1-\alpha_1 -\alpha_2 -\alpha_3)\right),\ \boldsymbol{\alpha}_{4} \notin \boldsymbol{G}_{4}, \\
\nonumber & \quad \nu_0 \leqslant \alpha_{5} < \frac{1}{2}\alpha_1,\ \boldsymbol{\alpha}_{5}^{\#} \notin \boldsymbol{G}_{5},\ \boldsymbol{\alpha}_{5}^{\#} \notin \boldsymbol{D}^{++},\ \boldsymbol{\alpha}_{5}^{\# \prime} \in \boldsymbol{D}^{++} \\
\nonumber & \quad \nu_0 \leqslant \alpha_{6} < \min\left(\alpha_4, \frac{1}{2}(1 -\alpha_1 -\alpha_2 -\alpha_3 -\alpha_4)\right),\ \boldsymbol{\alpha}_{6}^{\# \prime} \notin \boldsymbol{G}_{6}, \\
\nonumber & \quad \nu_0 \leqslant \alpha_{7} < \min\left(\alpha_6, \frac{1}{2}(1 -\alpha_1 -\alpha_2 -\alpha_3 -\alpha_4 -\alpha_6)\right),\ \boldsymbol{\alpha}_{7}^{\# \prime} \notin \boldsymbol{G}_{7}, \\
\nonumber & \left. \quad \nu_0 \leqslant \alpha_1 < \frac{1}{2},\ \nu\left(\alpha_1\right) \leqslant \alpha_2 < \min\left(\alpha_1, \frac{1}{2}(1-\alpha_1) \right) \right\}.
\end{align}

Finally, by (35), (36), (40), (41), (43), (44) and (53), the total loss from $\sum_{2}^{\prime}$ is less than
$$
0.183 + 0.255 + 0.38 + 0.12 + 0.22 + 0.716 < 1.874
$$
and we conclude that
$$
\pi(x)-\pi(x-x^{0.52})=S\left(\mathcal{A}, x^{\frac{1}{2}}\right) \leqslant 2.874 \frac{x^{0.52}}{\log x}.
$$
The upper constant $2.874$ can be slightly improved by more careful decompositions and accurate calculations. We remark that we can also use Lemma~\ref{l23} together with a variant of [\cite{BHP}, Lemma 17] on the lower bound problem, but the new four-dimensional loss after using them on $\sum_{A}$ exceeds the original two-dimensional loss when $\theta = 0.52$. Our upper bound result is weaker than Iwaniec's upper constant $\frac{4}{1+\theta} \approx 2.6316$ when $\theta = 0.52$, but our sieve approach gives better results for slightly longer intervals (of length $x^{0.522}$, $x^{0.523}$ and so on, see the values in following table). In fact, the upper constant rises rapidly as $\theta$ increases. The upper bounds for other values of $\theta$ between $0.52$ and $0.525$ can be proved in the same way, so we omit the calculation details. One can check our code for them to verify the numerical calculations.

\begin{center}
\begin{tabular}{|c|c|c|}
\hline \boldmath{$\theta$} & \textbf{New} \boldmath{$\operatorname{UB}(\theta)$} & \textbf{Iwaniec's} \boldmath{$\operatorname{UB}(\theta)$} \\
\hline \boldmath{$0.520$} & $<2.874$ & $\frac{4}{1+0.52} <2.6316$  \\
\hline \boldmath{$0.521$} & $<2.700$ & $\frac{4}{1+0.521} <2.6299$  \\
\hline \boldmath{$0.522$} & \boldmath{$<2.583$} & $\frac{4}{1+0.522} <2.6282$  \\
\hline \boldmath{$0.523$} & \boldmath{$<2.536$} & $\frac{4}{1+0.523} <2.6264$  \\
\hline \boldmath{$0.524$} & \boldmath{$<2.437$} & $\frac{4}{1+0.524} <2.6247$  \\
\hline \boldmath{$0.525$} & \boldmath{$<2.347$} & $\frac{4}{1+0.525} <2.6230$  \\
\hline
\end{tabular}
\end{center}

\section{Applications}
Clearly our Theorem~\ref{t1} has many interesting applications (just like the previous BHP's result), and we state some of them in this section. Note that we ignore the presence of $\varepsilon$ because we can use the same method to prove Theorem~\ref{t1} with a slightly smaller $\theta$, such as $0.52 - 10^{-100}$. The first application is about primes in arithmetic progressions in short intervals, which improves upon the result of Harman [\cite{HarmanBOOK}, Theorem 10.8].
\begin{theorem1number}\label{t2}
For all $q \leqslant (\log x)^{K}$ and any $a$ coprime to $q$, we have
$$
\pi(x;q,a)-\pi(x-x^{0.52};q,a) \geqslant 0.004 \frac{x^{0.52}}{\varphi(q) \log x}.
$$
\end{theorem1number}

Another application is about bounded gaps between primes in short intervals, which improves the result of Alweiss and Luo [\cite{AlweissLuo}, Corollary 1.2].
\begin{theorem1number}\label{t3}
There exist positive integers $k, d$ such that the interval $[x - x^{0.52}, x]$ contains $\gg x^{0.52}(\log x)^{-k}$ pairs of consecutive primes differing by at most $d$.
\end{theorem1number}

The third application is about primes with prime subscripts (or prime-primes) in short intervals. By using the numerical bound in Theorem~\ref{t1B}, we can derive the following theorem.
\begin{theorem1number}\label{PrimePrime}
We have
$$
\pi(\pi(x))-\pi(\pi(x-x^{0.52})) \geqslant 0.004 \frac{x^{0.52}}{(\log x)^2}.
$$
\end{theorem1number}
\noindent The bound for the number of prime-primes in interval of length $x^{0.525}$ was obtained by Broughan and Barnett \cite{BroughanBarnett}, where they also proved the analogs of Prime Number Theorem and weak Dirichlet's Theorem for prime-primes.

The next two applications focus on Goldbach numbers (sum of two primes) in short intervals. By replacing [\cite{HarmanBOOK}, Theorem 10.8] by our Theorem~\ref{t2} in the proof of the main theorem in \cite{Grimmelt}, we can easily deduce the following result.
\begin{theorem1number}\label{t4}
Almost all even numbers in the interval $[x, x+x^{\frac{13}{225}}]$ are Goldbach numbers.
\end{theorem1number}
\noindent By combining our Theorem~\ref{t1} with the main theorem proved in \cite{LiRunbo1215}, we can easily show the following result.
\begin{theorem1number}\label{t5}
The interval $[x, x+x^{\frac{26}{1075}}]$ contains Goldbach numbers.
\end{theorem1number}
\noindent Note that $\frac{13}{225} \approx 0.0578$ and $\frac{26}{1075} \approx 0.0242$. Previous exponents $\frac{7}{120} \approx 0.0583$ [\cite{Grimmelt}, Theorem 1.1] and $\frac{21}{860} \approx 0.0244$ [\cite{LiRunbo1215}, Theorem 8.1] come from BHP's $0.525$. We remark that if we focus on Maillet numbers (difference of two primes) instead of Goldbach numbers in short intervals, Pintz \cite{PintzM} improved the exponent in Theorem~\ref{t5} to any $\varepsilon > 0$.

The next four applications of Theorem~\ref{t1} are not direct corollaries of Theorem~\ref{t1} and Theorem~\ref{t1B}. However, since the arithmetic information inputs are quite similar, we can easily get these results with a slight modification of our calculation. We remark that we can only get an exponent $0.5248$ in these applications since the corresponding arithmetic information is weaker than that in Section 3 and 4. The first one is about the distribution of prime ideals of imaginary quadratic fields.
\begin{theorem1number}\label{t6}
Let $d < 0$ be the discriminant of an imaginary quadratic field $K = \mathbb{Q}(\sqrt{d})$, and let $Q(x,y) \in \mathbb{Z}[x,y]$ be a positive definite quadratic form with discriminant $d$. Then, for every pair $(s,t) \in \mathbb{R}^2$, there is another pair $(m,n) \in \mathbb{Z}^2$ for which $Q(m,n)$ is prime and
$$
Q(s-m,t-n) \ll Q(s,t)^{0.5248} +1.
$$
Specially, For every $z \in \mathbb{C}$, one can find a Gaussian prime $\mathfrak{p} \neq z$ satisfying
$$
\left| z - \mathfrak{p} \right| \ll |z|^{0.5248} +1.
$$
\end{theorem1number}
\noindent Theorem~\ref{t6} improves previous results of Lewis \cite{LewisThesis} and Harman, Kumchev and Lewis \cite{HarmanKumchevLewis}, who got exponents $0.528$ and $0.53$ respectively.

The second one focuses on primes in arithmetic progressions valid except for a small set of exceptional moduli.
\begin{theorem1number}\label{t7}
There exists a $C > 0$ such that if $q$ is large, all prime factors of $q$ is less than $q^{C}$, and we have
$$
L(s, \chi) \neq 0 \text{ for } \mathrm{Re}s > 1- \frac{1}{(\log q)^{\frac{3}{4}}}, \qquad |t| \leqslant \exp\left(\varepsilon (\log q)^{\frac{3}{4}} \right)
$$
for every $d \mid q$ with $\chi$ a primitive character mod $d$, then for any $a$ coprime to $q$, we have
$$
\pi(x;q,a) \gg \frac{x}{\varphi(q)\log x}
$$
whenever $q < x^{0.4752}$.
\end{theorem1number}

The third one is a corollary of Theorem~\ref{t7}, which concerns the number of Carmichael numbers less than $x$. By combining our exponent $0.5248$ with [\cite{Lichtman2}, Theorem 1.1], we know that
\begin{theorem1number}\label{t8}
Let $Carm(x)$ denote the number of Carmichael numbers less than $x$. Then we have
$$
Carm(x) > x^{(1-0.2844)(1-0.5248)} > x^{0.34}.
$$
\end{theorem1number}
\noindent Theorem~\ref{t8} improves previous results of Lichtman \cite{Lichtman2} and Harman \cite{HarmanCarmichael2} \cite{HarmanCarmichael1}, who got exponents $0.3389$, $\frac{1}{3}$ and $0.33$ respectively.

The fourth one is another corollary of Theorem~\ref{t7}, which focuses on Linnik's constant for prime power moduli with a fixed prime and improves the result of Banks and Shparlinski \cite{BanksShparlinski}.
\begin{theorem1number}\label{t9}
For any $q = p^{R}$ with a large integer $R$ and any $a$ coprime to $p$, we have
$$
\sum_{\substack{n \leqslant x \\ n \equiv a (\bmod q) }} \Lambda(n) \gg \frac{x}{\varphi(q)}
$$
for any $x > q^{\frac{1}{0.4752}}$. Specially, we can bound Linnik's constant by $\frac{1}{0.4752} < 2.1044$ if $q$ is a power of a fixed prime.
\end{theorem1number}

The tenth application of Theorem~\ref{t1} concerns the work of Erdős and Rényi \cite{ErdosRenyi} on Turán’s problem 10. By applying the methods in \cite{Andersson2006} together with our Theorem~\ref{t1}, we can obtain the following upper bound of the power sum of complex $z$:
\begin{theorem1number}\label{t10}
We have
$$
\inf_{|z_k| \geqslant 1} \max_{v = 1, \ldots, n^2} \left|\sum_{1 \leqslant k \leqslant n}z^v_k \right| = \sqrt{n} + O \left(n^{0.26}\right).
$$
\end{theorem1number}
\noindent Theorem~\ref{t10} improves upon the result of Andersson \cite{Andersson2006}, which has an error of $O \left(n^{0.2625}\right)$.

Next application of Theorem~\ref{t1} gives a better lower bound for the pairs of ``symmetric primes'', which was first considered by Tang and Wu \cite{TangWu}. By applying our Theorem~\ref{t1} directly, we can get the following bound.
\begin{theorem1number}\label{t11}
We have
$$
\sum_{\substack{p \leqslant x \\ \exists p^{\prime} \text{ such that } [x/p^{\prime}] = p}} 1 \gg \frac{x^{\frac{12}{37}}}{\log x}.
$$
\end{theorem1number}
\noindent Note that $\frac{12}{37}=\frac{1-0.52}{2-0.52}$. Theorem~\ref{t11} improves upon the result of Tang and Wu \cite{TangWu}, which has a lower bound $x^{\frac{19}{59}} (\log x)^{-1}$ comes from BHP's $0.525$.

The twelfth application of Theorem~\ref{t1} focuses on the size of a Sidon set and the sum of elements in it. By applying Theorem~\ref{t1} together with the methods in \cite{DingYC}, we can get the following result.
\begin{theorem1number}\label{t12}
Let $S$ be a Sidon set in $\{1,2,\ldots,n\}$ with $|S| = S_n$, then we have
$$
S_n = n^{\frac{1}{2}} + O\left(n^{\frac{13}{50}}\right)
$$
for positive integers $n$, and
$$
\sum_{a \in S}a = \frac{1}{2} n^{\frac{3}{2}} + O\left(n^{\frac{69}{50}}\right).
$$
\end{theorem1number}
\noindent The second part of Theorem~\ref{t12} improves upon the result of Ding [\cite{DingYC}, Corollary 1.3], where he proved an error of $O\left(n^{\frac{221}{160}}\right)$. The first part of Theorem~\ref{t12} is a direct corollary of Theorem~\ref{t1}. One can see [\cite{DingYC}, Lemma 2.3] for a proof with $\theta = 0.525$.

The last application of Theorem~\ref{t1} is Waring--Goldbach problem in short intervals, which improves the previous result of Wang [\cite{WangMengdi}, Corollary 2]. Using our new Theorem~\ref{t2} together with [\cite{WangMengdi}, Theorem 1], we can get the following result.
\begin{theorem1number}\label{t13}
Let $v = v(p, k)$ denote the integer such that $p^{v} \mid k$ but $p^{v+1} \nmid k$. Define
\begin{align*}
y = y(p, k) = 
\begin{cases}
v + 2, & \text{if } p = 2 \text{ and } v > 0, \\
v + 1, & \text{otherwise}
\end{cases}
\end{align*}
and
$$
R_k = \prod_{(p-1) \mid k}p^y.
$$
Then, when $k \geqslant 2$, $\theta > 0.52$ and $s > \max\left(\frac{12500}{13}, k(k+1)\right)$, for all sufficiently large $n \equiv s (\bmod R_k)$, there are primes 
$$
p_1, \ldots, p_s \in \left[ \left(\frac{n}{s}\right)^{\frac{1}{k}} - n^{\frac{\theta}{k}} , \left(\frac{n}{s}\right)^{\frac{1}{k}} + n^{\frac{\theta}{k}} \right]
$$
such that
$$
n = p_1^k + \cdots + p_s^k.
$$
Note that $\frac{12500}{13} = \frac{2}{0.004 \times 0.52} \approx 961.5$.
\end{theorem1number}

\newpage
\section*{Appendix 1: Plot of Regions}
\begin{center}
\includegraphics[height=8cm]{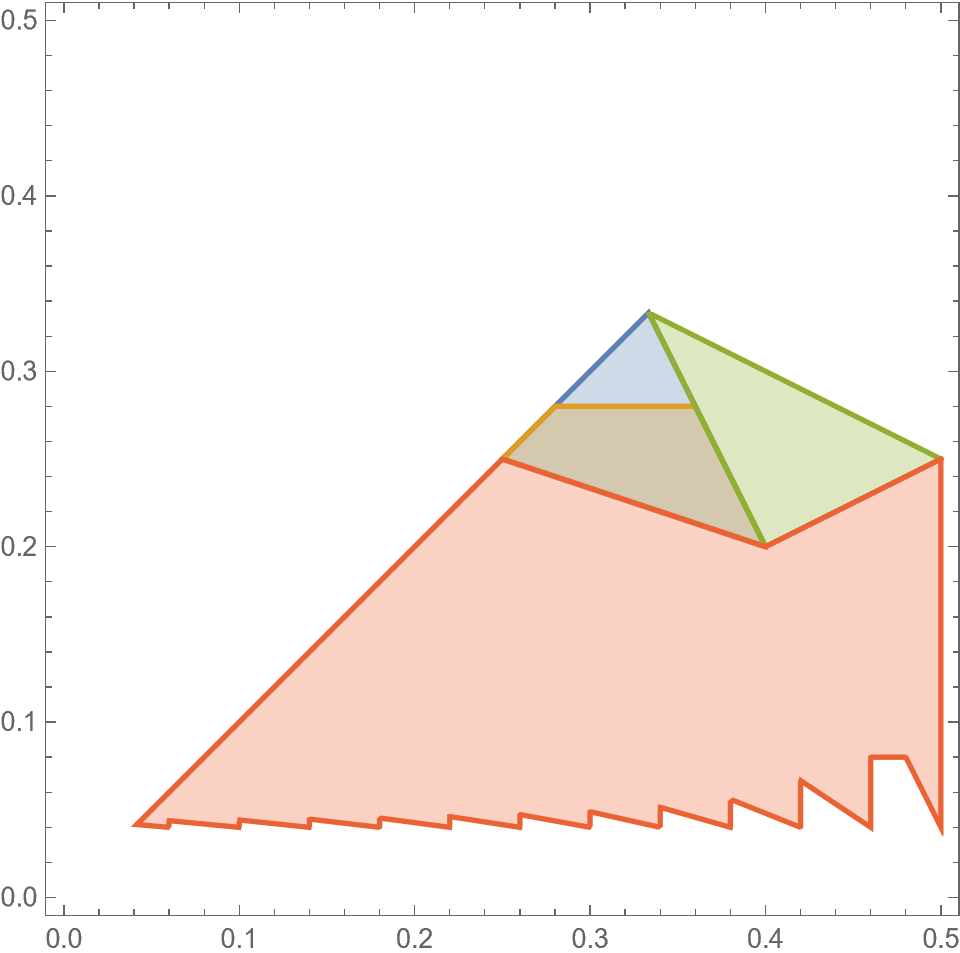}
\includegraphics[height=8cm]{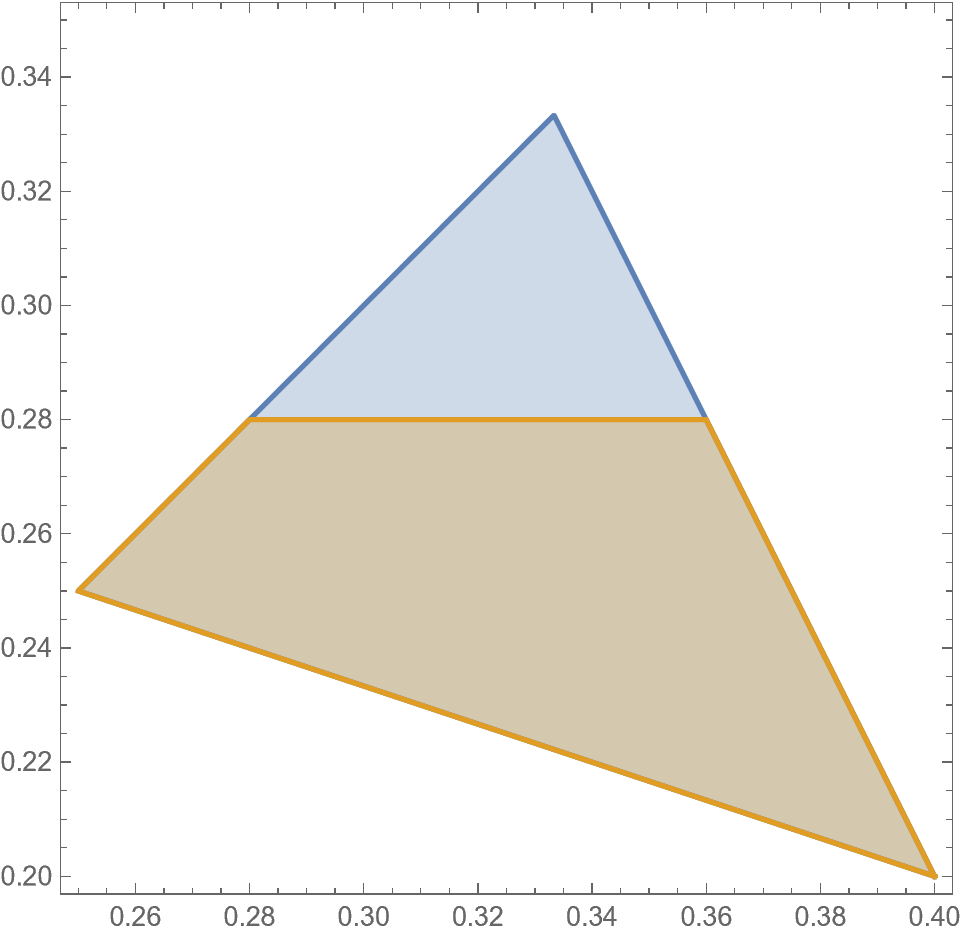}
\includegraphics[height=8cm]{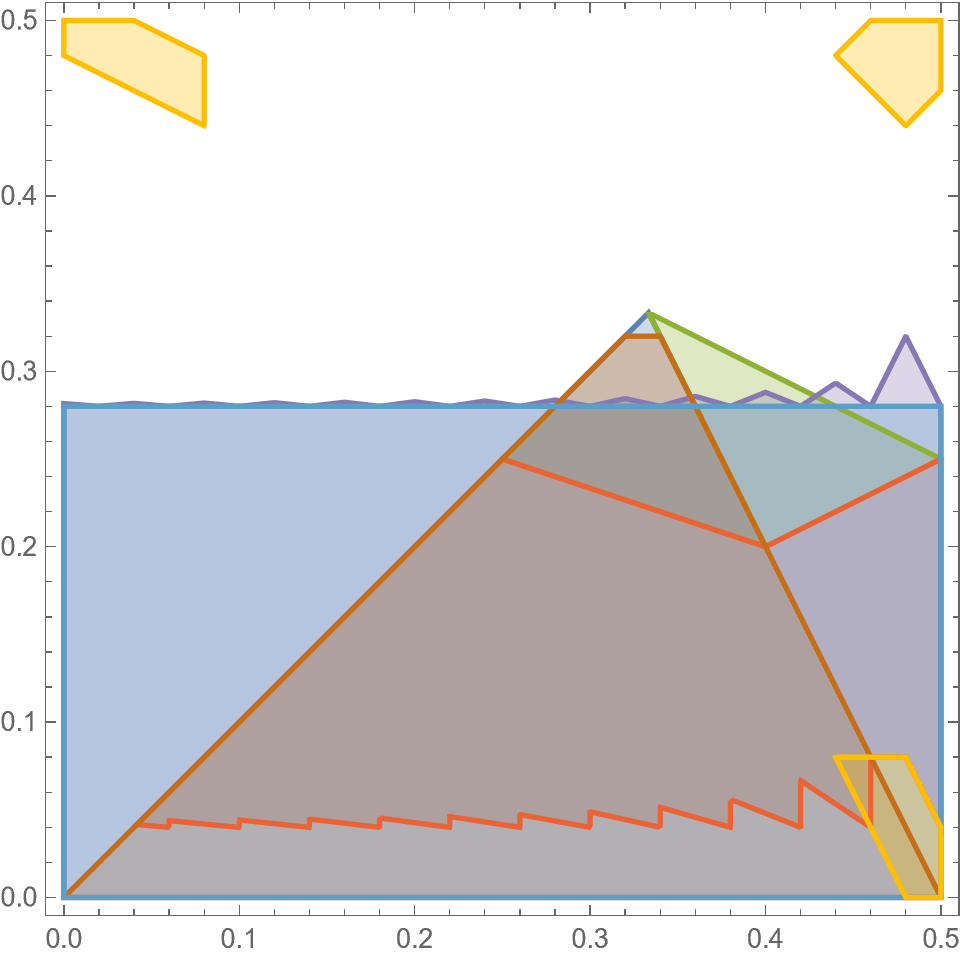}
\includegraphics[height=8cm]{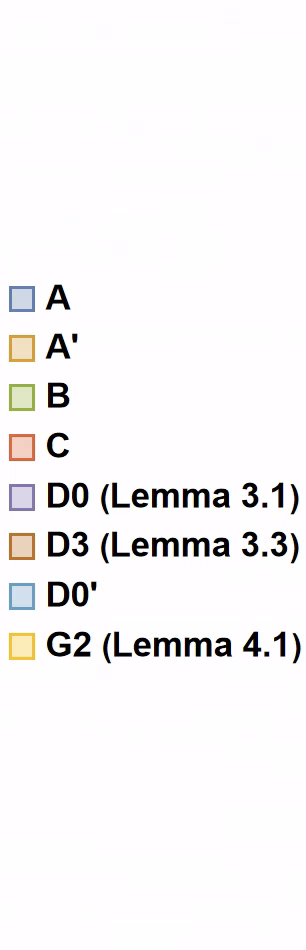}
\\
\textbf{Figure 1: Plot for \boldmath{$\operatorname{LB}(0.52)$}}
\end{center}

\newpage
\begin{center}
\includegraphics[height=8cm]{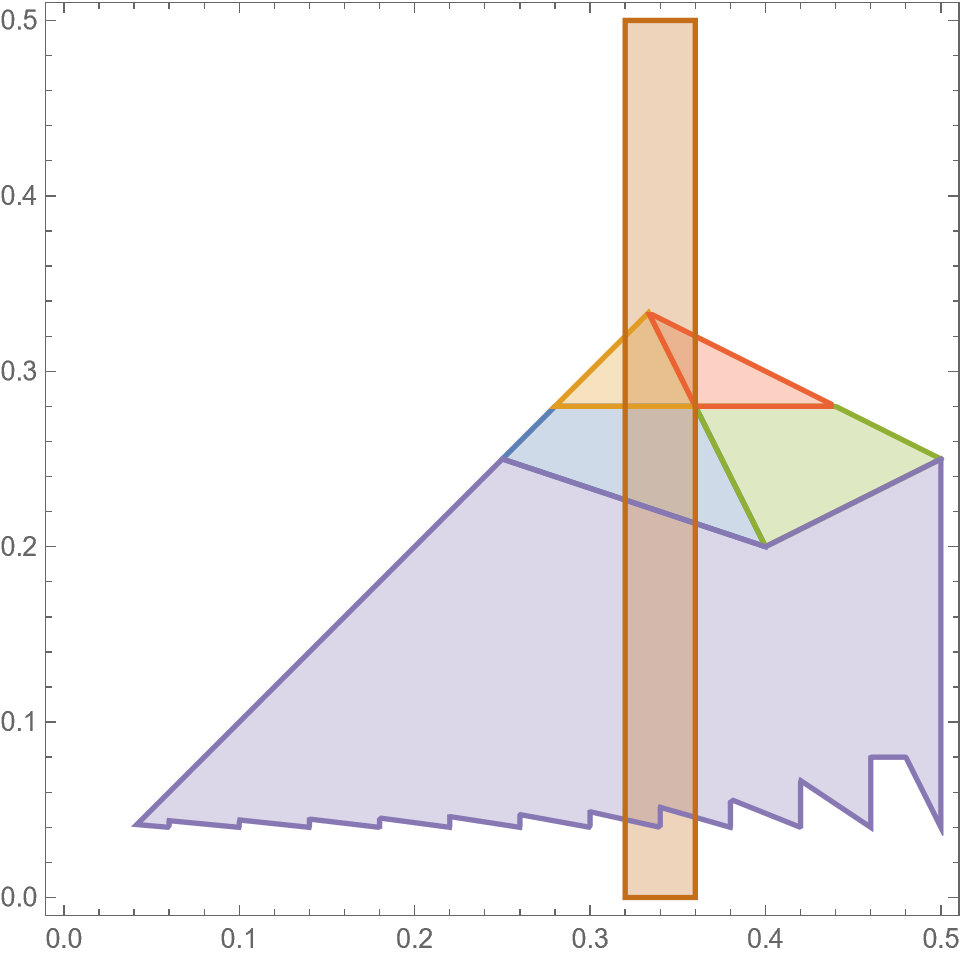}
\includegraphics[height=8cm]{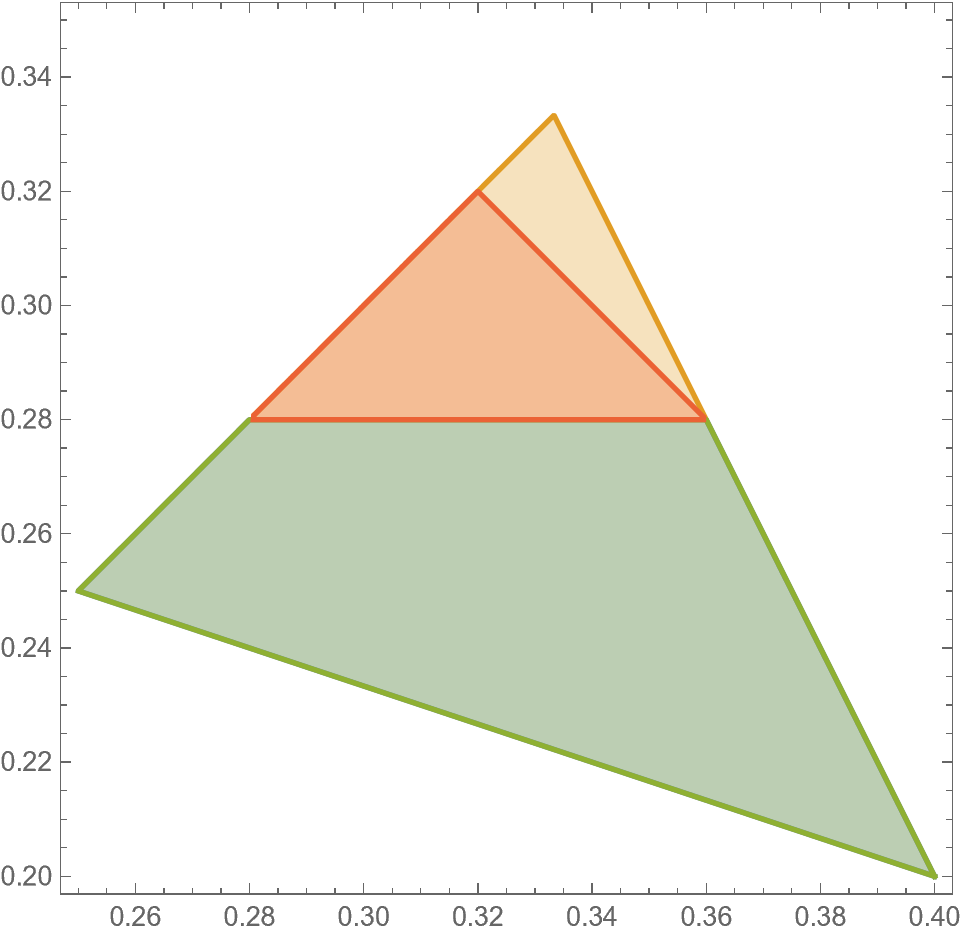}
\includegraphics[height=8cm]{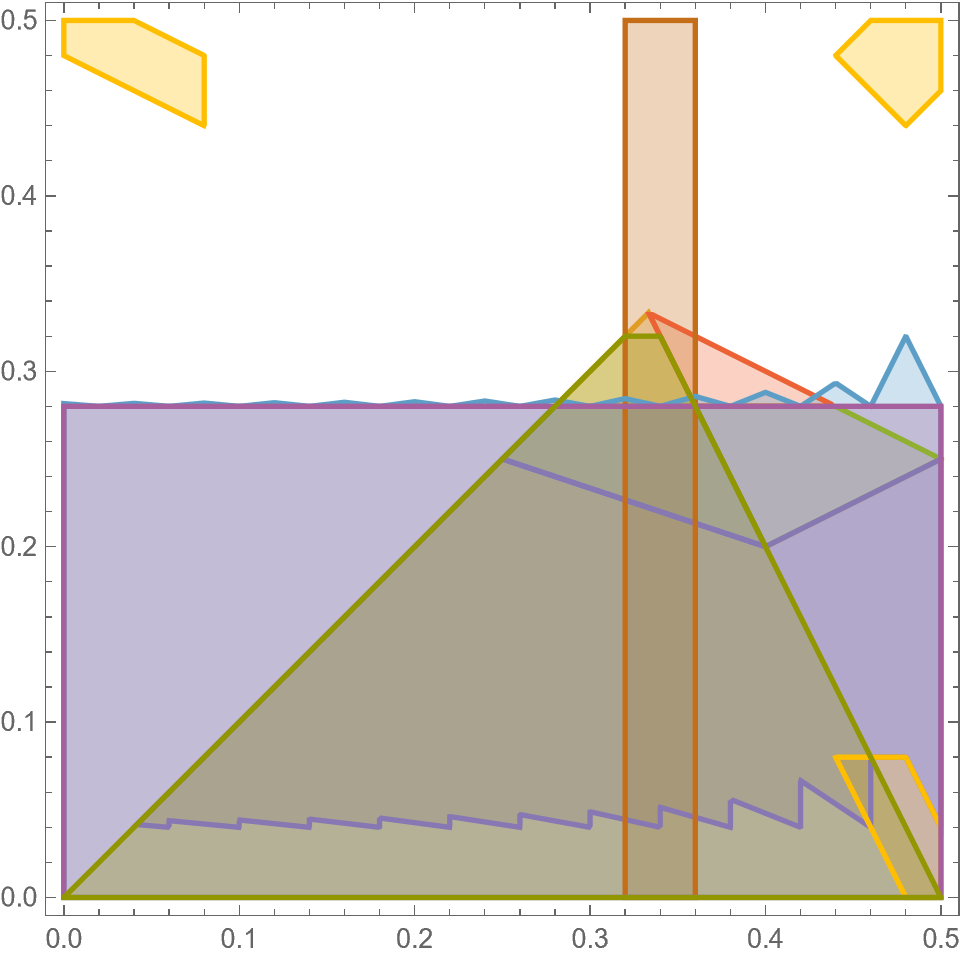}
\includegraphics[height=8cm]{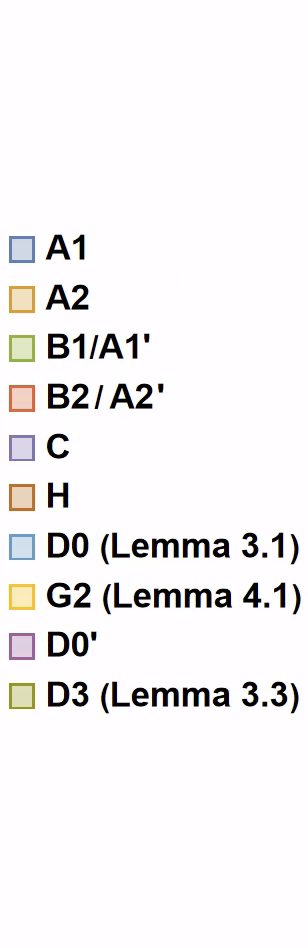}
\\

\textbf{Figure 2: Plot for \boldmath{$\operatorname{UB}(0.52)$}}
\end{center}

\newpage
\section*{Appendix 2: Values of Integrals}

\begin{center}
\begin{tabular}{|c|c|c|c|c|c|c|}
\hline \boldmath{$\theta$} & \boldmath{$0.52$} & \boldmath{$0.521$} & \boldmath{$0.522$} & \boldmath{$0.523$} & \boldmath{$0.524$} & \boldmath{$0.525$} \\
\hline \textbf{Loss from} \boldmath{$A$} & $< 0.241$ & $< 0.241$ & $< 0.241$ & $< 0.241$ & $< 0.241$ & $< 0.241$ \\
\hline $U_{C01}$ & $< 0.21$ & $< 0.19$ & $< 0.17$ & $< 0.155$ & $< 0.135$ & $< 0.12$ \\
\hline $U_{C02}$ & $0$ & $0$ & $0$ & $0$ & $0$ & $0$ \\
\hline $U_{C03}$ & $< 0.015$ & $< 0.01$ & $< 0.005$ & $< 0.005$ & $< 0.005$ & $< 0.005$ \\
\hline $U_{C04}$ & $< 0.05$ & $< 0.03$ & $< 0.02$ & $< 0.015$ & $< 0.01$ & $< 0.005$ \\
\hline $U_{C05}$ & $< 0.001$ & $< 0.001$ & $< 0.001$ & $< 0.001$ & $< 0.001$ & $< 0.001$ \\
\hline $U_{C06}$ & $< 0.001$ & $< 0.001$ & $< 0.001$ & $< 0.001$ & $< 0.001$ & $< 0.001$ \\
\hline $U_{C07}$ & $< 0.001$ & $< 0.001$ & $< 0.001$ & $< 0.001$ & $< 0.001$ & $< 0.001$ \\
\hline $U_{C08}$ & $< 0.22$ & $< 0.2$ & $< 0.18$ & $< 0.165$ & $< 0.15$ & $< 0.13$ \\
\hline $U_{C09}$ & $0$ & $0$ & $0$ & $0$ & $0$ & $0$ \\
\hline $U_{C10}$ & $0$ & $0$ & $0$ & $0$ & $0$ & $0$ \\
\hline $U_{C11}$ & $0$ & $0$ & $0$ & $0$ & $0$ & $0$ \\
\hline $U_{C12}$ & $< 0.015$ & $< 0.01$ & $< 0.005$ & $< 0.005$ & $< 0.005$ & $< 0.005$ \\
\hline $U_{C13}$ & $< 0.001$ & $< 0.001$ & $< 0.001$ & $< 0.001$ & $< 0.001$ & $< 0.001$ \\
\hline \textbf{Loss from} \boldmath{$C$} & $< 0.514$ & $< 0.443$ & $< 0.384$ & $< 0.349$ & $< 0.309$ & $< 0.269$\\
\hline \textbf{Total Loss} & $< 0.996$ & $< 0.925$ & $< 0.866$ & $< 0.831$ & $< 0.791$ & $< 0.751$\\
\hline \textbf{Lower Bound} & $> 0.004$ & $> 0.075$ & $> 0.134$ & $> 0.169$ & $> 0.209$ & $> 0.249$\\
\hline
\end{tabular}
\\
\textbf{Table 1: Values for \boldmath{$\operatorname{LB}(\theta)$} (C\boldmath{$++$})}
\end{center}

\hspace*{\fill}

\begin{center}
\begin{tabular}{|c|c|c|c|c|c|c|}
\hline \boldmath{$\theta$} & \boldmath{$0.52$} & \boldmath{$0.521$} & \boldmath{$0.522$} & \boldmath{$0.523$} & \boldmath{$0.524$} & \boldmath{$0.525$} \\
\hline \textbf{Loss from} \boldmath{$H$} & $< 0.183$ & $< 0.134$ & $< 0.086$ & $< 0.039$ & $0$ & $0$ \\
\hline $V_{A1}$ & $< 0.19$ & $< 0.23$ & $< 0.26$ & $< 0.3$ & $< 0.33$ & $< 0.33$ \\
\hline $V_{A2}$ & $> 0.005$ & $> 0.01$ & $> 0.01$ & $> 0.015$ & $> 0.03$ & $> 0.035$ \\
\hline $V_{A3}$ & $< 0.07$ & $< 0.05$ & $< 0.035$ & $< 0.03$ & $< 0.02$ & $< 0.015$ \\
\hline \textbf{Loss from} \boldmath{$A_1$} & $< 0.255$ & $< 0.27$ & $< 0.285$ & $< 0.315$ & $< 0.32$ & $< 0.31$ \\
\hline $V_{A4}$ & $< 0.32$ & $< 0.32$ & $< 0.32$ & $< 0.33$ & $< 0.33$ & $< 0.33$ \\
\hline $V_{A5}$ & $> 0.01$ & $> 0.01$ & $> 0.01$ & $> 0.015$ & $> 0.03$ & $> 0.035$ \\
\hline $V_{A6}$ & $< 0.07$ & $< 0.05$ & $< 0.035$ & $< 0.03$ & $< 0.02$ & $< 0.015$ \\
\hline \textbf{Loss from} \boldmath{$A_1^{\prime}$} & $< 0.38$ & $< 0.36$ & $< 0.345$ & $< 0.345$ & $< 0.32$ & $< 0.31$ \\
\hline \textbf{Loss from} \boldmath{$A_2$} & $< 0.12$ & $< 0.11$ & $< 0.13$ & $< 0.14$ & $< 0.17$ & $< 0.16$ \\
\hline \textbf{Loss from} \boldmath{$A_2^{\prime}$} & $< 0.22$ & $< 0.2$ & $< 0.19$ & $< 0.18$ & $< 0.17$ & $< 0.16$ \\
\hline $V_{C1}$ & $< 0.31$ & $< 0.31$ & $< 0.3$ & $< 0.3$ & $< 0.3$ & $< 0.29$ \\
\hline $V_{C2}$ & $0$ & $0$ & $0$ & $0$ & $0$ & $0$ \\
\hline $V_{C3}$ & $< 0.13$ & $< 0.09$ & $< 0.07$ & $< 0.06$ & $< 0.04$ & $< 0.03$ \\
\hline $V_{C4}$ & $< 0.25$ & $< 0.21$ & $< 0.17$ & $< 0.15$ & $< 0.11$ & $< 0.08$ \\
\hline $V_{C5}$ & $< 0.02$ & $< 0.01$ & $< 0.005$ & $< 0.005$ & $< 0.005$ & $< 0.005$ \\
\hline $V_{C6}$ & $< 0.005$ & $< 0.005$ & $< 0.001$ & $< 0.001$ & $< 0.001$ & $< 0.001$ \\
\hline $V_{C7}$ & $< 0.001$ & $< 0.001$ & $< 0.001$ & $< 0.001$ & $< 0.001$ & $< 0.001$ \\
\hline \textbf{Loss from} \boldmath{$C$} & $< 0.716$ & $< 0.626$ & $< 0.547$ & $< 0.517$ & $< 0.457$ & $< 0.407$ \\
\hline \textbf{Total Loss} & $< 1.874$ & $< 1.7$ & $< 1.583$ & $< 1.536$ & $< 1.437$ & $< 1.347$ \\
\hline \textbf{Upper Bound} & $< 2.874$ & $< 2.7$ & $< 2.583$ & $< 2.536$ & $< 2.437$ & $< 2.347$ \\
\hline
\end{tabular}
\\
\textbf{Table 2: Values for \boldmath{$\operatorname{UB}(\theta)$} (C\boldmath{$++$})}
\end{center}

\newpage

\begin{center}
\begin{tabular}{|c|c|c|c|c|c|}
\hline \boldmath{$\theta$} & \boldmath{$0.52$} & \boldmath{$0.521$} & \boldmath{$0.522$} & \boldmath{$0.523$} & \boldmath{$0.524$} \\
\hline \textbf{Loss from} \boldmath{$A$} & $< 0.240227$ & $< 0.240227$ & $< 0.240227$ & $< 0.240227$ & $< 0.240227$ \\
\hline $U_{C01}$ & $< 0.197907$ & $< 0.178493$ & $< 0.158194$ & $< 0.136616$ & $< 0.119466$ \\
\hline $U_{C02}$ & $> 0.001607$ & $> 0.001789$ & $> 0.001688$ & $> 0.001790$ & $> 0.001787$ \\
\hline $U_{C03}$ & $< 0.020936$ & $< 0.014611$ & $< 0.010405$ & $< 0.005868$ & $< 0.003499$ \\
\hline $U_{C04}$ & $< 0.065033$ & $< 0.043988$ & $< 0.037108$ & $< 0.014831$ & $< 0.007705$ \\
\hline $U_{C05}$ & $< 0.000101$ & $< 0.000043$ & $< 0.000018$ & $< 0.000006$ & $< 0.000002$ \\
\hline $U_{C06}$ & $< 0.000131$ & $< 0.000106$ & $< 0.000073$ & $0$ & $0$ \\
\hline $U_{C07}$ & $0$ & $0$ & $0$ & $0$ & $0$ \\
\hline $U_{C08}$ & $< 0.201090$ & $< 0.181571$ & $< 0.165022$ & $< 0.143845$ & $< 0.128240$ \\
\hline $U_{C09}$ & $> 0.000693$ & $> 0.001054$ & $> 0.001193$ & $> 0.001259$ & $> 0.001338$ \\
\hline $U_{C10}$ & $> 0.000222$ & $> 0.000251$ & $> 0.000286$ & $> 0.000295$ & $> 0.000293$ \\
\hline $U_{C11}$ & $< 0.000048$ & $< 0.000040$ & $< 0.000029$ & $< 0.000028$ & $< 0.000265$ \\
\hline $U_{C12}$ & $< 0.008809$ & $< 0.005143$ & $< 0.004523$ & $< 0.001541$ & $< 0.000727$ \\
\hline $U_{C13}$ & $0$ & $0$ & $0$ & $0$ & $0$ \\
\hline \textbf{Loss from} \boldmath{$C$} & $< 0.491533$ & $< 0.420901$ & $< 0.372205$ & $< 0.299391$ & $< 0.256486$ \\
\hline \textbf{Total Loss} & $< 0.971987$ & $< 0.901355$ & $< 0.852659$ & $< 0.779845$ & $< 0.736940$ \\
\hline \textbf{Lower Bound} & $> 0.028013$ & $> 0.098645$ & $> 0.147341$ & $> 0.220155$ & $> 0.263060$ \\
\hline
\end{tabular}
\\
\textbf{Table 3: Values for \boldmath{$\operatorname{LB}(\theta)$} (Mathematica, Error Added)}
\end{center}

\hspace*{\fill}

\begin{center}
\begin{tabular}{|c|c|c|c|c|c|}
\hline \boldmath{$\theta$} & \boldmath{$0.52$} & \boldmath{$0.521$} & \boldmath{$0.522$} & \boldmath{$0.523$} & \boldmath{$0.524$} \\
\hline \textbf{Loss from} \boldmath{$H$} & $< 0.182012$ & $< 0.133815$ & $< 0.085930$ & $< 0.038334$ & $0$ \\
\hline $V_{A1}$ & $< 0.179773$ & $< 0.217159$ & $< 0.254821$ & $< 0.292355$ & $< 0.323686$ \\
\hline $V_{A2}$ & $> 0.004874$ & $> 0.007200$ & $> 0.010359$ & $> 0.017561$ & $> 0.023389$ \\
\hline $V_{A3}$ & $< 0.043475$ & $< 0.035114$ & $< 0.027426$ & $< 0.020820$ & $< 0.015243$ \\
\hline \textbf{Loss from} \boldmath{$A_1$} & $< 0.218374$ & $< 0.245073$ & $< 0.271888$ & $< 0.295614$ & $< 0.315540$ \\
\hline $V_{A4}$ & $< 0.310609$ & $< 0.313652$ & $< 0.316896$ & $< 0.320119$ & $< 0.323686$ \\
\hline $V_{A5}$ & $> 0.008299$ & $> 0.010006$ & $> 0.012635$ & $> 0.019583$ & $> 0.023389$ \\
\hline $V_{A6}$ & $< 0.051108$ & $< 0.038581$ & $< 0.028772$ & $< 0.021186$ & $< 0.015243$ \\
\hline \textbf{Loss from} \boldmath{$A_1^{\prime}$} & $< 0.353418$ & $< 0.342227$ & $< 0.333033$ & $< 0.321722$ & $< 0.315540$ \\
\hline \textbf{Loss from} \boldmath{$A_2$} & $< 0.102865$ & $< 0.109021$ & $< 0.122256$ & $< 0.140969$ & $< 0.155383$ \\
\hline \textbf{Loss from} \boldmath{$A_2^{\prime}$} & $< 0.201264$ & $< 0.195899$ & $< 0.187831$ & $< 0.173941$ & $< 0.155383$ \\
\hline $V_{C1}$ & $< 0.261034$ & $< 0.260555$ & $< 0.257913$ & $< 0.254700$ & $< 0.249854$ \\
\hline $V_{C2}$ & $> 0.000575$ & $> 0.000787$ & $> 0.000850$ & $> 0.000815$ & $> 0.000795$ \\
\hline $V_{C3}$ & $< 0.128160$ & $< 0.107541$ & $< 0.092325$ & $< 0.070907$ & $< 0.055342$ \\
\hline $V_{C4}$ & $< 0.307367$ & $< 0.249849$ & $< 0.210236$ & $< 0.163109$ & $< 0.128740$ \\
\hline $V_{C5}$ & $< 0.004722$ & $< 0.002606$ & $< 0.001446$ & $< 0.000670$ & $< 0.000322$ \\
\hline $V_{C6}$ & $< 0.003889$ & $< 0.002529$ & $< 0.000965$ & $< 0.000461$ & $< 0.000512$ \\
\hline $V_{C7}$ & $< 0.000013$ & $0$ & $0$ & $0$ & $0$ \\
\hline \textbf{Loss from} \boldmath{$C$} & $< 0.704610$ & $< 0.622293$ & $< 0.562035$ & $< 0.489032$ & $< 0.433975$ \\
\hline \textbf{Total Loss} & $< 1.762543$ & $< 1.648328$ & $< 1.562973$ & $< 1.459612$ & $< 1.375821$ \\
\hline \textbf{Upper Bound} & $< 2.762543$ & $< 2.648328$ & $< 2.562973$ & $< 2.459612$ & $< 2.375821$ \\
\hline
\end{tabular}
\\
\textbf{Table 4: Values for \boldmath{$\operatorname{UB}(\theta)$} (Mathematica, Error Added)}
\end{center}

\newpage

\begin{center}
\begin{tabular}{|c|c|c|c|c|}
\hline \boldmath{$\theta = 0.52$} & \textbf{C}\boldmath{$++$} & \textbf{C}\boldmath{$++$}\textbf{~Bounds} & \textbf{Mathematica} & \textbf{Mathematica Error} \\
\hline \textbf{Loss from} \boldmath{$A$} & $--$ & $< 0.241$ & $< 0.240227$ & $0$ \\
\hline $U_{C01}$ & \makecell{$< 0.205494$ \\ $< 0.204864$ \\ $< 0.202968$ \\ $< 0.202764$ \\ $< 0.204471$} & $< 0.21$ & $< 0.179029$ & $\pm 0.018879$ \\
\hline $U_{C02}$ & $--$ & $0$ & $> 0.002844$ & $\pm 0.001237$ \\
\hline $U_{C03}$ & \makecell{$< 0.00867172$ \\ $< 0.00852254$ \\ $< 0.00570209$ \\ $< 0.00614043$ \\ $< 0.00595708$ \\ $< 0.00668194$} & $< 0.015$ & $< 0.011131$ & $\pm 0.009805$ \\
\hline $U_{C04}$ & \makecell{$< 0.0331034$ \\ $< 0.0250261$ \\ $< 0.0281436$ \\ $< 0.0307910$ \\ $< 0.0257140$ \\ $< 0.0255659$ \\ $< 0.0297102$ \\ $< 0.0262677$ \\ $< 0.0313978$ \\ $< 0.0312939$ \\ $< 0.0354621$ \\ $< 0.0341336$ } & $< 0.05$ & $< 0.043774$ & $\pm 0.021259$ \\
\hline $U_{C05}$ & $--$ & $< 0.001$ & $< 0.000087$ & $\pm 0.000015$ \\
\hline $U_{C06}$ & $--$ & $< 0.001$ & $< 0.000066$ & $\pm 0.000065$ \\
\hline $U_{C07}$ & $--$ & $< 0.001$ & $0$ & $0$ \\
\hline $U_{C08}$ & \makecell{$< 0.206760$ \\ $< 0.207967$ \\ $< 0.210592$ \\ $< 0.212729$ \\ $< 0.212458$} & $< 0.22$ & $< 0.182786$ & $\pm 0.018305$ \\
\hline $U_{C09}$ & $--$ & $0$ & $> 0.002623$ & $\pm 0.001930$ \\
\hline $U_{C10}$ & $--$ & $0$ & $> 0.000844$ & $\pm 0.000623$ \\
\hline $U_{C11}$ & \makecell{$< U_{C09}$ \\ $< U_{C10}$ \\ $< 0.000363954$} & \makecell{$0$ \\ ($U_{C09} + U_{C10} - U_{C11} > 0$)} & $< 0.000015$ & $\pm 0.000033$ \\
\hline $U_{C12}$ & \makecell{$< 0.00567703$ \\ $< 0.00520828$ \\ $< 0.00643319$ \\ $< 0.00603185$ \\ $< 0.00644211$ \\ $< 0.00713637$} & $< 0.015$ & $< 0.005207$ & $\pm 0.003603$ \\
\hline $U_{C13}$ & $0$ & $< 0.001$ & $0$ & $0$ \\
\hline \textbf{Loss from} \boldmath{$C$} & $--$ & $< 0.514$ & $< 0.491533$ & $--$ \\
\hline \textbf{Total Loss} & $--$ & $< 0.996$ & $< 0.971987$ & $--$ \\
\hline \textbf{Lower Bound} & $--$ & $> 0.004$ & $> 0.028013$ & $--$ \\
\hline
\end{tabular}
\\
\textbf{Table 5: Values for \boldmath{$\operatorname{LB}(0.52)$} (Comparison)}
\end{center}

\newpage

\begin{center}
\begin{tabular}{|c|c|}
\hline \boldmath{$\operatorname{LB}(\theta)$} & \textbf{Code} \\
\hline \boldmath{$0.520$} & \makecell{\texttt{https://notebookarchive.org/2025-04-3pjf0hd} \\ \texttt{https://notebookarchive.org/2025-04-9p1on6y}} \\
\hline \boldmath{$0.521$} & \makecell{\texttt{https://notebookarchive.org/2025-04-3pju4ml} \\ \texttt{https://notebookarchive.org/2025-04-9p1t3hj}} \\
\hline \boldmath{$0.522$} & \makecell{\texttt{https://notebookarchive.org/2025-04-3pk3cy5} \\ \texttt{https://notebookarchive.org/2025-04-9p1xmq7}} \\
\hline \boldmath{$0.523$} & \makecell{\texttt{https://notebookarchive.org/2025-04-3pmvmzo} \\ \texttt{https://notebookarchive.org/2025-04-9p4hloi}} \\
\hline \boldmath{$0.524$} & \makecell{\texttt{https://notebookarchive.org/2025-04-48d2pum} \\ \texttt{https://notebookarchive.org/2025-04-9p4m3bt} \\ \texttt{https://notebookarchive.org/2025-05-2swdynw}} \\
\hline \boldmath{$\operatorname{UB}(\theta)$} & \textbf{Code} \\
\hline \boldmath{$0.520$} & \makecell{\texttt{https://notebookarchive.org/2025-04-3pjlqxi} \\ \texttt{https://notebookarchive.org/2025-04-9p1qy87}} \\
\hline \boldmath{$0.521$} & \makecell{\texttt{https://notebookarchive.org/2025-04-3pk0dpg} \\ \texttt{https://notebookarchive.org/2025-04-9p1va6j}} \\
\hline \boldmath{$0.522$} & \makecell{\texttt{https://notebookarchive.org/2025-04-3pmmkfv} \\ \texttt{https://notebookarchive.org/2025-04-9p4dl87}} \\
\hline \boldmath{$0.523$} & \makecell{\texttt{https://notebookarchive.org/2025-04-48cy9rw} \\ \texttt{https://notebookarchive.org/2025-04-9p4jsrs}} \\
\hline \boldmath{$0.524$} & \makecell{\texttt{https://notebookarchive.org/2025-04-48d5k5h} \\ \texttt{https://notebookarchive.org/2025-04-9p4o9kp}} \\
\hline
\end{tabular}
\\
\textbf{Table 6: Mathematica code websites}
\end{center}

\newpage





\bibliographystyle{plain}
\bibliography{bib}
\end{document}